\numberwithin{equation}{section}
\newtheorem{thm}{Theorem}[section]
\newtheorem{lemma}[thm]{Lemma}
\newtheorem{cor}[thm]{Corollary}
\newtheorem{prop}[thm]{Proposition}
\newtheorem{con}[thm]{Conjecture}
\theoremstyle{definition}
\newtheorem{definition}[thm]{Definition}
\newtheorem{example}[thm]{Example}
\newtheorem{prob}[thm]{Problem}
\newtheorem{eg}[thm]{Example}
\theoremstyle{remark}
\newtheorem{remark}[thm]{Remark}
\renewcommand{\S}{\mathfrak S}
\newcommand{\s}{\sigma}
\newcommand\calC{{\mathcal{C}}}
\newcommand\C{{\mathbb{C}}}
\newcommand\Q{{\mathbb Q}}
\newcommand\Z{{\mathbb{Z}}}
\newcommand\PP{{\mathbb{P}}}
\newcommand\N{{\mathbb{N}}}
\newcommand\bq{\begin{equation}}
\newcommand\eq{\end{equation}}
\newcommand\beq{\begin{eqnarray*}}
\newcommand\eeq{\end{eqnarray*}}
\newcommand\ben{\begin{enumerate}}
\newcommand\een{\end{enumerate}}
\newcommand\bit{\begin{itemize}}
\newcommand\eit{\end{itemize}}
\newcommand\des{{\rm des}}
\newcommand\exc{{\rm exc}}
\newcommand\inv{{\rm inv}}
\newcommand\maj{{\rm maj}}
\newcommand\sg{{\mathfrak S}}
\newcommand\Des{{\rm DES}}
\newcommand\fix{{\rm fix}}
\newcommand\ch{{\rm ch}}
\newcommand\x{{\mathbf x}}
\newcommand\Par{{\rm Par}}
\newcommand\asc{{\rm asc}}
\newcommand\hgt{{\rm ht}}
\def\zz{{\mathbb Z}}
\def\nn{{\mathbb N}}
\def\cc{{\mathbb C}}
\def\pp{{\mathbb P}}
\def\cq{{\rm QSym}}
\def\ci{{\mathcal I}}
\def\xx{{\mathbf x}}
\def\inc{{\rm{inc}}}
\def\o{\bar {\mathbf o}}
\def\bft{s}
\def\hess{{\mathcal H}}
\def\Ps{{\rm \bf ps}}
\def\v{{\mathcal V}}
\newlength\cellsize \setlength\cellsize{15\unitlength}
\newcommand\cellify[1]{\def\thearg{#1}\def\nothing{}%
\ifx\thearg\nothing
\vrule width0pt height\cellsize depth0pt\else
\hbox to 0pt{\usebox2\hss}\fi%
\vbox to 15\unitlength{
\vss
\hbox to 15\unitlength{\hss$#1$\hss}
\vss}}
\newcommand\tableau[1]{\vtop{\let\\=\cr
\setlength\baselineskip{-16000pt}
\setlength\lineskiplimit{16000pt}
\setlength\lineskip{0pt}
\halign{&\cellify{##}\cr#1\crcr}}}
\newcommand\expath[1]{%
\hbox to 0pt{\usebox3\hss}%
\vbox to 15\unitlength{
\vss
\hbox to 15\unitlength{\hss$#1$\hss}
\vss}}
\begin{document}

\title[Chromatic quasisymmetric functions]
{ Chromatic quasisymmetric functions}
\author[Shareshian]{John Shareshian$^1$}
\address{Department of Mathematics, Washington University, St. Louis, MO 63130}
\thanks{$^{1}$Supported in part by NSF Grants
 DMS 0902142 and DMS 1202337}
\email{shareshi@math.wustl.edu}

\author[Wachs]{Michelle L. Wachs$^2$}
\address{Department of Mathematics, University of Miami, Coral Gables, FL 33124}
\email{wachs@math.miami.edu}
\thanks{$^{2}$Supported in part by NSF Grants
DMS 0902323 and DMS 1202755 and by   Simons Foundation Grant
\#267236.}

\begin{abstract} We introduce  a  quasisymmetric refinement of  Stanley's chromatic symmetric function.   We derive refinements of both Gasharov's Schur-basis expansion of the chromatic symmetric function and Chow's expansion in  Gessel's basis of fundamental quasisymmetric functions.  We present a conjectural refinement of Stanley's power sum basis expansion, which we prove in special cases.   We describe connections between the chromatic quasisymmetric function and both the $q$-Eulerian polynomials introduced in our earlier work and, conjecturally, representations of symmetric groups on cohomology of regular semisimple Hessenberg varieties, which have been studied by Tymoczko and others.   We  discuss an approach, using the results and conjectures herein, to the $e$-positivity conjecture of Stanley and Stembridge for incomparability graphs of $(3+1)$-free posets.  \end{abstract}

\date{June 30, 2014; Revised March 18, 2016}

\vspace*{-.2in}\maketitle

\vspace*{-.3in}\tableofcontents

\section{Introduction} \label{introsec}
We study a quasisymmetric refinement of Stanley's chromatic symmetric function.  We present refined results for our quasisymmetric functions, some proved herein and some conjectured, analogous to known results and conjectures of Chow, Gasharov, Stanley and Stanley-Stembridge on chromatic symmetric functions.  We present also a conjecture relating our work to work of Tymoczko and others on representations of symmetric groups on the cohomology of regular semisimple Hessenberg varieties.  Some of the results in this paper were presented, without proof, in our survey paper \cite{ShWa3}.  We assume throughout that the reader is familiar with basic properties of symmetric and quasisymmetric functions, as discussed in \cite[Chapter 7]{St2}.

 Let $G=(V,E)$ be a graph. Given a subset $S$ of the set $\pp$ of positive integers,   a {\em proper $S$-coloring} of $G$ is a function $\kappa:V\to S$ such that $\kappa(i) \neq \kappa(j)$ whenever $\{i,j\} \in E$.  
Let $\mathcal C(G)$ be the set of proper $\pp$-colorings of $G$.  In \cite{St3}, Stanley defined the {\em chromatic symmetric function} of $G$ as
\[
X_G(\x):=\sum_{\kappa \in \calC(G)}\x_\kappa,
\]
where $\x:=(x_1,x_2,\dots)$ is a sequence of commuting indeterminants and
$
\x_\kappa:=\prod_{v\in V} x_{\kappa(v)}
$.

It is straightforward to confirm that $X_G(\x)$ lies in the $\Q$-algebra $\Lambda_\Q$ of symmetric functions in $x_1,x_2,\ldots$ with rational coefficients.  The chromatic symmetric function gives more information about proper colorings than the well-studied chromatic polynomial $\chi_G:\pp \to \N$.   (Recall that $\chi_G(m)$ is the number of proper $\{1,2,\ldots,m\}$-colorings of $G$.)  Indeed,  $X_G(1^m) = \chi_G(m)$, where $X_G(1^m)$ is the specialization of $X_G(\x)$   obtained by setting $x_i=1$ for $1\le i \le m$ and $x_i=0$ for $i > m$.   Chromatic symmetric functions are studied in various papers, including \cite{St3,St4,Ga, Ga2, Ch0, Ch, wo,nw,MaMoWa,Hu,Gu}.

Some basic results on chromatic symmetric functions involve expansions in various bases for $\Lambda_\Q$.  The bases in question are indexed naturally by the set $\Par$ of all integer partitions.  If $B=(b_\lambda)_{\lambda \in \Par}$ is such a basis and the coefficients of $f \in \Lambda_{\Q}$ with respect to $B$ are all nonnegative, we say that $f$ is $b$-positive.  Bases of this type appearing herein are the elementary basis $(e_\lambda)_{\lambda \in \Par}$, the complete homogeneous basis $(h_\lambda)_{\lambda \in \Par}$, the Schur basis $(s_\lambda)_{\lambda \in \Par}$ and the power sum basis $(p_\lambda)_{\lambda \in \Par}$.

A main motivation for our study is to understand the conjecture of Stanley-Stembridge stated below and a related theorem of Gasharov discussed in Section \ref{schursec}.  Recall that the {\em incomparability graph} $\inc(P)$ of a poset $P$ has as its vertices the elements of $P$, with edges connecting pairs of incomparable elements, and that $P$
 is called $(r+s)${-\it free} if $P$ does not contain an induced subposet isomorphic to the direct sum of an $r$ element chain and an $s$ element chain.

 \begin{con} [Stanley-Stembridge conjecture {\cite[Conjecture 5.5]{StSte}}, {\cite[Conjecture 5.1]{St3}}] \label{stancon}   
Let $G=(V,E)$ be the incomparability graph of a $(3+1)$-free poset.  Then $X_G(\x)$ is $e$-positive.
\end{con}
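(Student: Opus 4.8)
The statement is one of the central open problems in algebraic combinatorics, so any plan is necessarily a program rather than a complete argument; I outline the route that the quasisymmetric refinement introduced in this paper is designed to support. The overall strategy is to prove a stronger $t$-graded statement about the chromatic quasisymmetric function $X_G(\x,t)$ and then specialize at $t=1$.

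First I would reduce to the case in which $G=\inc(P)$ for $P$ a natural unit interval order. Such a reduction is available because the incomparability graph of an arbitrary $(3+1)$-free poset can be compared, via a Hopf-algebraic argument exploiting linear (``modular'') relations among chromatic symmetric functions, to those arising from unit interval orders: one shows that the chromatic symmetric functions in the $(3+1)$-free class lie in a positivity-preserving span of those coming from unit interval orders, so that $e$-positivity for the latter forces it for the former. For natural unit interval orders the refined invariant $X_G(\x,t)$ is symmetric and palindromic in $t$ and specializes to $X_G(\x)$ at $t=1$, so the target becomes the refined assertion that $X_G(\x,t)=\sum_{\lambda\in\Par} c_\lambda(t)\,e_\lambda$ with each $c_\lambda(t)\in\N[t]$.

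To establish this refined claim I would exploit two features developed in the paper. On the combinatorial side, one can attempt an induction on the structure of the unit interval order using the modular-type relations satisfied by $X_G(\x,t)$ under local modifications of the Hessenberg data, reducing an arbitrary order to base cases (paths, complete graphs, and their join-like combinations) whose $e$-expansions are visibly $e$-positive. On the representation-theoretic side, applying the involution $\omega$ converts $e$-positivity into $h$-positivity, and under the conjectural identification of $\omega X_G(\x,t)$ with the Frobenius characteristic of Tymoczko's dot action on the cohomology of the associated regular semisimple Hessenberg variety, $h$-positivity of each graded piece is exactly the statement that the corresponding cohomology group is, as a symmetric-group representation, a direct sum of modules induced from trivial representations of Young subgroups. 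The goal then reduces to producing, in each cohomological degree, a permutation basis whose point stabilizers are Young subgroups.

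The main obstacle is precisely this last step. Gasharov's Schur-positivity (Section \ref{schursec}) is comparatively accessible because a tableau model already realizes $X_G$ as a genuine character, and genuine characters are automatically Schur-positive; but $h$-positivity is a far stronger structural demand, for which no mechanism --- no equivariant cell decomposition, no filtration of Hessenberg cohomology by induced modules, no sign-reversing involution on an $e$-weighted colouring model --- is presently known. I expect that closing this gap will require either a new combinatorial model for the coefficients $c_\lambda(t)$ that is manifestly nonnegative, or a geometric construction exhibiting the Young-subgroup permutation structure on Hessenberg cohomology directly. The inductive modular-relations approach, while promising as a means of propagating positivity, still founders both on verifying the base cases in the refined setting and on showing that the relations respect the $e$-basis rather than merely the Schur basis.
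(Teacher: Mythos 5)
This statement is a conjecture in the paper, not a theorem: the paper offers no proof, only the very program you describe --- Guay-Paquet's reduction to natural unit interval orders, the refined $e$-positivity and $e$-unimodality statement (Conjecture~\ref{introquasistan}), and the conjectural identification of $\omega X_G(\x,t)$ with Tymoczko's representation on regular semisimple Hessenberg cohomology (Conjecture~\ref{introhesschrom}), with $e$-positivity then reduced to the assertion that each $H^{2j}(\hess(P))$ is a permutation module with Young-subgroup stabilizers (Conjecture~\ref{tympermcon}). Your proposal reconstructs exactly this program and correctly isolates the same open gap (the $h$-positivity/permutation-module step, for which no mechanism is known), so it is faithful to the paper's approach, but, like the paper itself, it does not constitute a proof.
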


A weaker result stating that $X_G(\x)$ is Schur-positive, when $G$ is as in the conjecture, follows from results of Haiman \cite{Ha}.  Gasharov \cite{Ga} proves this result by finding a formula for the coefficients of the Schur functions in the Schur-expansion of $X_G(\x)$.   Schur-positivity means that $X_G(\x)$ is the Frobenius characteristic of a representation of the symmetric group.  The Stanley-Stembridge conjecture asserts that this representation is a direct sum of representations induced from sign characters of Young subgroups.

Our refinement is as follows.

\begin{definition}\label{quasichromdef} Let $G=(V,E)$ be a graph whose vertex set $V$  is a finite subset of  $\pp$. The {\em chromatic quasisymmetric function} of  $G$ is
$$X_G(\x,t) := \sum _{\kappa \in \calC(G)} t^{\asc(\kappa)} \x_\kappa,$$
where 
$$\asc(\kappa) :=| \{\{i,j\} \in E : i < j \mbox{ and  } \kappa(i) < \kappa(j) \}|.$$
\end{definition}  

It is straightforward to confirm that $X_G(\x,t)$ lies in the ring $\cq_\Q[t]$.  That is, $X_G(\x,t)$ is a polynomial in $t$ with coefficients in the ring $\cq_\Q$ of quasisymmetric functions in $x_1,x_2,\ldots$ with rational coefficients.  (We can also think of $X_G(\x,t)$ as lying in the ring $\cq_{\Q[t]}$ of quasisymmetric functions with coefficients in the polynomial ring $\Q[t]$, but the first interpretation is more natural for our purposes.)

Note that the coefficients of $X_G(\x,t)$ need not be symmetric functions and that $X_G(\x,t)$ depends not only on the isomorphism type of $G$ but on the pair $(V,E)$.  Indeed, we will see in Example \ref{XGex}(a) that if $G$ is the path $1-3-2$ then the coefficients of $X_G(\x,t)$ are not symmetric, while in Example \ref{XGex}(b) we will see that if $G$ is the path $1-2-3$ then the coefficients of $X_G(\x,t)$ are indeed symmetric functions.

The graph $1-2-3$ belongs to a class that will be of most interest herein.  A {\it natural unit interval order} is obtained as follows.  Choose a finite set of closed intervals $[a_i,a_i+1]$ ($1 \leq i \leq n$) of length one on the real line, with $a_i<a_{i+1}$ for $1 \leq i \leq n-1$.  The associated natural unit interval order $P$ is the poset on $[n]:=\{1,\ldots,n\}$ in which $i<_P j$ if $a_i+1<a_j$.  A natural unit interval order is both $(3+1)$-free and $(2+2)$-free.  We show in Section \ref{symmsec} that if $G$ is the incomparability graph of a natural unit interval order then the coefficients of $X_G(\x,t)$ are symmetric functions and form a palindromic sequence.  This leads us to the following refinement of the unit interval order case of Conjecture~\ref{stancon}. (This special case of Conjecture~\ref{stancon}  is equivalent to a conjecture of Stembridge on immanants \cite[Conjecture 4.4]{Ste0}; see \cite[(5.1)]{StSte}.)

\begin{con} \label{introquasistan} Let $G$ be the incomparability graph of a natural unit interval order.   Then  the palindromic polynomial $X_{G}(\x,t)$ is $e$-positive and $e$-unimodal.  That is,  if $X_{G}(\x,t)  = \sum_{j=0}^m  a_j(\x) t^j$ then $a_j(\x)$ is $e$-positive for all $j$, and $a_{j+1}(\x) - a_j(\x)$ is $e$-positive whenever $0\le j <\frac {m-1} 2$.  
 \end{con}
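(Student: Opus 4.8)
My plan is to reduce both assertions to a single, stronger statement about the $e$-expansion of $X_G(\x,t)$. Write $m$ for the number of edges of $G$; the injective coloring $\kappa(v)=v$ makes every edge an ascent, so $\asc$ attains $m$ and $X_G(\x,t) = \sum_{j=0}^m a_j(\x)\,t^j$ has top degree $m$, while the palindromicity proved in Section~\ref{symmsec} reads $a_j = a_{m-j}$. The goal I would aim for is an expansion
\[
X_G(\x,t) \;=\; \sum_{\mu \in \Par} c_\mu(t)\, e_\mu ,
\]
in which every coefficient $c_\mu(t)$ lies in $\N[t]$ and is a \emph{palindromic and unimodal} polynomial with common center of symmetry $m/2$. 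Granting this, $e$-positivity of each $a_j(\x) = \sum_\mu \big([t^j]c_\mu(t)\big) e_\mu$ is immediate, and $e$-unimodality follows formally: since
\[
a_{j+1}(\x) - a_j(\x) = \sum_{\mu \in \Par} \big([t^{j+1}]c_\mu(t) - [t^j]c_\mu(t)\big)\, e_\mu ,
\]
each bracketed integer is nonnegative for $0 \le j < \tfrac{m-1}{2}$ precisely because $c_\mu(t)$ is unimodal about $m/2$. Thus the whole problem collapses to producing palindromic, unimodal, nonnegative $c_\mu(t)$.

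That this is the right target is confirmed by the base cases. When $G=K_n$ (the incomparability graph of the natural unit interval order whose intervals pairwise overlap) a direct computation gives $X_{K_n}(\x,t) = \big(\textstyle\prod_{k=1}^{n}(1+t+\cdots+t^{k-1})\big)\, e_n$, and the single coefficient $c_{(n)}(t)$ is a product of palindromic unimodal factors, hence palindromic and unimodal; the edgeless case is trivial; and for the path $1-2-3$ of Example~\ref{XGex}(b) one checks directly that $X_G(\x,t) = (1+t+t^2)e_3 + t\,e_{21}$, so that $c_{(3)}(t)$ and $c_{(2,1)}(t)$ are both palindromic and unimodal about $m/2=1$. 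The cases where $X_G(\x,t)$ specializes to the $q$-Eulerian polynomials of our earlier work, which are known to be palindromic and unimodal, give further corroboration.

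For the general construction I would begin from the refined Gasharov expansion established earlier, which presents $X_G(\x,t)$ as a manifestly $t$-graded nonnegative sum of Schur functions indexed by $P$-tableaux, and attempt to convert it into the $e$-basis by a cancellation-free reorganization: partition $\calC(G)$ (or the relevant set of $P$-structures) into blocks, each contributing one monomial $e_\mu$ times a palindromic unimodal polynomial in $t$, with unimodality witnessed by a degree-raising operator on colorings that increments $\asc$ by one and yields a symmetric-chain (i.e.\ $\mathfrak{sl}_2$-type) structure inside each block. A complementary route to the unimodality of the \emph{total} polynomial is geometric: under the conjectured identification of $\omega X_G(\x,t)$ with the graded Frobenius characteristic of the regular semisimple Hessenberg variety, hard Lefschetz for the dot action forces palindromicity and the Schur-positivity of the differences $a_{j+1}-a_j$.

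The main obstacle is the nonnegativity of the $c_\mu(t)$ --- that is, the $e$-positivity itself, which is exactly the unit interval order case of the Stanley--Stembridge conjecture (Conjecture~\ref{stancon}) and is invisible in every expansion at our disposal: the Schur, fundamental-quasisymmetric, and power-sum refinements are each positive in their own basis but offer no cancellation-free passage to the $e$-basis. The most promising attack is to find a recursion or \emph{modular} relation among natural unit interval orders, compatible with the $\asc$-grading and provably preserving the palindromic-unimodal-$e$ form, reducing the general case to the base cases above. Even granting $e$-positivity, the subtle residual point is that hard Lefschetz controls the unimodality of the whole symmetric function rather than of the individual coefficients $c_\mu(t)$; securing the latter --- equivalently, a symmetric-chain decomposition internal to each $e$-block --- is where I expect the genuine difficulty to lie.
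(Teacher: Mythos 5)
The statement you set out to prove is Conjecture~\ref{introquasistan}, and you should be aware at the outset that the paper does not prove it: it is an open conjecture (the natural-unit-interval-order refinement of the Stanley--Stembridge Conjecture~\ref{stancon}, restated as Conjecture~\ref{quasistan}), and even the ``Recent developments'' of Section~\ref{newsec} record only Schur-unimodality, not $e$-positivity or $e$-unimodality, as having been settled. Your proposal is likewise not a proof, and you say so honestly; what it does contain is correct and coincides with the paper's own organization of the evidence. Your reduction of both assertions to the existence of an expansion $X_G(\x,t)=\sum_{\mu}c_\mu(t)e_\mu$ in which every $c_\mu(t)$ is a nonnegative, palindromic, unimodal polynomial with common center of symmetry $|E|/2$ is exactly the mechanism the paper formalizes in Definition~\ref{paldef} and Propositions~\ref{tooluni} and~\ref{tooluni2}; moreover, in view of the palindromicity $a_j=a_{m-j}$ from Corollary~\ref{genpalcor}, your ``stronger'' target is in fact equivalent to the conjecture rather than stronger. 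Your base cases are the paper's as well: the complete graph gives $[n]_t!\,e_n$, the path gives the Smirnov-word enumerator (Theorem~\ref{newsymth}, Corollary~\ref{symunicor}), and the paper additionally settles the families $G_{n,r}$ with $r\ge n-2$ in Section~\ref{calcsec}. Your caveat that the Hessenberg/hard-Lefschetz route (Conjecture~\ref{hesschrom} with Proposition~\ref{uniconsprop}) controls only Schur-unimodality of the total polynomial, not unimodality of the individual $e$-coefficients, is precisely the paper's own remark, and your proposed ``modular relation'' attack is the Guay-Paquet reduction cited in the introduction.

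The genuine gap is therefore the one you name yourself: nonnegativity of the $c_\mu(t)$ is exactly the unit-interval-order case of Stanley--Stembridge, and per-coefficient unimodality (a symmetric-chain structure inside each $e$-block) does not follow from any expansion in the paper --- the Schur expansion of Theorem~\ref{schurcon}, the fundamental-quasisymmetric expansion of Theorem~\ref{qchow}, and the conjectural power-sum expansion are each positive in their own basis but admit no known cancellation-free passage to the $e$-basis. What can actually be extracted rigorously, both from your argument and from the paper, is: the correct equivalent reformulation, palindromicity, Schur-positivity, the coefficient of $e_n$ (Corollary~\ref{ecoefcor}), and the special families above. Read your proposal, then, as a sound reduction plus a research plan on par with the paper's own discussion, not as a proof of the statement.
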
 
 
Conjecture \ref{introquasistan} applies to posets that are both $(3+1)$-free and $(2+2)$-free, while Conjecture \ref{stancon} applies to the much larger class of $(3+1)$-free posets.  However, Guay-Paquet shows in \cite{Gu} that if Conjecture \ref{stancon} holds for posets that are both $(3+1)$-free and $(2+2)$-free, then it holds for all $(3+1)$-free posets.  So, Conjecture \ref{introquasistan} implies Conjecture \ref{stancon}.

   In Section~\ref{schursec}, we prove the weaker result that each $a_j(\x)$ in Conjecture~\ref{introquasistan} is Schur-positive by  giving a formula for the coefficients of the Schur functions in the Schur-expansion of $a_j(\x)$, which refines Gasharov's formula.
 It follows that each $a_j(\x)$ is the Frobenius characteristic of a representation of the symmetric group $\S_n$.  As with Conjecture \ref{stancon}, our conjecture asserts that each such representation is the direct sum of representations induced from sign representations of Young subgroups.  Thus we can consider $X_G(\x,t)$ to be a graded (by $t$) representation of $\S_n$.

The palindromicity of  $X_G(\x,t)$, when $G$ is the incomparability graph of a natural unit interval order, suggests that $X_G(\x,t)$ might be the Frobenius characteristic of the representation of $\S_n$ on the cohomology of some manifold.  Conjecture \ref{introhesschrom} below says that this is essentially the case.

For any $n \in \pp$, there is a natural bijection between the set of natural unit interval orders on the set $[n]$ and the set of regular semisimple Hessenberg varieties of type $A_{n-1}$.  Such varieties were studied by De Mari and Shayman in \cite{DeSh}.  Regular semisimple Hessenberg varieties associated to arbitrary crystallographic root systems were studied by De Mari, Procesi and Shayman in \cite{DePrSh}.  A regular semisimple Hessenberg variety of type $A_{n-1}$ admits an action of an $n$-dimensional torus.  This action satisfies the conditions necessary to apply the theory given by Goresky, Kottwitz and MacPherson in \cite{GoKoMacP}, in order to determine the cohomology of the variety.  As observed by Tymoczko in \cite{Ty2}, the moment graph arising from this action admits an action of $\S_n$, and this determines a representation of $\S_n$ on the cohomology of the variety. This cohomology is concentrated in even dimensions.  

As is standard, we write $\omega$ for the involution on $\Lambda_\Q$ that exchanges $e_\lambda$ and $h_\lambda$.

\begin{con} \label{introhesschrom}\footnote{See Section~\ref{newsec}: {\em Recent developments}}
Let $P$ be a natural unit interval order with incomparability graph $G=(V,E)$, and let $\hess(P)$ be the associated regular semisimple Hessenberg variety.  Then \begin{equation} \label{mainconeq} \omega X_G(\x,t)=\sum_{j=0}^{|E|}\ch H^{2j}(\hess(P))t^j, \end{equation} where $\ch H^{2j}(\hess(P))$ is the Frobenius characteristic of the representation of $\S_n$ on the $2j^{th}$ cohomology group of $\hess(P)$ described in \cite{Ty2}.
\end{con}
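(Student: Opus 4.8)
The plan is to prove \eqref{mainconeq} degree by degree, reducing it to a comparison of graded $\S_n$-characters. On the left, the coefficient $a_j(\x)$ of $t^j$ in $X_G(\x,t)$ is, by the refinement of Gasharov's theorem established in Section~\ref{schursec}, the Frobenius characteristic of an honest $\S_n$-representation; on the right, $H^{2j}(\hess(P))$ carries Tymoczko's representation. Applying $\omega$ sends $s_\lambda$ to $s_{\lambda'}$ and tensors the corresponding representation with the sign character, so \eqref{mainconeq} is equivalent to the assertion that, for every $\sigma\in\S_n$ and every $j$, the value of the dot-action character of $H^{2j}(\hess(P))$ at $\sigma$ matches the datum extracted from $\omega a_j(\x)$ in the power-sum basis. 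Thus the problem splits into (i) computing the graded character of Tymoczko's representation and (ii) matching it against the combinatorial expansions of $X_G(\x,t)$ supplied by this paper.

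For step (i), I would work entirely within the GKM/moment-graph description. The fixed points of $\hess(P)$ under the torus $T$ are indexed by $\S_n$, the edges of the moment graph are the transpositions permitted by the Hessenberg condition encoded in $P$ (equivalently by the edge set $E$ of $G$), and the cohomological grading is recorded by a Hessenberg-inversion statistic on the fixed points whose generating function recovers the De Mari--Shayman Poincaré polynomial. Tymoczko's dot action is the delicate ingredient: $\S_n$ acts on $H^*_T(\hess(P))\subseteq\bigoplus_{w}H^*_T(\mathrm{pt})$ by simultaneously permuting the fixed-point summands and permuting the equivariant parameters $t_1,\dots,t_n$, with ordinary cohomology recovered by killing the parameters. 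I would compute the trace of $\sigma$ on each graded piece by localizing and tracking how this combined action moves a homogeneous GKM basis, organizing the bookkeeping so that the output appears directly in the power-sum basis.

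For step (ii), I would feed in the expansions proved here: the refined Gasharov formula for $\omega a_j(\x)$ in the Schur basis, the refined Chow expansion in Gessel's fundamental quasisymmetric functions, and, where available, the power-sum expansion proved in special cases. Palindromicity together with the geometric hard Lefschetz property yields unimodality on both sides, and the relation to the $q$-Eulerian polynomials pins down the specializations of the character at the trivial and sign representations; matching the two characters then reduces to checking agreement on each conjugacy class.

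The hardest part will be the dot action. Unlike an ordinary permutation representation, Tymoczko's action does not merely permute a monomial basis of $H^*$, so its character on a class of cycle type $\lambda$ is not read off by counting fixed basis elements: the simultaneous permutation of the equivariant parameters produces genuine mixing that must be controlled in every cohomological degree. An attractive alternative that sidesteps the direct character computation is a recursion on the poset of natural unit interval orders: establish a three-term modular relation for $X_G(\x,t)$ (refining the Guay-Paquet/Orellana--Scott relation for $X_G(\x)$), show that the graded Frobenius characteristics $\sum_j \ch H^{2j}(\hess(P))\,t^j$ satisfy the same relation via an $\S_n$-equivariant geometric comparison of neighboring Hessenberg varieties (for instance a $\PP^1$-bundle or blow-up relationship that respects the dot action and shifts the grading by one), and then verify the two base cases, namely the $n!$ isolated fixed points for the edgeless graph, where the dot action on $H^0$ is the regular representation and $\omega X_G(\x,t)$ reduces to $p_1^n$, and the full flag variety for $G=K_n$, where $\omega X_G(\x,t)=[n]_t!\,h_n$. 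In this second approach the obstacle migrates to proving the geometric three-term relation together with its compatibility with both the grading and the dot action.
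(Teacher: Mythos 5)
You should first note that the statement you set out to prove is Conjecture~\ref{introhesschrom} (= Conjecture~\ref{hesschrom}): the paper itself contains no proof of it, only supporting evidence --- the dimension check obtained by specializing both sides to the Poincar\'e polynomial $A_G(1,1,t)$ via (\ref{poinintro})/(\ref{dmpseq}), the verified cases $G_{n,r}$ for $r\in\{1,2,n-2,n-1,n\}$ in Section~\ref{hesssec}, and the toric-variety case (\ref{introsmirtor}); complete proofs appeared only later (Brosnan--Chow \cite{BrCh}, Guay-Paquet \cite{Gu2}, cited in Section~\ref{newsec}). Measured against that, your proposal is a research plan rather than a proof, and its two routes each leave the essential step unestablished. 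In route (i), the entire difficulty of the conjecture is concentrated in the sentence ``I would compute the trace of $\sigma$ on each graded piece by localizing and tracking how this combined action moves a homogeneous GKM basis.'' This is precisely Tymoczko's open Problem~\ref{tymprob}: the dot action permutes the $T$-fixed points \emph{and} the equivariant parameters simultaneously, so it does not permute any natural homogeneous basis of $H^*(\hess(P))$, and no one at the time knew how to organize that ``bookkeeping'' so as to extract the graded character. Asserting that it can be done is assuming the conjecture's hard content. (Your remarks about hard Lefschetz and unimodality, while true, are irrelevant to proving an identity of characters, and ``matching the two characters reduces to checking agreement on each conjugacy class'' is a restatement of what character equality means, not a reduction.)

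Route (ii) is closer in spirit to how the conjecture was eventually proved (Guay-Paquet's modular law is exactly the three-term relation you describe), but as written it has two gaps. First, the geometric half --- an $\S_n$-equivariant relation among the cohomologies of ``neighboring'' Hessenberg varieties, compatible with the dot action and shifting the grading --- is exactly where you say ``the obstacle migrates,'' i.e.\ it is named but not constructed; this is the part that required genuinely new geometry (Brosnan--Chow's monodromy/local-invariant-cycle argument, or the later filtration arguments). Second, your proposed base cases are insufficient: the modular law does not reduce an arbitrary natural unit interval order to just the edgeless graph and the complete graph. The uniqueness statement one can extract from the modular law pins down the function only after its values are known on all disjoint unions of complete graphs $K_{n_1}+\cdots+K_{n_k}$; one then still needs Proposition~\ref{disjointprop} on the chromatic side together with the K\"unneth/product statement on the Hessenberg side (the role played by \cite[Theorem 4.10]{Te} in the paper's reduction to connected graphs) to handle those. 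So even granting the geometric three-term relation, your induction as stated does not close.
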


The truth of Conjecture~\ref{introhesschrom} would give another proof of Schur-positivity of $X_G(\x,t)$.  In addition, combined with the hard Lefschetz theorem, it would establish Schur-unimodality, which is still open.
We have considerable evidence for Conjecture~\ref{introhesschrom}.  As shown in Sections~\ref{specsec} 
and~\ref{hesssec}, $H^{2j}(\hess(P))$ and the inverse Frobenius characteristic of the coefficient of $t^j$ in $\omega X_G(\x,t)$ have the same dimension.  Moreover, an interesting special case of the conjecture is known to be true. 

For each positive integer $n$, the path $G_n:=1-2- \cdots -n$ is the  incomparability graph of a natural unit interval order.  The chromatic quasisymmetric function $X_{G_n}(\x,t)$ has been studied in various guises and it is the motivating example behind our work.   It is the descent enumerator for Smirnov words of length $n$, that is, words over the alphabet $\pp$, with no equal adjacent letters.   This is the context in which the formula 
\begin{equation}\label{introcarlg} \sum_{n\ge 0} X_{G_n}(\x,t) z^n = {(1-t) E(z) \over E(zt) - t E(z)} ,\end{equation}
which  refines a  formula of Carlitz, Scoville, and Vaughan \cite{CaScVa}, was obtained by Stanley  (see {\cite[Theorem  7.2]{ShWa}}).  
(Here  $E(z) := \sum_{n \ge 0} e_n(\x) z^n$.)  It follows from (\ref{introcarlg}) and an obeservation of Haiman \cite{Ha} that 
$X_{G_n}(\x,t)$ is $e$-positive and $e$-unimodal.  

In another direction, Stanley \cite[Proposition 7.7]{St0} used a recurrence of Procesi \cite{Pr} to show that  the right hand side of (\ref{introcarlg}), with $E(z)$ replaced by $H(z):= \sum_{n\ge 0} h_n(\x) z^n$,  is equal to  the generating function for the Frobenius characteristic of the representation of $\S_n$ on the cohomology of the toric variety $\v_n$ associated with the Coxeter complex of type $A_{n-1}$.  As noted in \cite{DePrSh}, the regular semisimple Hessenberg variety associated to the natural unit interval order with incomparability graph $G_n$ is $\v_n$.  It follows that   
\begin{equation} \label{introsmirtor} \omega X_{G_n}(\x,t)= \sum_{j=0}^{n-1}  \ch H^{2j}(\v_n)t^j ,\end{equation} as predicted by Conjecture \ref{introhesschrom}. 

 For a graph $G=([n],E)$ and a permutation $\sigma \in \S_n$, define $\inv_G(\sigma)$ to be the number of edges $\{\sigma(i),\sigma(j)\}\in E$ such that $1 \leq i<j \leq n$ and $\sigma(i)>\sigma(j)$.  (So, $\inv_G$ generalizes the well-studied inversion statistic.)  It follows from results in \cite{DeSh} and \cite{DePrSh} (see \cite[Proposition 7.7]{St0}) that if natural unit interval order $P$ has incomparability graph $G$ then the Poincar\'e polynomial for $\hess(P)$ is
\begin{equation} \label {poinintro} \sum_{j=0}^{|E|}\dim H^{2j}(\hess(P)) t^j = A_G(t):= \sum_{\sigma \in \S_n}  t^{\inv_G(\sigma)}.\end{equation}
When $G=G_n$, $\inv_G(\sigma)$ is the inverse descent number $ \des(\sigma^{-1})$.  Hence   $A_{G_n}(t)$ is an Eulerian polynomial.  For this reason $A_G(t)$ is referred to as a generalized Eulerian polynomial in \cite{DeSh}.  In this paper, we  study a $q$-analog $A_G(q,t)$ of
$A_G(t)$ defined by pairing $\inv_G$ with a classical Mahonian permutation statistic of Rawlings \cite{Ra}.   We  prove that 
\begin{equation}\label{introspec} A_G(q,t) = (q;q)_n \Ps (\omega X_{G}(\x,t)) ,\end{equation}
where $\Ps$ denotes the stable principal specialization and $(p;q)_n = \prod_{i=0}^{n-1} (1-pq^i)$.
We use this and results in \cite{ShWa} to show that $A_{G_n}(q,t) $ equals the 
$q$-Eulerian polynomial $$A_n(q,t) := \sum_{\sigma \in \S_n} q^{\maj(\sigma)-\exc(\sigma)} t^{\exc(\sigma)}$$ introduced in \cite{ShWa1,ShWa}.

The paper is organized as follows.  In Section~\ref{basicsec} we discuss some basic properties of $X_G(\x,t)$.  While the easy results in  this Section hold for  arbitrary graphs on subsets of $\pp$, the results in the remainder of the paper concern  incomparability graphs of   posets.    In Section~\ref{fundquasec} we describe the expansion of $X_{G}(\x,t)$ in  Gessel's basis of fundamental quasisymmetric functions.   When $t$ is set equal to $1$, this reduces to Chow's \cite[Corollary 2]{Ch} expansion of $X_G(\x)$.  The expansion in the basis of fundamental quasisymmetric functions is used in Section~\ref{specsec} to obtain  (\ref{introspec}) and other results on $A_G(q,t)$.

In Section~\ref{symmsec} we give various characterizations of the natural unit interval orders and discuss some of their properties.  Here we prove that if $G$ is the incomparability graph of a natural unit interval order then $X_G(\x,t)$ is symmetric in $\x$. In the remaining sections of the paper,  $G$ is assumed to be the  incomparability graph of a natural unit interval order.
In Section~\ref{esec} we discuss Conjecture~\ref{introquasistan}.  

Section~\ref{schursec} contains the proof of our refinement of Gasharov's result giving the coefficients in the Schur basis expansion of $X_G(\x,t)$ in terms of a type of tableau called a $P$-tableau.  From this we prove that the coefficient of $s_{1^n}$ is 
$\prod_{i=1}^{n-1} [1+a_i]_t$, where  $a_i := |\{\{i,j\} \in E : i <j\}|$ and $[k]_t$ denotes the $t$-number $1+t+\dots+t^{k-1}$.  
Thus 
this coefficient is a palindromic, unimodal polynomial, as follows from  a standard tool discussed in Appendix~\ref{palin.appen}.

Expansion in the power sum basis is discussed in Section~\ref{powersec}.  We use our refinement of Gasharov's expansion, the Murnaghan-Nakayama rule, and combinatorial manipulations to prove that the coefficient of $\frac 1 n p_n$ in the power sum basis expansion  of $\omega X_G(\x,t)$ is $[n]_t \prod_{j=2}^n [b_j]_t$, where  $b_j = |\{ \{i,j\} \in E : i < j\}|$.  Since this coefficient is the same as the coefficient of $e_n$ in the $e$-basis expansion of $X_G(\x,t)$, this product formula gives some additional support for Conjecture~\ref{introquasistan}. Again we can conclude that the coefficient is a palindromic, unimodal polynomial.
We also present two versions of a conjectured combinatorial formula for the coefficient of $p_\lambda$ in the $p$-basis expansion for arbitrary partitions  $\lambda$.  It can be shown that upon setting $t=1$, this formula reduces to a result of Stanley in \cite{St3}.

In Section~\ref{calcsec}  we consider certain natural unit interval orders $P$ having very few pairs of comparable elements, and  obtain explicit formulae  for all the coefficients in the Schur basis and $e$-basis expansions of $X_{\inc(P)}$ as sums of products of $t$-numbers.  Once again, $e$-unimodality (and palindromicity) are evident from these formulas.

In Section~\ref{specsec} we examine specializations of $\omega X_G(\x,t)$.  The stable principal specialization yields, by (\ref{introspec}),   the generalized $q$-Eulerian polynomials $A_G(q,t)$ and  the nonstable principal specialization yields  the more general generalized $(q,p)$-Eulerian polynomials.  We observe that Conjecture~\ref{introquasistan} (or the weaker Schur-unimodality conjecture\footnote{See Section~\ref{newsec}: {\em Recent developments}}) implies the conjecture that $A_G(q,t)$ is $q$-unimodal, an assertion that is true for $G=G_n$.  The assertion is also true when we set $q=1$ by  (\ref{poinintro}) and the hard Lefschetz theorem.  We prove that  when we set $q$ equal to any  {\em primitive} $n$th root of unity $\xi_n$, $A_G(q,t)$ is a unimodal (and palindromic) polynomial in $t$ with nonnegative integer coefficients  by deriving an explicit formula for $A_G(\xi_n,t)$.  We conjecture that the same is true for any $n$th root of unity, which was shown to be true    for $G=G_n$ in \cite{SaShWa}.

In Section~\ref{hesssec} we describe the connection with Hessenberg varieties.  In 
particular Conjecture~\ref{introhesschrom} and its consequences are discussed. 

In the appendices we give background information  on permutation statistics, $q$-unimodality of the $q$-Eulerian   numbers, and $e$-unimodality of the Smirnov word enumerators. In Appendix~{\ref{app.eul} we derive an explicit  formula for the $q$-Eulerian numbers from which palindromicity and $q$-unimodality are evident. An analogous formula for the Smirnov word enumerator is also given.

\section{Basic properties and examples} \label{basicsec}
 All terms used but not defined in this paper are defined in \cite{St1} or \cite{St2}.   
For any ring $R$, let $\cq_R$ be the $R$-algebra of quasisymmetric functions in variables $x_1,x_2,\ldots$ with coefficients in  $R$ and let $\cq^n_R$ denote the $R$-submodule of $\cq_R$ consisting of homogeneous quasisymmetric functions of degree $n$.  

The following results are immediate consequences of  Definition~\ref{quasichromdef}.

\begin{prop} Let $G$ be a graph on a subset of $\pp$ of size $n$.  Then
$X_G(\x,t) \in \cq_\Z^n[t]$.
\end{prop}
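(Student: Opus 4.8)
The plan is to show that $X_G(\x,t)$ is a polynomial in $t$ whose coefficients are homogeneous quasisymmetric functions of degree $n$ with integer coefficients. Since the claim has two parts — that each coefficient lies in $\cq_\Z^n$ and that $t$ appears only polynomially — I would verify each in turn directly from Definition~\ref{quasichromdef}.

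First I would unwind the definition. Writing $X_G(\x,t) = \sum_{\kappa \in \calC(G)} t^{\asc(\kappa)} \x_\kappa$, I observe that each proper coloring $\kappa : V \to \pp$ contributes a monomial $\x_\kappa = \prod_{v \in V} x_{\kappa(v)}$, which has total degree $|V| = n$ since $V$ has size $n$. Hence grouping by the value of $\asc(\kappa)$, we get $X_G(\x,t) = \sum_{j \ge 0} a_j(\x)\, t^j$ where $a_j(\x) = \sum_{\kappa :\, \asc(\kappa) = j} \x_\kappa$ is a formal power series, homogeneous of degree $n$, with coefficients in $\Z$ (indeed each monomial appears with nonnegative integer multiplicity). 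For the polynomiality in $t$, I note that $\asc(\kappa)$ counts a subset of the edge set $E$, so $0 \le \asc(\kappa) \le |E|$ for every coloring; thus only finitely many powers of $t$ occur, and $X_G(\x,t)$ is genuinely a polynomial in $t$ of degree at most $|E|$.

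The one substantive point is to check that each $a_j(\x)$ is actually quasisymmetric, i.e.\ lies in $\cq_\Z^n$ rather than merely being an arbitrary homogeneous degree-$n$ power series. For this I would recall the standard argument that Stanley's $X_G(\x)$ is symmetric (hence quasisymmetric): relabeling colors by an order-isomorphism of $\pp$ sends proper colorings to proper colorings. The key observation is that the ascent statistic $\asc(\kappa)$ depends only on the relative order of the color values $\kappa(i), \kappa(j)$ on edges, together with the fixed vertex labels $i<j$. Therefore, if $f : \pp \to \pp$ is any strictly increasing map and we set $\kappa' = f \circ \kappa$, then $\kappa'$ is again proper and $\asc(\kappa') = \asc(\kappa)$, since $\kappa(i) < \kappa(j) \iff \kappa'(i) < \kappa'(j)$. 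This is precisely the condition guaranteeing quasisymmetry: for any composition and any two sets of indices $i_1 < \cdots < i_k$ and $j_1 < \cdots < j_k$, the coefficients of $x_{i_1}^{c_1}\cdots x_{i_k}^{c_k}$ and $x_{j_1}^{c_1}\cdots x_{j_k}^{c_k}$ in $a_j(\x)$ agree. Hence each $a_j(\x) \in \cq_\Z^n$, and we conclude $X_G(\x,t) \in \cq_\Z^n[t]$.

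I expect the main (and only real) obstacle to be articulating cleanly why the ascent statistic is preserved under strictly increasing relabelings of the colors, since this is what separates the refined function from the symmetric case where full color permutations are allowed. Once that order-preservation is isolated, the quasisymmetry — rather than full symmetry — of the coefficients follows immediately, and indeed this is exactly the reason the refinement lands in $\cq_\Z$ but not necessarily in $\Lambda_\Z$, consistent with the remark preceding the statement that the coefficients of $X_G(\x,t)$ need not be symmetric.
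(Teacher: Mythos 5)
Your proof is correct: the degree-$n$ homogeneity of each $\x_\kappa$, the bound $\asc(\kappa)\le |E|$ giving polynomiality in $t$, and quasisymmetry of each coefficient via order-preserving relabelings of colors all hold as you argue (the only cosmetic imprecision is that a globally defined strictly increasing $f:\pp\to\pp$ with $f(i_m)=j_m$ need not exist for arbitrary index sets, so the comparison is most cleanly made through the order-isomorphism between the two finite color sets, or by standardizing both to colors $1,\dots,k$). The paper offers no proof at all---it records this proposition as an immediate consequence of Definition~\ref{quasichromdef}---so your write-up is exactly the direct verification the paper leaves to the reader, carried out in the same spirit.
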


\begin{prop} \label{disjointprop} Let $G$ and $H$ be graphs on disjoint finite subsets of $\pp$ and let $G+H$ be the disjoint union of $G$ and $H$.  Then
$$X_{G+H}(\x,t) = X_G(\x,t) X_H(\x,t).$$
\end{prop}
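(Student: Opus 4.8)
The plan is to prove the multiplicative formula $X_{G+H}(\x,t) = X_G(\x,t)\, X_H(\x,t)$ directly from Definition~\ref{quasichromdef} by establishing a bijection between the proper colorings of the disjoint union $G+H$ and the pairs of proper colorings of $G$ and $H$ separately, then checking that this bijection multiplies both the monomial weights $\x_\kappa$ and the ascent-tracking powers of $t$.

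First I would set up the bijection. Since $G=(V_G,E_G)$ and $H=(V_H,E_H)$ have disjoint vertex sets, the disjoint union $G+H$ has vertex set $V_G \sqcup V_H$ and edge set $E_G \sqcup E_H$. A function $\kappa:V_G\sqcup V_H\to\pp$ restricts to functions $\kappa_G:V_G\to\pp$ and $\kappa_H:V_H\to\pp$, and conversely any pair $(\kappa_G,\kappa_H)$ of functions on the two vertex sets glues to a single $\kappa$. The key observation is that, because every edge of $G+H$ lies entirely within $V_G$ or entirely within $V_H$, the coloring $\kappa$ is proper if and only if both $\kappa_G$ and $\kappa_H$ are proper. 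Hence restriction gives a bijection $\calC(G+H)\to\calC(G)\times\calC(H)$.

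Next I would verify that the weight factors through this bijection multiplicatively. The monomial weight satisfies $\x_\kappa = \prod_{v\in V_G\sqcup V_H} x_{\kappa(v)} = \left(\prod_{v\in V_G} x_{\kappa_G(v)}\right)\left(\prod_{v\in V_H} x_{\kappa_H(v)}\right) = \x_{\kappa_G}\,\x_{\kappa_H}$, simply by splitting the product over the disjoint union. For the ascent statistic, an edge $\{i,j\}\in E_G\sqcup E_H$ with $i<j$ contributes to $\asc(\kappa)$ exactly when $\kappa(i)<\kappa(j)$, and since each edge belongs to exactly one of $E_G,E_H$, the set of ascending edges of $\kappa$ is the disjoint union of the ascending edges of $\kappa_G$ and of $\kappa_H$. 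Therefore $\asc(\kappa)=\asc(\kappa_G)+\asc(\kappa_H)$, so $t^{\asc(\kappa)}=t^{\asc(\kappa_G)}\,t^{\asc(\kappa_H)}$.

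Finally I would assemble these pieces. Summing over $\calC(G+H)$ and pushing the sum through the bijection,
\[
X_{G+H}(\x,t)=\sum_{\kappa\in\calC(G+H)} t^{\asc(\kappa)}\x_\kappa
=\sum_{(\kappa_G,\kappa_H)} t^{\asc(\kappa_G)+\asc(\kappa_H)}\x_{\kappa_G}\x_{\kappa_H},
\]
and because the sum ranges over the full product $\calC(G)\times\calC(H)$ the right-hand side factors as $\left(\sum_{\kappa_G\in\calC(G)} t^{\asc(\kappa_G)}\x_{\kappa_G}\right)\left(\sum_{\kappa_H\in\calC(H)} t^{\asc(\kappa_H)}\x_{\kappa_H}\right)=X_G(\x,t)\,X_H(\x,t)$, as desired. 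This argument is essentially a routine bookkeeping exercise, so I do not expect any genuine obstacle; the only point that requires care is the initial verification that properness and the ascent count decompose over the disjoint edge set, which is immediate once one notes that no edge of $G+H$ crosses between $V_G$ and $V_H$.
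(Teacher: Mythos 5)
Your proof is correct and is exactly the verification the paper has in mind: the paper states this proposition without proof, as an immediate consequence of Definition~\ref{quasichromdef}, and your bijection $\calC(G+H)\to\calC(G)\times\calC(H)$ together with the observations that $\x_\kappa$ factors and $\asc$ adds over the disjoint edge set is the natural way to spell out that immediacy. No gaps.
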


Suppose  $G=(V,E)$, where $V=\{v_1<\dots <v_n\}\subseteq \pp$.  Then  the isomorphic graph $G^\prime=([n],E^\prime)$, where 
$ E^\prime = \{\{i,j\} \in \binom{[n]}{2} : \{v_i,v_j\} \in E\}$, has the same chromatic quasisymmetric function as $G$; that is $$X_G(\x,t) = X_{G^{\prime}}(\x,t).$$ Hence, 
we may, without  loss of generality,  restrict our attention to graphs on vertex set $[n]$.

For $n \ge 0$, the $t$-analogs of $n$ and $n!$ are defined as, $$[n]_t := 1+t +t^2+ \dots +t^{n-1} \quad \mbox{ and } \quad [n]_t! := \prod_{i=1}^n [i]_t.$$

\begin{eg} Let $G=([n],\emptyset)$.  Since every map $\kappa:[n] \to \PP$ is a proper coloring, we have
\bq \label{pn1} X_{G}(\x,t)= e_1^n . \eq
\end{eg}
\begin{eg}  Let $G=([n],\binom{[n]}{2})$. Since $G$  is the complete graph, each proper coloring $\kappa$ is an injective map and there is a unique permutation $\sigma \in \S_n$ whose letters appear in the same relative order as $\kappa(n),\kappa(n-1),\dots,\kappa(1)$.  Clearly $\inv(\sigma) = \asc(\kappa)$.
  It follows from (\ref{majinveq}) that
\bq \label{pn2} X_{G}(\x,t)= e_n  \sum_{\sigma \in \S_n} t^{\inv(\sigma)} = [n]_t! \,e_n.\eq
\end{eg}

\begin{eg} \label{pathex} Let $G_n=([n],\{\{i,i+1\} : i \in [n-1]\})$.  Then  $G_n$ is the path $1-2-\dots-n$.
    To each proper $\pp$-coloring $\kappa$ of   $G_n$ one  can associate the word $w(\kappa):=\kappa(n),\kappa(n-1),\dots,\kappa(1)$.  Such words are characterized by the property that no adjacent letters are equal and  are sometimes referred to as {\em Smirnov words}.  Note that $\asc(\kappa)$ counts descents of $w(\kappa)$.
  The descent enumerator $W_n(\x,t)$ for Smirnov words is discussed in Appendix~\ref{smirsec}.  Since 
 \begin{equation} \label{smirgrapheq} X_{G_n}(\x,t) = W_n(\x,t),\end{equation} by Theorem~\ref{stanth} we have,
 \bq \label{pathchrom}1+  \sum_{n\ge 1}X_{G_n}(\x,t) z^n= {(1-t)E(z) \over E(zt) - tE(z)},\eq
where $E(z) = \sum_{n\ge 0} e_n(\x)z^n$.
\end{eg}

In the examples above, the  polynomial $X_G(\x,t)$ is symmetric in $\x$.  This is not always the case, see Example~\ref{XGex}(a).

Let $\alpha = (\alpha_1,\dots,\alpha_k)$  be a composition  and let $\alpha^{{\sf r}{\sf e}{\sf v}}$ denote the reverse composition $(\alpha_k,\dots,\alpha_1)$.  The monomial quasisymmetric function $M_\alpha(\x)$ is defined by
$$M_\alpha(\x) := \sum_{i_1 <\dots < i_k} x_{i_1}^{\alpha_1} \cdots x_{i_k}^{\alpha_k}.$$
Let  $\rho: \cq_\Z \to \cq_\Z$ be the involution defined on the basis of monomial quasisymmetric functions  by $\rho(M_\alpha) = M_{\alpha^{{\sf r}{\sf e}{\sf v}}}$ for each composition $\alpha$.  Note that every symmetric function is fixed by $\rho$. Extend the involution $\rho$ to $\cq_\Z[t]$.

\begin{prop} \label{rhoprop} For all graphs $G= (V,E)$, where $V$ is a finite subset of $\pp$,
$$\rho(X_G(\x,t)) = t^{|E|}X_G(\x,t^{-1}).$$
\end{prop}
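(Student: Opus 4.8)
The plan is to expand both sides in the monomial quasisymmetric basis, indexed by ordered set partitions of $V$ into independent sets, and to recognize $\rho$ as the operation of reversing such a partition. First I would record the elementary but crucial fact that since each $\kappa\in\calC(G)$ is a \emph{proper} coloring, every edge $\{i,j\}\in E$ with $i<j$ has $\kappa(i)\neq\kappa(j)$, so it is counted either by $\asc(\kappa)$ or by its complement $\des(\kappa):=|\{\{i,j\}\in E : i<j,\ \kappa(i)>\kappa(j)\}|$. Hence $\asc(\kappa)+\des(\kappa)=|E|$ for all $\kappa$, which gives $t^{|E|}X_G(\x,t^{-1})=\sum_{\kappa\in\calC(G)}t^{\des(\kappa)}\x_\kappa$. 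Thus the proposition reduces to the identity $\rho(X_G(\x,t))=\sum_{\kappa\in\calC(G)}t^{\des(\kappa)}\x_\kappa$.

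Next I would group the proper colorings by their \emph{color pattern}. Each $\kappa\in\calC(G)$ determines an ordered set partition $\pi=(B_1,\dots,B_k)$ of $V$, where $B_\ell$ is the set of vertices receiving the $\ell$-th smallest color used by $\kappa$; properness forces every $B_\ell$ to be an independent set of $G$. Conversely, the colorings realizing a fixed such $\pi$ are exactly those obtained by choosing colors $c_1<\dots<c_k$ in $\pp$, and summing $\x_\kappa=x_{c_1}^{|B_1|}\cdots x_{c_k}^{|B_k|}$ over all such choices yields $M_{\alpha(\pi)}(\x)$ with $\alpha(\pi):=(|B_1|,\dots,|B_k|)$. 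The key point is that $\asc$ is constant on each pattern: an edge $\{i,j\}$ with $i<j$ is an ascent precisely when the block containing $i$ precedes the block containing $j$ in $\pi$, which depends only on $\pi$ and not on the chosen colors. This yields the monomial expansion $X_G(\x,t)=\sum_{\pi}t^{\asc(\pi)}M_{\alpha(\pi)}(\x)$, the sum over all ordered set partitions of $V$ into independent sets.

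Finally I would apply $\rho$ termwise. Since $\rho(M_{\alpha(\pi)})=M_{\alpha(\pi)^{{\sf r}{\sf e}{\sf v}}}$ and $\alpha(\pi)^{{\sf r}{\sf e}{\sf v}}=\alpha(\pi^{{\sf r}{\sf e}{\sf v}})$ for the reversed partition $\pi^{{\sf r}{\sf e}{\sf v}}=(B_k,\dots,B_1)$, reindexing by $\pi':=\pi^{{\sf r}{\sf e}{\sf v}}$ (which again runs over all ordered partitions into independent sets) transforms the sum. Reversing the block order swaps ascents and descents, so $\asc(\pi)=\des(\pi')=|E|-\asc(\pi')$, giving $\rho(X_G(\x,t))=\sum_{\pi'}t^{\des(\pi')}M_{\alpha(\pi')}(\x)=\sum_{\kappa\in\calC(G)}t^{\des(\kappa)}\x_\kappa$, which is the required identity. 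I expect the only real content to lie in justifying that $\asc$ is genuinely a function of the color pattern and that the grouping of monomials into $M_{\alpha(\pi)}$ is exact; everything else is bookkeeping once these two facts are in place.
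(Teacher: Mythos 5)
Your proof is correct: the reduction via $\asc(\kappa)+\des(\kappa)=|E|$ (valid because properness forces the endpoints of every edge to get distinct colors), the grouping of colorings by ordered set partitions of $V$ into independent sets, and the reindexing by partition reversal are all sound, and together they do yield $\rho(X_G(\x,t))=\sum_{\kappa}t^{\des(\kappa)}\x_\kappa=t^{|E|}X_G(\x,t^{-1})$. The route differs from the paper's in its bookkeeping, though it rests on the same underlying symmetry. The paper constructs an explicit involution $\gamma$ on $\calC(G)$, sending $\kappa$ to the coloring $v\mapsto m_1+m_2-\kappa(v)$, where $m_1$ and $m_2$ are the smallest and largest colors used by $\kappa$; it observes that $\gamma$ preserves properness, complements the ascent number ($\asc(\gamma(\kappa))=|E|-\asc(\kappa)$), and reverses the exponent sequence of $\x_\kappa$, and then compares coefficients of the monomials $x_1^{\alpha_1}\cdots x_m^{\alpha_m}t^j$ to deduce the statement about the $(M_\alpha t^j)$-expansion. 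You never build an involution on individual colorings; instead you first establish the expansion $X_G(\x,t)=\sum_{\pi}t^{\asc(\pi)}M_{\alpha(\pi)}$ over ordered set partitions $\pi$ into independent sets, and implement the color-order reversal as the reversal $\pi\mapsto\pi^{{\sf r}{\sf e}{\sf v}}$ of these partitions, on which $\rho$ acts transparently. Both arguments are the same reversal idea at different levels of aggregation: your version has the advantage of isolating the $M$-expansion as a reusable identity (the $t$-analogue of the standard monomial expansion of Stanley's $X_G$), and of making the action of $\rho$ a literal term-by-term substitution; the paper's version avoids the grouping entirely and gets by with a one-line bijection on colorings plus coefficient extraction. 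Your opening reduction $\asc+\des=|E|$ appears in the paper only implicitly, inside the claim $\asc(\gamma(\kappa))=|E|-\asc(\kappa)$.
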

\begin{proof}
  We construct an  involution $\gamma$ on $\calC(G)$.  For $\kappa \in \calC(G)$, let $\gamma(\kappa):V \to \pp$ be the coloring defined by  letting $\gamma(\kappa)(v) = m_1+m_2-\kappa(v)$,  for each $v \in V$, where $m_1 = \min_{u \in V} \kappa(u)$  and $m_2 = \max_{u \in V} \kappa(u)$.  It is clear that $\gamma(\kappa)$ is a proper coloring and that $\asc(\gamma(\kappa)) = |E| - \asc(\kappa)$.  This implies that the coefficient of $x_1^{\alpha_1} x_2^{\alpha_2} \dots x_m^{\alpha_m} t^j$ in $X_G(\x,t)$ equals the coefficient of $x_{m}^{\alpha_1} x_{m-1}^{\alpha_2} \dots x_1^{\alpha_m} t^{|E|-j}$ in $X_G(\x,t)$ for all $j \in \{0,1,\dots,|E|\}$ and compositions $\alpha:=(\alpha_1,\alpha_2,\dots, \alpha_m)$ of $|V|$.   
It follows that the coefficient of $M_\alpha t^j$  in the expansion of   $X_G(\x,t)$ in the basis $(M_\alpha t^j)_{\alpha,j}$ equals the coefficient of $M_{\alpha^{{\sf r}{\sf e}{\sf v}}} t^{|E|-j}$ in the expansion of $X_G(\x,t)$, which equals the coefficient of $M_{\alpha} t^{|E|-j}$ in the expansion of $\rho(X_G(\x,t))$.  Hence the desired result holds.
\end{proof}

\begin{cor}For all graphs $G= (V,E)$, where $V$ is a finite subset of $\pp$, we have
$$\rho(X_G(\x,t)) = \sum_{\kappa \in \mathcal C(G)} t^{\des(\kappa)} \x_\kappa,$$
where 
$$\des(\kappa) :=| \{\{u,v\} \in E : u < v \mbox{ and  } \kappa(u) > \kappa(v) \}|.$$
Consequently if $X_G(\x,t)\in \Lambda_\Z[t]$ then
\begin{equation}\label{deseq} X_G(\x,t) = \sum_{\kappa \in \mathcal C(G)} t^{\des(\kappa)} \x_\kappa. \end{equation}
 \end{cor}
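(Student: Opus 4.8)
The plan is to deduce the corollary directly from Proposition~\ref{rhoprop} together with the definition of the involution $\rho$ on monomial quasisymmetric functions. The key observation is that $\rho$ acts on a monomial $x_1^{\alpha_1} \cdots x_m^{\alpha_m}$ by reversing the order of the variable indices. So I would start by writing out $X_G(\x,t)$ in its defining sum over colorings and apply $\rho$ term by term, tracking what $\rho$ does to each monomial $\x_\kappa$.

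First I would fix a coloring $\kappa \in \calC(G)$ and analyze $\rho(\x_\kappa)$. Recall from the proof of Proposition~\ref{rhoprop} the involution $\gamma$ on $\calC(G)$ sending $\kappa$ to the coloring $v \mapsto m_1 + m_2 - \kappa(v)$, where $m_1$ and $m_2$ are the minimum and maximum colors used by $\kappa$. The crucial properties already established there are that $\gamma$ is a proper-coloring involution satisfying $\asc(\gamma(\kappa)) = |E| - \asc(\kappa)$, and that $\gamma$ reverses the relative order of the colors, so that the monomial $\x_{\gamma(\kappa)}$ is obtained from $\x_\kappa$ precisely by the index-reversal that defines $\rho$. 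Concretely, $\rho(\x_\kappa) = \x_{\gamma(\kappa)}$. I would make this identification explicit, since it is the bridge between the abstract involution $\rho$ on $\cq_\Z$ and the concrete reflection $\gamma$ on colorings.

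Next I would compute $\rho(X_G(\x,t))$ two ways. On one hand, applying $\rho$ termwise gives $\rho(X_G(\x,t)) = \sum_{\kappa} t^{\asc(\kappa)} \x_{\gamma(\kappa)}$; re-indexing the sum by $\kappa' = \gamma(\kappa)$ (legitimate since $\gamma$ is an involution, hence a bijection on $\calC(G)$) and using $\asc(\kappa) = |E| - \asc(\gamma(\kappa)) = |E| - \asc(\kappa')$ turns this into $\sum_{\kappa'} t^{|E| - \asc(\kappa')} \x_{\kappa'}$. Now I observe that $|E| - \asc(\kappa')$ counts exactly the edges that are \emph{not} ascending, i.e.\ the edges $\{u,v\} \in E$ with $u < v$ and $\kappa'(u) > \kappa'(v)$, which is by definition $\des(\kappa')$. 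This yields $\rho(X_G(\x,t)) = \sum_{\kappa} t^{\des(\kappa)} \x_\kappa$, the first assertion.

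For the final displayed equation \eqref{deseq}, I would simply note that when $X_G(\x,t) \in \Lambda_\Z[t]$, every coefficient is a symmetric function and hence fixed by $\rho$ (as remarked just before Proposition~\ref{rhoprop}), so $\rho(X_G(\x,t)) = X_G(\x,t)$, and comparing with the expression just derived gives the claim. I do not expect a genuine obstacle here: the corollary is essentially a restatement of Proposition~\ref{rhoprop} in combinatorial language, and all the substantive work—constructing $\gamma$ and verifying $\asc(\gamma(\kappa)) = |E| - \asc(\kappa)$—is already done in that proof. The one point requiring a little care is confirming that $\rho(\x_\kappa) = \x_{\gamma(\kappa)}$, i.e.\ that the order-reversing reflection of colors matches exactly the variable-index reversal defining $\rho$; this is straightforward but should be stated cleanly rather than glossed over.
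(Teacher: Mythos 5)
Your overall architecture (push $\rho$ inside the sum, re-index along an ascent-complementing involution on colorings, then use that symmetric functions are fixed by $\rho$) can be made to work, but the bridge on which it rests --- the identity $\rho(\x_\kappa)=\x_{\gamma(\kappa)}$ --- is a genuine gap as stated. First, it does not typecheck: $\rho$ is defined only on $\cq_\Z$, by its values $\rho(M_\alpha)=M_{\alpha^{{\sf rev}}}$ on the monomial quasisymmetric basis; it is not defined on individual monomials, so ``applying $\rho$ term by term'' to $\sum_\kappa t^{\asc(\kappa)}\x_\kappa$ presupposes a linear map on the ambient power-series ring that extends $\rho$ and acts on each $\x_\kappa$ as claimed --- this is exactly what must be proved, not assumed. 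Second, it is not ``already established'' in the proof of Proposition~\ref{rhoprop}: that proof only compares coefficients of monomials $x_1^{\alpha_1}\cdots x_m^{\alpha_m}$ where $\alpha$ is a composition (all parts positive), i.e.\ colorings using every color in $[m]$, for which $\gamma$ happens to coincide with $c\mapsto m+1-c$. Third, under your literal reading (``reversing the order of the variable indices'') the claim is false for general $\kappa$: if $\kappa$ uses the colors $2,3,6$, each once, then $\gamma$ reflects within the interval $[m_1,m_2]=[2,6]$, so $\x_{\gamma(\kappa)}=x_2x_5x_6$, which is not obtained from $\x_\kappa=x_2x_3x_6$ by any reversal of the indices occurring in it (and reflection within $[1,6]$ would give $x_1x_4x_5$, also wrong). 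The repair is to prove that the termwise map ``reflect the support of a monomial within $[\min,\max]$ of its support and reverse the multiplicities'' sends each $M_\alpha$ to $M_{\alpha^{{\sf rev}}}$ (using that $S\mapsto\{\min S+\max S-s:s\in S\}$ is an involution on finite subsets of $\pp$), hence computes $\rho$ on quasisymmetric functions; but at that point you are re-proving Proposition~\ref{rhoprop} rather than quoting it.

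The gap is avoidable, because the corollary follows from the \emph{statement} of Proposition~\ref{rhoprop} alone, with no reference to $\gamma$ or to any termwise action of $\rho$: since every $\kappa\in\calC(G)$ is proper, each edge $\{i,j\}\in E$ with $i<j$ has $\kappa(i)\neq\kappa(j)$, so it is counted by exactly one of $\asc(\kappa)$, $\des(\kappa)$, giving $\asc(\kappa)+\des(\kappa)=|E|$; hence
\[
\rho(X_G(\x,t)) \;=\; t^{|E|}X_G(\x,t^{-1}) \;=\; \sum_{\kappa\in\calC(G)}t^{|E|-\asc(\kappa)}\x_\kappa \;=\; \sum_{\kappa\in\calC(G)}t^{\des(\kappa)}\x_\kappa.
\]
Note that your identification of $|E|-\asc(\kappa')$ with $\des(\kappa')$ also silently uses this properness argument (for a non-proper coloring an edge can be neither an ascent nor a descent), so it should be said explicitly rather than ``by definition.'' Your final step is correct as written: when $X_G(\x,t)\in\Lambda_\Z[t]$ its coefficients are symmetric functions, hence fixed by $\rho$, and comparing with the display above yields equation (\ref{deseq}).
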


\begin{cor} \label{sympalincor} If $X_G(\x,t) \in \Lambda_\Z[t]$ then $X_G(\x,t)$ is palindromic in $t$ with center of symmetry $\frac {|E|}{2}$, in the sense that $X_G(\x,t) = t^{|E|} X_G(\x,t^{-1})$ (see Definition~\ref{paldef}).
\end{cor}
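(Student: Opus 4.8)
The plan is to obtain this as an immediate consequence of Proposition~\ref{rhoprop}. The essential observation is that the hypothesis $X_G(\x,t) \in \Lambda_\Z[t]$ forces $X_G(\x,t)$ to be a fixed point of the involution $\rho$. Indeed, recall from the text that $\rho$ fixes every symmetric function and that $\rho$ was extended to $\cq_\Z[t]$; since this extension acts coefficientwise on the powers of $t$, writing $X_G(\x,t) = \sum_j a_j(\x)\,t^j$ with each $a_j(\x) \in \Lambda_\Z$ gives
$$\rho(X_G(\x,t)) = \sum_j \rho(a_j(\x))\,t^j = \sum_j a_j(\x)\,t^j = X_G(\x,t).$$

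First I would invoke this fixed-point identity, and then combine it with Proposition~\ref{rhoprop}, which asserts $\rho(X_G(\x,t)) = t^{|E|}X_G(\x,t^{-1})$. Chaining the two equalities yields $X_G(\x,t) = t^{|E|}X_G(\x,t^{-1})$, which is exactly the palindromicity asserted, with center of symmetry $\tfrac{|E|}{2}$ in the sense of Definition~\ref{paldef}.

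There is essentially no obstacle here: the entire analytic content is already packaged in Proposition~\ref{rhoprop}, and the only point requiring care is the verification that $\rho$ acts as the identity on $\Lambda_\Z[t]$. This reduces to the already-noted fact that symmetric functions are $\rho$-invariant, together with the coefficientwise nature of the extension of $\rho$ to $\cq_\Z[t]$. I would simply confirm that the hypothesis $X_G(\x,t) \in \Lambda_\Z[t]$ is genuinely being used---it is precisely what guarantees that each coefficient $a_j(\x)$ is symmetric and hence fixed by $\rho$---so that the step $\rho(X_G(\x,t)) = X_G(\x,t)$ is valid and the conclusion follows.
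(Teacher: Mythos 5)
Your proof is correct and is exactly the argument the paper intends: the corollary is stated as an immediate consequence of Proposition~\ref{rhoprop}, using precisely the observation that symmetric functions are fixed by $\rho$ (so $X_G(\x,t)\in\Lambda_\Z[t]$ forces $\rho(X_G(\x,t))=X_G(\x,t)$), which chained with $\rho(X_G(\x,t))=t^{|E|}X_G(\x,t^{-1})$ gives palindromicity. Your explicit verification that $\rho$ acts coefficientwise in $t$ is a sound (if routine) point the paper leaves implicit.
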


\section{Expansion in the fundamental basis} \label{fundquasec}

From now on we assume that a given graph $G$ is the incomparability graph $\inc(P)$ of a poset $P$ on some finite subset of $\pp$.  In this section we refine Chow's \cite[Corollary 2]{Ch} expansion of $X_G(\x)$ in Gessel's basis of fundamental quasisymmetric functions.  Expansion in the basis of fundamental quasisymmetric functions is a useful tool for obtaining results about permutation statistics via specialization.   This is discussed in Section~\ref{specsec}.

For $n \in \pp$ and $S \subseteq [n-1]$, let $D(S)$ be the set of all functions $f:[n] \rightarrow \pp$ such that
\begin{itemize}
\item $f(i) \geq f(i+1)$ for all $i \in [n-1]$, and \item $f(i)>f(i+1)$ for all $i \in S$.
\end{itemize}
The  {\em fundamental quasisymmetric function} associated with $S \subseteq [n]$ is defined as\footnote{This is a nonstandard version of Gessel's fundamental quasisymmetric function.  Our $F_{n,S}$ is equal to $L_{\alpha(S)}$ in \cite{St2}, where $\alpha(S)$ is the reverse of the composition associated with $S$.}
\[
F_{n,S}:=\sum_{f \in D(S)}\x_f,
\]
where $\x_f := x_{f(1)}x_{f(2)} \cdots x_{f(n)}$.
It is straightforward to confirm that $F_{n,S} \in \cq_{\Z}^n$.  In fact (see \cite[Proposition 7.19.1]{St2}),  $\{F_{n,S}:S \subseteq [n-1]\}$ is a basis for $\cq^n_{\Z}$.

For a graph $G:=([n],E)$, a $G$-inversion of a permutation $\sigma \in \sg_n$ is an edge $\{\sigma(i),\sigma(j)\} \in E$ such that $i < j$ and $\sigma(i) > \sigma(j)$.  Let $\inv_G(\sigma)$ be the number of $G$-inversions of $\sigma$; that is
\begin{equation} \label{invdef}  \inv_G(\sigma):=|\{ \{\sigma(i),\sigma(j) \} \in E: 1\le i<j \le n, \,\,\sigma(i) > \sigma(j)  \} |.
 \end{equation}
For a poset $P$ on $[n]$ and $\sigma \in \sg_n$, define the $P$-descent  set of $\sigma$
as
\begin{equation} \label{desdef}
 \Des_P(\sigma) :=  \{i \in [n-1] : \sigma(i) >_P \sigma(i+1)\}.
 \end{equation}
  Clearly, when $G$ is the complete graph, $\inv_G$ is the usual $\inv$ statistic and when $P$ is the chain $1<_P \dots <_P n$, $\Des_P$ is the usual descent set $\Des$ (see Appendix~\ref{qEulersec}).

We define $\omega$ to be the involution on $\cq_\Z$ that maps  $F_{n,S}$ to $F_{n,[n-1] \setminus S}$ for each $n \in \nn$ and $S \subseteq [n-1]$.

Note that the restriction of $\omega$ to $\Lambda_\Z$ is the involution mapping $h_n$ to $e_n$ for all $n \in \nn$.

\begin{thm} \label{qchow} Let  $G$ be the incomparability graph of a  poset $P$ on $[n]$.   Then 
$$\omega X_G(\x,t) = \sum_{\sigma \in \S_n} t^{\inv_G(\sigma)} F_{n,\Des_{P}(\sigma)}.$$
\end{thm}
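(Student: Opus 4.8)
The plan is to expand $X_G(\x,t)$ into monomials grouped by their ``type'' and organize these monomials according to permutations, matching the $F_{n,S}$ definition after applying $\omega$. The key is to partition the set $\calC(G)$ of proper colorings so that each block corresponds naturally to a single fundamental quasisymmetric function.

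First I would recall that $X_G(\x,t) = \sum_{\kappa \in \calC(G)} t^{\asc(\kappa)} \x_\kappa$, and that $\omega$ sends $F_{n,S}$ to $F_{n,[n-1]\setminus S}$. A cleaner route is to first compute $X_G(\x,t)$ itself as a sum of $F_{n,S}$'s and then apply $\omega$, converting each $\Des_P(\sigma)$-set into its complement. To group colorings, I would fix a permutation $\sigma \in \S_n$ and consider those colorings $\kappa$ whose values, read in the order $\kappa(\sigma(1)), \kappa(\sigma(2)), \dots, \kappa(\sigma(n))$, are weakly decreasing, with the additional constraint that ties are broken consistently by $\sigma$. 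Concretely, I would associate to each coloring $\kappa$ the unique $\sigma$ such that $\kappa(\sigma(1)) \ge \kappa(\sigma(2)) \ge \cdots \ge \kappa(\sigma(n))$ and $\kappa(\sigma(i)) = \kappa(\sigma(i+1))$ forces $\sigma(i) < \sigma(i+1)$. This standard ``sorting'' of a word into a weakly decreasing rearrangement with a canonical choice of permutation sets up a bijection between $\calC(G)$ and pairs $(\sigma, f)$ where $f$ ranges over functions in some $D(S)$.

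The main work is to identify, for a fixed $\sigma$, exactly which $S \subseteq [n-1]$ governs the strict-descent positions, and to track what $\asc(\kappa)$ becomes. I expect that a position $i$ must be a strict descent (i.e. $i \in S$) precisely when forcing $\kappa(\sigma(i)) = \kappa(\sigma(i+1))$ would violate properness, which happens exactly when $\{\sigma(i),\sigma(i+1)\}$ is an edge of $G = \inc(P)$, equivalently when $\sigma(i)$ and $\sigma(i+1)$ are $P$-incomparable. The complementary positions --- where $\sigma(i)$ and $\sigma(i+1)$ \emph{are} $P$-comparable --- are precisely those recorded (after the sorting convention fixes the tie-breaking order) by $\Des_P(\sigma)$ or its complement. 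I would verify that the strict-descent set of the associated $f$ is exactly $[n-1] \setminus \Des_P(\sigma)$, so that summing the monomials over each block yields $F_{n,[n-1]\setminus\Des_P(\sigma)}$, and hence applying $\omega$ produces $F_{n,\Des_P(\sigma)}$. Simultaneously I must check that $\asc(\kappa)$ is constant across each $\sigma$-block and equals $\inv_G(\sigma)$; this should follow because, for colorings sorted by $\sigma$, an edge $\{\sigma(i),\sigma(j)\}$ with $i<j$ contributes an ascent precisely when $\sigma(i) > \sigma(j)$ (the smaller-indexed vertex getting the larger color), matching the definition of a $G$-inversion in (\ref{invdef}).

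The step I expect to be the main obstacle is pinning down the tie-breaking convention so that the map $\kappa \mapsto (\sigma, f)$ is a genuine bijection and so that the edge/comparability dictionary between strict descents of $f$ and $\Des_P(\sigma)$ is exactly right --- in particular, confirming that on equal-color runs the properness constraint interacts correctly with the requirement $\sigma(i) < \sigma(i+1)$, and that no double-counting or omission occurs at the boundaries between comparable and incomparable adjacent pairs. Once this correspondence is established and $\asc(\kappa) = \inv_G(\sigma)$ is checked to be a block invariant, the theorem follows by summing over $\sigma \in \S_n$ and applying $\omega$ termwise.
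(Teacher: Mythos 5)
Your overall architecture --- partition $\calC(G)$ into blocks indexed by a sorting permutation $\sigma$, show each block is exactly a set $D(S)$, and check that $\asc(\kappa)$ is the block invariant $\inv_G(\sigma)$ --- is sound and genuinely different from the paper's proof (which decomposes colorings by acyclic orientations, applies $P$-partition theory to each orientation poset via a carefully constructed decreasing labeling, and finishes by dualizing $P$); your argument that an edge $\{\sigma(i),\sigma(j)\}$ with $i<j$ contributes an ascent exactly when it is a $G$-inversion of $\sigma$ is also correct. But the step you flag as the main obstacle is precisely where the proposal fails as written: with ties broken by the \emph{natural} order, the blocks are in general not of the form $D(S)$ for any $S$, and the claimed identification $S=[n-1]\setminus\Des_P(\sigma)$ is false. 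Two things go wrong. First, a position $i$ is forced to be strict not only when $\{\sigma(i),\sigma(i+1)\}\in E$ but also when $\sigma(i)>\sigma(i+1)$ in the natural order (otherwise the tie-break would not return $\sigma$); the complement of this forced set is $\{i:\sigma(i)<\sigma(i+1) \mbox{ and } \sigma(i),\sigma(i+1) \mbox{ comparable in } P\}$, which records comparability but not the \emph{direction} of the $P$-relation, so it is not $\Des_P(\sigma)$. Second, and more seriously, properness of non-adjacent pairs inside a tie run is not automatic, because $P$-comparability is not transitive. Concretely, take the poset $P$ on $[3]$ with $2<_P1$ and $2<_P3$ (so $1$ and $3$ are incomparable and $E=\{\{1,3\}\}$), and $\sigma=123$: under your convention no position is forced, yet the block of $\sigma$ is $\{f: f(1)\ge f(2)\ge f(3),\ f(1)>f(3)\}$, which is not $D(S)$ for any $S$ (constant $f$ lie in $D(\emptyset)$ but give improper colorings). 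Note the theorem is asserted for arbitrary posets on $[n]$, where $x<_Py$ need not imply $x<y$, so such examples are in scope.

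The missing idea is to break ties by the order of $P$ itself: in a proper coloring of $\inc(P)$ each color class is an independent set of $G$, hence a chain of $P$, so one may require that equal colors occur in $P$-decreasing order along $\sigma$, i.e.\ $\kappa(\sigma(i))=\kappa(\sigma(i+1))$ forces $\sigma(i)>_P\sigma(i+1)$. (This is exactly where the hypothesis $G=\inc(P)$ enters; it plays the role of the paper's technical claim about the labeling $\widetilde{w_{\o}}$.) With this convention the forced strict positions are precisely $\{i:\sigma(i)\not>_P\sigma(i+1)\}=[n-1]\setminus\Des_P(\sigma)$; transitivity of $>_P$ now makes non-adjacent properness, and the recovery of $\sigma$ from $f$, automatic, so the block of $\sigma$ is exactly $D([n-1]\setminus\Des_P(\sigma))$, giving $X_G(\x,t)=\sum_{\sigma\in\S_n}t^{\inv_G(\sigma)}F_{n,[n-1]\setminus\Des_P(\sigma)}$ and hence $\omega X_G(\x,t)=\sum_{\sigma\in\S_n}t^{\inv_G(\sigma)}F_{n,\Des_P(\sigma)}$. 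So your plan is salvageable --- and, once repaired, yields a proof more direct than the paper's, avoiding orientations, the labeling construction, and the final dualization --- but with the natural-order tie-break it does not go through.
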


\begin{proof} Our proof  follows the same path as the proof of Corollary~2 in \cite{Ch}. We use the terminology {\it sequencing} and {\it  labeling} from \cite{Ch}.   Both a sequencing and a labeling of $G$ are bijections from $[n] $ to $ [n]$, but for a sequencing, the codomain $[n]$ is viewed as the vertex set of $G$, and for a labeling, the domain $[n]$ is viewed as the vertex set of $G$.

Let $O(G) $ be the set of acyclic orientations of $G$.  For $\o \in O(G)$, let $\asc(\o)$ be the number of directed edges 
$(i,j)$ of $\o$ for which $i<j$, and $\calC(\o)$ be the set of proper colorings $\kappa: [n] \to \pp$ of $G$ that are compatible with the orientation $\o$ in the sense that  $\kappa(i) < \kappa(j)$ whenever $(i,j)$ is a directed edge of $\o$. We have, 
\begin{equation} \label{orient}  X_G(\x,t) = \sum_{\o\in O(G)} t^{\asc(\o)} \sum_{\kappa \in \calC(\o)} \x_\kappa,\end{equation}
since each proper coloring $\kappa$ of $G$ is compatible with a unique acyclic orientation $\o$  of $G$, and $\asc(\kappa) = \asc(\o)$. 

We view each acyclic orientation $\o$ of $G$  as a poset on $[n]$ by taking the  transitive closure of the relations given by the directed edges of $\o$.   For any labeling $\alpha$  of $G$,  
let $\mathcal L(\o, \alpha)$ denote the set of all linear extensions of the labeled poset $(\o,\alpha)$;  that is, $\mathcal L(\o, \alpha)$ is the set of permutations $[\alpha(v_1),\dots,\alpha(v_n)]$ (written in one line notation) such that  if $v_i <_{\o} v_j$ then $i <j$.  

For each $\o\in O(G)$, choose a decreasing labeling $w_{\o}$ of  the poset $\o$, that is, choose  $w_{ \o}$ so that $w_{\o}(x) > w_{\o}(y) $ if $x <_{\o} y$.  Given a subset $S \subseteq [n-1]$, define $n-S := \{i : n-i \in S\}$.
  It follows from Stanley's theory of $P$-partitions (see \cite[Corollary 7.19.5]{St2}) that
\begin{equation} \label{ppart} \sum_{\kappa \in \calC(\o)} \x_\kappa = \sum_{\sigma \in \mathcal L(\o, w_{\o})} F_{n,n-{\Des(\sigma)}}.\end{equation}

Let $e$ be the identity labeling of $G$ (or $\o$).  Each element of $\mathcal L(\o,e)$ is a sequencing of $G$.  For $s \in  \mathcal L(\o,e)$, let $w_{\o} s$ denote the product of $w_{\o} $ and $s$ in the symmetric group $\sg_n$. We have $\sigma \in \mathcal L(\o, w_{\o})$ if and only if $\sigma = w_{\o} s$ for some sequencing  $s\in \mathcal L(\o,  e) $. Hence  (\ref{ppart}) becomes
\begin{equation} \label{eqnew}  \sum_{\kappa \in \calC(\o)} \x_\kappa  = \sum_{s \in \mathcal L(\o, e)} F_{n,n-{\Des(w_{\o} s)}}. \end{equation}   
Combining (\ref{eqnew}) and  (\ref{orient}), we obtain
\begin{equation} \label{eqchromascorient} X_G(\x,t) = \sum_{\o\in O(G)} t^{\asc(\o)} \sum_{s \in \mathcal L(\o, e)} F_{n,n-{\Des(w_{\o} s)}}.\end{equation}
Each sequencing $s$ of $G$ determines a unique acyclic orientation $\o$ of $G$ for which $s\in \mathcal L(\o,  e) $.  Denote this acyclic orientation by $\o(s)$.  We can therefore rewrite (\ref{eqchromascorient}) as
$$ X_G(\x,t) = \sum_{s} t^{\asc(\o(s))} F_{n,n-{\Des(w_{\o(s)} s)}},$$ where $s$ ranges over all sequencings of $G$.
Note that $$ \asc( \o(s)) = \inv_G(s^{{\sf r}{\sf e}{\sf v}}),$$ where $s^{{\sf r}{\sf e}{\sf v}}$ denotes the reverse of the sequence $s$.  Hence
\begin{equation} \label{chromdes} X_G(\x,t) = \sum_{s\in \S_n} t^{\inv_G(s^{{\sf r}{\sf e}{\sf v}})} F_{n,n-{\Des(w_{\o(s)} s)}}.\end{equation}

Up to this point, $w_{\o}$ was an arbitrary decreasing labeling of the poset $\o$. Now we need to use a specific decreasing labeling.   For each acyclic orientation $\o$ of $G$, we construct a decreasing labeling $\widetilde{w_{\o}}$ of $\o$ as follows:    Since all maximal elements of $\o$ are comparable in $P$ there will always be an element among these elements that is larger than the others under $P$.  Label this element with $1$.   Assuming that $k$ elements have already been labeled, label the $P$-largest element in the  set of maximal  unlabeled elements of $\o$   with $k+1$.    Do this for $k=1$ to $n-1$.  

{\em Claim:} 
For all $x,y$ that are incomparable in $\o$, if $x<_P y$ then $\widetilde{w_{\o}}(x) >\widetilde{w_{\o}}(y) $.  

Suppose the claim is false.  Then there are  incomparable $x$ and $y$ in  $\o$ such that $ x <_P y$ and $\widetilde{w_{\o}}(x) <\widetilde{w_{\o}}(y) $.  Consider the step in the  construction of $\widetilde{w_{\o}}$ in which $x$ is labeled.  Since $x <_P y$ and $x$ is labeled ahead of   $y$, there must be an unlabeled element $z$ such that $z>_{\o} y$  and $z <_P x$.  Choose an $\o$-minimal such $z$. Let $$y<_{\o} y_1<_{\o} y_2 <_{\o} \cdots <_{\o}y_k= z$$ be an unrefinable chain of the poset $\o$. 

Since $z$ is unlabeled,  the $y_i$'s are unlabeled.  By the minimality of $z$ we have that $$y_i \not<_P x$$ for each $i\in [k-1]$. 
Since $x$ is labeled before each 
 $y_i$, we have $x\not<_{\o} y_i$.  Since $x$ and $y$ are incomparable in $\o$ and $y <_{\o} y_i$, 
 we have $y_i \not <_{\o} x$.  Hence $x$ and $y_i$ are incomparable in $\o$ for each $i$. This implies 
 that there is no edge of $G$ joining $x$ to any of the $y_i$'s.  It follows that $x$ is comparable to each $y_i$  in  $P$.  In particular $x$ and $y_{k-1}$ are comparable in $P$.  Since $y_{k-1} \not <_P x$, we have $x \le_P y_{k-1}$.  Combining this with $z<_P x$ yields $z<_P y_{k-1}$, which contradicts the fact that  $z$ and $y_{k-1}$ are joined by an edge in $G$.  Therefore we have established the claim.
 
Next we use the claim to show that for any sequencing $s$ of $G$,
\begin{equation}\label{desdeseq} \Des_P(s) = [n-1] \setminus \Des(\widetilde{w_{\o(s)}}s).\end{equation}
 Let $i \in \Des_P(s)$.  This means that $s(i) >_P s(i+1)$. Clearly $s(i)$ and $s(i+1)$ are incomparable in $\o(s)$ since $s \in \mathcal L(\o(s),e)$.  Thus  the claim implies that $\widetilde{w_{\o(s)}}(s(i)) < \widetilde{w_{\o(s)}}(s(i+1)) $, which means that $i \in [n-1] \setminus \Des(\widetilde{w_{\o(s)}}s)$.  Hence, $\Des_P(s) \subseteq [n-1] \setminus \Des(\widetilde{w_{\o(s)}}s)$.  Conversely, if $i \in [n-1] \setminus \Des(\widetilde{w_{\o(s)}}s)$ then $\widetilde{w_{\o(s)}}(s(i)) < \widetilde{w_{\o(s)}}(s(i+1)) $.  Since $\widetilde{w_{\o(s)}}$ is a decreasing labeling,  $s(i) \not\le_{\o(s)} s(i+1) $.
Since $s \in \mathcal L(\o(s),e)$, we also have $s(i+1) \not\le_{\o(s)} s(i) $.  Hence $s(i)$ and $s(i+1)$ are incomparable in $\o(s)$.  Therefore the claim can again be applied  yielding $i \in \Des_P(s)$.  Hence $[n-1] \setminus \Des(\widetilde{w_{\o(s)}}s) \subseteq \Des_P(s)$.  We  conclude that (\ref{desdeseq}) holds.

By (\ref{chromdes}) and (\ref{desdeseq})  we have,
$$\omega  X_G(\x,t) =\sum_{s \in \sg_n} t^{\inv_G(s^{{\sf r}{\sf e}{\sf v}}) } F_{n,n-{\Des_P(s)}}.$$
For all $\sigma \in \sg_n$, we have $n-{\Des_P(\sigma)}= \Des_{P^*}(\sigma^{{\sf r}{\sf e}{\sf v}})$,
where  $P^*$ denotes the dual of $P$.
  It follows that
$$\omega  X_G(\x,t) =\sum_{\s \in \sg_n} t^{\inv_G(\sigma) } F_{n,\Des_{P^*} (\s)}.$$
The result now follows from $\inc(P^*) = \inc(P)$.
\end{proof}

\begin{example}[a]  \label{XGex} Let $G$  be the graph with vertex set $[3]$ and edge set $\{\{1,3\},\{2,3\}\}$.  In other words $G$ is the path $1-3-2$.  Then $G= \inc(P)$, where $P$ is the poset on $[3]$ whose only order relation is $ 1<_P2$.  We have
\begin{center}\begin{tabular}{|c|c|c|}
\hline{\color{red} $\sg_3$}  & {\color{red}$\inv_G$} & {\color{red}$\Des_P$} \\ \hline\hline 
$123$ &$0$ & $\emptyset$ \\   \hline 
$132$ &$1$ & $\emptyset$   \\ \hline 
$213$ &$0$ & $\{1\}$ 
\\ \hline 
$231$ &$1$ & $\emptyset$ 
\\ \hline
$312$ &$2$ & $\emptyset$ 
\\ \hline
$321$ &$2$ & $\{2\}$  \\
 \hline 
\end{tabular} \end{center}

\vspace{.1in}
Hence by Theorem~\ref{qchow}  
\begin{eqnarray*}\omega X_{G}(\x,t)&=& (F_{\emptyset} + F_{3,\{1\}}) + 2 F_{\emptyset} t + (F_{\emptyset} + F_{3,\{2\}})t^2 \\ &=&  (h_3 + F_{3,\{1\}}) + 2 h_3 t + (h_3 + F_{3,\{2\}})t^2.\end{eqnarray*}
Note that  $X_{G}(\x,t)$ is  not symmetric in $\x$ and that $X_{G}(\x,t)$ is not  palindromic as a polynomial in $t$; cf. Corollary~\ref{sympalincor}.  

(b).  Now let $G$  be the graph with vertex set $[3]$ and edge set $\{\{1,2\},\{2,3\}\}$.  In other words $G$ is the path $1-2-3$.  Then $G= \inc(P)$ where $P$ is the poset on $[3]$ whose only order relation is $ 1<_P3$.  We have
\begin{center}\begin{tabular}{|c|c|c|}
\hline{\color{red} $\sg_3$}  & {\color{red}$\inv_G$} & {\color{red}$\Des_P$} \\ \hline\hline 
$123$ &$0$ & $\emptyset$ \\   \hline 
$132$ &$1$ & $\emptyset$  \\ \hline 
$213$ &$1$ & $\emptyset$ 
\\ \hline 
$231$ &$1$ & $\{2\}$ 
\\ \hline
$312$ &$1$ & $\{1\}$ 
\\ \hline
$321$ &$2$ & $\emptyset$  \\
 \hline 
\end{tabular} \end{center}

\vspace{.1in}
Hence by Theorem~\ref{qchow}  
\begin{eqnarray*}\omega X_{G}(\x,t)&=&  F_{3,\emptyset} + (2 F_{3,\emptyset} +  F_{3,\{1\}} + F_{3,\{2\}}) t + F_{3,\emptyset}  t^2 \\ &=&  h_3 + (h_3 +  h_{2,1}) t + h_3 t^2.\end{eqnarray*}  
Note that  $X_G(\x,t)$ is symmetric in $\x$ and that $X_G(\x,t)$ is a palindromic polynomial in $t$; cf. Corollary~\ref{sympalincor}.
\end{example}

\section{Natural unit interval orders} \label{symmsec}
In this section we describe a general class of posets $P$ for which $X_{\inc(P)}(\x,t)$ is symmetric in $\x$.

A {\em unit interval order} is a  poset  that is isomorphic to a finite collection $\ci$ of  intervals $[a,a+1]$ on the real line, partially ordered by the relation $[a,a+1]<_\ci[b,b+1]$ if $a+1<b$.  A well known characterization of the unit interval orders is that they are $(2+2)$-free and $(3+1)$-free  (see \cite{ScSu}).  The isomorphism classes of unit interval orders form a Catalan class, see \cite[Exercise 6.19 ddd]{St2}.

 Define a {\em natural unit interval order} to be a poset $P$ on   a finite subset of $\pp$ that satisfies both conditions
\begin{enumerate} 
\item $x<_P y$ implies $x<y$ in the natural order on $\pp$, and
\item if the direct sum $\{x<_P z\} + \{y\}$ is an induced subposet of $P$ then  $x<y<z$ in the natural order on $\pp$.
\end{enumerate}

We provide now a useful characterization of natural unit interval orders and show that our formal definition is equivalent to the informal definition of natural unit interval orders given in the introduction.

Given a sequence ${\bf m}:=(m_1,\dots,m_{n-1})$ of positive integers satisfying
\begin{itemize}
\item[(a)] $m_1 \le m_2 \le \dots \le m_{n-1} \le n$, and
\item[(b)] $m_i \ge i$ for all $i$,
\end{itemize}
let $P({\bf m})$ denote the poset on $[n]$ with order relation given by $i<_{P({\bf m})} j$ if  $i < n$ and $j \in \{m_i+1,m_{i}+2,\dots,n\}$.
(It is straightforward to confirm that the given relation does indeed make $P({\bf m})$ a poset.)

\begin{prop} \label{natunitprop2}
Let $P$ be a poset on $[n]$.  The following conditions on $P$ are equivalent. \begin{itemize} \item[(A)] $P$ is a natural unit interval order. \item[(B)] $P=P({\bf m})$ for some ${\bf m}=(m_1,\ldots,m_{n-1})$ satisfying conditions (a) and (b) above. \item[(C)] There exist $n$ real numbers $y_1<\ldots<y_n$ such that, for 
$i,j \in [n]$, $y_i+1<y_j$ if and only if $i<_P j$. \end{itemize} \end{prop}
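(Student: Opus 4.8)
\section*{Proof proposal}

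The plan is to prove the cyclic chain of implications (C) $\Rightarrow$ (A) $\Rightarrow$ (B) $\Rightarrow$ (C). Throughout I use the translation supplied by (C): for distinct $i,j$, the relation $i<_P j$ is equivalent to $y_i+1<y_j$, while incomparability of $\{i,j\}$ is equivalent to $|y_i-y_j|\le 1$ (the larger of $y_i,y_j$ exceeds the smaller by at most $1$).

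\emph{First, (C) $\Rightarrow$ (A).} Condition (1) is immediate: if $x<_P y$ then $y_x+1<y_y$, so $y_x<y_y$, and since $y_1<\cdots<y_n$ this forces $x<y$. For condition (2), suppose $\{x<_P z\}+\{y\}$ is an induced subposet, so $y_z>y_x+1$, $|y_x-y_y|\le 1$, and $|y_y-y_z|\le 1$. Then $y_y\ge y_z-1>(y_x+1)-1=y_x$ and $y_y\le y_x+1<y_z$, so $y_x<y_y<y_z$ and hence $x<y<z$. Thus $P$ is a natural unit interval order.

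\emph{Next, (A) $\Rightarrow$ (B).} For each $i$ put $U_i:=\{j:i<_P j\}$ and define $m_i$ by $U_i=\{m_i+1,\dots,n\}$, with $m_i:=n$ when $U_i=\emptyset$. The content of this step is two monotonicity lemmas, both proved by exhibiting a forbidden induced $\{x<_P z\}+\{y\}$ and invoking condition (2). \emph{Lemma 1:} if $i<_P j$ and $j<k$ then $i<_P k$; for otherwise $i,k$ are incomparable (using (1)) and one checks $j,k$ must be incomparable, making $\{i,j\}$ with pendant $k$ an induced $\{x<_P z\}+\{y\}$, whose conclusion $i<k<j$ contradicts $j<k$. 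This shows each $U_i$ is upward closed in the natural order, justifying the definition of $m_i$. \emph{Lemma 2:} if $i<i'$ and $i'<_P j$ then $i<_P j$, by the symmetric argument with pendant $i$ on $\{i',j\}$; this gives $U_1\supseteq U_2\supseteq\cdots$, i.e. $m_1\le m_2\le\cdots$, which is (a). Finally (1) gives $U_i\subseteq\{i+1,\dots,n\}$, so $m_i\ge i$, which is (b); and $m_i\le n$ trivially. By construction $i<_P j$ iff $j>m_i$, so $P=P({\bf m})$.

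\emph{Finally, (B) $\Rightarrow$ (C),} which I expect to be the main obstacle since it requires actually producing the real numbers. I would induct on $n$, adding the largest element. The restriction of $P({\bf m})$ to $[n-1]$ is $P({\bf m}')$ with $m_i'=\min(m_i,n-1)$, and one checks ${\bf m}'$ again satisfies (a) and (b). To make the inductive step go through I would strengthen the hypothesis to produce a \emph{generic} representation, in which $y_j\neq y_i+1$ for all $i,j$, so that incomparability gives the strict inequality $y_j<y_i+1$. Given such a representation $y_1<\cdots<y_{n-1}$, monotonicity of ${\bf m}$ splits the indices $i<n$ into a prefix with $m_i\le n-1$ (requiring $y_n>y_i+1$) and a suffix with $m_i=n$ (requiring $y_n<y_i+1$); the binding constraints confine $y_n$ to an interval whose left endpoint is strictly below its right endpoint precisely because the representation is generic. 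The crux is exactly this nonemptiness: allowing $y_j=y_i+1$ would let the interval collapse, so maintaining strict inequalities throughout the induction is the key point. One then selects $y_n$ in this open interval, avoiding the finitely many values $y_i+1$ to preserve genericity.
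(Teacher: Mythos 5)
Your proposal is correct and takes essentially the same route as the paper: the same cycle of implications, the same construction of $\mathbf{m}$ (with condition (2) supplying the monotonicity via forbidden induced $\{x<_P z\}+\{y\}$ configurations), and the same induction on $n$ for (B)$\Rightarrow$(C), where the paper likewise restricts to $[n-1]$, assumes no two $y$'s differ by exactly $1$ (achieved there by a small perturbation rather than by strengthening the induction hypothesis), and places $y_n$ in an open interval whose nonemptiness rests on that genericity. The only cosmetic difference is that your uniform prefix/suffix interval argument subsumes the paper's three-case analysis ($m_1=n$, $m_{n-1}=n-1$, and the mixed case).
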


\begin{proof}
(A)$\implies$ (B). Suppose that $P$ is a natural unit interval order on $[n]$.   For each $i \in [n-1]$, let $m_i = \max \{j \in [n] : j \not>_P i\}$.  Since $i \not>_P i$, we have $m_i \ge i$.  Let $x <y \in [n-1]$.  We need to show $m_x \le m_y$.
Suppose $m_x > m_y$.  Then by construction of $m_y$, we have $y <_P m_x$.  We claim $x$ is  comparable to neither $y$ nor $m_x$.  If $x$ is comparable to $y$ then $x <_Py$ by condition (1) of the definition of natural unit interval order.  
By transitivity this implies $x <_P m_x$, which is impossible.  Hence $x$ is not comparable to $y$.  If $x$ is comparable to $m_x$ then $x=m_x$, which implies $y <_P x$, contradicting condition (1).  Now since $x$ is  comparable to neither $y$ nor $m_x$, by condition (2) of the definition of natural unit interval order we have $y <x < m_x$, which is a contradiction.  We conclude that $m_x \le m_y$.  We have shown that ${\bf m} = (m_1,\dots,m_{n-1}) $ satisfies the conditions (a) and (b).
  
To show $P = P({\bf m})$, we only need to show that for all $x \in [n-1]$, if $y$ is such that $x < y < m_x$ then $x \not<_P y$.  Suppose $x <_P y$.  Then by condition (2) of the definition of  natural unit interval order, $m_x$ is comparable to $x$ or $y$.  Since $m_x$ is not comparable to $x$, it must be comparable to $y$.  By condition (1), $m_x >_P y$.  By transitivity $m_x >_P x$, which is impossible.  Hence our assumption $x <_P y$ is false.

(B) $\implies$ (C). Now suppose that $P=P({\bf m})$ for some ${\bf m}$ satisfying conditions (a) and (b).  We prove by induction on $n$ that $P$ satisfies condition (C), the base case $n=1$ being trivial.  Assume that $n>1$.  Let ${\bf m^\prime}=(m_1^\prime,\ldots,m^\prime_{n-2})$, where $m^\prime_i=\min\{n-1,m_i\}$ for all $i \in [n-2]$. Note that the subposet of $P$ induced on $[n-1]$ is $P({\bf m^\prime})$.
By inductive hypothesis, we can find $y_1<\ldots<y_{n-1}$ such that $y_i+1<y_j$ if and only if $i<_{P({\bf m^\prime})} j$.  Let us choose and fix such $y_1,\ldots,y_{n-1}$.  We may assume that there do not exist $i,j$ with $y_j-y_i=1$.  Indeed, if such $i,j$ exist, we may subtract some small number from $y_k$ for each $k \geq j$ to obtain another sequence satisfying the desired conditions.

If $m_1=n$, then $P$ is an antichain.  We can ignore our original choices and set $y_i=\frac{i}{n}$ in this case.  If $m_{n-1}=n-1$ then $i<_P n$ for all $i \in [n-1]$.  In this case, we can choose $y_n=y_{n-1}+2$.  If $m_1<m_{n-1}=n$, find the smallest $i$ such that $m_i=n$.  Then $i$ and $j$ are incomparable in $P$ whenever $i<j<n$, as are $j$ and $n$. In particular, $y_i+1>y_{n-1}$.  On the other hand, $j<_P n$ for all $j \in [i-1]$.  In this case, we pick $y_n$ strictly between $\max\{y_{i-1}+1,y_{n-1}\}$ and $y_i+1$.

(C) $\implies$ (A). Assume that $P$ satisfies (C).  If $i<_P j$ then $y_i<y_i+1<y_j$ and $i<j$ in the natural order.  If $i<_P k$ and neither $i$ nor $k$ is comparable to $j$ in $P$ then $y_i+1<y_k$ and $y_j \in [y_i-1,y_i+1] \cap [y_k-1,y_k+1]$.  It follows that $y_i<y_j<y_k$ and $i<j<k$ in the natural order.  Therefore, $P$ is a natural unit interval order. \end{proof}

It follows from the equivalence of (A) and (C) in Theorem \ref{natunitprop2} that every natural unit interval order is a unit interval order and that every unit interval order is isomorphic with some natural unit interval order.  As mentioned above, the number of isomorphism classes of unit interval orders on $n$ elements is the Catalan number $C_n:=\frac{1}{n+1}{{2n} \choose {n}}$.  The number of sequences of length $n-1$ satisfying conditions (a) and (b) above is also $C_n$.  Indeed, this follows quickly from \cite[Exercise~6.19~s]{St2}.
Thus we have the following result.

\begin{prop} \label{natunitprop1}
Every unit interval order is isomorphic with a unique natural unit interval order.
\end{prop}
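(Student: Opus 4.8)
The plan is to run a short finite counting argument that upgrades the existence statement (already in hand) to a bijection. From the equivalence of (A) and (C) in Theorem~\ref{natunitprop2} we already know that every natural unit interval order is a unit interval order and that every unit interval order is isomorphic with \emph{some} natural unit interval order; what remains is precisely to rule out two distinct natural unit interval orders on $[n]$ being isomorphic. So I would recast uniqueness as the injectivity of the ``take the isomorphism class'' map and deduce it by comparing cardinalities.

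First I would identify the natural unit interval orders on $[n]$ with the admissible sequences. By the equivalence of (A) and (B) in Theorem~\ref{natunitprop2}, the natural unit interval orders on $[n]$ are exactly the posets $P(\mathbf{m})$ as $\mathbf{m}=(m_1,\dots,m_{n-1})$ ranges over the sequences satisfying conditions (a) and (b). The assignment $\mathbf{m}\mapsto P(\mathbf{m})$ is injective, since from the definition of $P(\mathbf{m})$ the set of elements strictly above each $i<n$ is $\{m_i+1,\dots,n\}$, so that $m_i=n-|\{j:j>_{P(\mathbf{m})}i\}|$ recovers $\mathbf{m}$ from $P(\mathbf{m})$. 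Hence the number of natural unit interval orders on $[n]$ equals the number of admissible sequences, which is $C_n$ as recorded above.

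Next I would consider the map $\Psi$ sending an admissible sequence $\mathbf{m}$ to the isomorphism class of $P(\mathbf{m})$. Its domain has $C_n$ elements, and its codomain, the set of isomorphism classes of unit interval orders on $n$ elements, also has $C_n$ elements; the existence statement says exactly that $\Psi$ is surjective. Since a surjection between finite sets of equal cardinality is a bijection, $\Psi$ is injective, i.e.\ distinct admissible sequences yield non-isomorphic posets. Consequently, if a unit interval order $Q$ were isomorphic to two natural unit interval orders $P(\mathbf{m}_1)$ and $P(\mathbf{m}_2)$ on $[n]$, then $\Psi(\mathbf{m}_1)=[Q]=\Psi(\mathbf{m}_2)$ forces $\mathbf{m}_1=\mathbf{m}_2$ and hence $P(\mathbf{m}_1)=P(\mathbf{m}_2)$, which is the desired uniqueness. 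The argument is essentially bookkeeping once the two Catalan counts are granted, so I do not expect a genuine obstacle; the only point demanding care is making the count of natural unit interval orders \emph{exactly} $C_n$ rather than merely at most $C_n$, which is why the injectivity of $\mathbf{m}\mapsto P(\mathbf{m})$ (immediate from the explicit order relation) must be checked before invoking the pigeonhole step.
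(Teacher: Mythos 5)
Your proof is correct and is essentially the paper's own argument: the paper likewise gets existence from the (A)$\Leftrightarrow$(C) equivalence of Theorem~\ref{natunitprop2}, notes that both the isomorphism classes of unit interval orders on $n$ elements and the admissible sequences $\mathbf{m}$ are counted by the Catalan number $C_n$, and concludes uniqueness because a surjection between finite sets of equal cardinality is a bijection. Your explicit verification that $\mathbf{m}\mapsto P(\mathbf{m})$ is injective is a small detail the paper leaves implicit, not a different route.
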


\begin{eg}  \label{Gex} For each $r \in [n]$, define $P_{n,r}$ to be the poset on  $[n]$ with order relation given by $i <_{P_{n,r}} j$ if $j-i \ge r$.  The poset $P_{n,r}$ is an example of a natural unit interval order.  Indeed using the notation of  Proposition~\ref{natunitprop2}, $$P_{n,r} = P(r,r+1,r+2,\dots,n,\dots,n).$$   The incomparabilty graph $G_{n,r}$ of $P_{n,r}$ is the graph  with vertex set $[n]$ and edge set $\{\{i,j\} : 0 < j-i < r\}$.  Note that  $G_{n,2}$ is the path $G_n:=1-2-\dots-n$.  This class of examples will appear throughout the paper.
\end{eg}

We show now that chromatic quasisymmetric functions of incomparability graphs of natural unit interval orders are symmetric functions.

\begin{lemma} \label{pathcolorlem} Let $G$ be the incomparability graph of a natural unit interval order $P$ and let $\kappa$ be a proper $\pp$-coloring of $G$.  For every  integer $a$, each connected component of the  induced subgraph $G_{\kappa,a}$ of $G$ consisting of vertices colored by $a$ or $a+1$, is a path of the form $i_1-i_2-\cdots-i_j$, where $i_1<\dots<i_j$. 
\end{lemma}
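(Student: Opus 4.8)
The plan is to first extract the ``interval'' (indifference-graph) property of $G=\inc(P)$, and then use properness of $\kappa$ to show that every edge of $G_{\kappa,a}$ joins two vertices that are \emph{consecutive}, in the natural order, among all vertices of $G_{\kappa,a}$. Once that is known, the path structure is immediate.

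\textbf{Step 1: the interval property.} I would first prove that for $i<k<j$ in $[n]$, if $\{i,j\}\in E$ then $\{i,k\}\in E$ and $\{k,j\}\in E$. Using characterization (C) of Proposition~\ref{natunitprop2}, fix reals $y_1<\cdots<y_n$ with $y_p+1<y_q \iff p<_P q$. Since all comparabilities respect the natural order, for $i<j$ the edge condition $\{i,j\}\in E$ (incomparability in $P$) is equivalent to $y_j\le y_i+1$. From $y_i<y_k<y_j\le y_i+1$ one reads off $y_k\le y_i+1$ and $y_j<y_k+1$, i.e.\ $\{i,k\}\in E$ and $\{k,j\}\in E$. (Alternatively this follows from the description $P=P(\mathbf m)$, using that $\mathbf m$ is nondecreasing.)

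\textbf{Step 2: edges join consecutive vertices.} Let $V_a$ be the vertex set of $G_{\kappa,a}$, that is, the set of vertices $v$ with $\kappa(v)\in\{a,a+1\}$, and list it in natural order as $v_1<\cdots<v_m$. Suppose $\{v_p,v_q\}$ is an edge of $G_{\kappa,a}$ with $v_p<v_q$ but $p,q$ are not consecutive indices, so there is some $v_r\in V_a$ with $v_p<v_r<v_q$. By Step~1, $v_r$ is adjacent in $G$ to both $v_p$ and $v_q$. Since $\kappa$ is proper and $v_p,v_q$ are adjacent, $\{\kappa(v_p),\kappa(v_q)\}=\{a,a+1\}$; properness on the edges $\{v_r,v_p\}$ and $\{v_r,v_q\}$ then forces $\kappa(v_r)\notin\{a,a+1\}$, contradicting $v_r\in V_a$. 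Hence every edge of $G_{\kappa,a}$ has the form $\{v_\ell,v_{\ell+1}\}$.

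\textbf{Step 3: conclusion.} By Step~2, $G_{\kappa,a}$ is a subgraph of the path $v_1-v_2-\cdots-v_m$, so each of its connected components is a maximal run $v_\ell-v_{\ell+1}-\cdots-v_{\ell+s}$ of consecutive vertices joined by edges (a single vertex being the case $s=0$). This is exactly a path $i_1-i_2-\cdots-i_j$ with $i_1<\cdots<i_j$, as claimed. I expect the only genuine content to be Step~1 together with the observation in Step~2 that properness kills any vertex of $V_a$ lying strictly between two adjacent vertices of $V_a$; everything afterward is bookkeeping. The main point to get right is invoking the correct equivalence from Proposition~\ref{natunitprop2} and keeping in mind that in a natural unit interval order every comparability respects the natural order, so no ``reversed'' edges can occur.
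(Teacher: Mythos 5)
Your proof is correct, but it follows a genuinely different route from the paper's. The paper argues locally: since $G_{\kappa,a}$ is bipartite it has no triangles, so any path $x-y-z$ in $G_{\kappa,a}$ has $\{x,z\}\notin E(G)$, and condition (2) of the definition of natural unit interval order then forces $x<y<z$ or $z<y<x$; monotonicity of paths rules out cycles, so components are trees, and a separate three-neighbor argument rules out vertices of degree at least $3$, whence components are increasing paths. You instead prove a global structural fact first — the interval (indifference) property that an edge $\{i,j\}$ with $i<k<j$ forces edges $\{i,k\}$ and $\{k,j\}$, extracted from characterization (C) of Proposition~\ref{natunitprop2} — and then use properness of $\kappa$ head-on: any vertex of $V_a$ strictly between the endpoints of an edge of $G_{\kappa,a}$ would be adjacent to both endpoints, which carry the two colors $a$ and $a+1$, so its color could be neither; hence all edges of $G_{\kappa,a}$ join consecutive elements of $V_a$ and $G_{\kappa,a}$ is a subgraph of one increasing path. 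Your route buys a cleaner endgame (no tree/degree case analysis; components of a subgraph of a path are visibly increasing subpaths) and isolates a reusable property of unit interval graphs, at the cost of invoking the geometric characterization (C) — which strictly speaking is stated for posets on $[n]$, so you should either note the standard relabeling to $[n]$ or derive the interval property directly from conditions (1) and (2), as your parenthetical remark suggests is possible. The paper's route stays entirely inside the order-theoretic definition and uses properness only through bipartiteness of $G_{\kappa,a}$.
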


\begin{proof}
Note first that $G_{\kappa,a}$ is bipartite and therefore contains no cycle of length three.  If $\{x,y\}$ and $\{y,z\}$ are edges in $G$ and $\{x,z\}$ is not then condition (2) in the definition of  natural unit interval order implies that either  $x<y<z$ or $z<y<x$.  It follows that if $i_1-i_2-\cdots-i_j$ is a path of  $G_{\kappa,a}$ then either $i_1< i_2 < \dots < i_j$ or $i_1>i_2 > \dots > i_j$, which implies that $G_{\kappa,a}$ contains no cycle.
So, every connected component of $G_{\kappa,a}$ is a tree.  We claim that no vertex of $G_{\kappa,a}$ has degree larger than two, from which it follows that every connected component of $G_{\kappa,a}$ is a path with increasing vertices, as claimed.  Indeed, assume for contradiction that $x,y,z$ are all neighbors of $w$ in $G_{\kappa,a}$.  Suppose $x <w$.  Then 
$x<w<y$ and $x<w<z$ since $x-w-y$ and $x-w-z$ are paths in  $G_{\kappa,a}$.  But  we cannot have $w < y,z$ since $y-w- z$ is also a path in $G_{\kappa,a}$.
\end{proof}

\begin{thm}  \label{symprop} Let $G$ be the incomparability graph of a natural unit interval order.   Then $$X_{G}(\x,t) \in \Lambda_{\zz}[t].$$
\end{thm}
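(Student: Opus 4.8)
The plan is to show that $X_G(\x,t)$ is invariant under interchanging $x_a$ and $x_{a+1}$ for every $a\ge 1$; since these adjacent transpositions generate all finitary permutations of the variables, such invariance is precisely the assertion that $X_G(\x,t)$ is symmetric, and as $X_G(\x,t)$ already lies in $\cq_\Z^n[t]$ it will then follow that $X_G(\x,t)\in\Lambda_\Z[t]$. Concretely, I would fix $a$ and construct an involution $\phi_a$ on the set $\calC(G)$ of proper colorings that preserves $\asc(\kappa)$, interchanges the number of vertices colored $a$ with the number colored $a+1$, and fixes the multiplicity of every other color. Such a $\phi_a$ shows that, for each $j$, the coefficient of $t^j$ in $X_G(\x,t)$ is fixed by the swap $x_a\leftrightarrow x_{a+1}$.

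To define $\phi_a$, I would invoke Lemma~\ref{pathcolorlem}: given $\kappa$, the induced subgraph $G_{\kappa,a}$ on the vertices colored $a$ or $a+1$ is a disjoint union of increasing paths, and along each such path the proper coloring necessarily alternates between $a$ and $a+1$. The map $\phi_a$ recolors each \emph{odd-length} component by its opposite alternating $2$-coloring (swapping $a$ and $a+1$ along it) and leaves every even-length component, together with all vertices of other colors, untouched. Because $\phi_a$ does not change which vertices are colored in $\{a,a+1\}$, the graph $G_{\kappa,a}$ and its decomposition into components are unchanged; hence $\phi_a$ is visibly an involution, and the recolored map is again a proper coloring.

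The crux, and the step I expect to be the main obstacle, is verifying that $\asc$ is preserved; I would split the edges of $G$ into three types. Edges with both endpoints outside $G_{\kappa,a}$ are untouched. For an edge joining a vertex of $G_{\kappa,a}$ to a vertex of some other color $b$, the essential point is that $a$ and $a+1$ are \emph{consecutive}: any $b\neq a,a+1$ satisfies either $b<a<a+1$ or $a<a+1<b$, so the ascent status of the edge is the same whether its $G_{\kappa,a}$-endpoint is colored $a$ or $a+1$, and these edges are therefore unaffected. The remaining edges lie within the path components, and here a parity computation is decisive: an odd-length path yields the same number of within-path ascents under both of its alternating colorings (while interchanging its local counts of $a$ and $a+1$), whereas an even-length path would change its within-path ascent count upon recoloring (while keeping its color counts balanced). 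This is exactly why $\phi_a$ must flip the odd components and fix the even ones. Finally I would confirm the content swap: the even components contribute equally to the $a$- and $(a+1)$-counts, so the difference between the total $a$-count and $(a+1)$-count comes solely from the odd components; flipping all of them negates this difference while preserving its sum, and therefore $\phi_a$ interchanges the overall multiplicities of $a$ and $a+1$, completing the argument.
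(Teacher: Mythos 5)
Your proposal is correct and is essentially the paper's own argument: the same involution (flip the colors $a\leftrightarrow a+1$ on the odd-size components of $G_{\kappa,a}$, using Lemma~\ref{pathcolorlem}), with the same conclusion that this swaps the multiplicities of $a$ and $a+1$ while preserving $\asc$. Your edge-by-edge verification of ascent preservation simply spells out details the paper leaves implicit.
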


\begin{proof}  For each $a \in \pp$, we construct an involution $\psi_a$ on the set $\calC(G)$ of proper colorings of $G$ that, for each proper coloring, exchanges the number of occurrences of the color  $a$ with the number of occurrences of the color $a+1$, and preserves the statistic $\asc$.   From this we may conclude that $X_{G}(\x,t)$ is symmetric in $\x$.

By Lemma~\ref{pathcolorlem}, for $\kappa\in \calC(G)$, each connected component of  $G_{\kappa,a}$ is a path 
$i_1-i_2 - \cdots - i_j$ with $i_1 <\dots < i_j$.  Let $\psi_a(\kappa)$ be the coloring of $G$ obtained from $\kappa$ 
by replacing each color $a$ by $a+1$ and each $a+1$ by $a$ in a connected component $i_1-i_2-\cdots-i_j$ 
of $G_{\kappa,a}$ whenever $j$ is odd.    (If $j$ is even leave the coloring of the component unchanged.) It is easy 
to see that  $\psi_a(\kappa)$ is a proper coloring.  Moreover,  the number of occurrences of the color $a$ in $\psi_a(\kappa)$ is equal 
to the number of occurrences of $a+1$ in $\kappa$, and the number of  occurrences of the color $a+1$  in $\psi_a(\kappa)$ is equal to the 
number of occurrences of $a$ in $\kappa$.  It is also clear that $\asc(\kappa) = \asc(\psi_a(\kappa))$ on each 
component, since $i_1 <\dots < i_j$. \end{proof}
 
By Corollary~\ref{sympalincor} we have,
\begin{cor} \label{genpalcor} If $G$ is the incomparability graph of a natural unit interval order then $X_G(\x,t)$ is palindromic as  a polynomial in $t$.  \end{cor}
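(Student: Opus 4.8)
The plan is to deduce this immediately by combining two results already in hand: Theorem~\ref{symprop}, which supplies the needed membership hypothesis, and Corollary~\ref{sympalincor}, which converts that hypothesis into palindromicity. First I would invoke Theorem~\ref{symprop} to observe that, since $G$ is the incomparability graph of a natural unit interval order, $X_G(\x,t) \in \Lambda_\Z[t]$; that is, every coefficient of a power of $t$ in $X_G(\x,t)$ is a genuine symmetric function, not merely a quasisymmetric one. This is precisely the hypothesis required by Corollary~\ref{sympalincor}, so applying that corollary directly yields $X_G(\x,t) = t^{|E|} X_G(\x,t^{-1})$, which is the asserted palindromicity with center of symmetry $\frac{|E|}{2}$.

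If one wishes to see the mechanism transparently rather than as a black-box citation, I would unwind Corollary~\ref{sympalincor} through Proposition~\ref{rhoprop}. Recall that Proposition~\ref{rhoprop} gives, for every graph on a finite subset of $\pp$, the identity $\rho(X_G(\x,t)) = t^{|E|} X_G(\x,t^{-1})$, where $\rho$ is the composition-reversing involution on $\cq_\Z[t]$. The key structural fact is that $\rho$ fixes every symmetric function. Since Theorem~\ref{symprop} places $X_G(\x,t)$ in $\Lambda_\Z[t]$, each of its $t$-coefficients is $\rho$-invariant, whence $\rho(X_G(\x,t)) = X_G(\x,t)$. Equating this with the right-hand side of Proposition~\ref{rhoprop} produces $X_G(\x,t) = t^{|E|} X_G(\x,t^{-1})$, exactly as desired.

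There is no genuine obstacle at this stage: all of the analytic and combinatorial substance has been front-loaded into Theorem~\ref{symprop}, whose proof relies on the structural Lemma~\ref{pathcolorlem} and the color-swapping involution $\psi_a$, and into Proposition~\ref{rhoprop}, whose proof builds the color-reversal involution $\gamma$ on $\calC(G)$. Consequently the corollary is a purely formal consequence, and I would expect the write-up to consist of a single sentence citing Theorem~\ref{symprop} and Corollary~\ref{sympalincor}, with the optional expansion through Proposition~\ref{rhoprop} included only if the reader benefits from seeing the $\rho$-invariance argument made explicit.
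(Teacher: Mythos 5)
Your proof is correct and is exactly the paper's argument: the paper derives Corollary~\ref{genpalcor} by applying Corollary~\ref{sympalincor} to the symmetry established in Theorem~\ref{symprop}. Your optional unwinding through Proposition~\ref{rhoprop} and the $\rho$-invariance of symmetric functions is likewise precisely how the paper itself obtains Corollary~\ref{sympalincor}, so nothing is missing.
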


\begin{remark} The converse of Theorem~\ref{symprop} does not hold.    It would be interesting to find a  characterization of those graphs $G$ for which $X_G(\x,t)$ is symmetric in $\x$.
\end{remark}

\section{Expansion in the elementary basis} \label{esec}

Recall the graph $G_{n,r}$ given in Example~\ref{Gex}.   
When $r=1$, $G_{n,r}$ is the empty graph whose chromatic quasisymmetric function is $e_1^n$. When $r=2$, $G_{n,r}$ is the path $G_n$ of Example~\ref{pathex}.  Hence  by (\ref{smirgrapheq}), a closed form formula for $X_{G_{n,2}}(\x,t)$ is given in Theorem~\ref{newsymth}.  Formulae for some other values of $r$ are derived in Section~\ref{calcsec}.  For general $r$, we have only a formula for the coefficient of $e_n$ in the $e$-basis expansion, which is given in Corollary~\ref{ecoefcor}.  
Table~1 gives a summary of these formulae.    
  The formulae get increasingly complex and difficult to compute as $r$ decreases from $n$ down  to $3$.

\begin{table}[htdp]
\hspace*{-.2in}
\begin{tabular}{|@{}c@{}||l|l|l|l|l|}
\hline{\color{blue} \scriptsize{$r$}} & {\color{blue}\scriptsize{$X_{G_{n,r}}({\bf x},t)$} }  
\\
\hline
\hline{\color{blue} \scriptsize{1}} & \scriptsize{{$e_1^n$}}  \\

\hline{\color{blue} \scriptsize{2}} & \scriptsize{$\displaystyle{{\sum_{m=1}^{\lfloor {n+1 \over 2} \rfloor}\,\,\sum_{\substack{
k_1,\dots, k_m \ge 2 \\ \sum k_i = n+1}}\,\,\,{e_{(k_1-1,k_2, \dots, k_m)} }\,\, {\color{red}t^{m-1} \!\!  \prod_{i=1}^m  [k_i-1]_{t} }}}$} \\
\hline {\color{blue} \vdots} & \\ 
\hline{\color{blue} \scriptsize{\,\,$r$\,\,}} & \scriptsize{$e_n{\color{red} [n]_t  [ r-1]_t^{n-r} [r-1]_t!} \qquad+\qquad$ ???}\\
\hline {\color{blue} \vdots} &\\ 
\hline{\color{blue} \scriptsize{\,\,$n-2$\,\,}} & \scriptsize{$e_n{\color{red} [n]_t  [ n-3]_t^2 [n-3]_t!}  +  e_{(n-1,1)} \,\,{\color{red}  t^{n-3}[n-4]_t!([2]_t[n-3]_t^2+[n-1]_t[n-4]_t)} $  } \\ & \scriptsize{$  + e_{(n-2,2)} {\color{red} t^{2n-7}[2]_t[n-4]_t! } $} \\
\hline{\color{blue} \scriptsize{\,\,$n-1$\,\,}} &\scriptsize{$e_n {\color{red}[n]_t  [n-2]_t [n-2]_t !  } + e_{(n-1,1)} {\color{red} t^{n-2} [n-2]_t! }$}\\
\hline{\color{blue} \scriptsize{$n$}} & \scriptsize{$e_n{\color{red} [n]_t! } $}
\\
\hline \end{tabular}
\caption{Expansion in the $e$-basis.}
\end{table}

See Definition~\ref{paldef} for the definition  of $b$-positivity and $b$-unimodality of polynomials in $\Lambda_\Q[t]$, where $b$ is a basis for $\Lambda_\Q$.  
By Proposition~\ref{tooluni}, we see that for each formula of Table 1, the coefficient of  each $e_\lambda$  is a positive, unimodal and palindromic polynomial in $t$ with the same center of symmetry.   Hence by Proposition~\ref{tooluni2}  we conclude that the  conjectured
 refinement of the  Stanley-Stembridge conjecture, which we restate now,  is true for $G=G_{n,r}$  when $ r \le 2$ and $r \ge n-2$.  We have also verified the conjecture by computer for $G=G_{n,r}$ for $1 \le r \le n \le 8$.

 \begin{con} \label{quasistan} Let $G$ be the incomparability graph of a natural unit interval order.   Then $X_{G}(\x,t)$  is an $e$-positive and $e$-unimodal polynomial in $t$. \end{con}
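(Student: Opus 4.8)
The plan is to establish $e$-positivity first and then derive $e$-unimodality from it together with the palindromicity already proved in Corollary~\ref{genpalcor}. A preliminary reduction uses Proposition~\ref{disjointprop}: since $X_{G+H}=X_GX_H$ and a product of $e$-positive, $e$-unimodal, palindromic polynomials with a common center of symmetry is again such a polynomial (Proposition~\ref{tooluni2}), it suffices to treat connected incomparability graphs. For these the natural organizing data is the sequence ${\bf m}=(m_1,\dots,m_{n-1})$ of Proposition~\ref{natunitprop2}(B), and I would induct on $|E|=\sum_i(m_i-i)$, the number of edges of $G=\inc(P({\bf m}))$. The extreme cases $|E|=0$ (empty graph) and $|E|=\binom{n}{2}$ (complete graph) are settled by the computations in Section~\ref{basicsec}, and the path $r=2$, sitting at $|E|=n-1$, is governed by the generating function (\ref{pathchrom}), which already yields $e$-positivity and $e$-unimodality via Haiman's observation.

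The engine of the induction would be a local three-term identity comparing $X$ for the posets obtained by toggling a single comparability relation, i.e.\ by altering one coordinate $m_i$. Using the fundamental-basis expansion of Theorem~\ref{qchow}, one writes $\omega X_G=\sum_{\sigma\in\mathfrak S_n}t^{\inv_G(\sigma)}F_{n,\Des_P(\sigma)}$ and tracks how $\inv_G(\sigma)$ changes on the explicit set of permutations affected when one comparability is removed; packaging this comparison should express $X_{P({\bf m})}$ through chromatic quasisymmetric functions of posets of strictly smaller edge count. The explicit product-of-$t$-number formulas of Table~1 for $r\le 2$ and $r\ge n-2$, each manifestly $e$-positive and $e$-unimodal by Proposition~\ref{tooluni}, would anchor the induction and serve as consistency checks.

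Granting $e$-positivity, write $X_G(\x,t)=\sum_j a_j(\x)\,t^j$; by Corollary~\ref{genpalcor} the sequence $(a_j)$ is palindromic with center $\frac{|E|}{2}$, so $e$-unimodality reduces to showing that $a_{j+1}(\x)-a_j(\x)$ is $e$-positive for $j<\frac{|E|-1}{2}$. I see two complementary routes. The combinatorial one seeks an injection from the objects contributing to $a_j$ into those contributing to $a_{j+1}$ that respects the $e$-expansion furnished by the recursion of the previous paragraph. The geometric one invokes Conjecture~\ref{introhesschrom}: identifying $\omega X_G(\x,t)$ with the graded cohomology of $\hess(P)$ lets the hard Lefschetz theorem supply a raising operator witnessing unimodality, though one must still check that this $\mathfrak{sl}_2$-action is compatible with the sign-induction decomposition underlying $e$-positivity. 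I would pursue the combinatorial route, since it does not rest on the unproved Conjecture~\ref{introhesschrom}.

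The main obstacle is $e$-positivity itself. Its $t=1$ specialization is precisely the unit interval order case of the Stanley--Stembridge conjecture (Conjecture~\ref{stancon}), which by Guay-Paquet's reduction \cite{Gu} implies the conjecture in full; a complete and unconditional execution of this plan would therefore resolve a long-standing open problem, so I do not expect it to succeed without a genuinely new idea. The precise difficulty is that every tool now available for positivity---the Schur-positivity obtained in Section~\ref{schursec} and the Hessenberg cohomology of Section~\ref{hesssec}---produces symmetric functions that are visibly Schur-positive yet offer no handle on the $e$-basis, whose nonnegativity encodes the far more rigid demand that the underlying $\mathfrak S_n$-representation be a direct sum of modules induced from sign characters of Young subgroups. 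Concretely, the three-term relation of the second paragraph is not sign-definite, so a naive induction cannot preserve $e$-positivity; making it work would first require re-expressing the recursion as a manifestly positive combinatorial identity, and I expect that reformulation to be the true locus of difficulty.
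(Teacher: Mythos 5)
The statement you were asked to prove is not proved in the paper: it is Conjecture~\ref{quasistan}, the authors' proposed refinement of the Stanley--Stembridge conjecture, and it is left open there. The paper supplies only partial evidence: explicit $e$-positive, $e$-unimodal formulas for $G_{n,r}$ with $r\le 2$ or $r\ge n-2$ (Table~1 and Section~\ref{calcsec}), the closed form for the coefficient of $e_n$ (Corollary~\ref{ecoefcor}), the acyclic-orientation identity (Theorem~\ref{acyclicth}), Schur-positivity (Theorem~\ref{schurcon}), and computer verification for $n\le 8$. Your proposal correctly recognizes this state of affairs --- you even note that, via Guay-Paquet's reduction \cite{Gu}, a complete execution of your plan would settle Conjecture~\ref{stancon} itself --- and to your credit you flag exactly where the plan breaks down.

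That breakdown is a genuine gap, and it is the whole problem. The ``local three-term identity'' you propose (toggling one coordinate $m_i$, which is essentially the modular law of \cite{Gu}) is never constructed in your write-up, and, as you yourself admit, it is not sign-definite: removing a single comparability changes $\inv_G(\sigma)$ and $\Des_P(\sigma)$ simultaneously on an intertwined set of permutations, and the resulting linear relation mixes positive and negative contributions in the $e$-basis, so an induction on $|E|$ cannot preserve $e$-positivity as stated. Your two fallback routes for unimodality also fall short of the target: the combinatorial injection presupposes the $e$-positive expansion you have not produced, and the geometric route through Conjecture~\ref{introhesschrom} together with hard Lefschetz yields only Schur-unimodality (this is exactly Proposition~\ref{uniconsprop}), since nothing forces the raising operator to respect a decomposition into modules induced from sign characters of Young subgroups --- which is precisely what $e$-positivity of the differences $a_{j+1}(\x)-a_j(\x)$ demands. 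In short, your diagnosis of the difficulty is accurate and your reductions (to connected graphs via Proposition~\ref{disjointprop}, and of unimodality to positivity of consecutive differences via palindromicity, Corollary~\ref{genpalcor}) are sound, but what you have written is a research plan rather than a proof; neither you nor the paper establishes the statement.
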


We have the following easy consequence of Proposition~\ref{disjointprop} and a  generalized version of Proposition~\ref{tooluni} for coefficient ring $\Lambda_\Z$, which reduces Conjecture~\ref{quasistan} to the case of connected graphs.
\begin{prop}Let $G$ and $H$ be graphs on disjoint finite subsets of $\pp$. 
Then
$X_{G+H}(\x,t)$ is symmetric in $\x$, $e$-positive,  $e$-unimodal, and palindromic if  $X_G(\x,t)$ and $X_H(\x,t)$ have these properties.
\end{prop}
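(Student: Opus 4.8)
The plan is to reduce all four assertions to statements about products of polynomials in $t$ whose coefficients lie in $\Lambda_\Z$, where we order $\Lambda_\Z$ by declaring $f \le g$ when $g-f$ is $e$-positive. By Proposition~\ref{disjointprop} we have $X_{G+H}(\x,t) = X_G(\x,t)\,X_H(\x,t)$, so it suffices to show that each property (symmetry in $\x$, $e$-positivity, palindromicity, $e$-unimodality) is preserved under multiplication in $\Lambda_\Z[t]$. Three of these are essentially formal; the one genuine point is $e$-unimodality, which is precisely the content of the generalized version of Proposition~\ref{tooluni} invoked in the statement.

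Symmetry and $e$-positivity come for free. If $X_G(\x,t),X_H(\x,t)\in\Lambda_\Z[t]$ then so is their product, giving symmetry in $\x$. For $e$-positivity, write $X_G(\x,t)=\sum_j a_j(\x)t^j$ and $X_H(\x,t)=\sum_k b_k(\x)t^k$ with all $a_j,b_k$ $e$-positive; the coefficient of $t^m$ in the product is $\sum_{j+k=m} a_j b_k$, and since $e_\lambda e_\mu = e_{\lambda\cup\mu}$ a product of $e$-positive symmetric functions is $e$-positive, as is any sum of such.

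Palindromicity follows from the centers of symmetry adding. By Corollary~\ref{sympalincor}, $X_G(\x,t)=t^{|E(G)|}X_G(\x,t^{-1})$ and likewise for $H$; multiplying and using $|E(G+H)|=|E(G)|+|E(H)|$ (the edge sets are disjoint) gives $X_{G+H}(\x,t)=t^{|E(G+H)|}X_{G+H}(\x,t^{-1})$, so $X_{G+H}$ is palindromic with center $|E(G+H)|/2$.

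The substantive step is $e$-unimodality of the product, which I would prove via an atom decomposition. Set $c:=|E(G)|/2$ and $\gamma_i:=a_i-a_{i-1}$ (with $a_{-1}:=0$) for $i$ up to the center $c$; each $\gamma_i$ is $e$-positive by $e$-unimodality of $X_G$, and a telescoping computation together with palindromicity shows $X_G(\x,t)=\sum_i \gamma_i\,t^i[\,2c-2i+1\,]_t$. Doing the same for $H$ with atoms $\delta_{i'}$ and center $c'$ yields
\[
X_{G+H}(\x,t)=\sum_{i,i'}\gamma_i\delta_{i'}\;t^{i+i'}\,[\,2c-2i+1\,]_t\,[\,2c'-2i'+1\,]_t .
\]
Here each $\gamma_i\delta_{i'}$ is $e$-positive, and each scalar factor $t^{i+i'}[a]_t[b]_t$ (with $a=2c-2i+1$, $b=2c'-2i'+1$) is a palindromic, unimodal polynomial in $t$ with center $c+c'$; palindromicity and unimodality of $[a]_t[b]_t$ is the elementary fact that the number of pairs $(p,q)$ with $0\le p<a$, $0\le q<b$, $p+q=k$ is symmetric and unimodal in $k$. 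Thus $X_{G+H}(\x,t)$ is an $e$-positive combination of terms, each a fixed $e$-positive symmetric function times a palindromic unimodal polynomial in $t$ sharing the common center $c+c'=|E(G+H)|/2$, and Proposition~\ref{tooluni2} then delivers $e$-positivity, palindromicity, and $e$-unimodality simultaneously. The main obstacle is exactly this unimodality step: positivity and symmetry pass through products trivially, but unimodality does not in general, and its survival here hinges on all atoms contributing the \emph{same} center of symmetry, which is what forces the scalar building blocks $t^{i+i'}[a]_t[b]_t$ to be simultaneously palindromic and unimodal about $c+c'$.
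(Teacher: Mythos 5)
Your proof is correct, and it takes a somewhat more self-contained route than the paper does. The paper's own proof is a one-line citation: it combines Proposition~\ref{disjointprop} (which gives $X_{G+H}(\x,t)=X_G(\x,t)X_H(\x,t)$, as you do) with a ``generalized version of Proposition~\ref{tooluni} for coefficient ring $\Lambda_\Z$,'' i.e.\ with the assertion --- stated in Appendix~\ref{palin.appen} but never proved there --- that a product of $e$-positive, $e$-unimodal, palindromic polynomials in $\Lambda_\Z[t]$ is again such, with centers of symmetry adding. You follow the same skeleton for the formal parts (symmetry, $e$-positivity, palindromicity all pass through the product trivially), but for the genuine step you do not cite that black box: your atom decomposition $X_G(\x,t)=\sum_i \gamma_i\, t^i[2c-2i+1]_t$, with $\gamma_i$ the successive differences, rewrites the product as an $e$-positive combination of scalar polynomials $t^{i+i'}[2c-2i+1]_t[2c'-2i'+1]_t$, all positive, unimodal and palindromic about the common center $c+c'$, so that Proposition~\ref{tooluni2} --- which the paper \emph{does} prove --- finishes the argument. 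In effect you have supplied a proof of the $b$-unimodality half of the generalized Proposition~\ref{tooluni}(1) that the paper leaves to the reader; the paper's route buys brevity, yours buys completeness. One point you should make explicit: $e$-unimodality by itself only places the peak of the coefficient sequence \emph{somewhere}, so to know each $\gamma_i=a_i-a_{i-1}$ is $e$-positive for all $i$ up to the center you need that the peak can be taken at the center; this follows from palindromicity together with antisymmetry of the $e$-positivity order (if $f-g$ and $g-f$ are both $e$-positive then $f=g$), and is worth a sentence in a polished write-up.
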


Let $\Par(n,j)$ be the set of partitions of $n$ into $j$ parts and let $c^G_\lambda$ be the coefficient of $e_\lambda$   in the $e$-basis expansion of the chromatic symmetric function $X_G(\x)$. In \cite{St3} Stanley proves that for any graph $G$ whose vertex set has size $n$, the number of acyclic orientations of $G$ with $j$ sinks is equal to  $\sum_{\lambda \in \Par(n,j)}  c^G_\lambda$. 
The following refinement of this result  provides a bit of further evidence for $e$-positivity of $X_G(\x,t)$.  The proof is an easy modification of Stanley's proof and is omitted.

\begin{thm} \label{acyclicth} Let $G$ be the incomparability graph of   a natural unit interval order of size $n$.   For each $\lambda \vdash n$, let $c^G_{\lambda}(t)$ be the coefficient of $e_\lambda$ in  the $e$-basis expansion of
$X_{G}(\x,t)$.  Then 
$$\sum_{\lambda \in \Par(n,j)}  c^G_\lambda(t) = \sum_{o \in {\mathcal O}(G,j)} t^{\asc(o)},$$
where ${\mathcal O}(G,j)$ is the set of acyclic orientations of $G$ with $j$ sinks and $\asc(o)$ is   the number of directed edges $(a,b)$  of $o$ for which $a<b$.  
\end{thm}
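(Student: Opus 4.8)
The plan is to mimic Stanley's original argument from \cite{St3}, which establishes the $t=1$ case, and to verify that the acyclic-orientation bookkeeping carries the statistic $\asc$ along for free. Recall that in Stanley's proof one interprets the $e$-basis expansion through the expansion of $X_G(\x)$ in the power-sum basis together with the combinatorics of acyclic orientations, or—more directly—through a sign-reversing-type argument on colorings refined by orientations. The key observation I would exploit is equation (\ref{orient}) from the proof of Theorem~\ref{qchow}, namely
\[
X_G(\x,t) = \sum_{\o\in O(G)} t^{\asc(\o)} \sum_{\kappa \in \calC(\o)} \x_\kappa,
\]
which already sorts the chromatic quasisymmetric function by acyclic orientation and records $\asc(\o)$ as the exponent of $t$. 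This is precisely the $t$-refinement Stanley's proof needs: his argument produces the identity $\sum_{\lambda\in\Par(n,j)} c^G_\lambda = |{\mathcal O}(G,j)|$ by tracking, for each acyclic orientation, a contribution governed by its number of sinks, and here each orientation $\o$ is weighted by the extra factor $t^{\asc(\o)}$.

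First I would set up the standard specialization used by Stanley: evaluate $X_G(\x,t)$ at $x_1=\dots=x_m=1$, $x_i=0$ for $i>m$, obtaining a polynomial in $m$ (and $t$) whose $e$-expansion coefficients relate to $c^G_\lambda(t)$ via the known expansions $e_\lambda(1^m)=\prod_i \binom{m}{\lambda_i}$. Stanley's insight is that the resulting count, read through the expansion in the augmented monomial or power-sum basis, matches a signed enumeration of pairs $(\o,\kappa)$ where $\kappa$ is an $\{1,\dots,m\}$-coloring compatible with the acyclic orientation $\o$; the inclusion–exclusion collapses everything except colorings whose compatible orientation has all of $[m]$-``strict-ascent'' freedom, leaving only the sinks to be counted. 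Carrying the weight $t^{\asc(\o)}$ through this specialization is purely formal because the factor depends only on $\o$, not on the number of colors $m$; so the same cancellation that isolates the $j$-sink orientations in Stanley's proof isolates $\sum_{o\in{\mathcal O}(G,j)} t^{\asc(o)}$ here.

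Concretely, the steps would be: (i) start from (\ref{orient}) and group by number of sinks of $\o$; (ii) use the fact that for an acyclic orientation $\o$, the generating function $\sum_{\kappa\in\calC(\o)}\x_\kappa$ specializes under $x_i\mapsto$ monomial-indicator so that its contribution to the $e_\lambda$ coefficient depends only on the poset structure of $\o$ through its number of sinks—this is exactly Stanley's lemma relating compatible colorings to the order polynomial and ultimately to the number of sinks; (iii) match the $t^{\asc(\o)}$-graded contribution term-by-term with $c^G_\lambda(t)$ summed over $\lambda\in\Par(n,j)$. Because $G=\inc(P)$ is the incomparability graph of a natural unit interval order, Theorem~\ref{symprop} guarantees $X_G(\x,t)\in\Lambda_\Z[t]$, so the $e_\lambda$ are a legitimate basis and the coefficients $c^G_\lambda(t)$ are well defined; this is where the hypothesis is actually used.

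I expect the main obstacle to be step (ii): making precise the claim that, after the appropriate specialization, each acyclic orientation contributes to $\sum_{\lambda\in\Par(n,j)} c^G_\lambda(t)$ exactly when it has $j$ sinks, with weight $t^{\asc(\o)}$. In Stanley's $t=1$ treatment this is the technical heart—it rests on a clean correspondence between the number of sinks of $\o$ and the leading behavior of the compatible-coloring enumerator, and on the interaction between the involution $\omega$ (or passage to the complement) and the $e$- versus $h$- expansions. Since the weight $t^{\asc(\o)}$ is constant on each orientation, I anticipate this correspondence transfers verbatim; the only real work is checking that no step in Stanley's argument secretly depends on summing over colorings in a way that would scramble the $\asc$-grading. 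As the authors note, this is an easy modification of Stanley's proof, so I would confirm this compatibility carefully and then cite \cite{St3} for the remaining combinatorial identity rather than reproduce it.
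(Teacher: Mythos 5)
The paper itself gives no proof of Theorem~\ref{acyclicth} (it is omitted, with a pointer to Stanley's argument in \cite{St3}), so there is nothing to compare you against line by line; your guiding intuition --- that $t^{\asc(\o)}$ is constant on each orientation class, so any identity organized orientation-by-orientation refines automatically --- is correct and is surely why the authors call the modification ``easy.'' But the concrete mechanism you wrap around that intuition has a genuine gap: you run the whole argument through the principal specialization $x_1=\cdots=x_m=1$. That specialization provably destroys the information the theorem is about. Already at $t=1$, the quantity $\sum_{\lambda\in\Par(n,j)}c^G_\lambda$ is \emph{not} a function of the chromatic polynomial: the path and the star on four vertices are both trees, hence both have chromatic polynomial $m(m-1)^3$, yet their sink distributions are $4s+4s^2$ and $4s+3s^2+s^3$. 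Equivalently, the polynomials $e_\lambda(1^m)=\prod_i\binom{m}{\lambda_i}$, $\lambda\vdash n$, are linearly dependent for $n\ge 4$ (five degree-$4$ polynomials vanishing at $m=0$ live in a $4$-dimensional space), so one cannot recover the $c^G_\lambda(t)$, even summed by length, from $X_G(1^m,t)$. What your steps actually reconstruct is the proof of Stanley's 1973 reciprocity theorem for the chromatic polynomial (the signed count of compatible pairs $(\o,\kappa)$); the sink theorem is strictly stronger and must use the full symmetric function. Any argument of the kind you describe would ``prove'' that sink distributions are chromatic-polynomial invariants, which is false.

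Your step (ii) has a second, related problem: the orientation summands $\sum_{\kappa\in\calC(\o)}\x_\kappa$ in (\ref{orient}) are only quasisymmetric --- this is exactly why Theorem~\ref{symprop} requires proof --- so an individual orientation has no well-defined ``contribution to the $e_\lambda$ coefficient.'' Making this precise means exhibiting a linear functional on $\cq_\Q^n$ extending $e_\lambda\mapsto s^{l(\lambda)}$ on $\Lambda_\Q^n$ and proving it sends each orientation's enumerator to $s^{\#\mathrm{sinks}}$; that is the substance of the theorem, not a compatibility one can ``anticipate transfers verbatim'' and outsource to \cite{St3}. A repair that genuinely uses your constant-weight observation: first prove the basis-free identity $\sum_{\lambda\vdash n}c^G_\lambda(t)\,s^{l(\lambda)}=\sum_{j\ge 0}(s-1)^{j}\langle X_G(\x,t),\,h_j e_{n-j}\rangle$, which follows from $\sum_{\lambda\vdash n}s^{l(\lambda)}m_\lambda=\sum_{j}(s-1)^{j}e_jh_{n-j}$ and duality; then interpret $\langle X_G(\x,t),h_\nu\rangle$ as the $t$-weighted number of proper colorings with color multiplicities $\nu$, using the $\des$-form (\ref{deseq}) of $X_G(\x,t)$ so that the vertices of the smallest color are sinks of the induced orientation; finally run the inclusion--exclusion at the level of colorings, where the fact that $\des(\kappa)$ depends only on the orientation induced by $\kappa$ collapses the signed sum, orientation by orientation, to $\sum_{W\ \mathrm{independent}}(s-1)^{|W|}\sum_{o:\,W\subseteq\mathrm{sinks}(o)}t^{\asc(o)}$, which equals $\sum_{o}t^{\asc(o)}s^{\mathrm{sink}(o)}$ by the binomial theorem. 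Each step there visibly respects the $t$-grading, which is where your observation does real work.
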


Since there is only one partition in $\Par(n,1)$, Theorem~\ref{acyclicth} gives a combinatorial description of 
the coefficient $c_{n}^G(t)$ of $e_n$ in the $e$-basis expansion of $X_G(\x,t)$.  In Corollary~\ref{ecoefcor}  below we give  a closed form formula for the coefficient.

\section{Expansion  in the Schur basis}  \label{schursec} In this section we refine the unit interval order case of Gasharov's Schur-positivity result \cite{Ga} and use the refinement to obtain a closed form formula for the coefficient of $s_{1^n}$ in the Schur basis expansion of $X_G(\x,t)$. 

\begin{definition}[Gasharov \cite{Ga}] \label{gashdef} Let $P$ be a poset of size $n$ and  $\lambda$ be a partition of $n$.   A {\em $P$-tableau  of shape $\lambda$} is  a filling  of a Young diagram of shape $\lambda$ (in English notation)
with elements of $P$ such that
\begin{itemize}
  \item each element of $P$ appears exactly once,
  \item if $y \in P$ appears immediately to the right of $x \in P$ then $y>_P x$, 
  \item if $y \in P$  appears immediately below  $x\in P$ then $y \not <_P x$.
\end{itemize}
\end{definition}

Given a finite poset $P$ on a subset of $\pp$, let ${\mathcal T}_P$ be the set of all $P$-tableaux. 
 For  $T \in \mathcal T_P$ and $G=\inc(P)$, define a {\em $G$-inversion} of $T$ to be  an edge $\{i,j\} \in E(G)$ such that 
 $i<j$ and $i$ appears below  $j$ in $T$ (not necessarily in the same column).  Let $\inv_G(T)$ be the number of $G$-inversions of $T$
  and let  $\lambda(T)$ be the shape of $T$.
   \begin{example} Let  $G=G_{9,3}$ and let $P:=P_{9,3}$.  Then 
$$T=\tableau{ {2} & {6}  & {9}    \\ {1} & {4} & {8} \\  {3} & {7}  \\ {5} }$$
 is a $P$-tableau of shape $(3,3,2,1)$ and 
$$\inv_G(T) = |\{ \{1,2\},  \{3,4\}, \{4,6\}, \{5,6\},\{5,7\},  \{7,8\} , \{7,9\},\{8,9\} \}| = 8 .$$
\end{example}

\begin{thm}\label{schurcon}
Let $G$ be the incomparability graph of  a natural unit interval order $P$.  Then
\[
X_G(\x,t)=\sum_{T \in {\mathcal T}_P}t^{\inv_G(T)}s_{\lambda(T)}.
\]  Consequently $X_G(\x,t)$ is Schur-positive.
\end{thm}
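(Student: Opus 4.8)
The plan is to derive the Schur-basis expansion from the already-proven fundamental-basis expansion (Theorem~\ref{qchow}) via a sign-reversing involution, mirroring Gasharov's original argument but tracking the statistic $\inv_G$ at every step. Recall that $\omega X_G(\x,t)=\sum_{\sigma\in\S_n} t^{\inv_G(\sigma)} F_{n,\Des_P(\sigma)}$. Applying $\omega$ and using the classical expansion $s_\lambda=\sum_{\sigma} \pm F_{n,\cdot}$ one sees that the Schur coefficients should be governed by permutations grouped according to an associated tableau. Concretely, I would set up a weight-preserving bijection between permutations $\sigma\in\S_n$ and pairs consisting of a $P$-tableau $T$ together with a standard-Young-tableau-type insertion datum, in such a way that $t^{\inv_G(\sigma)}$ is compatible with $t^{\inv_G(T)}$ and the descent set $\Des_P(\sigma)$ translates into the descent set of the recording tableau.

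First I would recall Gasharov's $t=1$ result: $X_G(\x)=\sum_{T\in\mathcal T_P} s_{\lambda(T)}$, proved by constructing a sign-reversing involution on pairs (a $P$-``array'' that may be non-column-strict or violate the tableau conditions, together with a sign), whose fixed points are exactly the genuine $P$-tableaux. The heart of the matter is to upgrade this involution to the $t$-graded setting. So the key steps, in order, are: (1) expand each Schur function $s_{\lambda(T)}$ via the Jacobi--Trudi / Lindström--Gessel--Viennot machinery into a signed sum over sequences of lattice paths, equivalently into a signed sum over fillings; (2) match these signed fillings against the terms $t^{\inv_G(\sigma)}F_{n,\Des_P(\sigma)}$ coming from Theorem~\ref{qchow}; (3) define an involution $\iota$ on the non-tableau fillings that reverses sign, and crucially verify that $\iota$ \emph{preserves} the value $\inv_G$; (4) check that the fixed points are precisely the $P$-tableaux and that on a fixed point the $G$-inversion count of the associated permutation equals $\inv_G(T)$ as defined via edges $\{i,j\}$ with $i<j$ and $i$ below $j$.

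The main obstacle is step~(3): making the cancelling involution $t$-equivariant. In Gasharov's original proof the involution typically swaps or reroutes two strands/entries that witness a violation of the tableau conditions, and one must confirm that such a local move does not change the number of $G$-inversions. Because $\inv_G$ counts edges $\{i,j\}$ of $G$ according to the relative vertical position of $i$ and $j$, I expect that the involution must be chosen to act by an order-preserving rearrangement within rows or by a transposition of two entries lying in positions that are \emph{incomparable} in $P$ (hence joined by an edge of $G$ or else both non-adjacent in a controlled way), so that the contribution to $\inv_G$ is manifestly unchanged. Verifying this invariance in every case of the involution—particularly distinguishing edge-pairs whose relative order flips from those that do not—is the delicate bookkeeping that the proof will have to carry out.

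Once the $t$-equivariant involution is in place, the rest is formal: the signed terms cancel in pairs, each pair contributing equal powers of $t$, leaving exactly $\sum_{T\in\mathcal T_P} t^{\inv_G(T)} s_{\lambda(T)}$ after applying $\omega$ twice (or, equivalently, arranging the Jacobi--Trudi expansion on the $\omega X_G$ side and transporting back). Since every coefficient in the resulting expansion is a polynomial in $t$ with nonnegative integer coefficients and each $s_{\lambda(T)}$ is a Schur function, Schur-positivity of $X_G(\x,t)$ follows immediately, as does Schur-positivity of each coefficient $a_j(\x)$ of $t^j$, giving the promised refinement of Gasharov's theorem.
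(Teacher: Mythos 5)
Your proposal has the right silhouette---Jacobi--Trudi plus a sign-reversing involution, upgraded so that the involution preserves $\inv_G$---but it is missing the one idea that makes the proof work, and its proposed starting point is also off-target. First, the starting point: the paper's proof of Theorem~\ref{schurcon} does not go through the fundamental-basis expansion of Theorem~\ref{qchow} at all. Matching Jacobi--Trudi/LGV signed fillings against the terms $t^{\inv_G(\sigma)}F_{n,\Des_P(\sigma)}$ is not a routine step: the coefficient of $s_\lambda$ in a symmetric function cannot be read off from its $F$-expansion by any simple summation, so your step (2) would in effect require the RSK-type bijection you allude to (a bijection $\sigma\mapsto(T,Q)$ with $\inv_G(\sigma)=\inv_G(T)$ and $\Des_P(\sigma)=\Des(Q)$), which you never construct and which is itself a hard open-ended problem. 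The paper instead uses symmetry (Theorem~\ref{symprop}) twice: it lets one compute $c_\lambda(t)=\langle X_G(\x,t),s_\lambda\rangle=\sum_{\pi\in\S_n}\epsilon(\pi)\langle X_G(\x,t),h_{\pi(\lambda)}\rangle$ by Jacobi--Trudi, and it lets one use the descent form (\ref{deseq}) of $X_G$, so that $\langle X_G(\x,t),h_{\pi(\lambda)}\rangle$ becomes $\sum_\kappa t^{\des(\kappa)}$ over proper colorings with prescribed color multiplicities $\pi(\lambda)$. Colorings are then encoded as $P$-arrays $A(\kappa)$ (row $i$ = color class $\kappa^{-1}(i)$, written as an increasing chain of $P$), and the statistic bookkeeping is transparent because $\des(\kappa)=\inv_G(A(\kappa))$.

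Second, and more seriously: the crux of the whole theorem is the construction of the sign-reversing involution that \emph{preserves} $\inv_G$, and your proposal only asserts that one ``must'' exist, guessing it acts by ``a transposition of two entries lying in incomparable positions'' or an ``order-preserving rearrangement within rows.'' That guess does not describe a workable mechanism (indeed the paper notes explicitly that Gasharov's own involution fails to preserve $\inv_G$). The actual mechanism is the same parity-of-paths trick used to prove symmetry in Theorem~\ref{symprop}: given a non-tableau pair $(A,\pi)$, locate the leftmost bad column $c$ and the relevant row $r$, form the sets $C_r(A)$ and $C_{r+1}(A)$ of entries of rows $r$ and $r+1$ lying (weakly, resp.\ strictly) to the right of column $c$, and consider the subgraph $H(A)$ of $G$ induced on $C_r(A)\cup C_{r+1}(A)$. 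By Lemma~\ref{pathcolorlem} its connected components are paths with increasing vertices, and the involution exchanges rows $r$ and $r+1$ \emph{only on the odd-size components}. Swapping the two colors along an odd increasing path turns descents into ascents and back in a way that leaves the descent count unchanged, which is exactly why $\inv_G(A')=\inv_G(A)$; the even components must be left alone to get the shape bookkeeping $\rho_r=\pi(\lambda)_{r+1}-1$, $\rho_{r+1}=\pi(\lambda)_r+1$ that makes $\psi$ a sign-reversing involution. One also needs $(3+1)$-freeness of $P$ to check that the rearranged rows are still chains of $P$ (e.g.\ that $a_{r,c-1}<_P a_{r+1,c+1}$). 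None of this appears in your proposal, so as written it is a plan whose central step is unresolved.
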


\begin{remark}Schur-unimodality of $X_G(\x,t)$, which is implied by Conjecture~\ref{quasistan}, is still 
open.\footnote{See Section~\ref{newsec}: {\em Recent developments}}  It is also consequence of Conjecture~\ref{hesschrom}; see Proposition~\ref{uniconsprop}.
\end{remark}

\begin{example} Let $G:=G_{3,2}$ and $P:=P_{3,2}$. The $P$-tableaux are as follows
$$ \tableau{ {1} & {3} \\  {2} } \qquad \tableau{{1}\\{2}\\{3}} \qquad \tableau{{1}\\{3}\\{2}} \qquad  \tableau{{2}\\{1}\\{3}}
\qquad  \tableau{{3}\\{2}\\{1}}$$
For the first, third, and fourth tableau, $\inv_G(T) = 1$; for the second $\inv_G(T) = 0$, and for the last $\inv_G(T) = 2$.  Hence Theorem~\ref{schurcon} yields, 
$$ X_G(\x,t) = t s_{(2,1)} + (1+2t + t^2) s_{1^3},$$
which is consistent with  Example~\ref{XGex} (b).\end{example}

\begin{remark}
Our proof of Theorem \ref{schurcon} follows closely the proof of the $t=1$ version of this theorem 
due to  Gasharov \cite{Ga}.   Gasharov's proof involves a sign-reversing involution.  While Gasharov's involution does not preserve the $G$-inversion number, we
find a modified involution that does so.
\end{remark}

For the proof of Theorem~\ref{schurcon}, we need  a more general notion than $P$-tableau.
\begin{definition}[Gasharov \cite{Ga}] Let $P$ be a poset of size $n$ and  $\alpha=(\alpha_1,\ldots,\alpha_n)$ be a weak composition of $n$.
A {\it $P$-array} of shape $\alpha$ is an $n \times n$ matrix $(a_{ij})$ such that  each of the following conditions hold\begin{itemize} \item each $a_{ij}$ lies in $P\uplus \{0\}$ \item for each $x \in P$, there is exactly one $a_{ij}$ equal to $x$  \item if $j>1$ and $a_{ij} \neq 0$, then $a_{i,j-1} \neq 0$ and $a_{i,j-1}<_Pa_{ij}$  \item for each $i \in [n]$, we have $|\{j \in [n]:a_{ij} \neq 0\}|=\alpha_i$. \end{itemize}   
\end{definition}

Less formally, if we define the Young diagram of a weak composition in a manner analogous with the definition of the Young diagram of a partition, then (after removing zeroes), a $P$-array of shape $\alpha$ is a filling of the Young diagram of $\alpha$ with the elements of $P$ such that each nonempty row of the diagram is filled with a chain from $P$ written in increasing order.  Note if a $P$-array $A$ of shape $\alpha$ satisfies the additional condition
\begin{itemize} \item  if $i>1$ and $a_{ij} \neq 0$ then $a_{i-1,j} \neq 0$ and $a_{i-1,j} \not>_P a_{ij}$, \end{itemize}   then $\alpha$ is a partition of $n$ and $A$ is a $P$-tableau of shape $\alpha$.

Given a finite poset $P$ on a subset of $\pp$ with $G=\inc(P)$, the notion of $\inv_G$ extends to $P$-arrays in a natural way.  For a $P$-array $A= (a_{ij})$, define $\inv_G(A) $ to be the number of pairs $(a_{ij},a_{st})$ such that each of the following holds 
\begin{itemize} \item $i<s$, \item $a_{ij}>a_{st}>0$ (in the natural order on $\pp$) \item $\{a_{ij},a_{st}\}\in E(G)$. \end{itemize}

\begin{example} Let  $G:=G_{8,3}$ and let $P:=P_{8,3}$.  Then 
$$A=\tableau{ {5} & {8}     \\ {1} & {4} & {7} \\ \hspace{.2in}{\color{white}{\blacksquare}\over \blacksquare} 
\hspace{-.1in}{\color{white}\blacksquare}  \\ {2} & {6}  \\ {3} }$$
 is a $P$-array of shape $(2,3,0,2,1)$ and 
 $$\inv_G(A) = |\{\{2,4\} ,\{3,4\},\{3,5\},  \{4,5\}, \{6,7\}, \{6,8\}, \{7,8\} \}| = 7.$$
\end{example}

\begin{proof}[Proof of Theorem~\ref{schurcon}]  Write $$X_G(\x,t)=\sum_{\lambda \in \Par(n)}c_\lambda(t)s_\lambda,$$ where $\Par(n)$ is the set of partitions of $n$.  For $\lambda \in \Par(n)$, let 
${\mathcal T}_{P,\lambda}$ be the set of $P$-tableaux of shape $\lambda$.  The claim of the theorem is that 
\begin{equation} \label{clam} c_\lambda(t)=\sum_{T \in {\mathcal T}_{P,\lambda}}t^{\inv_G(T)}. \end{equation}  To 
prove that (\ref{clam}) holds, we use the standard inner product $\langle \cdot,\cdot \rangle$ on the ring of 
symmetric functions, as defined in \cite[Section 7.9]{St2}.  
Pad the partition $\lambda$ with zeros (if necessary) so that it has length $n$, so $\lambda=(\lambda_1 \ge \ldots \ge \lambda_n)$, where $\lambda_n \ge 0$.  For $\pi \in \S_n$, let 
$\pi(\lambda) \in \zz^n$ be the $n$-tuple whose $i^{th}$ entry  is given by $$\pi(\lambda)_i:=\lambda_{\pi(i)}-\pi(i)+i.$$  For any 
$\alpha=(\alpha_1,\ldots,\alpha_n) \in \zz^n$, set $h_\alpha=\prod_{i=1}^{n }h_{\alpha_i}$, where $h_m=0$ if $m<0$ and $h_0 =1$.  
So, $h_\alpha$ is a symmetric function.   Let $\epsilon$ be the sign character of $\S_n$.  It follows from the Jacobi-Trudi indentity (see \cite[Theorem 7.16.1]{St2}) that \begin{eqnarray*} c_\lambda(t) & = & \langle X_G(\x,t),s_
\lambda \rangle \\ & = & \sum_{\pi \in \S_n}\epsilon(\pi) \langle X_G(\x,t),h_{\pi(\lambda)} \rangle .\end{eqnarray*}  It 
follows from the fact that $\langle m_\mu,h_\lambda \rangle=\delta_{\mu,\lambda}$ that $\langle 
X_G(\x,t),h_{\pi(\lambda)} \rangle$ is the coefficient of $\prod_{i=1}^{n} x_i^{\pi(\lambda)_i}$ in $X_G(\x,t)$.  Hence by  (\ref{deseq}),
$$c_\lambda(t) = \sum_{\pi \in \sg_n} \epsilon(\pi) \sum_{\kappa\in \calC_{\pi(\lambda)}(G)} t^{\des(\kappa)},$$ where
$\calC_{\pi(\lambda)}(G)$ is the set of proper $[n]$-colorings of $G$ with $\pi(\lambda)_i$ occurrences of the color $i$ 
for each $i \in [n]$.

For each proper $[n]$-coloring $\kappa$ of $G$, we define $A(\kappa)$ to be the unique $n\times n$ $P$-array whose $i^{th}$ row contains the elements of $\kappa^{-1}(i)$, for each $i \in [n]$.  Note that 
\begin{eqnarray}\label{desinveq} \des(\kappa)=\inv_G(A(\kappa)).\end{eqnarray}  Note also that for each $n\times n$ $P$-array $A$, there is a unique proper $[n]$-coloring $\kappa$ of $G$ with $A(\kappa)=A$.

Let ${\mathcal A}_{P,\lambda}$ be the set of all pairs $(A,\pi)$ such that \begin{itemize} \item $\pi \in \S_n$
 \item $A$ is a $P$-array of shape $\pi(\lambda)$. \end{itemize}  We see now that \begin{equation} \label{arr} c_\lambda(t)=\sum_{(A,\pi) \in {\mathcal A}_{P,\lambda}}\epsilon(\pi)t^{\inv_G(A)}. \end{equation}
Set $${\mathcal B}_{P,\lambda}:=\{(A,\pi) \in {\mathcal A}_{P,\lambda}: A \not\in {\mathcal T}_{P,\lambda}\}.$$ 
For any nonidentity $\pi \in \S_n$, there is some inversion $(\pi(i),\pi(j))$ of $\pi$, such that $\pi(\lambda)_i<\pi(\lambda)_j$.  Therefore, $\pi(\lambda)$ is not a partition and $(A,\pi) \in {\mathcal B}_{P,\lambda}$ for all $P$-arrays $A$ of shape $\pi(\lambda)$.  Therefore, (\ref{clam}) will follow from (\ref{arr}) once we produce an involution $\psi$ on ${\mathcal B}_{P,\lambda}$, mapping $(A,\pi)$ to $(A^\prime,\pi^\prime)$, such that \begin{itemize} \item $\inv_G(A^\prime)=\inv_G(A)$, and \item $\epsilon(\pi^\prime)=-\epsilon(\pi)$. \end{itemize} Note that, as $\epsilon(\pi^\prime) \neq \epsilon(\pi)$, $\psi$ is fixed-point-free, as is necessary for our purposes.

Let $(A,\pi) \in {\mathcal B}_{P,\lambda}$.  
We call $a_{ij}$ ``bad" if $i>1$, $a_{ij} \neq 0$, and either $a_{i-1,j}=0$ or $a_{i-1,j}>_Pa_{ij}$.  Since $A$ is not a $P$-tableau, there exists some bad $a_{ij}$.  Let $c=c(A)$ be the smallest $j$ such that there is a bad $a_{ij}$ and, having found $c$, let $r=r(A)$ be the largest $i$ such that $a_{i+1,c}$ is bad.  We will obtain a $P$-array $A^\prime$ from $A$ by moving certain elements of row $r$ lying weakly to the right of column $c$ down to row $r+1$ and moving certain elements of row $r+1$ lying strictly to the right of column $c$ to row $r$.  Let us describe this process precisely.

Define $$C_r(A):=\{a_{rj}:c \leq j \leq |\kappa^{-1}(r)|\}$$ and $$C_{r+1}(A):=\{a_{r+1,j}:c<j \leq |\kappa^{-1}(r+1)|\},$$ 
where $\kappa$ is the unique proper $[n]$-coloring such that $A(\kappa) = A$.
Let $H(A)$ be the subgraph of $G$ induced on $C_r(A) \bigcup C_{r+1}(A)$.
For $i \in \{r,r+1\}$ set $$O_i(A):=\{x \in C_i(A):x \mbox{ lies in a connected component of odd size in $H(A)$}\}$$ and $$E_i(A):=C_i(A) \setminus O_i(A).$$
Also, set $$I_r(A):=\{a_{rj}:j<c\}$$ and $$I_{r+1}(A):=\{a_{r+1,j}:j \leq c\}.$$ Note that if $c>1$, then, as there is no bad $a_{ij}$ with $j<c$, we have  $a_{r,c-1} \neq 0$. 

If $x \in I_r(A)$ and $y \in E_r(A)$ then $x<_Py$, as $A$ is a $P$-array.  If $x \in O_{r+1}(A)$ and $y \in E_r(A)$ then $x,y$ are comparable in $P$, as $x$ and $y$ live in different connected components of $H(A)$.  We claim now that if $x \in I_r(A)$ and $y \in O_{r+1}(A)$ then $x<_Py$.  To prove this claim, it suffices to show that if $c>1$ and $a_{r+1,c+1} \neq 0$ then $a_{r,c-1}<_Pa_{r+1,c+1}$.  Under the given conditions, $$a_{r+1,c-1}<_Pa_{r+1,c}<_Pa_{r+1,c+1}.$$  By the definition of $c$, $a_{r,c-1}\not>_Pa_{r+1,c-1}$.  Since $P$ is $(3+1)$-free, it follows that $a_{r,c-1}<_Pa_{r+1,c+1}$ as desired.  Therefore, $I_r(A) \cup E_r(A) \cup O_{r+1}(A)$ is a chain in $P$ in which $I_r(A)$ is an initial chain.

If $x \in I_{r+1}(A)$ and $y \in E_{r+1}(A)$ then $x<_Py$.  If $x \in O_r(A)$ and $y \in E_{r+1}(A)$ then $x$ and $y$ lie in different components of $H(A)$ and are therefore comparable in $P$.  If $x \in I_{r+1}(A)$ and $y \in O_r(A)$ then $$x \leq_P a_{r+1,c}<_Pa_{r,c} \leq_Py.$$ Therefore, $I_{r+1}(A) \cup E_{r+1}(A) \cup O_r(A)$ is a chain in $P$ in which $I_{r+1}(A)$ is an initial chain.

Writing $A_i$ for the $i^{th}$ row of $A$ we can now define the $P$-array $A^\prime=(a^\prime_{ij})$, whose $i^{th}$ row is $A^\prime_i$, to be the unique $P$-array satisfying \begin{itemize} \item if $i \in [n] \setminus \{r,r+1\}$ then $A^\prime_i=A_i$, \item the set of nonzero elements in $A^\prime_r$ is $I_r(A) \cup E_r(A) \cup O_{r+1}(A)$, and \item the set of nonzero elements in $A^\prime_{r+1}$ is $I_{r+1}(A) \cup E_{r+1}(A) \cup O_r(A)$. \end{itemize} 

If $i \in [n]$ and $j \in [c-1]$ then $a^\prime_{ij}=a_{ij}$.  Moreover, $a^\prime_{r+1,c}=a_{r+1,c}$ and $$a^\prime_{rc} \in E_r(A) \cup O_{r+1}(A) \cup \{0\}.$$  Certainly $y>_Pa_{r+1,c}$ if $y \in O_{r+1}(A)$.  If $y \in E_r(A)$ then $$y \geq_P a_{rc}>_Pa_{r+1,c}.$$  Therefore, $a^\prime_{r+1,c}$ is bad in $A^\prime$ and $c(A^\prime)=c$.  Now $r(A^\prime)=r$, since $a^\prime_{ic}=a_{ic}$ for $i>r $.  Also, $E_r(A^\prime)=E_r(A)$, $O_r(A^\prime)=O_{r+1}(A)$, $E_{r+1}(A^\prime)=E_{r+1}(A)$ and $O_{r+1}(A^\prime)=O_r(A)$.  It follows that  $(A^\prime)^\prime=A$. 

For $i \in [n]$, the row $A_i$ has $\pi(\lambda)_i$ nonzero entries.  For $i \in \{r,r+1\}$, let $\rho_i$ be the number of nonzero entries in $A^\prime_i$. By Lemma~\ref{pathcolorlem}, since $H(A)$ is an induced subgraph of $G_{\kappa,r}$ (where $\kappa$ is the unique proper $[n]$-coloring such that $A(\kappa) = A$), its connected components are paths 
$i_1 - \cdots - i_j$ where $i_1< \dots < i_j$.  Hence each connected component of even size in $H(A)$ has exactly half of its elements in $A_r$.  It follows that each $\rho_i$ is the number of nonzero elements in the $i^{th}$ row of the array obtained from $A$ by moving all of $C_r(A)$ to row $r+1$ and moving all of  $C_{r+1}(A)$ to row $r$. 
 Therefore, \begin{equation} \label{rhor} \rho_r=c-1+\pi(\lambda)_{r+1}-c=\pi(\lambda)_{r+1}-1, \end{equation} and  \begin{equation} \label{rhor1} \rho_{r+1}=c+\pi(\lambda)_r-(c-1)=\pi(\lambda)_r+1. \end{equation}  Let $\tau_r \in \S_n$ be the transposition exchanging $r$ and $r+1$ and set $\pi^\prime=\pi\tau_r$.   It follows from (\ref{rhor}) and (\ref{rhor1}) that $A^\prime$ is a $P$-array of shape $\pi^\prime(\lambda)$.  Moreover, $(\pi^\prime)^\prime=\pi$ and $\epsilon(\pi^\prime)=-\epsilon(\pi)$.  So, the map $\psi$ sending $(A,\pi)$ to $(A^\prime,\pi^\prime)$ is a sign-reversing involution on ${\mathcal B}_{P,\lambda}$.

It remains to show that $\inv_G(A^\prime)=\inv_G(A)$.  By (\ref{desinveq}), this is equivalent to showing $\des(\kappa) = \des(\kappa^\prime)$, where $\kappa$ (resp. $\kappa^\prime$) is the unique proper $[n]$-coloring of $G$ 
that satisfies $A(\kappa) = A$ (resp. $A(\kappa^\prime) = A^\prime$).   Note that $\kappa^\prime$ is obtained from $\kappa$ by replacing each occurrence of the color $r$ by $r+1$ and each occurrence of  $r+1$ by $r$ in each odd size connected component of $H(A)$.  
Recall that the connected components of $H(A)$ are  paths 
with numerically increasing vertices. 
When $\kappa$ and $\kappa^\prime$ are restricted to such a path of odd size, the number of descents of $\kappa$  equals the number of ascents of $\kappa$, which in turn equals the number of descents of $\kappa^\prime$.   It follows  that $\des(\kappa) = \des(\kappa^\prime)$.
\end{proof}

 From Theorem \ref{schurcon}, we are able to derive a nice closed form formula for the coefficient  of $s_{1^n}$ in the Schur basis expansion of   $X_{G}(\x,t)$.

 \begin{thm} \label{s1n}
Let $G$ be the incomparability graph of a natural unit interval order $P$ of size $n$. Then \begin{equation} \label{eschureq} \sum_{\scriptsize\begin{array}{c} T \in \mathcal T_P \\ \lambda(T) = 1^n \end{array}}t^{\inv_G(T)}=\prod_{j=1}^{n-1}[1+a_j]_t,\end{equation}
where 
$a_j:= |\{  \{j,i\} \in E(G): j < i\}|.$  Consequently   the coefficient  of $s_{1^n}$ in the Schur basis expansion of  $X_{G}(\x,t)$ is equal to the right hand side of (\ref{eschureq})
  and is therefore positive, palindromic and unimodal with center of symmetry $\frac{|E(G)|}{2}$.
\end{thm}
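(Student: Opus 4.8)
The plan is to invoke Theorem~\ref{schurcon} first, which identifies the coefficient of $s_{1^n}$ in $X_G(\x,t)$ with the left-hand side of (\ref{eschureq}); this reduces the problem to the purely combinatorial identity (\ref{eschureq}), after which positivity, palindromicity and unimodality follow formally. A $P$-tableau of shape $1^n$ is simply a listing $v_1,\dots,v_n$ of the elements of $P=[n]$ read from top to bottom, subject only to the single-column constraint $v_{i+1}\not<_P v_i$ for $1\le i\le n-1$, and $\inv_G(T)$ counts the edges $\{i,i'\}\in E(G)$ with $i<i'$ for which $i'$ lies above $i$. I would prove (\ref{eschureq}) by induction, peeling off the minimal entry. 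Concretely, for $1\le k\le n$ let $C_k(t):=\sum_T t^{\inv_G(T)}$, the sum ranging over all column $P$-tableaux whose entry set is $\{k,k+1,\dots,n\}$, so that $C_1(t)$ is the desired sum and $C_n(t)=1$. The goal is the recursion $C_k(t)=[1+a_k]_t\,C_{k+1}(t)$, whence $C_1(t)=\prod_{k=1}^{n-1}[1+a_k]_t$ since $a_n=0$.

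The heart of the recursion is a weight-refining bijection between column $P$-tableaux on $\{k,\dots,n\}$ and pairs consisting of a column $P$-tableau on $\{k+1,\dots,n\}$ together with an insertion position for the new minimal entry $k$. Because $k$ is smaller than every other entry, condition (1) in the definition of natural unit interval order gives $i'\not<_P k$ for every $i'>k$, so the only constraint created by placing $k$ is on the entry $x$ immediately above it, which must satisfy $k\not<_P x$; as $x>k$ this means $x$ is a larger neighbour of $k$ in $G$. Thus the admissible positions for $k$ are precisely the top of the column together with the slot immediately below each of the $a_k$ larger neighbours of $k$, giving $1+a_k$ choices. Inserting $k$ creates new inversions only of the form $\{k,i'\}$ with $i'$ a larger neighbour lying above $k$; if the larger neighbours occur in the column as $x_{(1)},\dots,x_{(a_k)}$ from top to bottom, then placing $k$ at the top contributes $t^0$ while placing it immediately below $x_{(c)}$ contributes exactly $t^{c}$, since precisely $x_{(1)},\dots,x_{(c)}$ then lie above $k$. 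The inversions among $\{k+1,\dots,n\}$ are unchanged, so the insertion multiplies the generating function by $1+t+\dots+t^{a_k}=[1+a_k]_t$, independently of the tableau being extended.

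The step I expect to be the main obstacle is verifying that this correspondence is a genuine bijection, equivalently that deleting the minimal entry $k$ from a column $P$-tableau on $\{k,\dots,n\}$ yields a valid column $P$-tableau on $\{k+1,\dots,n\}$. The only adjacency that could be violated is the one formed when the entry $x$ immediately above $k$ and the entry $z$ immediately below $k$ become adjacent, so one must check $z\not<_P x$. Here the natural unit interval order hypothesis is essential. We know $x$ is incomparable to $k$, that $z\not<_P k$, and that $z>k$; suppose for contradiction that $z<_P x$. Since $z\not<_P k$, either $k<_P z$ or $z$ and $k$ are incomparable. In the former case $k<_P z<_P x$ contradicts the incomparability of $k$ and $x$; in the latter case $\{z<_P x\}+\{k\}$ is an induced subposet of $P$, so condition (2) in the definition of natural unit interval order forces $z<k<x$, contradicting $z>k$. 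Hence $z\not<_P x$, the deletion is valid, and the recursion $C_k(t)=[1+a_k]_t\,C_{k+1}(t)$ holds. Finally $\prod_{k=1}^{n-1}[1+a_k]_t$ is a product of palindromic, unimodal polynomials with nonnegative coefficients, the factor $[1+a_k]_t$ having center of symmetry $a_k/2$; by the standard tool of Appendix~\ref{palin.appen} (Proposition~\ref{tooluni}) the product is positive, palindromic and unimodal, with center of symmetry $\sum_{k=1}^{n-1}a_k/2=|E(G)|/2$, since $\sum_k a_k$ counts each edge of $G$ exactly once.
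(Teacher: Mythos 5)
Your proof is correct and takes essentially the same approach as the paper: after invoking Theorem~\ref{schurcon}, the paper likewise establishes (\ref{eschureq}) by the insertion recurrence $C_k(t)=[1+a_k]_t\,C_{k+1}(t)$ (phrased there in terms of words with no $P$-descents, which is just your column tableau read top to bottom), with the same count of $1+a_k$ admissible slots, the same inversion weights $t^0,\dots,t^{a_k}$, and the same deletion-validity check using the two defining conditions of a natural unit interval order. The conclusion via Proposition~\ref{tooluni} is also identical.
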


\begin{proof} 
  We may assume without loss of generality that the vertex set of $G$ is $[n]$.  Clearly there is a $G$-inversion preserving bijection between the set of $P$-tableaux of shape $1^n$ and the set $$\mathcal {ND}_P:=\{\sigma\in \sg_n: \Des_P(\sigma) = \emptyset\},$$  where $\Des_P(\sigma)$ is defined in (\ref{desdef}) and $G$-inversions of permutations are defined in (\ref{invdef}).  Indeed, by reading the entries of a $P$-tableau $T$ of shape $1^n$ from top down, one gets a permutation $\sigma$ in $\mathcal {ND}_P$ with $\inv_G(\sigma) = \inv_G(T)$.  Hence 
$$\sum_{\scriptsize\begin{array}{c} T \in \mathcal T_P \\ \lambda(T) = 1^n \end{array}}t^{\inv_G(T)}=\sum_{\sigma \in \mathcal{ND}_P} t^{\inv_G(\sigma)} .$$

To complete the proof  of (\ref{eschureq}) we need only show that for all $k \in [n]$,
$$\sum_{\sigma \in \mathcal{ND}_{P,k}} t^{\inv_G(\sigma)} = \prod_{j=k}^{n-1}[1+a_j]_t,$$
where 
 $\mathcal{ND}_{P,k}$ is the set of permutations of $\{k,\dots,n\}$ with no $P$-desccents (here we are viewing permutations of $\{k,\dots,n\}$ as words and defining $\inv_G$ and $P$-descents as in  Definition~\ref{leftrightdef}). 
This is equivalent to the   recurrence,
 \begin{equation} \label{receq} \sum_{\sigma \in \mathcal{ND}_{P,k}} t^{\inv_G(\sigma)} = [1+a_k]_t \sum_{\sigma \in \mathcal{ND}_{P,k+1}} t^{\inv_G(\sigma)},\end{equation} for all $k \in [n-1]$.
To prove this recurrence we  show that  each word in $\mathcal {ND}_{P,k}$ can be obtained by inserting $k$ into a word in $\mathcal {ND}_{P,k+1}$ in one of $1+a_k$ allowable places.
 
First we observe that removal of $k$ from any $\sigma\in \mathcal {ND}_{P,k}$ does not introduce a $P$-descent.
 Indeed, suppose a $P$-descent is introduced. Let  $s$ be the letter to the immediate left of $k$ in $\sigma$ and let $t$ be the letter to the immediate right of $k$.  Then 
$s >_P t$ and $s \not >_P k$.  It follows that $k$ is not comparable to $s$ or $t$.  Since $k < s,t$, this contradicts the fact that $P$ is a natural unit interval order.  Hence no $P$ descent was introduced by removal of $k$ from $\sigma$.  This means that each word in $ \mathcal {ND}_{P,k}$ can be obtained by inserting $k$ in an allowable position of a word in $ \mathcal {ND}_{P,k+1}$.

Now let $A_k=  \{i: k < i \le n,  \{k,i\} \in E(G)\}$; so that $|A_k| = a_k$.  Let $\alpha \in \mathcal{ND}_{P,k+1}$. The 
letters of $A_k$ form a subword $\alpha_{i_1},\alpha_{i_2},\dots, \alpha_{i_{a_k}}$ of $\alpha$.  To avoid introducing a $P$-descent when inserting $k$ into  $\alpha$,  the insertion must be made at the beginning of  $\alpha$ or just to the right of one of the 
$\alpha_{i_j}$'s.  If $k$ is inserted at the beginning of $\alpha$ then no $G$-inversions are introduced.   If  $k$ is 
inserted in $\alpha$ to  the immediate right of $\alpha_{i_j}$ for some $j \in [a_k]$, then $j$ $G$-inversions are 
introduced. 
Hence the recurrence (\ref{receq}) holds.

The consequence follows from Theorem~\ref{schurcon} and by applying Proposition~\ref{tooluni} to the product in (\ref{eschureq}).
\end{proof}

\section{Expansion in the power sum basis}  \label{powersec}
Conjecture~\ref{quasistan} implies  that $\omega X_G(\x,t)$ is $p$-positive when $G$ is the incomparability graph of a natural unit interval order.  In this section we give a conjectural formula for the coefficients  in the $p$-basis expansion, which can be shown to reduce to Stanley's p-basis expansion when $t=1$ (see \cite{St3}).  The conjectural formula  for the coefficient of $p_\lambda$ reduces to the simple closed form formula given in the following result when $\lambda = (n)$

\begin{thm} \label{pcoefth} Let $G$ be the incomparablity graph  of   a natural unit interval order of size $n$.   Then the coefficient $ c(t)$ of $\frac 1 n p_{n}$ in the $p$-basis expansion of $\omega X_{G}(\x,t)$ is given by
  \begin{equation} \label{pcoefeq1} c(t) = [n]_t \prod_{i=2}^n [b_i]_t,\end{equation}
where 
\begin{equation} \label{bi.eq} b_i = |\{ \{j,i\} \in E(G) : j < i\}|. \end{equation}  
Consequently the coefficient  is positive, palindromic, and unimodal with center of symmetry $\frac{|E(G)|}{2}$.
  \end{thm}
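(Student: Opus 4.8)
The plan is to isolate the coefficient with the Hall inner product $\langle\cdot,\cdot\rangle$ on $\Lambda_\Q$. For any symmetric function $f$ homogeneous of degree $n$, the coefficient of $\frac1n p_n$ in its power-sum expansion equals $\langle f,p_n\rangle$, since $\langle p_n,p_n\rangle=z_{(n)}=n$ and $\langle p_n,p_\mu\rangle=0$ for $\mu\neq(n)$. I would apply this to $f=\omega X_G(\x,t)$, and then use that $\omega$ is an isometry (so it is self-adjoint) together with $\omega p_n=(-1)^{n-1}p_n$. This reduces the whole problem to
$$c(t)=\langle \omega X_G(\x,t),\,p_n\rangle=(-1)^{n-1}\langle X_G(\x,t),\,p_n\rangle.$$

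Next I would substitute the Schur-basis expansion of Theorem~\ref{schurcon} and invoke the Murnaghan--Nakayama rule in the form $p_n=\sum_{a=1}^n(-1)^{n-a}s_{(a,1^{n-a})}$; equivalently $\langle s_\lambda,p_n\rangle=(-1)^{n-a}$ when $\lambda$ is the hook $(a,1^{n-a})$ and $0$ otherwise, since the only partitions of $n$ that are single border strips are the hooks. Writing $a(T)$ for the number of boxes in the first row of a $P$-tableau $T$, this yields
$$c(t)=(-1)^{n-1}\!\!\sum_{\substack{T\in\mathcal{T}_P\\ \lambda(T)\text{ a hook}}}\!\!(-1)^{n-a(T)}\,t^{\inv_G(T)}=\sum_{\substack{T\in\mathcal{T}_P\\ \lambda(T)\text{ a hook}}}(-1)^{a(T)-1}\,t^{\inv_G(T)},$$
using $(-1)^{n-1}(-1)^{n-a}=(-1)^{a-1}$. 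Setting $f_a(t):=\sum_{T:\,\lambda(T)=(a,1^{n-a})}t^{\inv_G(T)}$, the entire statement now comes down to the identity $\sum_{a=1}^n(-1)^{a-1}f_a(t)=[n]_t\prod_{i=2}^n[b_i]_t$; note that $f_1(t)$ is precisely the coefficient of $s_{1^n}$ already evaluated in Theorem~\ref{s1n}.

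The combinatorial evaluation of this alternating sum is the heart of the matter, and the step I expect to be the main obstacle; the reductions above are formal. I see two plausible routes. The first is to construct a sign-reversing involution on the hook $P$-tableaux with $a(T)\ge 2$ that changes the first-row length by one (hence flips $(-1)^{a-1}$) while preserving $\inv_G$, so that the alternating sum collapses onto an explicitly enumerable set of survivors. The genuine difficulty is $\inv_G$-preservation: moving a box between the arm and the leg of a hook shifts its row index substantially and does not obviously fix $\inv_G$, so the involution must be engineered carefully. I expect this to hinge on the path structure of the two-colour components from Lemma~\ref{pathcolorlem}, exactly as $\inv_G$-invariance was forced in the proof of Theorem~\ref{schurcon}. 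The second, and to my mind more tractable, route is to prove the product formula by induction through an insertion argument in the spirit of the proof of Theorem~\ref{s1n}: one reads each hook tableau as a word, identifies the allowable positions at which a distinguished element may be inserted (for instance the $P$-maximal element $n$, which in a hook can sit only at the top of the arm or somewhere in the leg), and tracks how both $\inv_G$ and the sign $(-1)^{a(T)-1}$ respond, aiming to peel off one $t$-number factor at a time until the product $[n]_t\prod_{i=2}^n[b_i]_t$ emerges. In either approach, the decisive bookkeeping is the interaction between $\inv_G$ and the hook-length sign, and that is where the real work will lie.
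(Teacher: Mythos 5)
Your formal reductions are correct, and they coincide exactly with the opening of the paper's own proof: the Hall inner product together with the Murnaghan--Nakayama rule (applied to the expansion of Theorem~\ref{schurcon}) gives
\[
c(t)=\sum_{\substack{T\in\mathcal T_P\\ \lambda(T)\text{ a hook}}}(-1)^{a(T)-1}\,t^{\inv_G(T)},
\]
which is precisely the paper's equation (\ref{eqhook}). From that point on, however, your text is a plan rather than a proof, and the one concrete device you commit to --- a sign-reversing, $\inv_G$-preserving involution ``on the hook $P$-tableaux with $a(T)\ge 2$'' --- cannot exist as stated. If the cancellation never touches the column tableaux, the alternating sum collapses to $f_1(t)$, which by Theorem~\ref{s1n} equals $\prod_{j=1}^{n-1}[1+a_j]_t$; this is in general different from $[n]_t\prod_{i=2}^n[b_i]_t$. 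Already for the path $1-2-3$ one gets $(1+t)^2$ versus $1+t+t^2$. So any successful involution must also cancel part of $f_1(t)$, and identifying exactly which column tableaux survive is a genuine piece of the argument, not bookkeeping.

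This is what the paper's Lemma~\ref{pcoeflem2} accomplishes. Its involution $\varphi$ acts on \emph{all} hook-shaped $P$-tableaux: with $a$ the rightmost arm entry and $b$ the lowest leg entry satisfying $b>_P x$ for every $x$ above it in the leg, one moves $b$ to the end of the arm when $b>_P a$, and otherwise slides $a$ into the leg as low as the order $P$ permits (with the analogous rule when the arm is empty); the fixed points are exactly the columns whose top-to-bottom reading word has no $P$-descent and no nontrivial left-to-right $P$-maximum, the set $\mathcal N_P$. Note also that $\inv_G$-preservation holds for a simpler reason than the one you anticipated: it has nothing to do with the path structure of Lemma~\ref{pathcolorlem}; rather, every entry that the moved element passes vertically is $P$-comparable to it, and $P$-comparable pairs are non-edges of $G$, so no counted pair is created or destroyed. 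Finally, even after the involution you are not done: evaluating $\sum_{\sigma\in\mathcal N_P}t^{\inv_G(\sigma)}$ requires a second, separate induction (the paper's Lemma~\ref{pcoeflem3}), which splits $\mathcal N_P$ into the words beginning with the minimal element of $P$ (an insertion argument yields $\prod_{i=2}^n[b_i]_t$) and the remaining words (induction on $n$ yields $t[n-1]_t\prod_{i=2}^n[b_i]_t$), and then sums the two pieces to get $[n]_t\prod_{i=2}^n[b_i]_t$. Both of these substantive steps --- the correct involution with its survivor set, and the product evaluation over $\mathcal N_P$ --- are missing from your proposal; what you have is the correct skeleton, identical to the paper's, with the two lemmas that carry the actual content left unproved.
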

  
  \begin{proof}  
   Equation~(\ref{pcoefeq1}) follows immediately from Lemmas~\ref{pcoeflem2} and~\ref{pcoeflem3} below. The consequence follows from Proposition~\ref{tooluni}.
  \end{proof}
  
Since  the coefficient of $\frac 1 n p_n$ in the $p$-basis expansion of any symmetric function  is the same as the coefficient of $h_n$ in the $h$-basis expansion, we have the following corollary.
  
  \begin{cor} \label{ecoefcor} Let $G$ be the incomparability graph  of   a natural unit interval order of size $n$.  Then the coefficient of $e_n$ in the $e$-basis expansion of expansion of $X_{G}(\x,t)$ is equal to $[n]_t \prod_{i=2}^n [b_i]_t$ and is therefore positive, palindromic, and unimodal with center of symmetry $\frac{|E(G)|}{2}$.
  \end{cor}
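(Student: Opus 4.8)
The plan is to obtain this as an immediate consequence of Theorem~\ref{pcoefth} together with the involution $\omega$. Theorem~\ref{pcoefth} already evaluates the coefficient of $\frac 1 n p_n$ in the $p$-basis expansion of $\omega X_G(\x,t)$ as $[n]_t\prod_{i=2}^n[b_i]_t$. Since $X_G(\x,t)\in\Lambda_\Z[t]$ by Theorem~\ref{symprop}, the involution $\omega$ (which on $\Lambda_\Z$ interchanges $h_n$ and $e_n$) applies, and the whole argument comes down to the elementary fact, stated just before the corollary, that for any homogeneous symmetric function the coefficient of $\frac 1 n p_n$ in its $p$-expansion equals the coefficient of $h_n$ in its $h$-expansion. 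Applying this fact coefficientwise in $t$ to $\omega X_G(\x,t)$ identifies the coefficient of $h_n$ in $\omega X_G(\x,t)$ with $[n]_t\prod_{i=2}^n[b_i]_t$, and then $\omega$ turns this into the coefficient of $e_n$ in $X_G(\x,t)$.

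To justify the elementary fact I would use the Hall inner product $\langle\cdot,\cdot\rangle$ on $\Lambda_\Q$. Writing a degree-$n$ symmetric function as $f=\sum_\mu a_\mu h_\mu=\sum_\mu b_\mu p_\mu$, the duality $\langle h_\lambda,m_\mu\rangle=\delta_{\lambda\mu}$ gives $a_{(n)}=\langle f,m_{(n)}\rangle$, and since $m_{(n)}=p_n$ this is $\langle f,p_n\rangle$. On the other hand the orthogonality relation $\langle p_\lambda,p_\mu\rangle=z_\lambda\delta_{\lambda\mu}$ with $z_{(n)}=n$ gives $\langle f,p_n\rangle=b_{(n)}\langle p_n,p_n\rangle=n\,b_{(n)}$. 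Hence $a_{(n)}=n\,b_{(n)}$, which says exactly that the coefficient of $h_n$ equals the coefficient of $\frac 1 n p_n$.

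For the final assertion, the coefficient of $e_n$ has now been written as the product of $t$-numbers $[n]_t\prod_{i=2}^n[b_i]_t$, so positivity, palindromicity and unimodality follow from Proposition~\ref{tooluni}; the center of symmetry is $\frac12\big((n-1)+\sum_{i=2}^n(b_i-1)\big)=\frac12\sum_{i=2}^n b_i=\frac{|E(G)|}{2}$, using that each edge $\{j,i\}$ with $j<i$ is counted exactly once, in $b_i$. I do not expect a genuine obstacle at this stage: all the substantive combinatorics lives in Theorem~\ref{pcoefth}, and the remaining step is the routine duality computation above together with the defining property of $\omega$.
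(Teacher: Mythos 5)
Your proposal is correct and follows essentially the same route as the paper: the paper likewise deduces the corollary from Theorem~\ref{pcoefth} together with the observation (stated there without proof) that the coefficient of $\frac{1}{n}p_n$ in the $p$-expansion of a symmetric function equals the coefficient of $h_n$ in its $h$-expansion, and then applies $\omega$ and Proposition~\ref{tooluni}. Your Hall inner product justification of that observation and your verification that the center of symmetry is $\frac{1}{2}\sum_{i=2}^n b_i = \frac{|E(G)|}{2}$ are correct details the paper leaves implicit.
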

  
\begin{definition}  \label{leftrightdef} Let $P$ be a poset on a finite subset of $\pp$ of size $n$ and let $\alpha=(\alpha_1,\dots,\alpha_m)$ be a word over alphabet $P$.     We  say that $\alpha$ has   a  left-to-right $P$-maximum at $r\in [n]$ if $\alpha_r >_P \alpha_s $ for all $ s \in [r-1]$.  The  left-to-right $P$-maximum at $1$ will be referred to as   trivial.  We say that $i \in [m]$ is a $P$-descent of $\alpha$ if $\alpha_i >_P \alpha_{i+1}$.  
  For  $G=\inc(P)$, let
 $$\inv_G(\alpha) := |\{\{\alpha_i, \alpha_j\} \in E : i<j \mbox{ and } \alpha_i > \alpha_j\}|.$$ 
  \end{definition}
  
 For a poset $P$ on a finite subset of $\pp$, let $\mathcal N_P$ denote the set of permutations of $P$ (viewed as words over alphabet $P$) with no $P$-descents and no  
  nontrivial  left-to-right $P$-maxima.

 \begin{lemma} \label{pcoeflem2} Let $G$ be the incomparability graph of a natural unit interval order $P$ of size $n$ and let $c(t)$ be the coefficient of $\frac 1 n p_n$ in the $p$-basis expansion of $\omega X_G(\x,t)$.  Then 
 \begin{equation} \label{pcoefeq2} c(t) =   \sum_{\sigma \in \mathcal N_P} t^{\inv_{G}(\sigma)}.\end{equation}
\end{lemma}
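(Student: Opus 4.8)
The plan is to extract the coefficient of $\frac{1}{n}p_n$ in $\omega X_G(\x,t)$ directly from the fundamental-basis expansion established in Theorem~\ref{qchow}, which asserts
\[
\omega X_G(\x,t) = \sum_{\sigma \in \S_n} t^{\inv_G(\sigma)} F_{n,\Des_P(\sigma)}.
\]
The idea is that extracting the $\frac{1}{n}p_n$ coefficient amounts to computing an inner product $\langle \omega X_G(\x,t), p_n \rangle$ (or, since we want a coefficient, exploiting the duality between the power sum and monomial/fundamental side). First I would recall the standard fact that the coefficient of $\frac{1}{n}p_n$ in a symmetric function $f$ equals $\langle f, h_n \rangle$ with an appropriate normalization, or more usefully that it can be computed via the transition between the $F$-basis and the power sums. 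The cleanest route is to pair $F_{n,S}$ against $p_n$: one knows that $\langle F_{n,S}, p_n \rangle$ is $0$ unless $S$ is the full set $[n-1]$ or the empty set $\emptyset$, reflecting that $p_n$ detects only the two ``extreme'' ribbon shapes.

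The key step, then, is to identify precisely which permutations $\sigma \in \S_n$ contribute to the $p_n$ coefficient and to relate those contributions to the combinatorial set $\mathcal N_P$ of permutations with no $P$-descents and no nontrivial left-to-right $P$-maxima. Concretely, I would use the expansion of $p_n$ in the fundamental basis (or equivalently the known rule that the coefficient of $\frac{1}{n}p_n$ picks out, via a Möbius-type sign-reversing cancellation over descent sets, a single sign-alternating sum over the $2^{n-1}$ subsets $\Des_P(\sigma)$). The descent sets $\Des_P(\sigma)$ will be grouped, and an inclusion-exclusion over $S \subseteq [n-1]$ applied to the $F_{n,S}$ must collapse to a signed count weighted by $t^{\inv_G(\sigma)}$. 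The condition ``no nontrivial left-to-right $P$-maximum'' should emerge as exactly the combinatorial shadow of the surviving terms after this cancellation: left-to-right maxima govern where the sign-reversing involution on permutations (pairing $\sigma$ with a permutation differing by a local transposition) fails to find a partner, hence these are the fixed points that contribute.

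\textbf{The main obstacle} I anticipate is controlling the interaction between the statistic $\inv_G$ and the sign-reversing involution used to pass from the full sum over $\S_n$ to the restricted sum over $\mathcal N_P$. The involution that cancels permutations whose descent sets fail the $p_n$-criterion must be shown to preserve $\inv_G$; this is analogous to the difficulty resolved in the proof of Theorem~\ref{schurcon}, where Gasharov's sign-reversing involution does not a priori preserve the $G$-inversion number and had to be modified. Here I expect to need the defining property (2) of a natural unit interval order (together with $(3+1)$-freeness) to guarantee that swapping adjacent letters across a non-$P$-descent, or deleting/reinserting a non-maximal element, changes $\inv_G$ by a controlled amount that telescopes correctly into the $t$-factors appearing in $[n]_t \prod_{i=2}^n [b_i]_t$. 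I would first verify the set-theoretic bijection $\mathcal N_P \leftrightarrow \{\text{surviving terms}\}$ ignoring $t$, then prove the $t$-weighted refinement by tracking how each left-to-right $P$-maximum position contributes a factor reflecting the number of $G$-inversions created, deferring the product formula itself to the subsequent Lemma~\ref{pcoeflem3}.
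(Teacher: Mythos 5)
Your proposal pivots on the claim that the coefficient of $\frac 1n p_n$ can be read off from the $F$-expansion by pairing against $F_{n,S}$ and that this pairing vanishes unless $S=\emptyset$ or $S=[n-1]$. That claim is false, and the argument collapses without it. Counterexample: $s_{(n-1,1)}=\sum_{|S|=1}F_{n,S}$ (the standard Young tableaux of hook shape $(n-1,1)$ have exactly the singleton descent sets), so its $F$-expansion involves neither $F_{n,\emptyset}$ nor $F_{n,[n-1]}$ when $n\ge 3$; yet the coefficient of $\frac 1n p_n$ in $s_{(n-1,1)}$ is the character value $\chi^{(n-1,1)}$ at an $n$-cycle, namely $-1\neq 0$. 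The correct linear functional, which follows from Lemma~\ref{desspec} and Lemma~\ref{thdes}, is a roots-of-unity filter: if $f=\sum_S a_S F_{n,S}$ is symmetric of degree $n$, the coefficient of $\frac 1n p_n$ in $f$ is $\sum_S a_S\,\xi_n^{\sum_{i\in S}i}$, a sum over \emph{all} subsets. Plugging Theorem~\ref{qchow} into this corrected functional only transforms the lemma into the identity $\sum_{\sigma\in\S_n}t^{\inv_G(\sigma)}\xi_n^{\maj_P(\sigma)}=\sum_{\sigma\in\mathcal N_P}t^{\inv_G(\sigma)}$, which is precisely the $d=n$ case of (\ref{rooteq}) in Conjecture~\ref{genunity}; in the paper that identity is deduced \emph{from} Theorem~\ref{pcoefth}, so taking it as the route to the present lemma leaves all of the work still to be done. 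Your proposal offers no concrete mechanism for this cancellation: the suggested sign-reversing involution on permutations is never constructed, and the (correctly identified) difficulty of making any such involution preserve $\inv_G$ is exactly the hard point, not a detail.

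For comparison, the paper proves the lemma by an entirely different route that avoids the $F$-basis altogether. It starts from the Schur expansion of Theorem~\ref{schurcon}, applies the Murnaghan--Nakayama rule to see that only hook shapes contribute to the coefficient of $\frac 1n p_n$ (a rim-hook tableau of type $(n)$ is a hook of $1$'s), and so writes that coefficient as the signed sum $(-1)^{n-1}\sum(-1)^{r(T)-1}t^{\inv_G(T)}$ over hook-shaped $P$-tableaux $T$. It then constructs an explicit sign-reversing involution $\varphi$ on these hooks --- moving an extremal arm entry into the leg or an extremal leg entry into the arm --- which is checked to preserve $\inv_G$; its fixed points are the single-column tableaux whose top-down reading words are exactly the elements of $\mathcal N_P$ (no $P$-descents, no nontrivial left-to-right $P$-maxima), and $\omega p_n=(-1)^{n-1}p_n$ finishes the proof. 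If you want to salvage an $F$-basis proof along your lines, the place to look is Athanasiadis's later work cited in Section~\ref{newsec}, which supplies the missing formula relating $F$-expansions to power-sum coefficients; but as written, your proof has a fatal gap at its first step.
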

\begin{lemma} \label{pcoeflem3} Let $G$ be the incomparability graph of a natural unit interval order $P$ of size $n$ and let $b_i$ be as in Theorem~\ref{pcoefth}.  Then
\begin{equation} \label{pcoefeq3}    \sum_{\sigma \in \mathcal N_P} t^{\inv_{G}(\sigma)} = [n]_t \prod_{i=2}^n [b_i]_t .\end{equation}
 \end{lemma}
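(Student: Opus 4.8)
The plan is to prove the identity by induction on the alphabet, inserting the smallest letter, in the spirit of the recurrence (\ref{receq}) from the proof of Theorem~\ref{s1n}, but refined so as to simultaneously control the ``no nontrivial left-to-right $P$-maximum'' condition. Writing $P=P(\mathbf m)$ as in Proposition~\ref{natunitprop2} and $a_k=m_k-k$ for the up-degree of $k$ in $G$, I would work over the interval alphabet $\{k,\ldots,n\}$, let $\mathcal N_{P,k}$ be the words over this alphabet lying in the analogue of $\mathcal N_P$ (so $\mathcal N_{P,1}=\mathcal N_P$), and \emph{track the first letter}: set
$$ h_k(x):=\sum_{\sigma\in\mathcal N_{P,k},\,\sigma_1=x} t^{\inv_G(\sigma)}, \qquad g_1:=\sum_{x} h_1(x), $$
so that $g_1$ is exactly the left-hand side of (\ref{pcoefeq3}).

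First I would analyze how the minimal letter $k$ can be inserted into $\alpha\in\mathcal N_{P,k+1}$. As in Theorem~\ref{s1n}, avoiding a $P$-descent forces $k$ to the front or immediately after one of the $a_k$ letters of $B_k:=\{k+1,\ldots,m_k\}$ (the letters incomparable to $k$). The new ingredient is the maximum condition: an insertion at a non-front slot never creates a nontrivial maximum (an old non-maximum stays one when the minimum is placed before it, and $k$ itself can never be a maximum), while a front insertion turns $\alpha_1$ into a maximum precisely when $\alpha_1>_P k$, i.e. $\alpha_1>m_k$; inserting after the $m$-th letter of $B_k$ creates exactly $m$ new $G$-inversions. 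This yields
$$ h_k(x)=t[a_k]_t\,h_{k+1}(x)\quad(x>k), \qquad h_k(k)=\sum_{x=k+1}^{m_k} h_{k+1}(x). $$
To see this is a genuine bijection I would check that deleting $k$ from any $\sigma\in\mathcal N_{P,k}$ returns a word in $\mathcal N_{P,k+1}$; here the monotonicity $c>k\Rightarrow m_c\ge m_k$ of a natural unit interval order is exactly what rules out a new descent (the merged neighbors $e\le m_k\le m_c$ of $k$ cannot satisfy $e>_P c$) and a new maximum (any $\alpha_r\le m_k$ has all of its alphabet-$\{k+1,\ldots,n\}$ predecessors incomparable or above it in $P$).

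Next I would solve the recurrence. Setting $H_k:=h_k(k)$, the second relation combined with the telescoped first relation $(\prod_{j=k+1}^{x-1}[a_j]_t)\,H_x=H_{k+1}$ collapses the $[a_j]_t$ factors, giving by downward induction from $H_n=1$
$$ H_k=H_{k+1}\sum_{x=k+1}^{m_k} t^{\,x-k-1}=[m_k-k]_t\,H_{k+1}=[a_k]_t\,H_{k+1}, $$
so $H_1=\prod_{j=1}^{n-1}[a_j]_t$. The same telescoping shows $(\prod_{j=1}^{v-1}t[a_j]_t)\,H_v=t^{\,v-1}H_1$ for every $v$, whence
$$ g_1=\sum_{v=1}^n\Big(\prod_{j=1}^{v-1}t[a_j]_t\Big)H_v=H_1\sum_{v=1}^n t^{\,v-1}=[n]_t\prod_{j=1}^{n-1}[a_j]_t. $$

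Finally I would reconcile this with the stated formula by proving $\prod_{j=1}^{n-1}[a_j]_t=\prod_{i=2}^n[b_i]_t$, which follows from the multiset identity $\{a_1,\ldots,a_{n-1}\}=\{b_2,\ldots,b_n\}$: for each $k\ge 1$ one has $a_j\ge k\iff\{j,j+k\}\in E$ and $b_i\ge k\iff\{i-k,i\}\in E$, so both tail-counts equal the number of edges of $G$ joining vertices at distance $k$, and equality of all tail-counts forces equality of the multisets. I expect the main obstacle to be the bookkeeping in the insertion/deletion step—verifying that deletion of the minimum lands in $\mathcal N_{P,k+1}$, i.e. that neither a $P$-descent nor a nontrivial maximum is created—since this is where the full force of the natural unit interval order axioms (through the monotonicity of $\mathbf m$) is used; the remaining manipulations of $t$-numbers are formal.
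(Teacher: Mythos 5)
Your proof is correct, but it follows a genuinely different route from the paper's. The paper splits $\mathcal N_P$ according to whether the first letter is $1$: words starting with $1$ are handled by a recurrence on the initial alphabets $[k]$ that inserts the \emph{largest} letter $k$ (each insertion contributing a factor $[b_k]_t$, so the product $\prod_{i=2}^n[b_i]_t$ appears directly), while words not starting with $1$ are obtained by inserting the smallest letter $1$ into $\mathcal N_{P^1}$ for the induced poset $P^1$ on $\{2,\dots,n\}$, after which induction on $n$ together with the identities $\{i:\{1,i\}\in E\}=\{2,\dots,m_1\}$ and $\{1,i\}\in E\Rightarrow b_i=i-1$ reassembles the product; summing the two pieces gives $(1+t[n-1]_t)\prod_{i=2}^n[b_i]_t=[n]_t\prod_{i=2}^n[b_i]_t$. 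You instead run a single smallest-letter insertion over the suffix alphabets $\{k,\dots,n\}$, and your key new ingredient --- tracking the first letter via $h_k(x)$ --- is exactly what makes the recurrence close on itself, since a front insertion is legal precisely when $\alpha_1\le m_k$; your deletion check (monotonicity of $\mathbf m$ rules out both new $P$-descents and new nontrivial left-to-right maxima), the telescoping solution $H_k=[a_k]_tH_{k+1}$, $h_1(v)=t^{v-1}H_1$, and the final multiset identity $\{a_1,\dots,a_{n-1}\}=\{b_2,\dots,b_n\}$ via tail counts are all sound. (One wording quibble: what you call the ``telescoped first relation'' $\bigl(\prod_{j=k+1}^{x-1}[a_j]_t\bigr)H_x=H_{k+1}$ is not a consequence of $h_k(x)=t[a_k]_t\,h_{k+1}(x)$ but the telescoped form of the inductive hypothesis $H_j=[a_j]_tH_{j+1}$ for $j>k$; the downward induction you invoke does make this legitimate.) What each approach buys: yours is more uniform --- one insertion direction, no induction on the size of the poset, no case split --- at the price of the first-letter bookkeeping and the $a$-to-$b$ conversion, which the paper never needs because its largest-letter recurrence produces the $[b_i]_t$ factors on the nose and recycles the insertion analysis already set up for Theorem~\ref{s1n}.
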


 \begin{proof}[Proof of Lemma \ref{pcoeflem2}]
We use the Murnaghan-Nakayama rule and Theorem~\ref{schurcon} to obtain an expansion of  $\omega X_G(\x,t)$ in the $p$-basis.
Recall that a {\it rim-hook tableau} of shape $\lambda \vdash n$ and type $\nu = (\nu_1,\dots,\nu_k) \vdash n$ is a filling of the Young diagram of shape $\lambda$ with positive integers  so that
\begin{itemize}
\item each  $i\in [k]$ appears $\nu_i$ times,
\item every row and every column is weakly increasing, and
\item for each $i\in [k]$, the set of squares occupied by $i$ forms a rim-hook, that is, a connected skew shape with no $2\times 2$ square.
\end{itemize}
Let $\mathcal R_{\lambda,\nu}$ be the set of rim-hook tableaux of shape $\lambda$ and type $\nu$.
The height $\hgt(H)$ of a rim-hook $H$ is one less than the number of rows of $H$.  The height $\hgt(R)$ of a rim-hook tableau $R$ is the sum of the heights of the rim-hooks appearing in $R$. 
For each $\nu \in \Par(n)$, let  $$z_\nu:=\prod_i m_i(\nu)! \, i^{m_i(\nu)}$$ where $m_i(\nu)$ is the multiplicity of $i$ in $\nu$ for each $i $.

By the  Murnaghan-Nakayama rule and Theorem~\ref{schurcon}, for each $\nu \in \Par(n)$, the coefficient  $\tilde c_{\nu}(t)$ of $z_\nu^{-1} p_\nu$ in the $p$-basis expansion of $X_G(\x,t)$ is given by
$$\tilde c_{\nu}(t) = \sum_{\scriptsize\begin{array}{c} T \in \mathcal T_P  \\ R\in \mathcal R_{\lambda(T),\nu} \end{array}}(-1)^{\hgt(R)}t^{\inv_G(T)}.$$ 
Since  any rim-hook tableau of type $(n)$ must be a hook filled with $1$'s, we have 
\begin{equation} \label{eqhook} \tilde c_{(n)}(t) = (-1)^{n-1} \sum_{\scriptsize\begin{array}{c} T \in \mathcal T_P  \\ \lambda(T) \in \mathcal H_n \end{array}} (-1)^{r(T)-1} t^{\inv_G(T)},\end{equation}
where $\mathcal H_n$ is the set of hooks of size $n$ and $r(T)$ equals the length of the first row of $T$.

We will construct a sign-reversing, $\inv_G$-preserving involution $\varphi$ on the set of  hook-shaped tableaux in $\mathcal T_P$, which serves to reduce the sum in 
(\ref{eqhook}) to  the sum of $t^{\inv_G(T)}$ over  certain $P$-tableaux $T$ of shape 
$1^n$.   When one reads these tableaux from top down one gets a word in $\mathcal N_P$. 

The involution $\varphi$ is defined as follows:  
Let $T \in \mathcal T_P$ have hook shape.  The {\em arm} of $T$ is the top row of $T$ not including the leftmost entry and the {\em leg} of $T$ is the first column of $T$.    If $\lambda(T) \ne 1^n$,
let $a$ be the right most entry of the arm of $T$ and let $b$ be the lowest entry of the leg  of $T$ satisfying 
$b >_P  x$ for all $x$ higher than $b$ in the leg.   If $b >_P a$ then move $b$ to the right of $a$ in the arm to obtain $\varphi(T)$.   If $b \not >_P a$ then to obtain $\varphi(T)$ move $a$ to the leg by inserting it  as low as possible so that $a >_P y$ for all $y$ higher than  $a$ in the leg.  
Now if $\lambda(T) = 1^n$  then let $b$ be as above.  If $b$ is not at the top of the leg then move $b$ to the empty arm of $T$ to obtain $\varphi(T)$.  If  $b$ is at the top of the leg then let $\varphi(T)=T$, so that $T$ is a fixed point of the map $
\varphi$. 

Clearly $\varphi(T)$ is a hook-shaped $P$-tableaux and it is easy to check  that $\varphi^2(T) = T$ in all cases.   If $T$ is not a fixed point then $r(\varphi(T)) =  r(T) \pm 1$; 
hence $\varphi$ is a sign-reversing involution.  It is also easy to see that  $\inv_G(\varphi(T)) = \inv_G(T)$.  
 It follows that $$\tilde c_{(n)}(t) = (-1)^{n-1}  \sum_{T \in \mathcal F_P}  t^{\inv_G(T)},$$ where $\mathcal F_P$ is the set of fixed points of $\varphi$.  
 For $T \in \mathcal F_P$, let $\sigma(T)$ be the word obtained by reading $T$ from top down ($T$ is a 
 single column).   Clearly $\sigma$ is a bijection from $\mathcal F_P$ to  $\mathcal N_P$.  Since $\inv_G(T) =  \inv_G(\sigma(T))$, we conclude 
that $\tilde c_{(n)}(t) = (-1)^{n-1} \sum_{\sigma \in \mathcal N_P} t^{\inv_{G}(\sigma)}$.  Equation (\ref{pcoefeq2})  follows from this and the fact that 
$\omega p_n = (-1)^{n-1} p_n$.
\end{proof}

\begin{proof}[Proof of Lemma~\ref{pcoeflem3}] 

{\bf Part 1:} For $[k] \subseteq P$, let  $$\mathcal N_{P,k}^{=1} := \{\sigma \in \sg_k : \sigma(1) = 1, \,\,\, \sigma \mbox{ has neither $P$-descents}$$ $$ \mbox{ nor nontrivial left-to-right $P$-maxima}\}.$$ (Here we are viewing permutations in $\sg_k$ as words over alphabet $[k]$.)
We begin by  proving   that  for all $k \ge 1$ such that $[k] \subseteq P$,
\begin{equation} \label{eqprodk} \sum_{\sigma \in \mathcal N_{P,k}^{=1} } t^{\inv_{G}(\sigma)} = \prod_{i=2}^k [b_i]_t  \,.
\end{equation} 
This is equivalent to the recurrence 
\begin{equation}\label{b.recurrence} \sum_{\sigma \in \mathcal N_{P,k}^{=1}}t^{\inv_{G}(\sigma)} = [b_k]_t \sum_{\sigma \in \mathcal N_{P,k-1}^{=1}}t^{\inv_{G}(\sigma)} ,\end{equation} 
for all $k \ge 2$ with $[k] \subseteq P$.  To prove this recurrence we show that each word in $\mathcal N_{P,k}^{=1}$ can be obtained by inserting $k$ into a word in $\mathcal N_{P,k-1}^{=1}$ in one of $b_k$ allowable places.  

First we claim that removing $k$ from any word  $\sigma \in
\mathcal N_{P,k}^{=1}$   introduces neither a $P$-descent nor a nontrivial left-to-right $P$-maximal letter. 
Suppose a $P$-descent is introduced. 
Let  $s$ be the letter to the immediate left of $k$ in $\sigma$ and let $t$ be the letter to the immediate right of $k$.  Then 
$s >_P t$ and $k \not >_P t$.  It follows that $k$ is not comparable to $s$ or $t$.  Since $k > s,t$, this contradicts the fact that $P$ is a natural unit interval order.  We conclude that removal of $k$ from $\sigma$ does not introduce a $P$-descent.   Now suppose removal of $k$ from $\sigma$ introduces a nontrivial left-to-right $P$-maximal letter $m$.  Then $k$ is to the left of $m$ in $\sigma$, and  since $k$ is not a left-to-right maximal letter of $\sigma$, there is a letter $s$ to the left of $k$ such that $s \not <_P k$.  But $s <_P m$ since $s$ is left of $m$ and $m$ is a left-to-right maximal letter of $\sigma$ with $k$ removed.  Again since $k > m,s$, the assumption that $P$ is a natural unit interval order is contradicted.  Therefore removal of $k$ from $\sigma$ does not introduce a nontrivial left-to-right $P$-maximum and our   claim is proved.

Now let $B_k=  \{i \in [k-1]: \{i,k\} \in E(G)\}$; so that $|B_k| = b_k$.  Let $\alpha \in \mathcal N_{P,k-1}^{=1}$. The 
letters of $B_k$ form a subword $\alpha_{i_1},\alpha_{i_2},\dots, \alpha_{i_{b_k}}$ of $\alpha$.
 To avoid introducing a $P$-descent when inserting $k$ into  $\alpha$, the insertion must be made at the end of $\alpha$ or just to the left of  one of the $\alpha_{i_j}$'s.  To also avoid introducing a nontrivial left-to-right $P$-maximum or violating the condition that the first letter is 1,   $k$ must not be inserted just  to the left of  $\alpha_{i_1}$.  If $k$ is inserted at the 
end of $\alpha$ then no $G$-inversions are introduced. If  $k$ is inserted in $\alpha$ to  the immediate left of  
$\alpha_{i_j}$  then $b_k-j+1$ $G$-inversions are introduced.   We can therefore conclude that (\ref{b.recurrence}) holds.

{\bf Part 2:} The proof of (\ref{pcoefeq3}) is by induction on $n$.  Assume the result is true for posets of size $n-1$.  We may assume without loss of generality that $P$ is a natural unit interval order on $[n]$.
 It follows from Proposition~\ref{natunitprop2}  that 
\begin{equation} \label{eqa} \{ i \in \{2,\dots, n\} : \{1,i\} \in E(G) \}= \{2,3,\dots,m_1\}\end{equation}  and 
\begin{equation} \label {eqbj} \{1,i\} \in E(G) \implies b_i = i-1. \end{equation}

Let  $\mathcal N_P^{\ne 1} := \{\sigma \in \mathcal N_P : \sigma_1 \ne 1\}$,
let $P^1$ be the induced subposet of $P$  on $[n]\setminus \{1\}$, and let $G^1=\inc(P_1)$.  
We claim that 
\begin{equation} \label{ne1eq} \sum_{\sigma \in \mathcal N_P^{\ne 1}} t^{\inv_G(\sigma)} = t [m_1-1]_t \sum_{\sigma \in \mathcal N_{P^{1}}} t^{\inv_{G^1}(\sigma)}.
\end{equation}
To prove (\ref{ne1eq}) we  show that  each word in $\mathcal N_P^{\ne 1}$ can be obtained by inserting $1$ into a word in $\mathcal N_{P^{1}}$ in one of $m_1-1$ allowable places.
By an argument similar to the argument used to prove (\ref{b.recurrence}), we observe that removing $1$ from any word  $\sigma \in \mathcal N_P^{\ne 1} $ introduces neither a $P$-descent nor a nontrivial left-to-right $P$-maximum.  
Given a word $\alpha \in \mathcal N_{P^1}$, in order to avoid creating a $P$-descent or having $1$ as  the leftmost   letter when inserting $1$ into $\alpha$,  by (\ref{eqa}) the insertion must be made immediately to the right of a letter in $\{2,3,\dots,m_1\}$. Equation (\ref{ne1eq}) is a consequence of this.

Now by (\ref{ne1eq}), (\ref{eqa}), and the induction hypothesis  applied to the natural unit interval order $P^1$ we have
\begin{eqnarray} \nonumber \sum_{\sigma \in \mathcal N_P^{\ne 1}} t^{\inv_G(\sigma)} 
&=& t [m_1-1]_t \,\,[n-1]_t \prod_{i=3}^{m_1} [b_i-1]_t \prod_{i=m_1+1}^n [b_i]_t  \\
\nonumber &=& t [n-1]_t \,\, [m_1-1]_t  \prod_{i=3}^{m_1} [i-2]_t  \prod_{i=m_1+1}^n [b_i]_t 
\\ \nonumber &=& t [n-1]_t \,\, [m_1-1]_t! \prod_{i=m_1+1}^n [b_i]_t 
\\ \nonumber &=& t [n-1]_t \,\, \prod_{i=2}^{m_1} [b_i]_t \prod_{i=m_1+1}^n [b_i]_t 
\\ \label{eqproda1} &=& t [n-1]_t \,\, \prod_{i=2}^{n} [b_i]_t ,
\end{eqnarray}
with the second and fourth equations following from (\ref{eqbj}).

Now by  (\ref{eqprodk}) and (\ref{eqproda1}), we can  conclude that  \begin{eqnarray*} \sum_{\sigma \in \mathcal N_P}  t^{\inv_G(\sigma)} &=& \sum_{\sigma \in \mathcal N_{P,n}^{=1}} t^{\inv_G(\sigma) } +  \sum_{\sigma \in \mathcal N_P^{\ne 1}  }t^{\inv_G(\sigma)} \\
&=&( \prod_{i=2}^n [b_i]_t )+( t[n-1]_t \prod_{i=2}^n [b_i]_t )\\
&=& [n]_t \prod_{i=2}^n [b_i]_t,
 \end{eqnarray*}
 as desired.
 \end{proof}

  In Conjecture \ref{powercon1} and Proposition~\ref{powercon2} below, we present two proposed formulae for coefficients of the $p$-basis expansion of $\omega X_{G}(\x,t)$.
    For a partition $\mu=(\mu_1 \ge \mu_2 \ge \dots \ge \mu_l)$  of $n$ and  a natural unit interval order $P$ on $[n]$, 
   let $\mathcal N_{P,\mu}$ be the set of permutations $\sigma \in \sg_n$ such that when we break $\sigma$ (in one line 
   notation) into contiguous segments of lengths  $\mu_1,\ldots,\mu_l$ (from left to right), each contiguous segment 
   has neither a $P$-descent nor a nontrivial left-to-right $P$-maximum. Thus $\mathcal N_{P,(n)} = \mathcal N_P$.

 \begin{con}\footnote{See Section~\ref{newsec}: {\em Recent developments}}  \label{powercon1} Let $P$ be a natural unit interval order on $[n]$ with incomparability graph $G$.  Then
 \begin{equation} \label{pexpandeq}  \omega X_G(\x,t) = \sum_{\mu \vdash n}  z_\mu^{-1} p_\mu  \sum_{\sigma \in \mathcal N_{P,\mu}} t^{\inv_G(\sigma)}.\end{equation}  Moreover, the palindromic polynomial  $\sum_{\sigma \in \mathcal N_{P,\mu}} t^{\inv_G(\sigma)}$ is unimodal.
  \end{con}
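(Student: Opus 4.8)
The plan is to reduce the expansion (\ref{pexpandeq}) to a single signed identity for each $\mu\vdash n$, to prove that identity by a sign-reversing involution generalizing the arm/leg exchange of Lemma~\ref{pcoeflem2}, and then to treat the unimodality clause separately, since it is by far the deeper assertion. Starting from the Schur expansion $X_G(\x,t)=\sum_{T\in\mathcal T_P}t^{\inv_G(T)}s_{\lambda(T)}$ of Theorem~\ref{schurcon}, I would apply $\omega$ together with the Murnaghan--Nakayama rule (exactly as in the proof of Lemma~\ref{pcoeflem2}, but retaining all types $\mu$ rather than only $\mu=(n)$) to obtain
\[
\omega X_G(\x,t)=\sum_{\mu\vdash n}z_\mu^{-1}p_\mu\,(-1)^{n-\ell(\mu)}\!\!\sum_{\substack{T\in\mathcal T_P\\ R\in\mathcal R_{\lambda(T),\mu}}}\!\!(-1)^{\hgt(R)}t^{\inv_G(T)},
\]
using $\omega p_\mu=(-1)^{n-\ell(\mu)}p_\mu$. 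Matching coefficients of $z_\mu^{-1}p_\mu$ against the right-hand side of (\ref{pexpandeq}), the formula is equivalent to proving, for each $\mu$, the identity
\[
\sum_{\substack{T\in\mathcal T_P\\ R\in\mathcal R_{\lambda(T),\mu}}}(-1)^{\hgt(R)}t^{\inv_G(T)}=(-1)^{n-\ell(\mu)}\sum_{\sigma\in\mathcal N_{P,\mu}}t^{\inv_G(\sigma)}.
\]

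To establish this identity I would seek a sign-reversing, $\inv_G$-preserving involution on the pairs $(T,R)$ that extends the map $\varphi$ of Lemma~\ref{pcoeflem2}. Since $\varphi$ already handles the single rim-hook forced by $\mu=(n)$, the natural generalization is to select, by a fixed rule, one rim-hook of $R$ and replay the arm/leg move on the corresponding cells of $T$: each such move should flip the parity of $\hgt(R)$ while leaving $\inv_G(T)$ unchanged, the latter being precisely where Lemma~\ref{pathcolorlem} enters, since the two-color subgraphs governing the exchanged cells are disjoint unions of increasing paths on which these swaps preserve $\inv_G$. The fixed points should be those pairs admitting no exchange; one then wants to verify that reading $T$ along the successive rim-hooks of $R$ produces exactly the words of $\mathcal N_{P,\mu}$, with the blocks of lengths $\mu_1,\ldots,\mu_\ell$ marked by the rim-hooks, $P$-descents permitted only at block boundaries, and the height of $R$ pinned to the constant value $n-\ell(\mu)$ so that the residual sign matches. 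The principal obstacle is that for $\mu\neq(n)$ the rim-hooks need not occupy hook shapes and may bend and share rows with their neighbors; localizing $\varphi$ to a single bent rim-hook while simultaneously keeping the global map an involution, enforcing the $\mathcal N_{P,\mu}$ block conditions, and checking that inversions counted between cells in different rim-hooks are unaffected is a substantial piece of bookkeeping that I regard as the heart of the first assertion. As a consistency check, setting $t=1$ should recover Stanley's power-sum expansion from \cite{St3}.

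Once (\ref{pexpandeq}) is in hand, palindromicity of each coefficient is automatic: $\omega X_G(\x,t)$ is palindromic by Corollary~\ref{genpalcor}, and linear independence of the $p_\mu$ (over $\Q[t]$) forces each $\sum_{\sigma\in\mathcal N_{P,\mu}}t^{\inv_G(\sigma)}$ to satisfy $c_\mu(t)=t^{|E(G)|}c_\mu(t^{-1})$. Thus the genuinely new content is unimodality, and here I do not expect the involution to help. For $\mu=(n)$ unimodality followed from the product factorization $[n]_t\prod_{i=2}^n[b_i]_t$ of Lemma~\ref{pcoeflem3} together with Proposition~\ref{tooluni}, but no analogous factorization is available for general $\mu$, because $\inv_G$ couples the blocks of $\sigma$ rather than splitting as a product over them; a first attempt would be to generalize the two recurrences of Lemma~\ref{pcoeflem3} to a sum of products of palindromic unimodal polynomials sharing a common center, but I expect this to fail. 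The realistic route is indirect: deduce unimodality of the coefficients from Schur- or $e$-unimodality of $X_G(\x,t)$ via the representation-theoretic and hard-Lefschetz picture of Conjecture~\ref{introhesschrom}. Since those inputs are themselves open, I expect the unimodality clause to remain conjectural under this approach, and it is honestly the principal obstruction to a complete proof.
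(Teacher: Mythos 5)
You should first be clear about what you are being compared against: this statement is Conjecture~\ref{powercon1} of the paper, and the paper does not prove it. The paper establishes only the case $\mu=(n)$ (Theorem~\ref{pcoefth}, via Lemmas~\ref{pcoeflem2} and~\ref{pcoeflem3}), the equivalence of (\ref{pexpandeq}) with a second formulation (Proposition~\ref{powercon2}), and the case $P=P_{n,2}$; Section~\ref{newsec} records that (\ref{pexpandeq}) was subsequently proved by Athanasiadis \cite{At}, while the unimodality clause remains open, following only from Conjecture~\ref{quasistan}. Judged on its own terms, your proposal also falls short, in one place honestly and in one place incorrectly. The honest gap: your Murnaghan--Nakayama reduction (with $\omega p_\mu=(-1)^{n-l(\mu)}p_\mu$) is correct and is exactly the paper's route for $\mu=(n)$, but the entire content of the first assertion then lies in the sign-reversing, $\inv_G$-preserving involution on pairs $(T,R)$ for arbitrary $\mu$, which you describe only by the properties you would like it to have. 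The paper's involution $\varphi$ exists only because for $\mu=(n)$ the relevant objects are hook-shaped $P$-tableaux with a forced rim-hook filling; nothing in your sketch shows that the arm/leg move can be localized to a bent rim hook sharing rows with its neighbors while remaining a global involution that preserves $\inv_G$ and enforces the $\mathcal N_{P,\mu}$ conditions at the fixed points. Calling this ``bookkeeping'' understates it: it is the open problem, and the proof of (\ref{pexpandeq}) cited in Section~\ref{newsec} does not proceed by such an involution.

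Second, the unimodality clause contains an actual error. You propose to deduce unimodality of $\sum_{\sigma\in\mathcal N_{P,\mu}}t^{\inv_G(\sigma)}$ from ``Schur- or $e$-unimodality'' of $X_G(\x,t)$ via Conjecture~\ref{hesschrom} and hard Lefschetz. But Proposition~\ref{uniconsprop} delivers only Schur-unimodality, and Schur-unimodality does not imply unimodality of the coefficients of $z_\mu^{-1}p_\mu$: the coefficient of $z_\mu^{-1}p_\mu$ in a Schur-positive function is a nonnegative combination of character values $\chi^\lambda(\mu)$, which can be negative, as in
$$s_{1^2}=\tfrac{1}{2}\,p_{1^2}-\tfrac{1}{2}\,p_2,$$
so a Schur-positive difference of consecutive coefficients of $\omega X_G(\x,t)$ need not be $p$-positive. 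The implication that does hold --- and the one the paper states in the remark following the conjecture --- routes through Conjecture~\ref{quasistan}: $e$-unimodality of $X_G(\x,t)$ is $h$-unimodality of $\omega X_G(\x,t)$, and each $h_\lambda$ is $p$-positive, whence $p$-unimodality. Either way the clause stays conditional on an open conjecture, but your specific reduction to the Hessenberg picture is insufficient as stated. Finally, your claim that no block factorization exists ``because $\inv_G$ couples the blocks'' is contradicted by Proposition~\ref{powercon2}: the paper decouples $\inv_G$ over blocks by summing over set compositions, which is exactly how the factor $\prod_{i=1}^{l(\mu)}[\mu_i]_t$ is extracted; the true obstruction to applying Proposition~\ref{tooluni2} is that the remaining summands need not be unimodal or share a common center of symmetry.
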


Unimodality of $\sum_{\sigma \in \mathcal N_{P,\mu}} t^{\inv_G(\sigma)}$ is a consequence of $p$-unimodality of $\omega X_G(\x,t)$, which in turn  is a consequence of 
Conjecture~\ref{quasistan}. It can be shown that the formula for the coefficient of $p_\mu$ predicted by Conjecture~\ref{powercon1} reduces to the formula given \cite[Theorem 2.6]{St3} when $t=1$.
In the case that $\mu = (n)$ the formula clearly reduces to (\ref{pcoefeq2}). 

We now give an alternative conjectural characterization of the coefficients of $z_\mu^{-1} p_\mu$.  Let 
$P$ be a natural unit interval order of size $n$.  Given a word $a_1 \cdots  a_k$ where each $a_i \in P$, we say that $i$ is a $P$-ascent if $a_i <_P a_{i+1}$.  Let $\widetilde {\mathcal N}_{P}$ be the set of 
words $\alpha\in P^n$ that have distinct letters, start with the smallest letter of $P$ (in the natural order on $\pp$),  and have no $P$-ascents.  

\begin{lemma} \label{lemalt} Let 
$P$ be a natural unit interval order of size $n$ and let $G=\inc(P)$.  Then
$$\sum_{\sigma \in \mathcal N_P} t^{\inv_G(\sigma)} = [n]_t \sum_{\sigma \in \widetilde {\mathcal N}_{P}} t^{\inv_G(\sigma)}$$
\end{lemma}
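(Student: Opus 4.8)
The plan is to reduce Lemma~\ref{lemalt} to the product formula already established for $\mathcal N_P$, and then to prove the matching product formula for $\widetilde{\mathcal N}_P$ by an insertion argument that mirrors Part~1 of the proof of Lemma~\ref{pcoeflem3}. By Theorem~\ref{pcoefth} (equivalently, by Lemma~\ref{pcoeflem3}) we already know
$$\sum_{\sigma \in \mathcal N_P} t^{\inv_G(\sigma)} = [n]_t \prod_{i=2}^n [b_i]_t,$$
so it suffices to prove that
$$\sum_{\sigma \in \widetilde{\mathcal N}_P} t^{\inv_G(\sigma)} = \prod_{i=2}^n [b_i]_t ,$$
after which multiplying by $[n]_t$ yields the claim. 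I would prove this by induction on $n$, now inserting the \emph{largest} letter $n$ (rather than the smallest, as in Lemma~\ref{pcoeflem3}) and tracking the condition ``no $P$-ascents'' in place of ``no $P$-descents.'' We may assume $P$ is a natural unit interval order on $[n]$; let $P'$ be the induced subposet on $[n-1]$, which is again a natural unit interval order with smallest element $1$, with $G'=\inc(P')$ and with unchanged values $b_2,\dots,b_{n-1}$.

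For the inductive step I claim each word of $\widetilde{\mathcal N}_P$ is obtained by inserting $n$ into a unique $\alpha \in \widetilde{\mathcal N}_{P'}$ in one of exactly $b_n$ admissible positions. Since $n$ is the maximum in the natural order it is maximal in $P$, so placing $n$ immediately before any letter never creates a $P$-ascent of the form $(n,\cdot)$; the only constraint is that the letter $a$ immediately preceding $n$ satisfy $a\not<_P n$, that is, $a$ is incomparable to $n$, i.e. $a\in B_n:=\{i<n:\{i,n\}\in E(G)\}$. As $n\neq 1$, the inserted $n$ is never leftmost, so the ``starts with $1$'' condition is preserved. Thus the admissible slots are exactly the $b_n$ positions immediately following a letter of $B_n$, and inserting $n$ just after the $j$-th letter of $B_n$ (read left to right) introduces precisely $b_n-j$ new $G$-inversions; summing $t^{b_n-j}$ over $j\in[b_n]$ contributes the factor $[b_n]_t$.

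The one point that genuinely uses the natural unit interval order hypothesis — and the main thing to verify carefully — is that deleting $n$ really returns a word in $\widetilde{\mathcal N}_{P'}$, i.e. that removing $n$ from $\cdots a\,n\,b\cdots$ creates no $P$-ascent $a<_P b$. Here $a\in B_n$ is incomparable to $n$. Using Proposition~\ref{natunitprop2} (or condition~(2) of the definition directly): if $a<_P b$ held, then since $b<n$ in the natural order, while $n$ is incomparable to $a$ and — by condition~(1) together with transitivity — also incomparable to $b$, the subposet $\{a<_P b\}+\{n\}$ would be induced, forcing $a<n<b$ and contradicting $b<n$. Hence $a$ lies below nothing in $P$, no ascent is created, and removal is a well-defined inverse to insertion. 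This yields the recurrence $\sum_{\sigma\in\widetilde{\mathcal N}_P} t^{\inv_G(\sigma)} = [b_n]_t \sum_{\alpha\in\widetilde{\mathcal N}_{P'}} t^{\inv_{G'}(\alpha)}$; with the base case $n=1$, where $\widetilde{\mathcal N}_P=\{(1)\}$ and the product is empty, the induction closes and Lemma~\ref{lemalt} follows.
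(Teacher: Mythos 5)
Your proposal is correct and follows essentially the same route as the paper: the paper's own proof of Lemma~\ref{lemalt} likewise combines (\ref{pcoefeq3}) with the identity $\sum_{\sigma \in \widetilde{\mathcal N}_{P}} t^{\inv_G(\sigma)} = \prod_{i=2}^n [b_i]_t$, whose proof it declares ``similar to that of (\ref{eqprodk})'' and leaves to the reader. Your insertion-of-the-largest-letter argument is exactly that omitted verification, carried out correctly (including the key use of condition (2) of the definition of a natural unit interval order to show that deleting $n$ creates no $P$-ascent).
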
 

\begin{proof} The result   follows from (\ref{pcoefeq3})  and the  equation,
 $$\sum_{\sigma \in \widetilde {\mathcal N}_{P}} t^{\inv_G(\sigma)} = \prod_{i=2}^n [b_i]_t,$$
 where $b_i$ is as in (\ref{bi.eq}).  The proof of this equation is similar to that of (\ref{eqprodk}) and is left to the reader.
\end{proof}

Now let $P$ be a natural unit interval order on $[n]$.  For $\mu=(\mu_1 \ge \mu_2 \ge \dots \ge \mu_l) \in \Par(n)$, 
   let $\widetilde {\mathcal N}_{P,\mu}$ be the set of permutations $\sigma \in \sg_n$ such that when we break $\sigma$ (in one line 
   notation) into contiguous segments of lengths  $\mu_1,\ldots,\mu_l$ (from left to right), each contiguous segment 
   has no $P$-ascent and begins with the  smallest letter of the segment in the natural order on $[n]$. 

  \begin{prop}   \label{powercon2} Let $P$ be a natural unit interval order on $[n]$ with incomparability graph $G$.  Then for all $\mu\in \Par( n)$,
\begin{equation} \label{powersumeq} \sum_{\sigma \in \mathcal N_{P,\mu}} t^{\inv_G(\sigma)} = \prod_{i=1}^{l(\mu)} [\mu_i]_t\sum_{\sigma \in \widetilde{\mathcal N}_{P,\mu}} t^{\inv_G(\sigma)}.\end{equation}  
Consequently,  (\ref{pexpandeq}) is equivalent to 
\begin{equation} \label{powereq} \omega X_G(\x,t) =  \sum_{\mu \vdash n}  z_\mu^{-1} p_\mu \prod_{i=1}^{l(\mu)} [\mu_i]_t\sum_{\sigma \in \widetilde{\mathcal N}_{P,\mu}} t^{\inv_G(\sigma)}.\end{equation}
 \end{prop}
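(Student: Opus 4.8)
The plan is to reduce (\ref{powersumeq}) to the $\mu=(n)$ case already established in Lemma~\ref{lemalt}, applied independently to each of the $l=l(\mu)$ contiguous segments. The crucial observation is that $\inv_G$ splits cleanly along the segment structure. Fix $\sigma\in\S_n$ and break it into contiguous segments of lengths $\mu_1,\dots,\mu_l$; let $B_1,\dots,B_l$ be the sets of values occupying these segments, so that $(B_1,\dots,B_l)$ is an ordered set partition of $[n]$ with $|B_i|=\mu_i$. Every $G$-inversion of $\sigma$ is either \emph{internal} to one segment or \emph{external}, joining two distinct segments. Since the letters of segment $p$ all precede those of segment $q$ in one-line notation whenever $p<q$, an external $G$-inversion between segments $p<q$ occurs exactly when some $a\in B_p$ and $b\in B_q$ satisfy $a>b$ and $\{a,b\}\in E(G)$; this depends only on the ordered set partition $(B_1,\dots,B_l)$, not on the arrangement within the segments. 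Writing $e(B_1,\dots,B_l)$ for the number of external inversions, we get $\inv_G(\sigma)=e(B_1,\dots,B_l)+\sum_{i=1}^l\inv_{G_i}(\sigma^{(i)})$, where $\sigma^{(i)}$ is the $i$th segment regarded as a word on $B_i$ and $G_i:=\inc(P|_{B_i})$ is the subgraph of $G$ induced on $B_i$.

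Next I would observe that the defining conditions of both $\mathcal N_{P,\mu}$ and $\widetilde{\mathcal N}_{P,\mu}$ are imposed segment-by-segment and involve only comparisons among letters lying in the same segment. Restricting $P$ to $B_i$ again gives a natural unit interval order, since conditions (1) and (2) in the definition refer only to induced subposets and so pass to $P|_{B_i}$. Hence the $i$th segment of a word in $\mathcal N_{P,\mu}$ is precisely an element of $\mathcal N_{P|_{B_i}}$, and the $i$th segment of a word in $\widetilde{\mathcal N}_{P,\mu}$ is precisely an element of $\widetilde{\mathcal N}_{P|_{B_i}}$. Consequently both index sets decompose as disjoint unions over ordered set partitions $(B_1,\dots,B_l)$ of $[n]$ with $|B_i|=\mu_i$, the choices in distinct segments being independent. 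Combined with the inversion decomposition, this yields
\begin{equation*}
\sum_{\sigma\in\mathcal N_{P,\mu}}t^{\inv_G(\sigma)}
=\sum_{(B_1,\dots,B_l)}t^{e(B_1,\dots,B_l)}\prod_{i=1}^l\Big(\sum_{\tau\in\mathcal N_{P|_{B_i}}}t^{\inv_{G_i}(\tau)}\Big),
\end{equation*}
together with the analogous identity for $\widetilde{\mathcal N}_{P,\mu}$, in which each inner sum runs over $\widetilde{\mathcal N}_{P|_{B_i}}$.

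Applying Lemma~\ref{lemalt} to the natural unit interval order $P|_{B_i}$ of size $\mu_i$ replaces each factor $\sum_{\tau\in\mathcal N_{P|_{B_i}}}t^{\inv_{G_i}(\tau)}$ by $[\mu_i]_t\sum_{\tau\in\widetilde{\mathcal N}_{P|_{B_i}}}t^{\inv_{G_i}(\tau)}$. Since the scalar $\prod_{i=1}^l[\mu_i]_t$ is the same for every ordered set partition of type $\mu$, it factors out of the sum over $(B_1,\dots,B_l)$, and refolding the remaining expression as the $\widetilde{\mathcal N}_{P,\mu}$ generating function produces exactly (\ref{powersumeq}). The equivalence of (\ref{pexpandeq}) and (\ref{powereq}) is then immediate: substituting (\ref{powersumeq}) into the coefficient of each $z_\mu^{-1}p_\mu$ in (\ref{pexpandeq}) gives (\ref{powereq}) term by term. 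The only genuine content beyond bookkeeping is the inversion decomposition and the verification that the per-segment conditions localize exactly to $\mathcal N_{P|_{B_i}}$ and $\widetilde{\mathcal N}_{P|_{B_i}}$; once these are in place, Lemma~\ref{lemalt} does all the work, so I expect the main care to go into stating the external-inversion count $e(B_1,\dots,B_l)$ precisely and confirming its independence of the within-segment arrangements.
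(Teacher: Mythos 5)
Your proposal is correct and takes essentially the same approach as the paper: the paper likewise splits $\inv_G(\sigma)$ into within-segment inversions plus external inversions depending only on the ordered set partition (recorded there as $\inv_G({\bf B}\!\uparrow)$, the inversion count of the block-sorted word), sums over set compositions of type $\mu$, and applies Lemma~\ref{lemalt} blockwise to the restricted posets $P|_{B_i}$. The only cosmetic difference is that you denote the external inversion count abstractly by $e(B_1,\dots,B_l)$ where the paper realizes it concretely as $\inv_G({\bf B}\!\uparrow)$.
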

 
 \begin{proof} A set composition of $[n]$ is a sequence ${\bf B}=(B_1,\dots,B_l)$ of mutually disjoint nonempty sets  whose union in $[n]$.  The type of ${\bf B}$ is the sequence $(|B_1|,\dots,|B_l|)$. Given a set $S \subseteq \pp$, let $ S\!\uparrow$ be the word obtained by arranging the elements of $S$ in increasing order and given a set composition ${\bf B}=(B_1,\dots,B_l)$ on $[n]$, let ${\bf B}\!\uparrow$ be the permutation $B_1\!\uparrow \circ \dots \circ B_l\!\uparrow$, where  $\circ $ denotes concatenation.
 
Now let $\mu$ be a partition of $n$ of length $l$ and let    $\sigma \in \S_n$.  For each  $i \in [l]$, let $\sigma^{(i)}$ be the $i$th segment of $\sigma$ under the segmentation determined by $\mu$ and let $B_i(\sigma)$ be the set of letters in $\sigma^{(i)}$.  So ${\bf B}(\sigma):=(B_1(\sigma), \dots, B_l(\sigma)) \in C_\mu$, where $C_\mu$ is the set of set compositions of $[n]$ of type $\mu$.  
We have $$\inv_G(\sigma) = \inv_G({\bf B}(\sigma)\!\uparrow ) + \sum_{i=1}^l \inv_G(\sigma^{(i)}).$$  It follows that 
 $$ \sum_{\sigma \in \mathcal N_{P,\mu}} t^{\inv_G(\sigma)} = \sum_{{\bf B} \in C_\mu} t^{\inv_G({\bf B}\uparrow)} \prod_{i=1}^l \sum_{\alpha \in \mathcal N_{P|B_i}} t^{\inv_G(\alpha)}$$ 
 and  $$ \sum_{\sigma \in \widetilde{ \mathcal N}_{P,\mu}} t^{\inv_G(\sigma)} = \sum_{{\bf B} \in C_\mu} t^{\inv_G({\bf B}\uparrow)} \prod_{i=1}^l \sum_{\alpha \in \widetilde{\mathcal N}_{P|B_i}} t^{\inv_G(\alpha)},$$ 
 where $P|B_i$ is the restriction of $P$ to the set $B_i$.  The result now follows from Lemma~\ref{lemalt}.
  \end{proof}

\begin{prop} Conjecture~\ref{powercon1} is true for $P:=P_{n,2}$. \end{prop}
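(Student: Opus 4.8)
The plan is to combine the known generating function for the path with the reformulation of the conjecture supplied by Proposition~\ref{powercon2}. Since $G=G_{n,2}$ is the path $G_n=1-2-\cdots-n$, Proposition~\ref{powercon2} shows that (\ref{pexpandeq}) is equivalent to (\ref{powereq}), so it suffices to establish
\[
\omega X_{G_n}(\x,t)=\sum_{\mu\vdash n}z_\mu^{-1}p_\mu\prod_{i=1}^{l(\mu)}[\mu_i]_t\sum_{\sigma\in\widetilde{\mathcal N}_{P_{n,2},\mu}}t^{\inv_{G_n}(\sigma)},
\]
together with the unimodality assertion. By Example~\ref{pathex}, the left side has a known generating function: starting from (\ref{pathchrom}) and applying $\omega$ coefficientwise to the identity $\bigl(E(zt)-tE(z)\bigr)\bigl(1+\sum_n X_{G_n}(\x,t)z^n\bigr)=(1-t)E(z)$, and using $\omega e_m=h_m$, one obtains $1+\sum_{n\ge1}\omega X_{G_n}(\x,t)z^n=(1-t)H(z)/(H(zt)-tH(z))$, where $H(z)=\sum_{m\ge0}h_m z^m$. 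Thus the whole task reduces to showing the right side above has this same generating function.

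The key combinatorial step is to evaluate the inner sum. First I would show that for $P_{n,2}$ every block of a word in $\widetilde{\mathcal N}_{P_{n,2},\mu}$ is forced to be a set of consecutive integers written in increasing order. Indeed, a block begins with its smallest letter $m$ and has no $P$-ascent; for $P_{n,2}$ the condition $a_i\not<_P a_{i+1}$ reads $a_{i+1}\le a_i+1$, so from $m$ the block is forced to continue $m,m+1,m+2,\dots$. Consequently a word $\sigma\in\widetilde{\mathcal N}_{P_{n,2},\mu}$ is exactly a tiling of $[n]$ by consecutive intervals of sizes $\mu_1,\dots,\mu_{l(\mu)}$ (its blocks, in the order they appear), and recording the left-to-right-by-value order of these intervals yields a bijection $\widetilde{\mathcal N}_{P_{n,2},\mu}\to\S_l$, where $l:=l(\mu)$. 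Under this bijection a $G_n$-inversion of $\sigma$ occurs precisely at a boundary between two value-adjacent intervals whose upper interval carries the smaller block index, so $\inv_{G_n}(\sigma)=\des(w)$ for the associated $w\in\S_l$. Hence $\sum_{\sigma\in\widetilde{\mathcal N}_{P_{n,2},\mu}}t^{\inv_{G_n}(\sigma)}=A_l(t)$, the Eulerian polynomial $\sum_{w\in\S_l}t^{\des(w)}$.

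With this identification the right side becomes $\sum_{\mu}z_\mu^{-1}p_\mu\prod_i[\mu_i]_t\,A_{l(\mu)}(t)$, and I would compute its generating function by the exponential formula for power sums. Setting $W:=\sum_{k\ge1}\tfrac{p_k}{k}[k]_t z^k$ and marking each part, one has $\sum_{\mu:\,l(\mu)=l}z_\mu^{-1}p_\mu\prod_i[\mu_i]_t z^{|\mu|}=W^l/l!$, so the generating function equals $\sum_{l\ge0}A_l(t)\,W^l/l!$. Using the exponential generating function of the Eulerian polynomials, $\sum_{l\ge0}A_l(t)\,x^l/l!=(t-1)/(t-e^{(t-1)x})$, together with $(t-1)[k]_t=t^k-1$, gives $(t-1)W=\sum_k\tfrac{p_k}{k}(t^k-1)z^k$, whence $e^{(t-1)W}=H(zt)/H(z)$ by the classical identity $H(y)=\exp\sum_k p_k y^k/k$. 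Substituting $x=W$ yields $(t-1)/\bigl(t-H(zt)/H(z)\bigr)=(1-t)H(z)/(H(zt)-tH(z))$, which matches the generating function of the left side and establishes the identity.

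Finally, unimodality of $\sum_{\sigma\in\mathcal N_{P_{n,2},\mu}}t^{\inv_{G_n}(\sigma)}=\prod_i[\mu_i]_t\cdot A_{l(\mu)}(t)$ follows because each $[\mu_i]_t$ and the Eulerian polynomial $A_{l(\mu)}(t)$ are palindromic and unimodal, and a product of palindromic, unimodal polynomials is again palindromic and unimodal (Proposition~\ref{tooluni}). I expect the main obstacle to be the combinatorial step of the second paragraph: verifying carefully that the blocks are forced into consecutive increasing runs and that the resulting bijection to $\S_l$ carries $\inv_{G_n}$ exactly onto $\des$. Once this Eulerian identification is secured, the remaining generating-function manipulation is routine.
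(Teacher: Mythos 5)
Your proposal is correct, and its combinatorial core coincides with the paper's proof: the paper likewise observes that each contiguous segment of a word in $\widetilde{\mathcal N}_{P_{n,2},\mu}$ is forced to be a run of consecutive integers, sets up a bijection with $\S_{l(\mu)}$ (the paper standardizes the sequence of segment-starting values, which is exactly the inverse of your permutation $w$, and then uses $\inv_{G_{l,2}}(\overline{\sigma})=\des(\overline{\sigma}^{-1})$), and concludes that the inner sum is the Eulerian polynomial $A_{l(\mu)}(t)$, reducing (\ref{powereq}) to the identity $\omega X_{G_{n,2}}(\x,t)=\sum_{\mu\vdash n}z_\mu^{-1}p_\mu A_{l(\mu)}(t)\prod_i[\mu_i]_t$. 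Where you genuinely diverge is in how this last identity is established: the paper simply cites Stembridge \cite{Ste1}, who showed that the coefficient of $z^n$ in the right-hand side of (\ref{pathchrom}) equals $\omega$ applied to the right-hand side of (\ref{stemeq}), whereas you prove it from scratch by summing $\sum_{l}A_l(t)W^l/l!$ with $W=\sum_k \frac{p_k}{k}[k]_t z^k$, invoking the exponential generating function $\sum_l A_l(t)x^l/l!=(t-1)/(t-e^{(t-1)x})$ and the identity $H(y)=\exp\sum_k p_ky^k/k$ to land on $(1-t)H(z)/(H(zt)-tH(z))$, which matches $1+\sum_n\omega X_{G_n}(\x,t)z^n$ from (\ref{symEulereq}). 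Your route is self-contained (it in effect reproves Stembridge's formula), at the cost of some routine formal-power-series bookkeeping; the paper's route is shorter and situates the result in the literature on toric varieties. A further small plus of your write-up: you explicitly dispose of the unimodality assertion in Conjecture~\ref{powercon1} via $\prod_i[\mu_i]_t\cdot A_{l(\mu)}(t)$ and Proposition~\ref{tooluni}, a point the paper's proof passes over in silence (it is implicit in the same product formula). All the individual steps you flag as needing care — the forced consecutive runs, the bijection to $\S_l$, and the identification $\inv_{G_n}(\sigma)=\des(w)$ — check out.
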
 

\begin{proof} 
Let $\sigma \in \widetilde{\mathcal N}_{P,\mu}$ and set $s_i = 1+\sum_{j=1}^{i-1} \mu_j$ and $t_i = \sum_{j=1}^{i} \mu_j$ for all $i \in [l(\mu)]$.  Then the $i$th contiguous segment of $\sigma$ is the sequence $\sigma(s_i),\sigma(s_i)+1,\dots,\sigma(s_i)+\mu_i-1= \sigma(t_i)$ since the segment has no $P$-ascents and $\sigma(s_i)$ is the smallest letter 
in the segment. There are no $G_{n,2}$-inversions within these segments.  The  $G_{n,2}$-inversions of $\sigma$ 
are of the form $(\sigma(s_{i}),\sigma(t_j))$, where $i <j$ and $\sigma(s_i) = \sigma(t_j)+1$.  Now let  $\overline{\sigma}$ be the 
permutation in $\sg_{l(\mu)}$ obtained from $ \sigma(s_1),\dots,\sigma(s_{l(\mu)})$ by replacing the $k$th smallest letter 
by $k$ for each $k$.   Clearly $\sigma(s_i) = \sigma(t_j)+1$ if and only if $\overline{\sigma}(i) =  \overline{\sigma}(j)+1$.  
Hence $\inv_{G_{n,2}}(\sigma) = \inv_{G_{l(\mu),2}}(\overline{\sigma})$.  

Now
$$\sum_{\sigma \in \widetilde{\mathcal N}_{P,\mu}} t^{\inv_{G_{n,2}}(\sigma)} 
= \sum_{\sigma \in \sg_{l(\mu)}} t^{\inv_{G_{l(\mu),2}}(\sigma)},$$
as the map
$\varphi: \widetilde{\mathcal N}_{P,\mu} \to \sg_{l(\mu)}$  defined by  $\varphi(\sigma) = \overline{\sigma}$ is  a bijection.

Since $\inv_{G_{n,2}}(\sigma) = \des(\sigma^{-1})$ for all $\sigma \in \sg_n$, by the definition of Eulerian polynomials given in (\ref{defEuler}), we now have
$$\sum_{\sigma \in \widetilde{\mathcal N}_{P,\mu}} t^{\inv_{G_{n,2}}(\sigma)} = A_{l(\mu)}(t).$$
Hence for $P:=P_{n,2}$ and $G:=G_{n,2}$, equation (\ref{powereq}) reduces to 
\begin{equation} \label{stemeq} \omega X_{G_{n,2}}(\x,t) = \sum_{\mu \vdash n}  z_\mu^{-1} p_\mu  
A_{l(\mu)}(t) \prod_{i=1}^{l(\mu)} [\mu_i]_t .\end{equation}
Stembridge \cite{Ste1} showed that the coefficient of $z^n$ on the right hand side of (\ref{pathchrom}) is equal to $\omega$ applied to the right hand side of (\ref{stemeq}).  Hence (\ref{stemeq}) holds and the conjecture is verified for $P_{n,2}$.
\end{proof}

\section{Some calculations} \label{calcsec}
 Recall the definition of $P(\bf m)$ given prior to Proposition~\ref{natunitprop2}.
 
  \begin{prop} \label{m3n}
Let $P=P({\bf m})$, where ${\bf m}=(m_1,\dots,m_{n-1})$ is a weakly increasing sequence of positive integers such that  $i \le m_i$ for each $i$. 
 Assume that $m_1 \geq 2$ and that $m_i=n$ for $i=3,\dots,n-1$. 
Define $B_\lambda(t)$ and $C_\lambda(t)$ for each partition $\lambda$ of $n$ by $$X_{\inc(P)}(\xx,t)=\sum_{\lambda\in \Par(n)}B_\lambda(t)s_\lambda=\sum_{\lambda \in \Par(n)} C_\lambda(t)e_\lambda.$$
Define \begin{eqnarray*} 
D(t)&:=& t^{m_1-1} [n-m_1]_t [m_2 -2]_t + t^{m_2-2}  [n-m_2]_t [m_1]_t  \\
 \mbox{ and } 
\\ E(t) &:=& t^{m_1-1}[n-3]_t[n-m_1]_t[m_2-2]_t+t^{m_2-2}[n-1]_t[n-m_2]_t[m_1-2]_t.\end{eqnarray*}
The following equalities hold:
\begin{enumerate}
\item if $\lambda\not\in \{1^n,21^{n-2},2^21^{n-4}\}$, then $B_\lambda(t)=0$, \item $B_{1^n}(t)=[m_1]_t[m_2-1]_t[n-2]_t!$, \item $B_{21^{n-2}}(t)=[n-3]_t!D(t)$, 
 \item $C_{(n-2,2)}(t)=B_{2^21^{n-4}}(t)=[n-4]_t![2]_t[n-m_2]_t [n-m_1-1]_t  t^{m_1+m_2-4}$, \item if $\lambda \not\in \{(n),(n-1,1),(n-2,2)\}$, then $C_\lambda(t)=0$, \item $C_{(n)}(t)=[n]_t [n-3]_t![m_1-1]_t [m_2-2]_t$,
\item $C_{(n-1,1)}(t)=[n-4]_t!E(t)$.
\end{enumerate}
\end{prop}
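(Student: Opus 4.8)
The plan is to compute the three nonzero Schur coefficients directly from Theorem~\ref{schurcon}, and then obtain the elementary coefficients by inverting a small unitriangular change of basis. First I would analyze the poset $P$. By hypothesis only the elements $1$ and $2$ can lie below other elements (since $m_i=n$ for $i\geq 3$ forces $i\not<_P j$ for all $i\geq 3$), and $1\not<_P 2$ because $m_1\geq 2$. Hence every chain of $P$ has at most two elements, and the only two-element chains are $1<_P j$ (for $j>m_1$) and $2<_P k$ (for $k>m_2$). In a $P$-tableau each row is a chain, so no row has more than two boxes; thus $\lambda(T)_1\leq 2$ and every admissible shape is of the form $2^a1^{n-2a}$. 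Since $1$ and $2$ each occur exactly once, at most two rows have length two, so $a\leq 2$. This proves~(1), restricting attention to $\lambda\in\{1^n,21^{n-2},2^21^{n-4}\}$. Statement~(2) is then immediate from Theorem~\ref{s1n}: here $a_1=m_1-1$, $a_2=m_2-2$ and $a_i=n-i$ for $i\geq 3$, so $\prod_{j=1}^{n-1}[1+a_j]_t=[m_1]_t[m_2-1]_t[n-2]_t!$.

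The computational heart is the evaluation of $B_{21^{n-2}}(t)=\sum_{\lambda(T)=21^{n-2}}t^{\inv_G(T)}$ and $B_{2^21^{n-4}}(t)=\sum_{\lambda(T)=2^21^{n-4}}t^{\inv_G(T)}$, and this is where I expect the \textbf{main obstacle} to lie. For each shape I would stratify the $P$-tableaux by the choice of the length-two row(s): for shape $21^{n-2}$ the unique long row is either $[1,j]$ with $j\in\{m_1+1,\dots,n\}$ or $[2,k]$ with $k\in\{m_2+1,\dots,n\}$, producing the two summands of $D(t)$; for shape $2^21^{n-4}$ both long rows occur, forcing one to be $[1,j]$ and the other $[2,k]$. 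In each stratum the remaining entries are the elements $\geq 3$ not yet used; they form a clique in $G$ and fill the first column, their relative order being unconstrained apart from the single requirement (coming from the column condition) that $2$ (resp. $1$) not sit directly below an element exceeding $m_2$ (resp. $m_1$). I would compute $\inv_G(T)$ as a sum of contributions---clique/clique, special/clique, and special/special pairs---and carry out the summation by the insertion technique used in the proofs of Theorem~\ref{s1n} and Lemma~\ref{pcoeflem3}: inserting the elements one at a time and recording the number of $G$-inversions created shows each contribution is a $t$-number, and the products factor as the claimed $[n-3]_t!\,D(t)$ and $[n-4]_t![2]_t[n-m_2]_t[n-m_1-1]_t\,t^{m_1+m_2-4}$. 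Keeping the inversion bookkeeping consistent across the two columns, and isolating the correct powers $t^{m_1-1}$, $t^{m_2-2}$, $t^{m_1+m_2-4}$, is the delicate part. This establishes~(3) and the Schur half of~(4).

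Finally I would pass to the elementary basis. By the Pieri rule, $e_{(n-c,c)}=s_{1^{n-c}}s_{1^{c}}=\sum_{a=0}^{c}s_{2^a1^{n-2a}}$ for $c=0,1,2$, a unitriangular relation between $\{e_{(n)},e_{(n-1,1)},e_{(n-2,2)}\}$ and $\{s_{1^n},s_{21^{n-2}},s_{2^21^{n-4}}\}$. Since $X_G(\x,t)$ is supported on these three Schur functions, it lies in the span of the three products, which gives~(5), and comparing coefficients yields
\[
C_{(n-2,2)}=B_{2^21^{n-4}},\qquad C_{(n-1,1)}=B_{21^{n-2}}-B_{2^21^{n-4}},\qquad C_{(n)}=B_{1^n}-B_{21^{n-2}}.
\]
The first identity is exactly the $e$-half of~(4). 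Substituting the formulas from~(2),~(3) and~(4) and simplifying the resulting differences of products of $t$-numbers gives~(7); this is a routine but slightly tedious $t$-number manipulation. For~(6) I would instead invoke Corollary~\ref{ecoefcor}, which gives $C_{(n)}(t)=[n]_t\prod_{i=2}^n[b_i]_t$ directly; computing $b_i$ from the description of $G$ (namely $b_i=i-1,\,i-2,\,i-3$ according as $i\leq m_1$, $m_1<i\leq m_2$, or $i>m_2$) telescopes the product to $[n-3]_t![m_1-1]_t[m_2-2]_t$, matching the claim and serving as a consistency check against $C_{(n)}=B_{1^n}-B_{21^{n-2}}$.
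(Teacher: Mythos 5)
Your proposal is correct and follows essentially the same route as the paper's own proof: restrict the possible Schur shapes via the chain structure of $P$, get $B_{1^n}$ from Theorem~\ref{s1n} and $C_{(n)}$ from Corollary~\ref{ecoefcor}, compute $B_{21^{n-2}}$ and $B_{2^21^{n-4}}$ by stratifying $P$-tableaux according to their length-two rows (the paper reduces each stratum to Theorem~\ref{s1n} applied to an induced subposet, which is the same insertion computation you describe), and invert the unitriangular Pieri relations to obtain the $e$-coefficients, with (7) coming from the same $t$-number identity you defer as "routine but tedious." The only minor caveat is that the two strata for shape $21^{n-2}$ do not individually equal the two summands of $D(t)$ — the sum splits into three pieces (the row $[1,j]$ case splitting at $j=m_2$) that must be algebraically recombined — but this is exactly the bookkeeping you flag, and it is carried out in the paper.
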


\begin{proof} Let $G=\inc(P)$.
Note first that (2) follows from Theorem~\ref{s1n} and (6) follows from Corollary~\ref{ecoefcor}.  Next, if $x<_Py$ then $x \in \{1,2\}$.  Therefore, $P$ has no chain of length two, since $\{1,2\} \in E(G)$.  Moreover, $P$ does not contain three pairwise disjoint chains of length one.  Now (1) follows from Theorem \ref{schurcon}.  

 Using  the    equalities \begin{eqnarray*} e_n & = & s_{1^n}, \label{es1} \\ e_{(n-1,1)} & = & s_{1^n}+s_{21^{n-2}} \label{es2}, \\ e_{(n-2,2)} & = & s_{1^n}+s_{21^{n-2}}+s_{2^21^{n-4}}, \label{es3} \end{eqnarray*} which  are obtained from the Pieri rule, we see that (5) follows from (1), that $$C_{(n-2,2)}(t)=B_{2^21^{n-4}}(t),$$ and that \begin{equation} \label{cn11} C_{(n-1,1)}(t)=B_{21^{n-2}}(t)-B_{2^21^{n-4}}(t). \end{equation}
It remains to prove (3), the second equality of (4), and \begin{equation} \label{diff} [n-3]_tD(t)=[2]_t[n-m_2]_t[n-m_1-1]_t t^{m_1+m_2-4}+E(t).  \end{equation} Indeed, (7) follows from (\ref{cn11}), (3), (4) and (\ref{diff}).

We begin with (3).  For $i \in \{1,2\}$ and $m_i<j \leq n$, let ${\mathcal T}_{P,i,j}$ be the set of all $P$-tableaux $T$ of shape $21^{n-2}$ such that the first row of $T$ is occupied by $i<_P j$.  Set $$Y_{i,j}(t):=\sum_{T \in {\mathcal T}_{P,i,j}}t^{\inv_{G}(T)}.$$  Each $P$-tableau of shape $21^{n-2}$ lies in exactly one $\mathcal T_{P,i,j}$.  By Theorem~\ref{schurcon}, \begin{equation} \label{byt} B_{21^{n-2}}(t)=\sum_{i<_P j}Y_{i,j}(t). \end{equation}  If $T \in {\mathcal T}_{P,i,j}$ and $P^\prime$ is the subposet of $P$ induced on $P \setminus \{i,j\}$, then upon removing the first row of $T$ we obtain a $P^\prime$-tableau of shape $1^{n-2}$.  Let $G^\prime = \inc(P^\prime)$ and note that $P^\prime$ is a natural unit interval order.
We have $$ Y_{i,j}(t) = t^a \sum_{\scriptsize \begin{array}{c} T^\prime \in \mathcal T_{P^\prime}\\ \lambda(T^\prime)= 1^{n-2}\end{array}} t^{\inv_{G^\prime}(T^\prime)},$$ where \begin{itemize}
\item[(1)] $a = {j-2}$ if either $i=1$ and $m_1< j \le m_2$ or $i=2$ and $m_2 < j \le n$, and \item[(2)] $a={j-3}$ if $i=1$ and $m_2 < j \le n$.  \end{itemize} Hence by Theorem~\ref{s1n},
\begin{equation} \label{yeq} Y_{ij}(t)=\begin{cases} t^{j-2}[m_2-2]_t[n-3]_t! &\mbox{if } i=1,m_1<j \leq m_2, \\ t^{j-3}[m_2-1]_t[n-3]_t! &\mbox{if } i=1,m_2<j \leq n, \\ t^{j-2}[m_1-1]_t[n-3]_t! &\mbox{if } i=2,m_2<j \leq n. \end{cases} \end{equation}

Now by (\ref{byt}), (\ref{yeq}) and the fact that $$\sum_{b<j \leq c}t^j=t^{b+1}[c-b]_t$$ for arbitrary positive integers $b,c$, we have,
\vspace{.1in}\newline$\displaystyle{\frac{B_{21^{n-2}}(t)}{[n-3]_t!} =}$ $$ t^{m_1-1}[m_2-m_1]_t[m_2-2]_t +t^{m_2-2}[n-m_2]_t[m_2-1]_t+t^{m_2-1}[n-m_2]_t[m_1-1]_t.$$
The first term of the right side of the above equation equals 
$$t^{m_1-1}([n-m_1]_t-t^{m_2-m_1}[n-m_2]_t)[m_2-2]_t,$$ 
the second term equals
$$t^{m_2-2}[n-m_2]_t (1+t[m_2-2]_t),$$
and the third term equals
$$t^{m_2-2}[n-m_2]_t([m_1]_t-1).$$
Adding these expressions together yields (3).

Arguments similar to those used to prove (3) prove (4).  For $m_1<i \leq n$ and $m_2<j \leq n$, now let ${\mathcal T}_{P,i,j}$ be the set of all $P$-tableaux $T$ of shape $2^21^{n-4}$ such that the first row of $T$ is occupied by $1$ and $i$ and the second row is occupied by $2$ and $j$, and let ${\mathcal T}^*_{P,j,i}$ be the set of all $P$-tableaux $T$ of shape $2^21^{n-4}$ such that the first row of $T$ is occupied by $2$ and $j$ and the  second row is occupied by $1$ and $i$.  Set $$Y_{i,j}(t):=\sum_{T \in {\mathcal T}_{P,i,j}}t^{\inv_{G}(T)}\qquad \mbox{and}\qquad Y^*_{j,i}(t):=\sum_{T \in {\mathcal T}^*_{P,j,i}}t^{\inv_{G}(T)}.$$  Then \begin{equation} \label{byijt} B_{2^21^{n-4}}(t)=\sum_{i,j}Y_{i,j}(t)+Y^*_{j,i}(t). \end{equation}  Having fixed the first two rows of a $P$-tabelau $T$ of shape $2^21^{n-4}$, we can place the remaining $n-4$ elements of $[n]$ into $T$ in arbitrary order, as there are no relations in $P$ among these elements.  It follows that \begin{equation} \label{yij} Y_{i,j}(t)=\left\{\begin{array}{ll} t^{i+j-6}[n-4]_t! & \mbox{if } m_1<i \leq m_2, \\ t^{i+j-6}[n-4]_t! & \mbox{if } m_2 <j<i \leq n, \\ t^{i+j-7}[n-4]_t! & \mbox{if }  m_2<i<j \leq n, \end{array} \right. \end{equation} and \begin{equation} \label{yji} Y^*_{j,i}(t)=\left\{\begin{array}{ll} t^{i+j-5}[n-4]_t! & \mbox{if } m_1<i \leq m_2, \\ t^{i+j-5}[n-4]_t! & \mbox{if } m_2<i<j\leq n, \\ t^{i+j-6}[n-4]_t! & \mbox{if } m_2<j<i \leq n. \end{array} \right.  \end{equation}  It is striaghtforward to show by induction that for nonnegative integers $a\le b$, \begin{equation} \label{absum} \sum_{a<i<j \leq b} t^{i+j}=t^{2a+3}\left[\begin{array}{c} b-a \\ 2\end{array}\right]_t.  \end{equation} Combining (\ref{byijt}), (\ref{yij}), (\ref{yji}), and (\ref{absum}), we get 
\begin{eqnarray*} \label{yijsum} \frac{B_{2^21^{n-4}}(t)}{[n-4]_t!} & = & (1+t)\sum_{i=m_1+1}^{m_2}t^{i-3}\sum_{j=m_2+1}^{n}t^{j-3} + (1+t)^2\sum_{m_2<k<l \leq n}t^{k+l-7} \\ & = & [2]_t t^{m_1+m_2-4}[m_2-m_1]_t[n-m_2]_t+[2]_t^2t^{2m_2-4} \left[\begin{array}{c} n-m_2\\ {2}\end{array}\right]_t
\\ &=&   t^{m_1+m_2-4} [2]_t [n-m_2]_t ([m_2 -m_1]_t +[n-m_2-1]_t t^{m_2-m_1} ).\end{eqnarray*}  
 Claim (4) follows quickly.
 
 Finally, (\ref{diff}) can be proved by a direct calculation, which we present here.  
 By simple manipulation we see that the right hand side of (\ref{diff}) is given by
\begin{eqnarray*}\mbox {RHS} &=& t^{m_2-2} [n-m_2]_t ( t^{m_1-2} [2]_t [n-m_1 -1]_t+ [n-1]_t[m_1-2]_t) \\ 
&\phantom{=}&   + t^{m_1-1} [n-3]_t [n-m_1]_t [m_2 -2]_t .
\end{eqnarray*}
Using the identity $t^b [a-b]_t = [a]_t -[b]_t$, we obtain
$$t^{m_1-2} [2]_t [n-m_1 -1]_t+ [n-1]_t[m_1-2]_t$$
\begin{eqnarray*}  &=& [2]_t([n-3]_t - [m_1-2]_t) + [n-1]_t [m_1-2]_t
\\ &=& [2]_t [n-3]_t +  t^2[n-3]_t [m_1-2]_t
\\ &=& [n-3]_t [m_1]_t
.\end{eqnarray*}
Hence, 
$$ \mbox {RHS} =  [n-3]_t (t^{m_2-2}  [n-m_2]_t [m_1]_t + t^{m_1-1} [n-m_1]_t [m_2 -2]_t) $$
as desired.
\end{proof}

The next three corollaries follow easily from Proposition \ref{m3n}.

\begin{cor} \label{epos3n}
Let $P=P(m_1,\ldots,m_{n-1})$ be a natural unit interval order.  If $m_3=n$ then $X_{\inc(P)}(\xx,t)$ is $e$-positive and $e$-unimodal.
\end{cor}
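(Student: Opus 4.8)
The plan is to induct on $n$, splitting on the value of $m_1$; the substance of the argument lives in the case $m_1\ge 2$, where Proposition~\ref{m3n} has already recorded every coefficient of the $e$-expansion explicitly. Note first that, since $\mathbf m$ is weakly increasing and $m_i\le n$, the hypothesis $m_3=n$ means $m_i=n$ for all $i\ge 3$; this is the form of the hypothesis I will verify for the smaller posets arising in the induction.

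For the principal case $m_1\ge 2$, the poset $P$ satisfies the hypotheses of Proposition~\ref{m3n}, which tells us that the coefficient $C_\lambda(t)$ of $e_\lambda$ in $X_{\inc(P)}(\x,t)$ vanishes unless $\lambda\in\{(n),(n-1,1),(n-2,2)\}$, and gives the three surviving coefficients in exactly the shape of the entries of Table~1: $C_{(n)}(t)$ and $C_{(n-2,2)}(t)$ are each a power of $t$ times a product of $t$-numbers, while $C_{(n-1,1)}(t)=[n-4]_t!\,E(t)$ is such a product times the sum $E(t)$ of two monomial-times-product terms. I would then run the same reasoning used for Table~1. Because $P$ is a natural unit interval order, Corollary~\ref{genpalcor} says $X_{\inc(P)}(\x,t)$ is palindromic with centre $\frac{|E(\inc(P))|}{2}$; since $\{e_\lambda\}$ is a basis of $\Lambda_\Q$, each $C_\lambda(t)$ is individually palindromic with that common centre. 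By Proposition~\ref{tooluni} each $C_\lambda(t)$ is moreover positive and unimodal: for $C_{(n)}$ and $C_{(n-2,2)}$ this is immediate as they are single products of $t$-numbers, and for $C_{(n-1,1)}$ one checks that the two summands of $E(t)$ are palindromic and unimodal with a common centre, so their sum is too, after which multiplication by the palindromic-unimodal factor $[n-4]_t!$ preserves these properties. Having shown that every $C_\lambda(t)$ is a positive, palindromic, unimodal polynomial sharing the centre of symmetry $\frac{|E(\inc(P))|}{2}$, Proposition~\ref{tooluni2} delivers that $X_{\inc(P)}(\x,t)$ is $e$-positive and $e$-unimodal.

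For the case $m_1=1$, the definition of $P(\mathbf m)$ gives $1<_P j$ for all $j\in\{2,\dots,n\}$, so vertex $1$ is comparable in $P$ to every other vertex and hence isolated in $G=\inc(P)$. Writing $G=G'+\{1\}$ with $G'$ the restriction to $\{2,\dots,n\}$, Proposition~\ref{disjointprop} gives $X_G(\x,t)=e_1\,X_{G'}(\x,t)$. Relabelling $\{2,\dots,n\}$ as $[n-1]$, the underlying poset is $P'=P(\mathbf m')$ with $m'_i=\min\{n-1,\,m_{i+1}-1\}$; using $m_{i+1}=n$ for $i\ge 2$ this yields $m'_i=n-1$ for all $i\ge 2$, so in particular $m'_3=n-1=|P'|$ and $P'$ again satisfies the hypothesis of the corollary (and is a natural unit interval order by Proposition~\ref{natunitprop2}). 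By the inductive hypothesis $X_{G'}(\x,t)$ is $e$-positive and $e$-unimodal, and multiplication by $e_1$ sends each $e_\lambda$-coefficient $a_j(\x)$ to the $e$-positive $e_1a_j(\x)$ and the differences $a_{j+1}(\x)-a_j(\x)$ to $e_1\bigl(a_{j+1}(\x)-a_j(\x)\bigr)$, so both properties pass to $X_G(\x,t)$. The base cases $n\le 2$ are immediate.

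The only genuinely delicate point is the unimodality of $C_{(n-1,1)}(t)$, which is not a single product of $t$-numbers, so the main obstacle is to confirm that the two terms of $E(t)$ share a centre of symmetry before invoking Proposition~\ref{tooluni} on their sum; a short computation shows both terms are centred at $\frac12(m_1+m_2)+n-5$, after which everything reduces to the bookkeeping already carried out for the entries of Table~1.
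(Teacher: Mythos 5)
Your proof is correct and takes essentially the same route as the paper, whose entire proof is to apply Propositions~\ref{tooluni} and~\ref{tooluni2} to the explicit $e$-expansion recorded in Proposition~\ref{m3n}: one checks each coefficient $C_\lambda(t)$ is positive, unimodal, and palindromic with the common center $\frac{|E(\inc(P))|}{2}$ (the only nontrivial point being the sum $E(t)$ inside $C_{(n-1,1)}(t)$, whose two terms you correctly verify share the center $\frac{1}{2}(m_1+m_2)+n-5$), and then invokes Proposition~\ref{tooluni2}. Your separate inductive treatment of the case $m_1=1$, where Proposition~\ref{m3n} does not formally apply and the isolated vertex factors off as $e_1$, fills in a detail the paper's one-line proof glosses over; the only quibble is that the induction bottoms out at $n\le 3$ (where the hypothesis $m_3=n$ does not parse) rather than $n\le 2$, but those cases are immediate.
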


\begin{proof}Apply Propositions~\ref{tooluni} and~\ref{tooluni2}.
\end{proof}

\begin{cor} For $n \ge 2$,\begin{eqnarray*} X_{G_{n,n-1}}(\xx,t)&=& [n-2]_t![n-1]_t^2 s_{1^n} + [n-2]_t! t^{n-2} s_{21^{n-2}}
\\&=&[n-2]_t![n]_t[n-2]_t e_{n} + [n-2]_t! t^{n-2} e_{(n-1,1)}.
\end{eqnarray*}
\end{cor}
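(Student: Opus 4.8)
The plan is to specialize Proposition~\ref{m3n} to the poset $P = P_{n,n-1}$ and read off the coefficients. First I would record the shape of $P_{n,n-1}$ in the $P(\mathbf{m})$ notation: by Example~\ref{Gex} we have $P_{n,n-1} = P(n-1,n,n,\dots,n)$, so the relevant parameters are $m_1 = n-1$ and $m_2 = n$ (and $m_i = n$ for every $i \ge 2$). For $n \ge 3$ these satisfy the hypotheses of Proposition~\ref{m3n}; in particular $m_1 = n-1 \ge 2$.

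The whole computation is driven by the single observation that $m_2 = n$ forces $[n-m_2]_t = [0]_t = 0$, annihilating every term of Proposition~\ref{m3n} carrying a factor $[n-m_2]_t$. On the Schur side this makes $B_{2^21^{n-4}}(t) = 0$ by part (4), so only $B_{1^n}$ and $B_{21^{n-2}}$ survive. Part (2) gives $B_{1^n}(t) = [n-1]_t[n-1]_t[n-2]_t! = [n-1]_t^2[n-2]_t!$, and in $D(t)$ the second summand vanishes while the first collapses (using $[n-m_1]_t = [1]_t = 1$) to $D(t) = t^{n-2}[n-2]_t$; hence part (3) yields $B_{21^{n-2}}(t) = [n-3]_t!\,t^{n-2}[n-2]_t = t^{n-2}[n-2]_t!$, where I use the elementary identity $[k]_t[k-1]_t! = [k]_t!$. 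These two coefficients give the first displayed equality.

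For the $e$-expansion I would argue in parallel. Parts (4) and (5), together with $C_{(n-2,2)}(t) = B_{2^21^{n-4}}(t) = 0$, leave only $C_{(n)}$ and $C_{(n-1,1)}$ nonzero. Part (6) gives $C_{(n)}(t) = [n]_t[n-3]_t![n-2]_t[n-2]_t = [n]_t[n-2]_t[n-2]_t!$, and in $E(t)$ the $[n-m_2]_t$-summand dies, leaving $E(t) = t^{n-2}[n-3]_t[n-2]_t$, so part (7) gives $C_{(n-1,1)}(t) = [n-4]_t!\,t^{n-2}[n-3]_t[n-2]_t = t^{n-2}[n-2]_t!$, once more by $[k]_t[k-1]_t! = [k]_t!$. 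This is the second displayed equality.

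The only place any care is needed is with the degenerate small cases, since the generic formulas involve $[n-3]_t!$ and $[n-4]_t!$; this is the step I expect to be the main obstacle, the $n \ge 4$ case being a routine substitution. For $n \ge 4$ all factorial arguments are nonnegative and the computation above is literal. For $n = 3$ the graph $G_{3,2}$ is the path $G_3$, and I would match the claimed formula against $X_{G_3}(\x,t) = [3]_t e_3 + t\,e_{(2,1)}$, obtained by applying $\omega$ to the value computed in Example~\ref{XGex}(b). For $n = 2$, $G_{2,1}$ is the empty graph with $X_{G_{2,1}}(\x,t) = e_1^2$; here I would verify directly that the stated formula reduces to $e_1^2$, noting that $[n-2]_t = [0]_t = 0$ kills the $e_n$ term while the $e_{(n-1,1)}$ term becomes $e_{(1,1)} = e_1^2$ (and on the Schur side $s_{1^2} + s_2 = e_1^2$).
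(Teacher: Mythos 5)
Your proof is correct and takes essentially the same route as the paper, which offers no details beyond stating that this corollary ``follows easily from Proposition~\ref{m3n}''; your specialization $m_1=n-1$, $m_2=n$ (so that every $[n-m_2]_t$ factor vanishes) is exactly that easy deduction, and all the resulting coefficient simplifications check out. Your separate direct verification of the small cases $n=2,3$ --- where the formulas of Proposition~\ref{m3n} involve ill-defined factors like $[n-4]_t!$ --- is a point of care that the paper glosses over.
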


\begin{cor} \label{pnn2}
For $n \geq 3$, \begin{eqnarray*} X_{G_{n,n-2}}(\xx,t) & = & [n-2]_t^2[n-2]_t!s_{1^n} 
\\ & + &  t^{n-3} [n-3]_t! ([2]_t[n-3]_t +[n-2]_t)s_{21^{n-2}} 
\\ & + & t^{2n-7}[2]_t[n-4]_t!s_{2^21^{n-4}}\\ & = & [n]_t[n-3]_t^2[n-3]_t!e_{n} \\ & + & t^{n-3}[n-4]_t!([2]_t[n-3]_t^2+[n-1]_t[n-4]_t)e_{(n-1,1)} \\ & + & t^{2n-7}[2]_t[n-4]_t!e_{(n-2,2)}. \end{eqnarray*}
\end{cor}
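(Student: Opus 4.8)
The plan is to derive Corollary~\ref{pnn2} from Proposition~\ref{m3n} by a single substitution, since $G_{n,n-2}$ is an instance of the family treated there. First I would identify the poset: $G_{n,n-2}=\inc(P_{n,n-2})$, and by Example~\ref{Gex} we have $P_{n,n-2}=P(n-2,n-1,n,\dots,n)$, so in the notation of Proposition~\ref{m3n} the defining sequence $\mathbf{m}$ has $m_1=n-2$, $m_2=n-1$, and $m_i=n$ for $3\le i\le n-1$. For $n\ge 4$ this sequence is weakly increasing, satisfies $i\le m_i$, has $m_1=n-2\ge 2$, and has $m_i=n$ for $i\ge 3$, so all hypotheses of Proposition~\ref{m3n} are met and its conclusions (1)--(7) apply verbatim with these values of $m_1,m_2$.

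Next I would read off the Schur expansion. By (1) the only shapes occurring are $1^n$, $21^{n-2}$ and $2^21^{n-4}$. The coefficient of $s_{1^n}$ comes directly from (2): with $m_1=m_2-1=n-2$ it is $[n-2]_t^2[n-2]_t!$. For $s_{21^{n-2}}$ I would substitute $m_1=n-2$, $m_2=n-1$ into the quantity $D(t)$ of Proposition~\ref{m3n}; using $n-m_1=2$, $m_2-2=n-3$, $n-m_2=1$ (so $[n-m_2]_t=1$) and $m_1=n-2$ gives $D(t)=t^{n-3}\big([2]_t[n-3]_t+[n-2]_t\big)$, whence (3) yields the stated coefficient $t^{n-3}[n-3]_t!\big([2]_t[n-3]_t+[n-2]_t\big)$. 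Finally, (4) gives the coefficient of $s_{2^21^{n-4}}$: since $[n-m_2]_t=[n-m_1-1]_t=[1]_t=1$ and $m_1+m_2-4=2n-7$, it collapses to $t^{2n-7}[2]_t[n-4]_t!$. Assembling these three terms gives the first displayed formula.

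For the elementary expansion I would proceed identically using (5)--(7). By (5) only $(n)$, $(n-1,1)$ and $(n-2,2)$ occur. Part (6) gives the coefficient of $e_n$ as $[n]_t[n-3]_t![m_1-1]_t[m_2-2]_t=[n]_t[n-3]_t^2[n-3]_t!$, since $m_1-1=m_2-2=n-3$. For $e_{(n-1,1)}$ I would substitute into $E(t)$: with the same simplifications ($n-m_1=2$, $m_2-2=n-3$, $n-m_2=1$, $m_1-2=n-4$) one gets $E(t)=t^{n-3}\big([2]_t[n-3]_t^2+[n-1]_t[n-4]_t\big)$, so (7) yields $t^{n-3}[n-4]_t!\big([2]_t[n-3]_t^2+[n-1]_t[n-4]_t\big)$. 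The coefficient of $e_{(n-2,2)}$ equals $B_{2^21^{n-4}}(t)=t^{2n-7}[2]_t[n-4]_t!$ by the first equality in (4). This produces the second displayed formula.

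There is no genuine obstacle here: once the poset is recognized as a member of the family in Proposition~\ref{m3n}, everything reduces to routine bookkeeping with the $t$-numbers, driven by the two coincidences $n-m_1=2$ and $n-m_2=1$ that make most factors collapse to $1$ or $[2]_t$. The only points needing a moment's care are the simplifications of $D(t)$ and $E(t)$; note in particular that (7) already packages the identity (\ref{diff}) established inside the proof of Proposition~\ref{m3n}, so the expression for $C_{(n-1,1)}(t)$ needs no further manipulation. The sole thing to watch is the boundary value $n=3$, where $m_1=n-2=1$ violates the hypothesis $m_1\ge 2$ (the graph $G_{3,1}$ is then empty); this case lies outside Proposition~\ref{m3n} and must be treated on its own, so the substitution argument proper applies for $n\ge 4$.
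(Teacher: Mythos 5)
Your proposal is correct and is essentially the paper's own (implicit) argument: the paper derives Corollary~\ref{pnn2} precisely by specializing Proposition~\ref{m3n} to $m_1=n-2$, $m_2=n-1$, and all of your substitutions and the simplifications of $D(t)$ and $E(t)$ check out. Your caveat about $n=3$ (where the hypothesis $m_1\ge 2$ fails, so the proposition does not apply) is a point of care that the paper itself glosses over despite stating the corollary for $n\ge 3$.
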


\section{Specialization}  \label{specsec}

We review two important ways to specialize a quasisymmetric function. Let
$R[[q]]$ denote the ring of formal power series in variable $q$ with coefficients in $R$.  
For a quasisymmetric function $Q(x_1,x_2,\ldots) \in \cq_R$, the {\it stable principal specialization} $\Ps(Q) \in R[[q]]$ is, by definition, obtained from $Q$ by substituting $q^{i-1}$ for $x_i$ for each $i \in \pp$. That is,
$$\Ps(Q) := Q(1,q,q^2,q^3,\dots).$$  For each $m \in \pp$, the {\it principal specialization} $\Ps_m(Q)$ is given by
$$\Ps_m(Q) := Q(1,q,\dots, q^{m-1},0,0,\dots).$$

\begin{lemma}[{\cite[Lemma 5.2]{gr}}] \label{desspec}For all $n \ge 1$ and $S \subseteq [n-1]$, we have
\begin{equation}\label{spec1}\Ps(F_{S,n})= \frac {q^{\sum_{i\in S} i}} {(q;q)_n} \end{equation} and
 \begin{equation} \label{spec2}\sum_{m \ge 1} \Ps_m( F_{S,n}) p^m = {p^{|S|+1} q^{\sum_{i\in S} i} \over (p;q)_{n+1}}, 
 \end{equation}
 where $$(p;q)_n = \prod_{j=1}^n (1-pq^{j-1}).$$ \end{lemma}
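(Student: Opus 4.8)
The plan is to prove both identities directly from the combinatorial definition $F_{n,S}=\sum_{f\in D(S)}\x_f$, using one explicit weight-preserving bijection that turns the strict-descent conditions into weak inequalities. (I write $F_{n,S}$ for the function called $F_{S,n}$ in the statement; these are the same object.) Under the substitution $x_i\mapsto q^{i-1}$, the monomial $\x_f=\prod_{i=1}^n x_{f(i)}$ becomes $q^{\sum_{i=1}^n(f(i)-1)}$, so that $\Ps(F_{n,S})=\sum_{f\in D(S)}q^{\sum_i(f(i)-1)}$. The key device is the shift $g(i):=f(i)-d_i$, where $d_i:=|\{j\in S:j\ge i\}|$. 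Since $d_i-d_{i+1}$ equals $1$ when $i\in S$ and $0$ otherwise, this map sends $D(S)$ bijectively onto the set of weakly decreasing sequences $g(1)\ge\cdots\ge g(n)\ge 1$: each strict descent of $f$ at a position of $S$ becomes a weak descent of $g$. Because $\sum_{i=1}^n d_i=\sum_{j\in S}|\{i:i\le j\}|=\sum_{i\in S}i$, the $q$-exponent factors as $\sum_i(f(i)-1)=\sum_{i\in S}i+\sum_i(g(i)-1)$, which already isolates the prefactor $q^{\sum_{i\in S}i}$.

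To finish (\ref{spec1}), I would put $\lambda_i:=g(i)-1$, so that $(\lambda_1\ge\cdots\ge\lambda_n\ge 0)$ ranges over all partitions with at most $n$ parts and the residual sum is $\sum_\lambda q^{|\lambda|}$. Evaluating this generating function is best done through the staircase decomposition $\mu_k:=\lambda_k-\lambda_{k+1}\ge 0$ (with $\lambda_{n+1}:=0$): one computes $|\lambda|=\sum_{k=1}^n k\mu_k$ and the $\mu_k$ vary independently over $\N$, whence $\sum_\lambda q^{|\lambda|}=\prod_{k=1}^n(1-q^k)^{-1}=(q;q)_n^{-1}$. This yields (\ref{spec1}).

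For (\ref{spec2}), I would restrict to those $f$ taking values in $[m]$, i.e.\ $f(1)=\max_i f(i)\le m$, weight by $p^m$, and interchange the order of summation. Using $\sum_{m\ge f(1)}p^m=p^{f(1)}/(1-p)$ gives $\sum_{m\ge 1}\Ps_m(F_{n,S})\,p^m=(1-p)^{-1}\sum_{f\in D(S)}p^{f(1)}q^{\sum_i(f(i)-1)}$. Applying the same bijection, now $f(1)=g(1)=\lambda_1+1+|S|$, and the staircase decomposition gives $p^{\lambda_1}q^{|\lambda|}=\prod_{k=1}^n(pq^k)^{\mu_k}$, so $\sum_\lambda p^{\lambda_1}q^{|\lambda|}=\prod_{k=1}^n(1-pq^k)^{-1}$. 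Collecting the prefactor $p^{|S|+1}q^{\sum_{i\in S}i}/(1-p)$ and observing that $(1-p)\prod_{k=1}^n(1-pq^k)=\prod_{k=0}^n(1-pq^k)=(p;q)_{n+1}$ produces the asserted right-hand side.

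The computation is essentially routine once the shift $g(i)=f(i)-d_i$ is in hand; the only points requiring care are verifying that this shift is a bijection carrying the strict/weak descent data correctly and that the $q$-exponent picks up exactly $\sum_{i\in S}i$, and then checking in (\ref{spec2}) that the geometric sum over $m$ contributes precisely the extra factor $(1-p)$ needed to promote $\prod_{k=1}^n(1-pq^k)$ to $(p;q)_{n+1}$.
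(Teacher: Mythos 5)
Your proposal is correct. Note, however, that the paper gives no proof of this lemma at all: it is quoted from Gessel--Reutenauer \cite[Lemma 5.2]{gr}, so there is no internal argument to compare against. Your self-contained derivation is essentially the standard one underlying that reference: the shift $g(i)=f(i)-d_i$ with $d_i=|\{j\in S: j\ge i\}|$ is exactly the classical device that converts the strict/weak descent conditions of a $P$-partition-type sum into an unrestricted partition sum, after which the staircase decomposition $\mu_k=\lambda_k-\lambda_{k+1}$ evaluates both generating functions; the bookkeeping $\sum_i d_i=\sum_{i\in S}i$, the geometric sum over $m\ge f(1)$ (valid since $f(1)=\max_i f(i)$ for the weakly decreasing $f$ in this paper's nonstandard convention for $F_{n,S}$), and the final regrouping $(1-p)\prod_{k=1}^n(1-pq^k)=(p;q)_{n+1}$ are all verified correctly. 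One small slip to fix: in the proof of (\ref{spec2}) you wrote $f(1)=g(1)=\lambda_1+1+|S|$, but of course $g(1)=\lambda_1+1$ and the correct chain is $f(1)=g(1)+d_1=g(1)+|S|=\lambda_1+1+|S|$; this is the value you actually use afterwards, so nothing downstream is affected.
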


It follows from Lemma~\ref{desspec} that
\begin{equation} \label{stabh}  \Ps (h_n) = \Ps(F_{n,\emptyset}) = (q,q)^{-1}_n\end{equation} for all $n$ and therefore if $(k_1,\dots,k_m)$ is a  composition of $n$,
\begin{equation}\label{spech}(q,q)_n\, \Ps (h_{(k_1,\dots,k_m)} )= \left [\begin{array}{c} {n}\\{k_1,\dots,k_m}\end{array} \right ]_q : = {[n]_q!
\over [k_1]_q!\cdots [k_m]_q!} .\end{equation}

The following is an immediate consequence of Lemma~\ref{desspec}.  
\begin{prop} \label{unispecprop} Let $F$ denote the basis of fundamental quaisymmetric functions and suppose $f(\x,t) \in \cq_\Q^n[t]$ is 
$F$-positive  (resp. $F$-unimodal). Then $(q;q)_n \Ps (f(\x,t)) $ is $q$-positive (resp. $q$-unimodal) and $(p;q)_{n+1} \sum_{m \ge 1} \Ps_m( f(\x,t)) p^m $ is $(p,q)$-positive (resp. $(p,q)$-unimodal). 
\end{prop}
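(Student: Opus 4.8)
The plan is to exhibit both specializations as \emph{positivity-preserving linear maps} that send each fundamental quasisymmetric function to a single monomial, after which everything follows formally. First I would introduce, for $g \in \cq_\Q^n$, the two $\Q$-linear maps
$$\Phi_q(g) := (q;q)_n\,\Ps(g), \qquad \Phi_{p,q}(g) := (p;q)_{n+1}\sum_{m\ge 1}\Ps_m(g)\,p^m,$$
and compute their values on the basis $\{F_{n,S} : S \subseteq [n-1]\}$ of $\cq_\Q^n$ using Lemma~\ref{desspec}:
$$\Phi_q(F_{n,S}) = q^{\sum_{i\in S} i}, \qquad \Phi_{p,q}(F_{n,S}) = p^{|S|+1}\,q^{\sum_{i\in S} i}.$$
Since each basis element maps to a monomial with coefficient $1$, $\Phi_q$ would carry every $F$-positive element of $\cq_\Q^n$ to a $q$-positive polynomial, and $\Phi_{p,q}$ would carry it to a $(p,q)$-positive polynomial.

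Next I would extend both maps to $\cq_\Q^n[t]$ by letting them act coefficientwise on the expansion in powers of $t$, so that for $f(\x,t) = \sum_j a_j(\x)\,t^j$ with $a_j(\x)\in\cq_\Q^n$ one has $\Phi_q(f) = (q;q)_n\,\Ps(f(\x,t))$ and $\Phi_{p,q}(f) = (p;q)_{n+1}\sum_{m\ge1}\Ps_m(f(\x,t))\,p^m$. The positivity half of the proposition would then be immediate: if $f$ is $F$-positive then every $a_j$ is $F$-positive, hence every $\Phi_q(a_j)$ is $q$-positive and every $\Phi_{p,q}(a_j)$ is $(p,q)$-positive, so the full images are $q$- and $(p,q)$-positive respectively. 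Note also that, since $a_j$ being $F$-positive and nonzero forces $\Phi_q(a_j)\neq 0$ (a sum of monomials with nonnegative, not all zero, coefficients), no top coefficient can be annihilated.

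For the unimodality half, I would unwind the definition (Definition~\ref{paldef}) of $F$-unimodality: $f$ is palindromic in $t$, say with coefficients satisfying $a_j = a_{m-j}$, and each difference $a_{j+1}-a_j$ is $F$-positive for $0 \le j < \tfrac{m-1}{2}$. Because $\Phi_q$ is linear and positivity-preserving, $\Phi_q(a_{j+1}-a_j) = \Phi_q(a_{j+1})-\Phi_q(a_j)$ would be $q$-positive in that range, while the relations $\Phi_q(a_j) = \Phi_q(a_{m-j})$ show the image is palindromic in $t$ with the same center of symmetry; together these give $q$-unimodality, and the identical argument for $\Phi_{p,q}$ gives $(p,q)$-unimodality. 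I do not expect a genuine obstacle here: the entire content sits in the monomial evaluation supplied by Lemma~\ref{desspec}, and the only care required is the mild bookkeeping that matches the definitions of $F$-positivity and $F$-unimodality in $\cq_\Q^n[t]$ to those of $q$- and $(p,q)$-positivity and unimodality in the image ring.
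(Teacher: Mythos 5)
Your proof is correct and is essentially the paper's own argument: the paper derives this proposition as an immediate consequence of Lemma~\ref{desspec}, for exactly the reason you identify, namely that the two specialization maps are $\Q$-linear and send each basis element $F_{n,S}$ to a single monomial ($q^{\sum_{i\in S}i}$, resp. $p^{|S|+1}q^{\sum_{i\in S}i}$), hence preserve positivity coefficientwise in $t$. One small caveat: $b$-unimodality in Definition~\ref{paldef} does not include palindromicity, so the unimodality half should be run directly on the chain $a_0\le_F a_1\le_F\cdots\le_F a_c\ge_F\cdots\ge_F a_m$ rather than via $a_j=a_{m-j}$; your linearity-plus-positivity argument applies verbatim to the differences on both sides of the peak, so nothing of substance changes.
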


Since $b$-positivity implies $F$-positivity when $b$ is the Schur basis, the $e$ basis or the $h$ basis, Proposition~\ref{unispecprop}  holds for $f(\x,t) \in \Lambda_\Q^n[t]$ when $F$ is replaced by any of these bases.

\subsection{Generalized $q$-Eulerian polynomials} By applying Lemma~\ref{desspec} to the expansion of $\omega X_G(\x,t)$ in the basis of fundamental quasisymmetric functions given in Theorem~\ref{qchow}, we obtain the following result.  For  the incomparability graph $G$ of a  poset $P$ on $[n]$, define $$A_G(q,p,t):= \sum_{\sigma \in \S_n} t^{\inv_G(\sigma)} q^{\maj_P(\sigma)} p^{\des_P(\sigma)},$$
where  $\maj_P(\sigma) := \sum_{i \in \Des_P(\sigma)}i$ and $\des_P(\sigma) := |\Des_P(\sigma)|$.

\begin{thm}\label{speccor}  Let  $G$ be the incomparability graph of a  poset $P$ on $[n]$.   Then
\begin{equation}\label{stabprinceq} (q;q)_n \,\Ps(\omega X_G(\x,t) )= A_G(q,1,t)\end{equation}
and
\begin{equation}\label{princeq}(p;q)_{n+1} \, \sum_{m \ge 1} \Ps_m( \omega X_G(\x,t)) p^{m-1} = A_G(q,p,t).\end{equation}
\end{thm}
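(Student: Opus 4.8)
The plan is to apply the two specialization formulas of Lemma~\ref{desspec} term-by-term to the expansion of $\omega X_G(\x,t)$ in the fundamental basis furnished by Theorem~\ref{qchow}. Since both $\Ps$ and $\Ps_m$ are defined by substituting values for the variables $x_i$, they are ring homomorphisms on $\cq_\Q$, hence $\Q[t]$-linear maps on $\cq_\Q^n[t]$ that commute with powers of $t$. This is what allows me to pull the specialization inside the sum over $\S_n$.

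First I would establish (\ref{stabprinceq}). Starting from Theorem~\ref{qchow} and applying $\Ps$ gives
\[
\Ps(\omega X_G(\x,t)) = \sum_{\sigma\in\S_n} t^{\inv_G(\sigma)}\,\Ps(F_{n,\Des_P(\sigma)}).
\]
Now I substitute the value of $\Ps(F_{n,S})$ from (\ref{spec1}) with $S=\Des_P(\sigma)$. The only thing to check is that the exponent $\sum_{i\in S}i$ appearing in (\ref{spec1}) equals $\maj_P(\sigma)$, which is immediate from the definition $\maj_P(\sigma):=\sum_{i\in\Des_P(\sigma)}i$ given just before the theorem. The common factor $(q;q)_n^{-1}$ pulls out of the sum, so multiplying both sides by $(q;q)_n$ yields $\sum_{\sigma}t^{\inv_G(\sigma)}q^{\maj_P(\sigma)}=A_G(q,1,t)$, as desired.

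The second identity (\ref{princeq}) is entirely analogous, using (\ref{spec2}) in place of (\ref{spec1}). Applying $\sum_{m\ge 1}\Ps_m(\cdot)\,p^m$ to the expansion of Theorem~\ref{qchow} and substituting (\ref{spec2}) with $S=\Des_P(\sigma)$, while reading off $|\Des_P(\sigma)|=\des_P(\sigma)$ and $\sum_{i\in\Des_P(\sigma)}i=\maj_P(\sigma)$, produces
\[
\sum_{m\ge 1}\Ps_m(\omega X_G(\x,t))\,p^m=\frac{1}{(p;q)_{n+1}}\sum_{\sigma\in\S_n}t^{\inv_G(\sigma)}\,p^{\des_P(\sigma)+1}\,q^{\maj_P(\sigma)}.
\]
Clearing the denominator $(p;q)_{n+1}$ and absorbing one factor of $p$ gives $A_G(q,p,t)$; this extra factor of $p$ is precisely why the left-hand side of (\ref{princeq}) is weighted by $p^{m-1}$ rather than $p^m$.

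I expect no genuine obstacle here: once Theorem~\ref{qchow} and Lemma~\ref{desspec} are in hand, the statement is a direct specialization, and the only care required is the bookkeeping that matches the permutation statistics $\maj_P$ and $\des_P$ to the exponents $\sum_{i\in S}i$ and $|S|$ occurring in Lemma~\ref{desspec}, together with the index shift between $p^m$ and $p^{m-1}$ in the nonstable case.
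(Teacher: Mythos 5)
Your proposal is correct and follows exactly the route the paper takes: the paper states Theorem~\ref{speccor} as an immediate consequence of applying Lemma~\ref{desspec} to the fundamental quasisymmetric expansion of Theorem~\ref{qchow}, which is precisely your argument. Your bookkeeping of $\maj_P$, $\des_P$, and the shift from $p^m$ to $p^{m-1}$ is accurate, so there is nothing to add.
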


\begin{cor} \label{sympalin} Let  $G$ be the incomparability graph of a  poset  $P$ on $[n]$.  Then $A_G(1,p,t) $ is palindromic as a polynommial in $t$.
If $X_G(\x,t)$ is symmetric in $\x$ then $A_G(q,p,t) $ is palindromic as a polynomial in $t$.  
\end{cor}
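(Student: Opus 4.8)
The plan is to treat the two assertions of the corollary separately, since the first holds for every poset $P$ while the second uses symmetry. In both cases the goal is to establish the palindromicity relation $A_G(\,\cdot\,,p,t)=t^{|E|}A_G(\,\cdot\,,p,t^{-1})$, and the common engine is that the principal specializations $\Ps_m$ in Theorem~\ref{speccor} act only on the $\x$-variables, so they commute with the substitution $t\mapsto t^{-1}$ and with multiplication by a power of $t$.

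I would prove the first statement ($q=1$, arbitrary $P$) in two moves. First, I would observe that $A_G(q,p,t)$ depends only on $G$ and not on the particular poset realizing $G$ as an incomparability graph: by Theorem~\ref{speccor} it is obtained from $\omega X_G(\x,t)$ by a fixed specialization, and $\omega X_G(\x,t)$ is attached to $G$ alone. Applying Theorem~\ref{speccor} to both $P$ and its dual $P^*$ (legitimate since $\inc(P^*)=\inc(P)=G$) therefore gives, after setting $q=1$, the identity $\sum_{\sigma\in\S_n} t^{\inv_G(\sigma)}p^{\des_P(\sigma)}=\sum_{\sigma\in\S_n} t^{\inv_G(\sigma)}p^{\des_{P^*}(\sigma)}$. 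Second, I would introduce the reversal involution $\sigma\mapsto\sigma w_0$ on $\S_n$, where $(\sigma w_0)(i)=\sigma(n+1-i)$, and verify two equivariance properties: $\inv_G(\sigma w_0)=|E|-\inv_G(\sigma)$ (each of the $|E|$ edge-pairs $\{a<b\}$ with $\{\sigma(a),\sigma(b)\}\in E$ is counted by exactly one of $\inv_G(\sigma)$, $\inv_G(\sigma w_0)$, according to the sign of $\sigma(a)-\sigma(b)$), and $\des_P(\sigma w_0)=\des_{P^*}(\sigma)$ (the bijection $i\mapsto n-i$ of $[n-1]$ carries $P$-descents of $\sigma w_0$ to $P$-ascents of $\sigma$, i.e.\ to $P^*$-descents). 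Substituting $\sigma\mapsto\sigma w_0$ into $A_G(1,p,t)=\sum_\sigma t^{\inv_G(\sigma)}p^{\des_P(\sigma)}$ and applying these two identities produces $A_G(1,p,t)=t^{|E|}\sum_\sigma t^{-\inv_G(\sigma)}p^{\des_{P^*}(\sigma)}$; the remaining sum is the $t\mapsto t^{-1}$ evaluation of $A_G$ computed from $P^*$, which by the first move equals $A_G(1,p,t^{-1})$. Hence $A_G(1,p,t)=t^{|E|}A_G(1,p,t^{-1})$, the desired palindromicity with center of symmetry $\tfrac{|E|}{2}$.

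For the second statement, assume $X_G(\x,t)$ is symmetric. Then Corollary~\ref{sympalincor} gives $X_G(\x,t)=t^{|E|}X_G(\x,t^{-1})$. Since $\omega$ acts only on the $\x$-variables, $\omega X_G(\x,t)=t^{|E|}\,\omega X_G(\x,t^{-1})$, and the same equality persists after applying any $\Ps_m$. Feeding this into the identity $A_G(q,p,t)=(p;q)_{n+1}\sum_{m\ge 1}\Ps_m(\omega X_G(\x,t))\,p^{m-1}$ of Theorem~\ref{speccor} and pulling the factor $t^{|E|}$ out of the sum gives $A_G(q,p,t)=t^{|E|}A_G(q,p,t^{-1})$, i.e.\ palindromicity in the full set of variables.

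The combinatorial identities under reversal are routine index bookkeeping; the one conceptual point, and the place I would be most careful, is the observation that $A_G$ is a poset-independent invariant of $G$, so that the reversal map---which unavoidably trades $P$ for $P^*$---nonetheless closes the loop. This is exactly why the argument succeeds only after setting $q=1$: with $q$ present the collapse would fail, because $\maj_P$ and $\maj_{P^*}$ genuinely differ, and that is precisely why the stronger palindromicity in all of $q,p,t$ is available only under the symmetry hypothesis.
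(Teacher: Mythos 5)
Your proof is correct, but for the first statement it takes a genuinely different route from the paper's. The paper stays at the level of quasisymmetric functions: it applies the composition-reversing involution $\rho$, combines Proposition~\ref{rhoprop} (giving $\rho(\omega X_G(\x,t))=t^{|E|}\omega X_G(\x,t^{-1})$, since $\omega$ and $\rho$ commute) with the fundamental-basis expansion of Theorem~\ref{qchow}, and specializes both sides via $\Ps_m$, using $\Ps_m(\rho F_{n,S})$, to arrive at
\[
\sum_{\sigma\in\S_n}t^{\inv_G(\sigma)}(pq^n)^{\des_P(\sigma)}q^{-\maj_P(\sigma)}
=t^{|E|}\sum_{\sigma\in\S_n}t^{-\inv_G(\sigma)}p^{\des_P(\sigma)}q^{\maj_P(\sigma)},
\]
which becomes the desired palindromicity upon setting $q=1$. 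You instead argue directly on $\S_n$: the reversal $\sigma\mapsto\sigma w_0$ complements $\inv_G$ (each edge is a $G$-inversion of exactly one of $\sigma$, $\sigma w_0$) and converts $P$-descents into $P^*$-descents, and you close the loop with the observation, correctly extracted from Theorem~\ref{speccor}, that $A_G(q,p,t)$ does not depend on which poset realizes $G$, so the dual $P^*$ may be exchanged back for $P$. This is a permutation-level shadow of the same mechanism (the paper's proof of Theorem~\ref{qchow} itself invokes $\inc(P^*)=\inc(P)$ at its final step), but your version is more elementary: no $\rho$, no quasisymmetric expansion beyond the single black-box appeal to Theorem~\ref{speccor} for poset-independence, and it isolates exactly where $q=1$ is needed, since under reversal $\maj_P(\sigma w_0)=n\des_{P^*}(\sigma)-\maj_{P^*}(\sigma)$, which is precisely the $(pq^n)^{\des}q^{-\maj}$ distortion visible in the paper's displayed identity. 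What the paper's route buys is a $q$-refined identity valid before specialization; what yours buys is transparency about the combinatorics. For the second statement your argument coincides with the paper's, which likewise just pushes the palindromicity $X_G(\x,t)=t^{|E|}X_G(\x,t^{-1})$ of Corollary~\ref{sympalincor} through the specialization of Theorem~\ref{speccor}.
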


\begin{proof} Recall the involution $\rho$ given in Section~\ref{basicsec}.  Note that for all $S\subseteq [n-1]$,  $\rho F_{n,S} = F_{n,n-S}$, where  $n-S:=\{n-i: i \in S\}$.  Hence 
$$(p;q)_{n+1} \, \sum_{m \ge 1} \Ps_m( \rho F_{n,S}) p^{m-1} = (pq^n)^{|S|} q^{-\sum_{i\in S} i}.$$Since $\omega$ and $\rho$ commute it follows from Proposition~\ref{rhoprop} that $$\rho(\omega X_G(\x,t)) = t^{|E|}\omega X_G(\x,t^{-1}).$$ It therefore follows from Theorem~\ref{qchow} that
$$ \sum_{\sigma \in \S_n} t^{\inv_G(\sigma)} \rho F_{n,\Des_{P}(\sigma)} = t^{|E|} \sum_{\sigma \in \S_n} t^{-\inv_G(\sigma)} F_{n,\Des_{P}(\sigma)}.$$  Taking principal specializations $\Ps_m$ of both sides yields
$$\sum_{\sigma \in \S_n} t^{\inv_G(\sigma)} (pq^n)^{\des_P(\sigma)}q^{-\maj_P(\sigma)} = t^{|E|} \sum_{\sigma \in \S_n} t^{-\inv_G(\sigma)} p^{\des_P(\sigma)} q^{\maj_P(\sigma)}.$$
Setting $q=1$ gives
$$\sum_{\sigma \in \S_n} t^{\inv_G(\sigma)} p^{\des_P(\sigma)} = t^{|E|} \sum_{\sigma \in \S_n} t^{-\inv_G(\sigma)} p^{\des_P(\sigma)} ,$$  which establishes palindromicity of $A_G(1,p,t)$.  

The palindromicity of $A_G(q,p,t)$ asserted in the second statement is a consequence of the palindromicity of $X_G(\x,t)$ obtained in Corollary~\ref{sympalincor}. \end{proof}

\begin{example}For the graph $G$ of Example~\ref{XGex}(a), $X_G(\x,t)$ is not symmetric in $\x$ and 
$$A_G(q,p,t)= (1+qp) + 2t + (1+q^2p) t^2 $$
is not palindromic in $t$, but $A_G(1,p,t)$ is palindromic.
 For the graph $G$ of Example~\ref{XGex}(b), $X_G(\x,t)$ is  symmetric in $\x$ and 
 $$A_G(q,p,t)   = 1 + (2+qp+q^2p)t + t^2$$ is palindromic in $t$.
\end{example}

By specializing Conjecture~\ref{quasistan} (or the weaker Schur-unimodality conjecture) and applying Proposition~\ref{unispecprop} we obtain the following purely combinatorial conjecture. This conjecture is a $(q,p)$-analog of a result of De Mari, Procesi, and Shayman \cite{DePrSh} discussed in Section~\ref{hesssec}.

\begin{con} \footnote{See Section~\ref{newsec}: {\em Recent developments}} \label{GEulcon} Let $G$ be the incomparability graph of a natural unit interval order on $[n]$.   Then $A_{G}(q,p,t)$  is a $(q,p)$-unimodal polynomial in $t$.
\end{con}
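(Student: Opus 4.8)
The plan is to obtain this conjecture as a principal specialization of the conjectural $e$-unimodality of $X_G(\x,t)$, using the dictionary between symmetric functions and permutation statistics furnished by Theorem~\ref{speccor} and Proposition~\ref{unispecprop}. Throughout I treat ``$(q,p)$-unimodal'' and ``$(p,q)$-unimodal'' as synonymous, the order of the two auxiliary variables being immaterial.

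First I would invoke Conjecture~\ref{quasistan}, which asserts that $X_G(\x,t)\in\Lambda_\Z[t]$ is $e$-unimodal. Since $\omega$ is a ring involution exchanging $e_\lambda$ and $h_\lambda$, writing $X_G(\x,t)=\sum_j a_j(\x)\,t^j$ gives $\omega X_G(\x,t)=\sum_j \omega(a_j)\,t^j$, where each $\omega(a_j)$ is $h$-positive and each difference $\omega(a_{j+1})-\omega(a_j)=\omega(a_{j+1}-a_j)$ is $h$-positive below the center of symmetry. Hence $\omega X_G(\x,t)$ is $h$-unimodal. (Alternatively, starting from the weaker Schur-unimodality conjecture, the identity $\omega(s_\lambda)=s_{\lambda'}$ for the conjugate partition $\lambda'$ shows in the same way that $\omega X_G(\x,t)$ is Schur-unimodal.)

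Next I would apply Proposition~\ref{unispecprop}, in the strengthened form valid for the $h$-basis and the Schur basis noted immediately after its statement, to $f(\x,t)=\omega X_G(\x,t)$. This yields that $(p;q)_{n+1}\sum_{m\ge 1}\Ps_m(\omega X_G(\x,t))\,p^m$ is $(p,q)$-unimodal in $t$. By Theorem~\ref{speccor} this power series equals $p\,A_G(q,p,t)$, and since multiplication by the monomial $p$, which is a constant in $t$, affects neither the $(p,q)$-positivity of the coefficients nor that of their consecutive differences, we conclude that $A_G(q,p,t)$ is itself $(p,q)$-unimodal. Setting $p=1$ then recovers the $q$-unimodality of $A_G(q,t)=A_G(q,1,t)$.

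The main obstacle is that this derivation is entirely conditional: every ingredient other than the $e$-unimodality (equivalently Schur-unimodality) of $X_G(\x,t)$ is already established in the paper, so an unconditional proof would require resolving Conjecture~\ref{quasistan}, which remains open, or else attacking $A_G(q,p,t)$ directly by combinatorial or representation-theoretic means. The available evidence---the path case $G=G_n$, where $e$-unimodality follows from~(\ref{introcarlg}) together with Haiman's observation; the specialization $q=1$, which holds by the Poincar\'e polynomial identity~(\ref{poinintro}) and the hard Lefschetz theorem; and the primitive-root-of-unity values $q=\xi_n$ treated later in this section---confirms the statement in several regimes but does not assemble into a proof valid for general $q$ and $p$.
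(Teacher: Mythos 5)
Your derivation matches the paper's own: the statement is presented there as a conjecture precisely because it is obtained, exactly as you do, by applying the involution $\omega$ to the conjectured $e$-unimodality (or Schur-unimodality) of $X_G(\x,t)$ from Conjecture~\ref{quasistan}, then invoking Proposition~\ref{unispecprop} in its $h$-basis/Schur-basis form together with the specialization identity of Theorem~\ref{speccor}, with the harmless shift by the factor $p$ handled just as you note. Your closing caveat is also the paper's position: the argument is conditional, and no unconditional proof is given (the paper later records, via Proposition~\ref{uniconsprop}, that Conjecture~\ref{hesschrom} would likewise suffice).
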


\vspace{.in} Now for each $r \in [n]$, let 
\begin{eqnarray*} \inv_{<r}(\sigma) &:=& |\{(i,j) : i < j, \,\,0<\sigma(i)-\sigma(j) < r\}|\\ 
\maj_{\ge r}(\sigma) &:=& \sum_{i : \sigma(i) - \sigma(i+1) \ge r}  i
\\ A^{(r)}_n(q,t)&:=&\sum_{\sigma \in \sg_n} q^{\maj_{\ge r}(\sigma)} t^{\inv_{<r}(\sigma)}.\end{eqnarray*}
Then $\inv_{<r} = \inv_{G_{n,r}}$, $\maj_{\ge r} = \maj_{P_{n,r}}$ and $A^{(r)}_n(q,t) = A_{G_{n,r}}(q,1,t)$.
The permutation statistic $\inv_{<r}+ \maj_{\ge r}$ was introduced by Rawlings  \cite{Ra} who proved that 
it is Mahonian for all $r$, that is
$$\sum_{\sigma \in \sg_n} q^{\maj_{\ge r}(\sigma)+ \inv_{<r}(\sigma)} = [n]_q !.$$
Note that this Mahonian statistic interpolates between $\maj$ (when $r=1$) and $\inv$  (when $r=n$).
It follows from a more general result of Kasraoui  \cite[Theorem 1.8]{Ka} that $\maj_P+\inv_{\inc(P)}$ is Mahonian for every natural unit interval order $P$.  Note that the Mahonian property fails for general $P$; see Example~\ref{XGex} (a).

By Theorem~\ref{speccor} we have

$$A^{(r)}_n(q,t) = (q;q)_n \Ps(\omega X_{G_{n,r}}(\x,t)).
$$
Hence by specializing the formulae in Table 1 of Section~\ref{esec} and using (\ref{spech}) we obtain Table~2 below. From this table and Propositions~\ref{tooluni} and~\ref{tooluni2}, we see that $A_n^{(r)}(q,t)$ is a $q$-unimodal polynomial in $t$ for $r\le 2$ and $r \ge n-2$.

 \begin{table}[htdp] 
\hspace*{-.2in}
\begin{tabular}{|@{}c@{}||l|l|l|l|l|} 
\hline{\color{blue} \scriptsize{$r$}} & {\color{blue}\scriptsize{$A^{(r)}_n(q,t)$} } \\
\hline
\hline{\color{blue} \scriptsize{1}} & \scriptsize{{$[n]_q!$}}  \\
\hline{\color{blue} \scriptsize{2}} & \scriptsize{$\displaystyle{{\sum_{m=1}^{\lfloor {n+1 \over 2} \rfloor}\,\,\sum_{{
k_1,\dots, k_m \ge 2 }}\,\,\,{\left[\begin{array}{c} n \\ k_1-1,k_2, \dots, k_m\end{array}\right]_q}\,\, {\color{red}t^{m-1} \!\!  \prod_{i=1}^m  [k_i-1]_{t} }}}$} \\
\hline {\color{blue} \vdots} & \\ 
\hline{\color{blue} \scriptsize{\,\,$r$\,\,}} & \scriptsize{${\color{red} [n]_t  [ r-1]_t^{n-r} [r-1]_t!} \qquad+\qquad$ ???}\\
\hline {\color{blue} \vdots} &\\ 
\hline{\color{blue} \scriptsize{\,\,$n-2$\,\,}} & \scriptsize{${\color{red} [n]_t  [ n-3]_t^2 [n-3]_t!}  +  [n]_q \,\,{\color{red}   t^{n-3}[n-4]_t!([2]_t[n-3]_t^2+[n-1]_t[n-4]_t)} $  } \\ & \scriptsize{$  + \left[\begin{array}{c} n \\ n-2,2\end{array}\right]_q{\color{red} t^{2n-7}[2]_t[n-4]_t!  } $} \\
\hline{\color{blue} \scriptsize{\,\,$n-1$\,\,}} &\scriptsize{$ {\color{red}[n]_t [n-2]_t [n-2]_t !  } + [n]_q {\color{red} t^{n-2} [n-2]_t! }$}\\
\hline{\color{blue} \scriptsize{$n$}} & \scriptsize{${\color{red} [n]_t! } $}
\\
\hline \end{tabular}
 \caption{Specialization of Table 1.}
\end{table}

Since $\inv_{<2}(\sigma) = \des(\sigma^{-1})$, it follows from the definition of Eulerian polynomials given in (\ref{defEuler}) that $A_n^{(2)}(1,t) = A_n(t)$.  We have the following $q$-analog of this observation, which involves the $q$-Eulerian polynomials introduced in \cite{ShWa1,ShWa}.

\begin{thm} \label{RawlingsqEulerTh} Let $A_n(q,t)$ be the $q$-Eulerian polynomial  
$$A_n(q,t) = \sum_{\sigma \in \sg_n} q^{\maj(\sigma) -\exc(\sigma)} t^{\exc(\sigma)}.$$ Then for all $n \ge 1$, $$A_n^{(2)}(q,t) = A_n(q,t) .$$
\end{thm}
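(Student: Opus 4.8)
The plan is to reduce the claim to a specialization of the Smirnov-word generating function and then match it against the exponential generating function for the $q$-Eulerian polynomials from \cite{ShWa}. Since $G_{n,2}=G_n$, we have $\inv_{<2}=\inv_{G_n}$ and $\maj_{\ge 2}=\maj_{P_{n,2}}$, so $A_n^{(2)}(q,t)=A_{G_n}(q,1,t)$. By (\ref{stabprinceq}) of Theorem~\ref{speccor} this equals
$$A_n^{(2)}(q,t)=(q;q)_n\,\Ps(\omega X_{G_n}(\x,t)).$$
Thus it suffices to show $(q;q)_n\,\Ps(\omega X_{G_n}(\x,t))=A_n(q,t)$, i.e. to compute the stable principal specialization of $\omega X_{G_n}(\x,t)$.

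To do this I would apply the ring homomorphisms $\omega$ and then $\Ps$ to the generating function (\ref{pathchrom}). As $\omega$ restricts on $\Lambda_\Z$ to the map sending $e_n\mapsto h_n$, we get $\omega E(z)=H(z)=\sum_{n\ge 0}h_n z^n$; and since $\Ps(h_n)=(q;q)_n^{-1}$ by (\ref{stabh}), specializing term by term gives $\Ps(H(z))=\widetilde H(z):=\sum_{n\ge 0} z^n/(q;q)_n$. Applying $\omega$ and $\Ps$ to (\ref{pathchrom}) (after clearing the denominator, so that only polynomial identities are specialized) yields
$$1+\sum_{n\ge 1}\Ps(\omega X_{G_n}(\x,t))\,z^n=\frac{(1-t)\,\widetilde H(z)}{\widetilde H(zt)-t\,\widetilde H(z)}.$$

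Finally I would invoke the generating function for the $q$-Eulerian polynomials established in \cite{ShWa},
$$\sum_{n\ge 0}A_n(q,t)\frac{z^n}{[n]_q!}=\frac{(1-t)\,\mathrm{exp}_q(z)}{\mathrm{exp}_q(zt)-t\,\mathrm{exp}_q(z)},\qquad \mathrm{exp}_q(z):=\sum_{m\ge 0}\frac{z^m}{[m]_q!}.$$
The bridge between the two identities is the relation $[m]_q!=(q;q)_m/(1-q)^m$, which gives $\mathrm{exp}_q(z)=\sum_{m\ge 0}((1-q)z)^m/(q;q)_m=\widetilde H((1-q)z)$. Hence the right-hand side of the \cite{ShWa} formula is exactly the right-hand side of my specialized identity with $z$ replaced by $(1-q)z$, so
$$\sum_{n\ge 0}A_n(q,t)\frac{z^n}{[n]_q!}=\sum_{n\ge 0}\Ps(\omega X_{G_n}(\x,t))\,(1-q)^n z^n.$$
Comparing coefficients of $z^n$ and using $[n]_q!(1-q)^n=(q;q)_n$ gives $A_n(q,t)=(q;q)_n\Ps(\omega X_{G_n}(\x,t))=A_n^{(2)}(q,t)$, as desired.

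I expect the main obstacle to be the bookkeeping between the two normalizations — the factor $[n]_q!$ appearing in the $q$-Eulerian generating function of \cite{ShWa} versus the factor $(q;q)_n$ produced by $\Ps(h_n)$ — which is reconciled precisely by the rescaling $z\mapsto(1-q)z$ coming from $\mathrm{exp}_q(z)=\widetilde H((1-q)z)$. One should also verify that applying $\Ps$ to the rational generating function is legitimate; this is handled by specializing the cross-multiplied polynomial identity, whose relevant series have invertible constant terms. Alternatively, one could cite directly the \cite{ShWa} identification of $\omega X_{G_n}(\x,t)$ with the Eulerian quasisymmetric function $Q_n$ together with the formula $A_n(q,t)=(q;q)_n\Ps(Q_n)$, bypassing the explicit generating function manipulation.
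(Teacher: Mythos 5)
Your proposal is correct and follows essentially the same route as the paper's own proof: apply $\omega$ and the stable principal specialization to the Smirnov-word generating function (\ref{pathchrom}), using (\ref{stabprinceq}) and (\ref{stabh}), and then match the result against the exponential generating function (\ref{expgeneq}) from \cite{ShWa}. The only difference is that you make explicit the rescaling $z\mapsto(1-q)z$ reconciling the $(q;q)_n$ and $[n]_q!$ normalizations, a step the paper carries out silently.
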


\begin{proof}  Since $G_{n,2}$ is the path $G_n$, applying the involution  $\omega$ to both sides of (\ref{pathchrom}) yields
\begin{equation} \label{symEulereq} 1+\sum_{n\ge 1} \omega X_{G_{n,2}}(\x,t) = \frac{(1-t)H(z)}{H(zt) -t H(z)},\end{equation}
where $H(z) = \sum_{n \ge 0} h_n(\x) z^n$.  Now by  taking the stable principal specialization of both sides and applying~(\ref{stabprinceq}) and~(\ref{stabh}), we have
$$
1+\sum_{n \geq 1}A^{(2)}_n(q,t)\frac{z^n}{[n]_q!}=\frac{(1-t)\exp_q(z)}{\exp_q(tz)-t\exp_q(z)},
$$ where  $$\exp_q(z) := \sum_{n \ge 0} \frac {z^n}{[n]_q!}.$$
The result now follows from (\ref{expgeneq}).
\end{proof}

 \begin{prob} Find a direct bijective proof of Theorem~\ref{RawlingsqEulerTh}.  The classical bijection for the $q=1$ case does not work for the $q$-analog.  Also a  combinatorial description of the coefficients of the fundamental quasisymmetric functions in the  expansion of $X_{G_{n,2}}(\x,t)$, which is different from the one given here,  is given in \cite{ShWa} .  A combinatorial explanation of why these coefficients are equal would be interesting.
 \end{prob}
 
 \subsection{Evaluation of generalized $q$-Eulerian polynomials at roots of unity}
It is surprising that for every $n$th root of unity  $\xi_n$, the coefficients of  
$A^{2}_n(\xi_n,t)=A_n(\xi_n,t)$ are positive integers. 
 This is a consequence of the following result.

\begin{thm}[Sagan, Shareshian, and Wachs {\cite[Corollary 6.2]{SaShWa}}]  \label{unity}\hspace{.1in} Let $dm=n$ and let $\xi_d$ be any primitive $d$th root of unity.   Then
$$A_n(\xi_d,t) = [d]^m_t A_m(t) .$$ Consequently, $A_n(\xi_d,t)$ is a positive, palindromic, unimodal polynomial in $\Z[t]$.  \end{thm}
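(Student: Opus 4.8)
The plan is to read off the value $A_n(\xi_d,t)$ from the exponential generating function
$$1+\sum_{k\ge 1} A_k(q,t)\frac{z^k}{[k]_q!}=\frac{(1-t)\exp_q(z)}{\exp_q(tz)-t\exp_q(z)}$$
established, via Theorem~\ref{RawlingsqEulerTh}, in the proof above, by analyzing its behaviour as $q\to\xi_d$. The obstruction to a bare substitution is that $[k]_q!$ vanishes at $q=\xi_d$ for every $k\ge d$, so the series has no termwise specialization. Instead I would let $q\to\xi_d$ along $q=\xi_d e^{\delta}$ and simultaneously rescale $z$ so that a single nondegenerate limit survives. The case $d=1$ gives $A_n(1,t)=A_n(t)$ and $[1]_t=1$, so it is immediate; henceforth assume $d\ge 2$, so that $1-\xi_d\ne 0$ and $[r]_{\xi_d}!\ne 0$ for $0\le r<d$.

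First I would record the $q$-factorial asymptotics. For $d\nmid i$ one has $[i]_q\to[i]_{\xi_d}\ne 0$, while for $i=kd$ one has $[kd]_q=\frac{1-q^{kd}}{1-q}\sim\frac{-kd}{1-\xi_d}\,\delta$. Writing $k=dm+r$ with $0\le r<d$ and collecting residues modulo $d$ (using $\xi_d^{jd}=1$) yields
$$[dm+r]_q!\sim \kappa^m\,m!\,[r]_{\xi_d}!\,\delta^m,\qquad \kappa:=\frac{-d\,[d-1]_{\xi_d}!}{1-\xi_d}\ne 0.$$
Introducing the truncated $q$-exponential $E_{<d}(z):=\sum_{r=0}^{d-1}z^r/[r]_{\xi_d}!$, a genuine polynomial at $q=\xi_d$, the terms of $\exp_q(z)$ then regroup as $\exp_q(z)\sim E_{<d}(z)\exp(z^d/(\kappa\delta))$. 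I would set $y:=z^d/(\kappa\delta)$ and let $\delta\to 0$ and $z\to 0$ with $y$ held fixed; since $E_{<d}(z)\to 1$ and $E_{<d}(tz)\to 1$, the generating function tends to
$$\frac{(1-t)\exp_q(z)}{\exp_q(tz)-t\exp_q(z)}\longrightarrow \frac{(1-t)e^{y}}{e^{t^{d}y}-te^{y}}=\frac{1-t}{e^{(t^{d}-1)y}-t}.$$

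The same rescaling applied to the left-hand side kills every term with $r\ge 1$, since such a term carries a factor $z^r\to 0$, and leaves $\sum_{m\ge 0}A_{dm}(\xi_d,t)\,y^m/m!$. To finish I would match the two limits against the classical Eulerian generating function $\sum_{m\ge 0}A_m(t)\,s^m/m!=\frac{1-t}{e^{(t-1)s}-t}$ (the $d=1$ case of the identity above). Factoring $t^{d}-1=(t-1)[d]_t$ shows the limit equals this Eulerian series under $s=[d]_t\,y$, namely $\frac{1-t}{e^{(t^{d}-1)y}-t}=\sum_{m\ge 0}A_m(t)[d]_t^{m}\,y^m/m!$. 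Equating coefficients of $y^m/m!$ gives $A_{dm}(\xi_d,t)=[d]_t^{m}A_m(t)$, i.e. the asserted identity with $n=dm$; note that the nonzero constant $\kappa$ drops out entirely. For the consequence, $[d]_t^{m}A_m(t)\in\Z[t]$, and each factor $[d]_t$ together with the Eulerian polynomial $A_m(t)$ is positive, palindromic and unimodal, so their product is positive, palindromic and unimodal by Proposition~\ref{tooluni}, with center of symmetry $\frac{m(d-1)+(m-1)}{2}=\frac{n-1}{2}$.

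The main obstacle I anticipate is making this double limit rigorous: justifying the termwise asymptotic factorization of $\exp_q$, interchanging the $\delta\to 0$ limit with the infinite summation, and extracting the coefficient of $y^m$ under the simultaneous rescaling $y=z^d/(\kappa\delta)$. I would handle this by passing to finite partial sums and establishing uniform estimates on compact sets in $y$, so that the limit can be taken inside the sum; once this analytic point is settled, the remainder is the bookkeeping of the factorial asymptotics and the elementary identity $t^{d}-1=(t-1)[d]_t$.
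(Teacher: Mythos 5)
Your route is genuinely different from the paper's. The paper does not prove Theorem~\ref{unity} by a limiting argument on generating functions: following \cite{SaShWa}, one writes $A_n(q,t)=(q;q)_n\,\Ps(\omega X_{G_{n,2}}(\x,t))$ (Theorems~\ref{RawlingsqEulerTh} and~\ref{speccor}), invokes Lemma~\ref{thdes} to convert evaluation at $q=\xi_d$ into the coefficient of $z_{d^m}^{-1}p_{d^m}$ in the power-sum expansion of $\omega X_{G_{n,2}}(\x,t)$, and reads that coefficient off Stembridge's expansion (\ref{stemeq}), obtaining $[d]_t^mA_m(t)$ at once; this is the same mechanism used for Proposition~\ref{impliesunity} and the $d=n$ theorem in Section~\ref{specsec}. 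Your computation instead performs a singular limit $q\to\xi_d$ on (\ref{expgeneq}) with the rescaling $y=z^d/(\kappa\delta)$, and the bookkeeping is correct: the asymptotics $[dm+r]_q!\sim\kappa^m m!\,[r]_{\xi_d}!\,\delta^m$, the cancellation of $\kappa$, the limiting closed form $(1-t)/(e^{(t^d-1)y}-t)$, the matching via $t^d-1=(t-1)[d]_t$, and the final appeal to Proposition~\ref{tooluni} with center of symmetry $(n-1)/2$ are all sound. The paper's route buys a purely algebraic argument that generalizes to all chromatic quasisymmetric functions; yours is self-contained, needing only (\ref{expgeneq}) and the classical Eulerian generating function.

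There is, however, a genuine gap in the way you propose to make the limit rigorous. Dominated convergence (``uniform estimates\ldots so that the limit can be taken inside the sum'') requires a $\delta$-independent summable bound on the terms $A_k(q,t)z^k/[k]_q!$. The only bounds available a priori are absolute-value bounds, $|A_k(q,t)|\le k!\max(1,|t|)^{k-1}$ for $|q|\le1$; combined with $|z|^k\asymp(|\delta|\,|y|)^{k/d}$ and $|[k]_q!|\asymp C^k(k/d)!\,|\delta|^{k/d}$ in the relevant range, this yields dominating terms of order $\bigl(k!/(k/d)!\bigr)C'^{\,k}|y|^{k/d}$, which grow superexponentially for every $d\ge2$, so the dominating series diverges. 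The terms are in fact small near $\xi_d$ only because of massive cancellation in $A_k(q,t)$ --- precisely the phenomenon expressed by the identity $A_{dm}(\xi_d,t)=[d]_t^mA_m(t)$ you are trying to prove --- so any termwise estimate strong enough to justify the interchange is essentially circular.

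The gap is repairable without altering your computation. Formally: substitute $q=\xi_d(1+\epsilon)$ and $z=(\kappa\epsilon y)^{1/d}$ into (\ref{expgeneq}) regarded as an identity over $\Q(\xi_d)(t)((\epsilon))$, landing in a ring of fractional-power series in $y$ and $\epsilon$. There the coefficient of each power $y^{k/d}$ receives a contribution from exactly one index $k$, so no interchange of limit and summation ever arises; every series involved has nonnegative $\epsilon$-exponents, and setting $\epsilon=0$ is a ring homomorphism, which produces exactly your limiting identity $1+\sum_{m\ge1}A_{dm}(\xi_d,t)\,y^m/m!=(1-t)e^y/(e^{t^dy}-te^y)$. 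Alternatively, analytically: never touch the series termwise, but use the product formula $\exp_q(z)=1/\bigl((1-q)z;q\bigr)_\infty$ to prove locally uniform convergence of the branch-averaged closed form $\frac1d\sum_{\omega^d=1}F\bigl(\omega(\kappa\delta y)^{1/d}\bigr)$ on a small disk in $y$ (the denominator stays near $1-t\ne0$ there), and then extract Taylor coefficients in $y$ by Cauchy's integral formula. With either repair your argument becomes a complete and independent proof of the theorem.
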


A key tool in the proof of Theorem~\ref{unity} is the following result, which is  implicit in \cite{De} and  stated explicitly in \cite{SaShWa}.

\begin{lemma}[see \cite{SaShWa}] \label{thdes} Suppose $u(q) \in \Z[q]$ and there exists  a homogeneous symmetric function $U(\x)$ of degree $n$ with coefficients in $\Z$ such that   
 $$u(q) =(q;q)_n\,\, \Ps (U(\x)).$$ If $dm =n$  then $u(\xi_d)$ is the coefficient of $z_{d^m}^{-1} p_{d^m}$ in the expansion of $U(\x)$ in the power sum basis.  
\end{lemma}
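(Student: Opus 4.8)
The plan is to expand $U$ in the power sum basis and track which terms survive specialization at $q=\xi_d$. Write $U=\sum_{\lambda\vdash n}c_\lambda z_\lambda^{-1}p_\lambda$ with $c_\lambda\in\Q$; the goal is to show $u(\xi_d)=c_{d^m}$, since $c_{d^m}$ is by definition the coefficient of $z_{d^m}^{-1}p_{d^m}$. First I would record the stable principal specialization of a single power sum: since $\Ps(p_k)=\sum_{i\ge 0}q^{ik}=(1-q^k)^{-1}$, we get $\Ps(p_\lambda)=\prod_i(1-q^{\lambda_i})^{-1}$, and therefore
$$u(q)=(q;q)_n\,\Ps(U)=\sum_{\lambda\vdash n}c_\lambda z_\lambda^{-1}\,\frac{(q;q)_n}{\prod_i(1-q^{\lambda_i})}.$$

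Next I would analyze the order of vanishing of each summand at $q=\xi_d$. Since $\xi_d$ is a primitive $d$th root of unity, $1-q^j$ vanishes at $q=\xi_d$ precisely when $d\mid j$. The numerator $(q;q)_n=\prod_{j=1}^n(1-q^j)$ thus vanishes to order equal to the number of multiples of $d$ in $[n]$, which is exactly $m$ because $n=dm$. The denominator $\prod_i(1-q^{\lambda_i})$ vanishes to order $d_\lambda:=|\{i:d\mid\lambda_i\}|$. The crucial observation is that $d_\lambda\le m$ for every $\lambda\vdash n$: each part divisible by $d$ is at least $d$, so $d\cdot d_\lambda\le\sum_i\lambda_i=n=dm$. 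Hence no summand blows up; a summand has a nonzero finite limit only when $d_\lambda=m$, and it vanishes when $d_\lambda<m$. Moreover $d_\lambda=m$ forces the $m$ parts divisible by $d$ to sum to $n=dm$ with each at least $d$, so each equals $d$ and there are no further parts; that is, $\lambda=(d^m)$. Thus only the single term $\lambda=(d^m)$ survives the specialization.

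Finally I would evaluate that surviving term. For $\lambda=(d^m)$ the denominator is $(1-q^d)^m$, and splitting $(q;q)_n$ into factors indexed by multiples and non-multiples of $d$ gives
$$\frac{(q;q)_n}{(1-q^d)^m}=\left(\prod_{k=1}^m\frac{1-q^{kd}}{1-q^d}\right)\prod_{\substack{1\le j\le n\\ d\nmid j}}(1-q^j).$$
Each factor $\frac{1-q^{kd}}{1-q^d}=[k]_{q^d}$ tends to $k$ as $q\to\xi_d$ (since $q^d\to 1$), so the first product tends to $m!$. For the second product, as $j$ runs over the $m$ complete residue blocks in $[n]=[dm]$ the non-multiples of $d$ realize each value $\xi_d^a$ with $1\le a\le d-1$ exactly $m$ times, so using $\prod_{a=1}^{d-1}(1-\xi_d^a)=d$ (obtained by setting $x=1$ in $\frac{x^d-1}{x-1}=\prod_{a=1}^{d-1}(x-\xi_d^a)$) the second product tends to $d^m$. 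The limit is therefore $m!\,d^m=z_{d^m}$, whence $u(\xi_d)=c_{d^m}z_{d^m}^{-1}\cdot z_{d^m}=c_{d^m}$, as required. The main obstacle is precisely the bookkeeping in this last step: getting the order-of-vanishing count right and carefully evaluating the root-of-unity product $\prod_{d\nmid j}(1-\xi_d^j)=d^m$, so that the surviving limit reproduces $z_{d^m}$ exactly; the earlier steps are routine once the power-sum specialization is in hand.
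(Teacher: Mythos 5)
Your proof is correct. Note that the paper itself gives no proof of this lemma---it is stated with a pointer to \cite{SaShWa} (and the remark that the result is implicit in D\'esarm\'enien's work)---and the argument in that reference is essentially the one you reconstruct: expand $U$ in the power-sum basis, use $\Ps(p_k)=(1-q^k)^{-1}$, compare the order of vanishing of $(q;q)_n$ (namely $m$) with that of $\prod_i(1-q^{\lambda_i})$ (namely $|\{i: d\mid \lambda_i\}|\le m$) at $q=\xi_d$ to conclude that only $\lambda=(d^m)$ contributes, and then evaluate the surviving factor to $m!\,d^m=z_{d^m}$. Your bookkeeping in the final step (the limit $[k]_{q^d}\to k$ and the root-of-unity product $\prod_{d\nmid j,\,j\le n}(1-\xi_d^j)=d^m$) is accurate, so the proposal stands as a complete, self-contained proof of the cited lemma.
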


The following conjecture  generalizes Theorem~\ref{unity}.  Recall the definitions of $\mathcal N_{P,\mu}$ and $\widetilde{\mathcal N}_{P,\mu}$ given prior to Conjecture~~\ref{powercon1}  and Proposition~\ref{powercon2}, respectively.
\begin{con}  \footnote{See Section~\ref{newsec}: {\em Recent developments}}\label{genunity} Let $G=([n],E)$ be the incomparability graph of  a natural unit interval order $P$ and let
$$A_G(q,t):=\sum_{\sigma \in \sg_n} q^{\maj_P(\sigma)} t^{\inv_G(\sigma)}.$$  If $dm = n$ and  $\xi_d$ is any primitive $d$th root of unity then  
\begin{equation} \label{rooteq} A_G(\xi_d,t) = \sum_{\sigma \in \mathcal N_{P,d^m}} t^{\inv_G(\sigma)}= [d]_t^m \sum_{\sigma \in \widetilde{\mathcal N}_{P,d^m}} t^{\inv_G(\sigma)} .\end{equation} Moreover,  the palindromic polynomial $A_G(\xi_d,t)$ is   unimodal.
\end{con}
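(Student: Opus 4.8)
The plan is to reduce the evaluation $A_G(\xi_d,t)$ to reading off a single coefficient in the power-sum expansion of $\omega X_G(\x,t)$, using the root-of-unity principle of Lemma~\ref{thdes}. First I would record, via Theorem~\ref{speccor} with $p=1$, that
\[
A_G(q,t)=A_G(q,1,t)=(q;q)_n\,\Ps(\omega X_G(\x,t)),
\]
and that by Theorem~\ref{symprop} the function $\omega X_G(\x,t)$ is a genuine symmetric function of degree $n$, with coefficients in $\Z[t]$. Applying Lemma~\ref{thdes} separately to the coefficient of each power of $t$ (each such coefficient is a degree-$n$ symmetric function with integer coefficients), and using $dm=n$, then identifies $A_G(\xi_d,t)$ with the coefficient of $z_{d^m}^{-1}p_{d^m}$ in the power-sum expansion of $\omega X_G(\x,t)$. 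This is the conceptual core: it turns the analytic evaluation at a primitive $d$th root of unity into one structure constant of the $p$-basis.

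With that reduction in place, the two equalities in (\ref{rooteq}) are purely combinatorial. The power-sum expansion (\ref{pexpandeq}) asserts that the coefficient of $z_\mu^{-1}p_\mu$ in $\omega X_G(\x,t)$ is $\sum_{\sigma\in\mathcal N_{P,\mu}}t^{\inv_G(\sigma)}$; taking $\mu=d^m$ yields the first equality of (\ref{rooteq}). The second equality is Proposition~\ref{powercon2}, equation (\ref{powersumeq}), specialized to $\mu=d^m$: there $l(\mu)=m$ and each part equals $d$, so $\prod_{i=1}^{l(\mu)}[\mu_i]_t=[d]_t^{m}$ and the residual sum is exactly $\sum_{\sigma\in\widetilde{\mathcal N}_{P,d^m}}t^{\inv_G(\sigma)}$. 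Notably, the passage between the two combinatorial sums provided by Proposition~\ref{powercon2} is unconditional.

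For the final assertion I would treat palindromicity and unimodality separately. Palindromicity is unconditional: Theorem~\ref{symprop} makes $X_G(\x,t)$ symmetric, so Corollary~\ref{sympalincor} gives $\omega X_G(\x,t)=t^{|E|}\omega X_G(\x,t^{-1})$; applying $(q;q)_n\Ps(\cdot)$ (equivalently Corollary~\ref{sympalin} with $p=1$) yields $A_G(q,t)=t^{|E|}A_G(q,t^{-1})$, a palindromicity in $t$ that is preserved under $q=\xi_d$. Unimodality of $A_G(\xi_d,t)$ reduces to unimodality of the palindromic polynomial $\sum_{\sigma\in\mathcal N_{P,d^m}}t^{\inv_G(\sigma)}$, which is precisely the unimodality built into Conjecture~\ref{powercon1} and follows from $p$-unimodality of $\omega X_G(\x,t)$, hence from Conjecture~\ref{quasistan}.

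The main obstacle is thus not the evaluation mechanism but its input: both the identification of the $p$-coefficient with $\sum_{\sigma\in\mathcal N_{P,d^m}}t^{\inv_G(\sigma)}$ and the unimodality of that sum rest on the open power-sum expansion of Conjecture~\ref{powercon1}. The case that goes through unconditionally is $m=1$ (that is, $d=n$), where $d^m=(n)$ and the needed coefficient of $\tfrac1n p_n$ is delivered by the proven Theorem~\ref{pcoefth}, giving $A_G(\xi_n,t)=[n]_t\prod_{i=2}^n[b_i]_t$, already known there to be positive, palindromic and unimodal. Establishing (\ref{rooteq}) for general $m$ would require first proving Conjecture~\ref{powercon1} (or at least its $\mu=d^m$ instance together with the corresponding unimodality), and I expect that to be the crux.
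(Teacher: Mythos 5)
Your proposal is correct and takes essentially the same route as the paper: the paper's Proposition~\ref{impliesunity} performs exactly your reduction (Lemma~\ref{thdes} together with equation (\ref{stabprinceq}) identify $A_G(\xi_d,t)$ with the coefficient of $z_{d^m}^{-1}p_{d^m}$ in the $p$-expansion of $\omega X_G(\x,t)$, after which the conjectural formula (\ref{pexpandeq}) gives the first equality, the unconditional Proposition~\ref{powercon2} gives the second, and the unimodality assertion of Conjecture~\ref{powercon1} gives unimodality), and the paper likewise singles out the unconditional cases $d=1$ and $d=n$, the latter via Theorem~\ref{pcoefth} exactly as you describe. Since the statement is itself a conjecture, this conditional reduction plus the special cases is all the paper establishes, and you have reproduced it faithfully.
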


\begin{prop}\label{impliesunity} If Conjecture~\ref{powercon1} is true then Conjecture~\ref{genunity} is true.
\end{prop}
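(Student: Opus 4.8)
The plan is to assemble three ingredients that are already available: the stable principal specialization identity of Theorem~\ref{speccor}, the root-of-unity principle of Lemma~\ref{thdes}, and the assumed power-sum expansion of Conjecture~\ref{powercon1}. First I would note that, in the notation of Conjecture~\ref{genunity}, one has $A_G(q,t)=A_G(q,1,t)$, so that equation (\ref{stabprinceq}) reads
$$A_G(q,t) = (q;q)_n\,\Ps(\omega X_G(\x,t)).$$
Hence $A_G(q,t)$ is exactly of the form $u(q)=(q;q)_n\,\Ps(U(\x))$ demanded by Lemma~\ref{thdes}, with $U=\omega X_G(\x,t)$.

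Next I would apply Lemma~\ref{thdes} to conclude that, for $dm=n$ and $\xi_d$ a primitive $d$th root of unity, $A_G(\xi_d,t)$ equals the coefficient of $z_{d^m}^{-1}p_{d^m}$ in the power-sum expansion of $\omega X_G(\x,t)$. Since $\omega X_G(\x,t)$ lies in $\Lambda_\Z[t]$ (Theorem~\ref{symprop}) rather than in $\Lambda_\Z$, I would apply the lemma coefficient-by-coefficient in $t$: writing $\omega X_G(\x,t)=\sum_j U_j(\x)\,t^j$ with each $U_j\in\Lambda_\Z$ homogeneous of degree $n$, and correspondingly $A_G(q,t)=\sum_j u_j(q)\,t^j$ with $u_j(q)=(q;q)_n\,\Ps(U_j)$, Lemma~\ref{thdes} gives that $u_j(\xi_d)$ is the coefficient of $z_{d^m}^{-1}p_{d^m}$ in $U_j$; summing against $t^j$ transfers the statement to $\omega X_G(\x,t)$.

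Now invoking Conjecture~\ref{powercon1} with $\mu=d^m$ identifies this coefficient as $\sum_{\sigma\in\mathcal N_{P,d^m}}t^{\inv_G(\sigma)}$, which is the first equality of (\ref{rooteq}). The second equality is unconditional: it is precisely equation (\ref{powersumeq}) of Proposition~\ref{powercon2} specialized to $\mu=d^m$, where $l(\mu)=m$ and every part equals $d$, so that $\prod_{i=1}^{l(\mu)}[\mu_i]_t=[d]_t^m$. Finally, the palindromicity and unimodality of $A_G(\xi_d,t)$ asserted in the ``Moreover'' clause are inherited directly, via the identification above, from the asserted palindromicity and unimodality of $\sum_{\sigma\in\mathcal N_{P,d^m}}t^{\inv_G(\sigma)}$ in Conjecture~\ref{powercon1}.

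I do not anticipate a serious obstacle here, as every component is already in hand; the argument is essentially a bookkeeping assembly. The only point requiring genuine care is the passage from the $\Z$-coefficient hypothesis of Lemma~\ref{thdes} to the $\Z[t]$-valued symmetric function $\omega X_G(\x,t)$, which the coefficient-wise argument above resolves cleanly.
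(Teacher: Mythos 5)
Your proposal is correct and follows essentially the same route as the paper's own proof: specialize via Theorem~\ref{speccor}, apply Lemma~\ref{thdes} to identify $A_G(\xi_d,t)$ with the coefficient of $z_{d^m}^{-1}p_{d^m}$ in $\omega X_G(\x,t)$, then invoke Conjecture~\ref{powercon1} for the first equality and unimodality, and Proposition~\ref{powercon2} for the second equality. Your explicit coefficient-by-coefficient treatment in $t$ of Lemma~\ref{thdes} is a careful handling of a point the paper leaves implicit, but it is a refinement of the same argument, not a different one.
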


\begin{proof}   It follows from Lemma~\ref{thdes} and equation (\ref{stabprinceq}) that $A_G(\xi_d,t)$ is the coefficient of $z_{d^m}^{-1}p_{d^m}$ in the expansion of $\omega X_G(\x,t)$ in the power sum basis.  By Conjecture~\ref{powercon1} this coefficient is equal to $\sum_{\sigma \in \mathcal N_{P,d^m}} t^{\inv_G(\sigma)}$, which by Proposition~\ref{powercon2} equals $[d]_t^m \sum_{\sigma \in \widetilde{\mathcal N}_{P,d^m}} t^{\inv_G(\sigma)}$.
It is also asserted in Conjecture~\ref{powercon1} that  $\sum_{\sigma \in \mathcal N_{P,d^m}} t^{\inv_G(\sigma)}$ is unimodal.
\end{proof}

In addition  to the  cases in which Conjecture~\ref{powercon1} has been verified, Conjecture~\ref{genunity} is true for $d = 1$ and $d=n$.  Equation (\ref{rooteq}) is trivial in the $d=1$ case and the unimodality of $A_G(1,t)$ was established by De Mari, Procesi, and Shayman in \cite{DePrSh} and is discussed in Section~\ref{hesssec}.  For $d=n$ we have the following result.
\begin{thm} Let $G$ and $\xi_n$ be as in Conjecture~\ref{genunity} and let $b_i$ be as in (\ref{bi.eq}).  Then
$$A_G(\xi_n,t) = [n]_t \prod_{i=2}^n [b_i]_t.$$ 
Consequently $A_G(\xi_n,t)$ is palindromic and unimodal. \end{thm}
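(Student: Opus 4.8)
The plan is to reduce the statement to the single power-sum coefficient computed unconditionally in Theorem~\ref{pcoefth}, following the proof of Proposition~\ref{impliesunity} but specialized to the case $d=n$, $m=1$, where Conjecture~\ref{powercon1} is not needed.

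First I would observe that, in the notation of Conjecture~\ref{genunity}, $A_G(q,t) = A_G(q,1,t)$, so (\ref{stabprinceq}) gives
\[
A_G(q,t) = (q;q)_n \, \Ps(\omega X_G(\x,t)).
\]
Since $t$ enters here only as a scalar parameter, whereas Lemma~\ref{thdes} is phrased for a single polynomial in $q$, I would apply the lemma coefficientwise in $t$. Writing $\omega X_G(\x,t) = \sum_j U_j(\x)\, t^j$ with each $U_j \in \Lambda_\Z$ homogeneous of degree $n$, and $A_G(q,t) = \sum_j u_j(q)\, t^j$, the identity above yields $u_j(q) = (q;q)_n \Ps(U_j(\x))$, where $u_j(q) = \sum_{\sigma : \inv_G(\sigma)=j} q^{\maj_P(\sigma)} \in \Z[q]$.

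Next, taking $d=n$ and $m=1$, so that $dm=n$ and the relevant partition is $d^m=(n)$ with $z_{(n)} = n$, Lemma~\ref{thdes} tells me that $u_j(\xi_n)$ is the coefficient of $z_{(n)}^{-1} p_{(n)} = \frac 1n p_n$ in the power-sum expansion of $U_j(\x)$. Summing against $t^j$, I conclude that
\[
A_G(\xi_n,t) = \sum_j u_j(\xi_n)\, t^j
\]
is exactly the coefficient of $\frac 1n p_n$ in the power-sum expansion of $\omega X_G(\x,t)$. By Theorem~\ref{pcoefth} this coefficient equals $[n]_t \prod_{i=2}^n [b_i]_t$, which is the asserted formula; the palindromicity and unimodality then follow immediately from the corresponding conclusion of Theorem~\ref{pcoefth}.

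The point worth emphasizing is that no genuinely hard step remains. Unlike the general root-of-unity statement of Conjecture~\ref{genunity}, which for arbitrary $d$ would require the full $p$-basis expansion predicted by Conjecture~\ref{powercon1}, the case $d=n$ invokes only the single coefficient of $\frac 1n p_n$, and that coefficient is computed unconditionally in Theorem~\ref{pcoefth}. The only mild subtlety is the bookkeeping needed to pass the parameter $t$ through Lemma~\ref{thdes}, which is handled by the coefficientwise argument above.
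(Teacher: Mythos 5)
Your proposal is correct and follows essentially the same route as the paper: both identify $A_G(\xi_n,t)$, via Theorem~\ref{speccor} and Lemma~\ref{thdes} (applied coefficientwise in $t$), with the coefficient of $\frac 1n p_n$ in the power-sum expansion of $\omega X_G(\x,t)$, and then invoke Theorem~\ref{pcoefth} for the product formula and its palindromicity/unimodality consequence. Your explicit bookkeeping of the $t$-coefficients $u_j(q) = (q;q)_n \Ps(U_j(\x))$ is exactly the step the paper leaves implicit, so there is no gap.
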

\begin{proof} By Lemma~\ref{thdes} and Theorem~\ref{speccor}, $A_G(\xi_n,t)$ is the coefficient of $n^{-1}p_n$ in the $p$-basis expansion of $\omega X_G(\x,t)$.  The result now follows from Theorem~\ref{pcoefth} and Propostion~\ref{tooluni}.
\end{proof}

\section{Hessenberg varieties} \label{hesssec}
In this section we discuss a conjectured connection, first presented in  \cite{ShWa3}, between the chromatic quasisymmetric functions and Hessenberg varieties.

Given a positive integer $n$, let $\mathcal F_n$ be the variety of all  flags of subspaces  $$ F: V_1 \subset V_2 \subset \cdots \subset V_n = \C^n$$ with $\dim V_i = i$.  Fix a diagonal $n \times n$ complex matrix $s$ with $n$ distinct eigenvalues. 
Let ${\bf m}$ be  a weakly increasing sequence $  (m_1,\dots,m_{n-1})$ of  integers in $[n]$ satisfying $m_i \ge i$ for each $i$.  The {\em regular semisimple Hessenberg variety of type $A$} associated to ${\bf m}$ is
\[
\hess ({\bf m}):=\{F \in \mathcal F_n \mid s V_i \subseteq V_{m_i}  \mbox{ for all } i \in [n-1]\}.
\]

The regular semisimple Hessenberg varieties of type A associated to $(r,r+1,\dots,n,\dots,n)$ were studied initially by De Mari and Shayman in \cite{DeSh}.  Hessenberg varieties for arbitrary Lie types were defined and studied by De Mari, Procesi and Shayman in \cite{DePrSh}.  Such varieties are determined by certain subsets of a root system.  In \cite[Theorem 11]{DePrSh} it is noted that, for arbitrary Lie type, the Hessenberg variety associated with the set of (negatives of) simple roots is precisely the  toric variety associated with the corresponding Coxeter complex.  
In particular, in Lie type A, the Hessenberg variety associated with the simple roots is $\hess(2,3,\dots,n)$. Thus
\begin{equation}\label{torhess0}  \hess(2,3,\dots,n) = \v_n,\end{equation}
where $\v_n$ is the toric variety associated with the Coxter complex of type $A_{n-1}$.

The symmetric group acts naturally on the type A Coxeter complex and this action induces an action of $\sg_n$ on $\v_n$, which in turn induces  a representation of $\sg_n$ on its cohomology $H^*(\v_n)$.  Stanley \cite[Proposition 12]{St0}, using a recurrence of Procesi \cite{Pr}, obtained the  formula 
\begin{equation} \label{toreq} \sum_{n\ge 0}\sum_{j=0}^{n-1} \ch H^{2j}(\v_n) t^j z^n = {(1-t) H(z) \over H(zt) - t H(z)}, \end{equation} 
where $\ch$ is the Frobenius characteristic and   $H(z) := \sum_{n \ge 0} h_n z^n$.  (The cohomology of every $\hess(\bf m)$ vanishes in odd dimensions.)

Let $P$ be a natural unit interval order  on $[n]$ and let ${ \bf m}$ be such that  $P=P({\bf m})$ (cf. the paragraph before Proposition~\ref{natunitprop2}).  
It is convenient to define the Hessenberg variety associated to   $P$ by  $\hess (P):= \hess ({\bf m})$.
Hence (\ref{torhess0}) can be rewritten as 
\begin{equation}\label{torhess}  \hess(P_{n,2}) = \v_n.\end{equation}

For each natural unit interval order $P$ on $[n]$, Tymoczko   \cite{Ty3}, \cite{Ty2} has defined a representation of $\S_n$ on $H^*(\hess(P))$ via a theory of Goresky, Kottwitz and MacPherson, known as GKM theory (see \cite{GoKoMacP}, and see \cite{Ty} for an introductory description of GKM theory).
The centralizer 
$T$ of $\bft$ in $GL_n(\cc)$ consists of all nonsingular diagonal
matrices.
  The torus $T$ acts (by left translation) on $\hess (P)$.  The technical conditions required for application of GKM theory  are satisfied by this action, and it follows that one can describe the cohomology of $\hess(P)$ using the associated moment graph $M(P)$.  The vertex set of $M(P)$ is $\S_n$.  Given $v,w \in \S_n$, the edge $\{v,w\}$ lies in $E(M(P))$ if $v=(i,j)w$ for some transposition $(i,j)$ such that $\{i,j\} \in E(\inc(P)) $.  Thus the action of $\S_n$ on itself by right multiplication determines an action of $\S_n$ on $M(P)$, and this action can be used to define a linear representation of $\S_n$ on the cohomology $H^\ast(\hess (P))$.  For the remainder of the section, we will consider $H^\ast(\hess(P))$ to be the $\S_n$-module determined by  the representation defined by Tymoczko.

It turns out that $H^{*}(\hess (P_{n,2}))$ is isomorphic to the representation of $\S_n$ on $H^*(\v_n)$ given in (\ref{toreq}). Hence by (\ref{symEulereq}) 
$$\sum_{j\ge 0} \ch H^{2j}(\hess (P_{n,2})) t^j= \omega X_{G_{n,2}}(\x,t) .$$
In the case $r=1$,   $\hess(P_{n,r})$ consists of $n!$ isolated points and  $H^{0}(\hess(P))$ is the regular $\S_n$-module.  Hence 
$$\sum_{j\ge 0} \ch H^{2j}(\hess (P_{n,1})) t^j= h_1^n.$$
 When $r=n$, it follows from \cite[Proposition 4.2]{Ty2}  that  for all $j$,  the $\S_n$-module $H^{2j}(\hess(P))$  is  isomorphic to the direct sum of copies of the trivial $\S_n$-module, where the number of copies equals the number of permutations in $\sg_n$ with $j$ inversions. Hence by (\ref{majinveq}),
 $$\sum_{j\ge 0} \ch H^{2j}(\hess (P_{n,n})) t^j= \sum_{\sigma \in \sg_n}  t^{\inv(\sigma)} h_n = [n]_t! h_n.$$
 By comparing with Table 1 in Section~\ref{esec}, we conclude that for $r=1$ and $r=n$,
 $$\sum_{j\ge 0} \ch H^{2j}(\hess (P_{n,r})) t^j= \omega X_{G_{n,r}}(\x,t) .$$
This has also been verified  for $r=n-1$ and $r=n-2$ by comparing computations of Tymoczko (personal communication, see also \cite{Te2}) to Table 1.  
 
 \begin{con}  \footnote{See Section~\ref{newsec}: {\em Recent developments}}\label{hesschrom} Let $P$ be a natural unit interval order on $[n]$.
Then \begin{equation}\label{mainconeq}\sum_{j=0}^{|E(\inc(P))|} \ch H^{2j}(\hess(P)) t^j = \omega X_{\inc(P)}(\x,t).\end{equation}
\end{con}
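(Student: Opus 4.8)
The plan is to prove equality of the two graded symmetric functions by comparing their images under the stable principal specialization $\Ps$, computing the Hessenberg side of that specialization geometrically by counting points of $\hess(P)$ over finite fields. The starting observation is that $\Ps$ is injective on the degree-$n$ part of $\Lambda$: since $\Ps(p_\mu)=\prod_k(1-q^{\mu_k})^{-1}$, the order of the pole at a primitive $d$th root of unity equals the number of parts of $\mu$ divisible by $d$ and hence (by M\"obius inversion) recovers $\mu$, whence the $\Ps(p_\mu)$ are linearly independent. Because $X_{\inc(P)}(\x,t)$ is symmetric (Theorem~\ref{symprop}) and Theorem~\ref{speccor} already gives $(q;q)_n\,\Ps(\omega X_{\inc(P)}(\x,t))=A_G(q,1,t)$, it therefore suffices to prove that the left-hand side of (\ref{mainconeq}) specializes the same way, i.e.
$$(q;q)_n\,\Ps\Big(\sum_{j\ge 0}\ch H^{2j}(\hess(P))\,t^j\Big)=A_G(q,1,t)=\sum_{\sigma\in\S_n}q^{\maj_P(\sigma)}t^{\inv_G(\sigma)}.$$

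To reach the Hessenberg specialization I would first record the structural geometry: $\hess(P)$ is smooth with an affine paving whose cells are indexed by $\S_n$ with dimension $\inv_G$, so its cohomology is pure, even, and has Poincar\'e polynomial $A_G(t)$ by (\ref{poinintro}); in particular geometric Frobenius acts on $H^{2j}$ by $q^{\,j}$. The decisive input is that Tymoczko's dot action is realized by the monodromy of the family $\{\hess(P)_s\}_s$ as the regular semisimple element $s$ varies: over $\F_q$, replacing $s$ by one whose eigenvalue multiset consists of Frobenius orbits of sizes $\mu_1,\dots,\mu_\ell$ twists Frobenius by a permutation $w_\mu$ of cycle type $\mu$. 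By the Grothendieck--Lefschetz trace formula together with purity, the resulting twisted count satisfies
$$\big|\hess(P)^{\,w_\mu F}\big|=\sum_{j\ge 0}\operatorname{tr}\!\big(w_\mu\mid H^{2j}(\hess(P))\big)\,q^{\,j},$$
so that the graded character value $c_\mu(t):=\sum_j\operatorname{tr}(w_\mu\mid H^{2j})\,t^j$ is the polynomial whose evaluation at $q$ is this count, and $\sum_j\ch H^{2j}(\hess(P))\,t^j=\sum_\mu z_\mu^{-1}c_\mu(t)\,p_\mu$.

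The remaining task is combinatorial: count $\F_q$-rational flags $F$ with $sV_i\subseteq V_{m_i}$ stable under the twisted Frobenius, organize the count by the ``colors'' recorded by the eigenvalue orbits of $s$, and match the outcome to $A_G(q,1,t)$ after substituting the $c_\mu(t)$ into $(q;q)_n\sum_\mu z_\mu^{-1}c_\mu(t)\prod_k(1-q^{\mu_k})^{-1}$. For the rectangular types $\mu=d^m$ the matching can be cross-checked independently via Lemma~\ref{thdes}, which identifies the $z_{d^m}^{-1}p_{d^m}$-coefficient of $\omega X_{\inc(P)}(\x,t)$ with the evaluation $A_G(\xi_d,1,t)$; one expects each $c_\mu(t)$ to equal the enumerator $\sum_{\sigma\in\mathcal N_{P,\mu}}t^{\inv_G(\sigma)}$ predicted by Conjecture~\ref{powercon1}, which would establish that conjecture simultaneously. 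The extremes $r=1$ and $r=n$, where the two sides are $h_1^n$ and $[n]_t!\,h_n$, are already verified and serve as sanity checks, as does the toric case $\hess(P_{n,2})=\v_n$ governed by Procesi's recursion (\ref{toreq}).

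I expect the main obstacle to be the identification of Tymoczko's GKM dot action with the geometric monodromy action, and the bookkeeping needed to turn the naive twisted point count into the correct $\S_n$-equivariant statement: one must know both that the Frobenius eigenvalues on $H^{2j}$ are exactly $q^{\,j}$ (purity, from the affine paving) and that the commuting finite-order action is precisely the monodromy, so that the Lefschetz number factors as displayed; controlling how cohomology classes are permuted is cleanest over the regular semisimple locus via the local invariant cycle theorem. A purely algebraic alternative would avoid the monodromy identification entirely: both sides satisfy a three-term ``modular'' linear relation under adjoining or deleting a single comparability of $P$, together with multiplicativity under disjoint unions (Proposition~\ref{disjointprop} on the chromatic side), and these relations anchored at the verified extremes $r=1$ and $r=n$ determine each side uniquely. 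There the difficulty migrates to proving the modular law for the Hessenberg cohomology representation, which requires understanding how $H^*(\hess(P))$ changes under a single edge modification.
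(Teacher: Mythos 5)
You should know at the outset that the paper contains no proof of this statement: it is Conjecture~\ref{hesschrom} (equivalently Conjecture~\ref{introhesschrom}), and it is left open there. What the paper supplies is evidence --- verification for $G_{n,r}$ with $r\in\{1,2,n\}$ (and, via Tymoczko's computations, $r=n-1,n-2$), the specialized identity (\ref{dmpseq}) matching Poincar\'e polynomials, and consequences such as Proposition~\ref{uniconsprop} --- together with the remark in Section~\ref{newsec} that proofs were found only later, by Brosnan and Chow \cite{BrCh} and then by Guay-Paquet \cite{Gu2}. So there is no proof in the paper to compare yours against; your proposal has to stand on its own.

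On its own terms it fails at the very first step: the stable principal specialization is \emph{not} injective on $\Lambda^n_\Q$ once $n\ge 12$. Indeed, for every $\lambda\vdash n$ the polynomial $(q;q)_n\,\Ps(h_\lambda)=[n]_q!/\prod_i[\lambda_i]_q!$ has degree $\binom{n}{2}-\sum_i\binom{\lambda_i}{2}\le\binom{n}{2}$, so the image of $\Lambda^n_\Q$ under $f\mapsto (q;q)_n\Ps(f)$ lies in a space of polynomials of dimension at most $\binom{n}{2}+1$, while $\dim\Lambda^n_\Q=p(n)$; since $p(12)=77>67=\binom{12}{2}+1$, the map $\Ps$ has a nonzero kernel on $\Lambda^{12}_\Q$, and likewise for all larger $n$. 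Your justification of injectivity --- that the pole orders of $\Ps(p_\mu)$ at roots of unity determine $\mu$ --- is a non sequitur: pairwise distinct pole profiles do not imply linear independence of the functions, and by the dimension count independence genuinely fails. This is exactly why the paper presents $(q;q)_n\Ps(\omega X_G(\x,t))=A_G(q,1,t)$ (Theorem~\ref{speccor}) and its $q=1$ form (\ref{dmpseq}) only as \emph{evidence}: specialization loses information, and what can be recovered from it are essentially the rectangular power-sum coefficients, via Lemma~\ref{thdes}. Consequently, matching specializations of the two sides of (\ref{mainconeq}) --- even if you carried out the twisted point count --- cannot prove the conjecture. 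Beyond this, each of your two routes defers precisely to the theorem that makes the problem hard and that you do not prove: the identification of Tymoczko's dot action with the monodromy action of the family (together with the purity statements needed to make the twisted Grothendieck--Lefschetz count compute graded characters), which is the substance of the Brosnan--Chow proof, and, in your algebraic alternative, the modular law for $H^*(\hess(P))$ under a single edge modification, which is the substance of Guay-Paquet's proof. As written, your text is a research plan whose decisive inputs are named as obstacles rather than established, mounted on a reduction that is false.
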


 Combining Proposition \ref{disjointprop} with \cite[Theorem 4.10]{Te}, one sees that if Conjecture \ref{hesschrom} holds for all natural unit interval orders $P$ such that $\inc(P)$ is connected, then it holds in general.

Further evidence of the validity of this conjecture is obtained by taking the stable principal specialization of both sides of (\ref{mainconeq}).  Indeed,  by Theorem~\ref{speccor}, equation (\ref{mainconeq}) specializes to
 $$(q;q)_n\sum_{j=0}^{|E(\inc(P))|}  {\bf ps}(\ch H^{2j}(\hess(P))) t^j = A_{\inc(P)}(q,1,t).$$  
Now for any representation $V$ of $\S_n$, by evaluating the polynomial $(q;q)_n {\bf ps} (\ch V)$ at $q=1$, one obtains the dimension of $V$. 
Hence the specialization of (\ref{mainconeq}) at $q=1$ is the formula
 \begin{equation} \label{dmpseq} \sum_{j=0}^{|E(\inc(P))|} \dim H^{2j}(\hess(P)) t^j = A_{\inc(P)}(1,1,t).\end{equation}
of De Mari, Procesi and Shayman \cite{DePrSh}.

Equation (\ref{dmpseq}) is also equivalent to the statement that the coefficient of the monomial symmetric function $m_{1^n}$ in the expansion 
of the left hand side of (\ref{mainconeq}) in the monomial symmetric function basis equals the coefficient of 
$m_{1^n}$ in the expansion of the right hand side of (\ref{mainconeq}).  

It is also pointed out in \cite{DeSh, DePrSh}  that it  follows from  the hard Lefschetz theorem that $A_{\inc(P)}(1,1,t)$ is palindromic and unimodal, since  $\hess(P)$ is smooth. Hence Conjecture~ \ref{GEulcon}  holds when $p=q=1$.  As far as we know there is no elementary proof of this case of the conjecture  (cf. \cite[Ex. 1.50 f]{St1}).

\begin{prop} \label{uniconsprop} If Conjecture~\ref{hesschrom} is true then $X_{\inc(P)}(\x,t)$ is Schur-unimodal.  Consequently, Conjecture~\ref{hesschrom} implies Conjecture~\ref{GEulcon}. \end{prop}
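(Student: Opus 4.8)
The plan is to assume Conjecture~\ref{hesschrom} and transport the unimodality question to the cohomology ring of $\hess(P)$, where the hard Lefschetz theorem applies. Write $X:=\hess(P)$, let $G=\inc(P)$ with edge set $E$, and recall that $X$ is a smooth projective variety (smoothness being due to De Mari--Procesi--Shayman \cite{DePrSh}) of complex dimension $|E|$, with cohomology concentrated in even degrees. Granting Conjecture~\ref{hesschrom}, we have $\omega X_G(\x,t)=\sum_{j=0}^{|E|}\ch H^{2j}(X)\,t^j$, so that $X_G(\x,t)=\sum_j a_j(\x)t^j$ with $a_j=\omega\,\ch H^{2j}(X)$. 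Since each $H^{2j}(X)$ is a genuine $\S_n$-module and $\omega$ sends $s_\lambda$ to $s_{\lambda'}$, each $a_j$ is already Schur-positive; the content is unimodality, i.e.\ Schur-positivity of $a_{j+1}-a_j$ for $2j<|E|$.

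To obtain this I would invoke the hard Lefschetz theorem equivariantly. It provides a Lefschetz operator $L$, given by cup product with a K\"ahler class $\ell\in H^2(X)$, for which $L\colon H^{2j}(X)\to H^{2j+2}(X)$ is injective whenever $2j<|E|$. The crux, and the main obstacle, is that Tymoczko's dot action is defined combinatorially through the moment graph $M(P)$ and is \emph{not} induced by a geometric action of $\S_n$ on $X$, so the equivariance of $L$ is not automatic. Here one uses that the dot action is by graded \emph{ring} automorphisms of $H^*(X)$: it arises from the $\S_n$-action permuting the variables of $H^*_T(\mathrm{pt})$, which preserves both the GKM image and the ideal cutting out ordinary cohomology. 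Consequently, if $\ell$ is chosen dot-invariant, then $L$ commutes with the dot action. The step requiring genuine care is therefore the existence of a dot-invariant ample class in $H^2(X)$ satisfying hard Lefschetz; I would produce one, for instance, from the restriction to $X$ of an $\S_n$-invariant ample class on the flag variety.

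Granting an $\S_n$-equivariant $L$, the map $L\colon H^{2j}(X)\to H^{2j+2}(X)$ is an injection of $\S_n$-modules for $2j<|E|$. Because $\Q\S_n$-modules are completely reducible, $H^{2j+2}(X)\cong H^{2j}(X)\oplus\operatorname{coker}L$ as $\S_n$-modules, whence $\ch H^{2j+2}(X)-\ch H^{2j}(X)=\ch(\operatorname{coker}L)$ is a genuine Frobenius characteristic and hence Schur-positive. Applying $\omega$, which preserves Schur-positivity, gives $a_{j+1}-a_j=\omega\,\ch(\operatorname{coker}L)$ Schur-positive for all $j$ with $2j<|E|$; together with the palindromicity from Corollary~\ref{genpalcor}, this is exactly Schur-unimodality of $X_G(\x,t)$. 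Finally, Schur-unimodality of $X_G(\x,t)$ --- equivalently of $\omega X_G(\x,t)$, since $\omega$ is a Schur-positivity-preserving bijection --- implies $F$-unimodality, so the Schur-basis form of Proposition~\ref{unispecprop} together with Theorem~\ref{speccor} shows that $A_G(q,p,t)$ is $(q,p)$-unimodal in $t$, which is Conjecture~\ref{GEulcon}.
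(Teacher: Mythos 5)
Your proof is correct and follows the same route as the paper's: an $\S_n$-equivariant hard Lefschetz operator on $H^*(\hess(P))$, plus complete reducibility of $\Q\S_n$-modules, makes each difference $\ch H^{2j+2}(\hess(P))-\ch H^{2j}(\hess(P))$ with $2j<|E(\inc(P))|$ a genuine Frobenius characteristic, hence Schur-positive; transporting through Conjecture~\ref{hesschrom} and $\omega$ gives Schur-unimodality of $X_{\inc(P)}(\x,t)$, and Conjecture~\ref{GEulcon} follows via Theorem~\ref{speccor} and the Schur-basis form of Proposition~\ref{unispecprop}, exactly as in the paper. The genuine difference is how equivariance of the Lefschetz map is obtained: the paper dispatches it in one sentence by citing MacPherson--Tymoczko \cite{MacTy}, a work in preparation, while you sketch a self-contained argument. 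Your sketch has the right two ingredients, and both can be completed from facts already present in the paper: the dot action is by graded ring automorphisms of $H^*(\hess(P))$ because it acts on the GKM presentation of $H^*_T(\hess(P))$ semilinearly over $H^*_T(\mathrm{pt})$ and preserves the augmentation ideal; and for the dot-invariant ample class, observe that the moment graph of $\hess(P)$ has vertex set $\S_n$ and a subset of the edges of the moment graph of $\mathcal F_n=\hess(P_{n,n})$, so that $H^*_T(\mathcal F_n)\subseteq H^*_T(\hess(P))$ inside $\bigoplus_{w\in\S_n}\Q[t_1,\dots,t_n]$; this inclusion is the geometric restriction map for the closed embedding $\hess(P)\subseteq\mathcal F_n$ (hence sends ample classes to ample classes) and is visibly dot-equivariant, and the dot action on $H^*(\mathcal F_n)$ is trivial (the $r=n$ case recorded in Section~\ref{hesssec}), so the restriction of \emph{any} ample class of $\mathcal F_n$ does the job. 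It is also worth making explicit, as you implicitly do, that the cheap alternative of averaging an ample class over the dot action is not obviously sufficient, precisely because the action is not geometric and so ampleness of the average is unclear; your restriction construction sidesteps this. In short, the two proofs have identical skeletons: the paper's rests on an unpublished reference, while yours is self-contained modulo published properties of the dot action.
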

\begin{proof} MacPherson and Tymoczko \cite{MacTy} show that the hard Lefschetz map for $H^*(\hess(P))$ commutes with the action of the symmetric group.  It follows that
$\sum_{j=0}^{|E(\inc(P))|} \ch H^{2j}(\hess(P)) t^j$ is  Schur-unimodal.  Hence Schur-unimodality of $X_{\inc(P)}(\x,t)$ would follow from (\ref{mainconeq}).  The consequence follows from Proposition~\ref{unispecprop}. \end{proof}

Conjecture~\ref{hesschrom}, if true,  would provide, via Theorem~\ref{schurcon}, a solution of the following problem.

\begin{prob}[Tymoczko \cite{Ty2}] \label{tymprob}   Describe the decomposition of the representation of $\S_n$ on $H^{2j}(\hess(P))$ into irreducibles.
\end{prob}

Although Conjecture~\ref{hesschrom} does not imply $e$-positivity of $X_{\inc(P)}(\x,t)$, it 
suggests an approach to establishing $e$-positivity.    Indeed, Conjecture \ref{stancon} follows from Conjecture~\ref{hesschrom} and the following conjecture. 

\begin{con} \label{tympermcon} For every natural unit interval order $P$, every  $\S_n$-module  $H^{2j}(\hess(P))$ is a permutation module  in which each point stabilizer is a Young subgroup.
\end{con}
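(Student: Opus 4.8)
The plan is to first translate the conjecture into a statement about a single symmetric function. Since the Frobenius characteristic $\ch$ is a complete isomorphism invariant of finite-dimensional $\C\S_n$-modules and $\ch\,\C[\S_n/\S_\lambda]=h_\lambda$ for every $\lambda\vdash n$, where $\S_\lambda$ is the corresponding Young subgroup, a module $V$ is a permutation module with every point stabilizer a Young subgroup if and only if $\ch V=\sum_{\lambda\vdash n}c_\lambda h_\lambda$ with all $c_\lambda\in\N$. Indeed, $h$-positivity forces $V\cong\bigoplus_\lambda(\C[\S_n/\S_\lambda])^{\oplus c_\lambda}$, and a direct sum of such coset modules is exactly a permutation module whose point stabilizers are the $\S_\lambda$ occurring. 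So Conjecture~\ref{tympermcon} is equivalent to asserting that $\ch H^{2j}(\hess(P))$ is $h$-positive for every $j$. Under Conjecture~\ref{hesschrom} this class equals $\omega a_j(\x)$, where $X_{\inc(P)}(\x,t)=\sum_j a_j(\x)t^j$, so its $h$-positivity is precisely $e$-positivity of $a_j(\x)$, i.e. Conjecture~\ref{quasistan}. Since that route presupposes an unproven conjecture, I would instead attack $h$-positivity of Tymoczko's module directly.

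\textbf{Reduction to connected graphs.} First I would reduce to the case that $\inc(P)$ is connected. The multiplicativity $X_{G+H}=X_G X_H$ of Proposition~\ref{disjointprop}, together with the module-level factorization underlying the reduction for Conjecture~\ref{hesschrom} (Proposition~\ref{disjointprop} combined with \cite[Theorem 4.10]{Te}), expresses $\sum_j\ch H^{2j}(\hess(P))t^j$ as a product over the connected components of $\inc(P)$. Because $h_\lambda h_\mu=h_{\lambda\cup\mu}$ and, at the module level, the induction product $\mathrm{Ind}_{\S_{n_1}\times\S_{n_2}}^{\S_n}\!\bigl(\C[\S_{n_1}/\S_\lambda]\boxtimes\C[\S_{n_2}/\S_\mu]\bigr)$ is the coset module $\C[\S_n/\S_{\lambda\cup\mu}]$, the class of Young-stabilizer permutation modules is closed under this product. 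Hence it suffices to prove the conjecture when $\inc(P)$ is connected.

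\textbf{Core construction.} For connected $P$ the real work is to produce, in each degree $2j$, an explicit $\C$-basis of $H^{2j}(\hess(P))$ on which the dot action of $\S_n$ is a genuine permutation (no signs, no mixing) with Young point stabilizers. The natural starting point is Tymoczko's GKM presentation: $H^*_T(\hess(P))$ embeds in $\bigoplus_{w\in\S_n}\C[t_1,\dots,t_n]$ subject to congruences along the edges of the moment graph $M(P)$, the dot action permutes the summands by right multiplication while acting on the polynomial coordinates, and there is a flow-up basis indexed by permutations $w$ graded by $\inv_G(w)$, recovering the Poincaré polynomial $A_G(t)$ of~(\ref{poinintro}). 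I would try to partition the index set $\S_n$ into blocks, change basis within each block, and match the resulting $\S_n$-orbits to the $P$-tableau combinatorics of Theorem~\ref{schurcon}, organizing the tableaux into families each carrying an honest coset module $\C[\S_n/\S_\mu]$.

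\textbf{Main obstacle.} The hard part will be exactly this last step: exhibiting a basis on which the dot action is an unsigned permutation with Young stabilizers, rather than merely an $\N$-combination of Schur functions. The dot action is not monomial on the flow-up basis, and no general recipe is known for the required change of basis; carrying it out would in effect establish the Stanley--Stembridge conjecture, since Conjectures~\ref{hesschrom} and~\ref{tympermcon} together yield Conjecture~\ref{stancon}. One should therefore expect genuine difficulty. A realistic intermediate goal is to perform the construction for the structured families of Section~\ref{calcsec} (for instance $P(\mathbf m)$ with $m_3=n$, where $e$-positivity already holds by Corollary~\ref{epos3n}), reading off the coset modules directly from the explicit $e$-expansions there, and to confirm compatibility with Tymoczko's computations in the known cases $r\in\{1,n-2,n-1,n\}$.
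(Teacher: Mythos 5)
You were asked to prove what is, in the paper, Conjecture~\ref{tympermcon}: it is stated there without proof, and remains open in the paper itself (the only thing the paper records about it is that, together with Conjecture~\ref{hesschrom}, it would imply Conjecture~\ref{stancon}). Your submission is accordingly not a proof but a plan, and you say so candidly. The parts you actually argue are correct. The equivalence between ``permutation module each of whose point stabilizers is (conjugate to) a Young subgroup'' and $h$-positivity of the Frobenius characteristic is right: $\ch\,\C[\S_n/\S_\lambda]=h_\lambda$, the $h_\lambda$ form a $\Z$-basis of $\Lambda_\Z$, and $\ch$ determines a $\C\S_n$-module up to isomorphism, so nonnegative integer $h$-coefficients are exactly what the conjecture asserts. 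Your observation that, modulo Conjecture~\ref{hesschrom}, the statement becomes $e$-positivity of each coefficient $a_j(\x)$ of $X_{\inc(P)}(\x,t)$ --- i.e.\ the positivity half of Conjecture~\ref{quasistan} --- is precisely the logical relationship the paper exploits in Section~\ref{hesssec}. The reduction to connected $\inc(P)$ is also sound: granting the module-level factorization from Teff's result that the paper cites for the analogous reduction of Conjecture~\ref{hesschrom}, the identification $\mathrm{Ind}_{\S_{n_1}\times\S_{n_2}}^{\S_n}\bigl(\C[\S_{n_1}/\S_\lambda]\boxtimes\C[\S_{n_2}/\S_\mu]\bigr)\cong\C[\S_n/\S_{\lambda\cup\mu}]$ shows the relevant class of modules is closed under the induction product, so connectedness may be assumed.

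The genuine gap is exactly the one you name, and it is unbridged by anything in the paper: exhibiting, in each degree, a basis of $H^{2j}(\hess(P))$ that the dot action permutes without signs and with Young stabilizers. Two cautions about your sketch. First, the flow-up basis lives in \emph{equivariant} cohomology $H^*_T(\hess(P))$, and the dot action there is not monomial, so beyond the block decomposition you would also need to control the descent to ordinary cohomology. Second, matching orbits to the $P$-tableaux of Theorem~\ref{schurcon} can at best reproduce Schur-positivity, which is strictly weaker than the $h$-positivity you need; no tableau statistic in the paper organizes the Schur expansion into $h_\mu$-blocks. Since, by your own reformulation, completing the core step (together with Conjecture~\ref{hesschrom}) would settle the Stanley--Stembridge conjecture, no routine change-of-basis argument can be expected to close it. In short: your reductions are correct and align with the paper's framing, your intermediate goal for the families of Section~\ref{calcsec} (where Corollary~\ref{epos3n} gives $e$-positivity, hence abstract existence of the desired module structure) is reasonable, but the heart of the statement remains untouched --- as it does in the paper, which is why it appears there as a conjecture.
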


\section{Recent developments} \label{newsec}

Athansiadis \cite{At} has proved our formula (\ref{pexpandeq}) in Conjecture~\ref{powercon1}, which gives the power-sum expansion of $\omega X_G(\x,t)$ when $G$ is the incomparability graph of a natural unit interval order.  As was discussed in Proposition~\ref{impliesunity},   formula (\ref {rooteq}) in Conjecture~\ref{genunity} is a consequence.

Brosnan and Chow have posted a proof of our Conjecture~\ref{hesschrom} (Conjecture~\ref{introhesschrom}), which 
relates the chromatic quasisymmetric functions to the Hessenberg varieties, see \cite{BrCh}.   As was discussed in 
Proposition~\ref{uniconsprop}, Schur-unimodality of $X_G(\x,t)$ and $(q,p)$-unimodality of $A_G(q,p,t)$ (Conjecture~\ref{GEulcon}) follow from this.

 More recently,  Guay-Paquet  has posted an alternative approach to proving our Conjecture~\ref{hesschrom} (Conjecture~\ref{introhesschrom}); see \cite{Gu2}.   Also Abe, Harada, Horiguchi, and Masuda have posted a formula for the coefficient of $s_n$ in the Schur expansion of the left hand side of (\ref{mainconeq}) that is identical to the right  hand side of (\ref{eschureq}), thereby providing an independent proof of the fact that the coefficients of $s_n$ on both sides of (\ref{mainconeq}) are equal, see \cite{AHHM}.
 
Clearman, Hyatt, Shelton and Skandera \cite{CHSS,CHSS2} have given an algebraic interpretation of the  chromatic quasisymmetric functions of incomparability graphs of natural unit interval orders in terms of  characters of type A Hecke algebras evaluated at Kazhdan-Lusztig basis elements.  This refines a known algebraic interpretation for chromatic symmetric functions developed in  Haiman \cite{Ha}, Stembridge \cite{Ste1}, Stanley-Stembridge \cite{StSte}, and Stanley \cite{St4}.

\appendix

\section{Permutation statistics} \label{qEulersec}
Let $\sigma \in \sg_n$. The 
excedance number of $\sigma$ is given by
$$\exc(\sigma) :=|\{i \in [n-1] : \sigma(i) > i\}|.$$   The descent set of $\sigma$  is
given by $$\Des(\sigma) :=  \{i \in [n-1] : \sigma(i) > \sigma(i+1)\},$$ and the descent number and 
major index are $$\des(\sigma) := |\Des(\sigma) |\,\, \mbox{  and }\,\,
 \maj(\sigma):= \sum_{i\in \Des(\sigma)} i .$$ 
 The inversion statistic is defined as $$\inv(\sigma) = |\{(i,j) \in [n] \times [n] : i < j \mbox{ and } \sigma(i) > \sigma(j)\}|.$$
 The number of fixed points of $\sigma$ is given by 
 $$\fix(\sigma):= |\{i\in [n] : \sigma(i) = i \}|.$$
 
 It is a well known result of MacMahon that the permutation statistics $
 \exc$ and $\des$ are   equidistributed on $\S_n$.  The common generating functions for these statistics are called Eulerian polynomials.  That is, the Eulerian polynomials are defined as 
\bq \label{defEuler} A_{n}(t) :=\sum_{\sigma \in \S_n}t^{\des(\sigma)} = \sum_{\sigma \in \S_n}t^{\exc(\s)} .\eq
 Any permutation statistic with generating function $A_n(t)$ is called an Eulerian statistic. 

It is also well-known (and due to MacMahon) that the  major index is equidistributed with the inversion statistic and that
\begin{equation} \label{majinveq} \sum_{\sigma \in \S_n}q^{\maj(\sigma)} = \sum_{\sigma \in \S_n}q^{\inv(\s)} = [n]_q!,\end{equation}
where 
$$[n]_q := 1+q+ \dots + q^{n-1} \mbox{ and } [n]_q! := [n]_q [n-1]_q \dots [1]_q. $$
Any permutation statistic equidistributed with $\maj $ and $\inv$ is called a Mahonian permutation statistic.  
 
In \cite{ShWa1,ShWa}, we studied the joint distribution of the (Mahonian) major index, and the (Eulerian) excedance number.   We showed that the $(q,r)$-Eulerian polynomial defined by 
$$A_n(q,r,t):=\sum_{\sigma \in \S_n} q^{\maj(\sigma)-\exc(\sigma)}  r^{\fix(\sigma)} t^{\exc(\sigma)}$$ has exponential generating function given by 
 \begin{equation} \label{expgeneq}
1+\sum_{n \geq 1}A_n(q,r,t)\frac{z^n}{[n]_q!}=\frac{(1-t)\exp_q(rz)}{\exp_q(tz)-t\exp_q(z)},
\end{equation}
where $$\exp_q(z) := \sum_{n \ge 0} \frac {z^n}{[n]_q!}.$$
By setting $q=1$ and $r=1$ in  (\ref{expgeneq}) one gets Euler's classical exponential generating function formula for the Eulerian polynomials $A_n(t)= A_n(1,1,t)$.

\section{Palindromicity and unimodality} \label{palin.appen}
 
 We say that a polynomial $f(t) = \sum_{i=0}^{n} a_i t^i$ in $\Q[t]$ is positive, unimodal and palindromic with center of symmetry $\frac n 2$ if 
$$ 0 \le a_{0} \le a_{1} \le  \dots \le a_{\lfloor {n\over 2} \rfloor} = a_{\lfloor {n+1\over 2} \rfloor} \ge \dots \ge a_{n-1}\ge a_{n} \ge 0,$$
and $a_i = a_{n-i}$ for all $i=0,\dots,n$.
The next  proposition gives a basic tool for establishing unimodality of polynomials in $\Q[t]$.
\begin{prop}[see {\cite[Proposition 1]{St0}}] \label{tooluni} Let $A(t)$ and $B(t)$ be positive,  unimodal and palindromic polynomials in $\Q[t]$ with respective centers of symmetry $c_A$ and $c_B$.  Then
\begin{enumerate}
\item $A(t) B(t) $ is positive,  unimodal, and palindromic  with center of symmetry $c_A +c_B$.
\item If $c_A=c_B$ then $A(t) +B(t)$ is positive,  unimodal and palindromic with center of symmetry $c_A$.
\end{enumerate}
\end{prop}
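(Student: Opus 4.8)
The plan is to handle positivity and palindromicity by direct manipulation, and to reduce unimodality of the product to one elementary computation. Throughout I write a positive, palindromic, unimodal polynomial of degree $n$ as $f(t) = \sum_{i=0}^n a_i t^i$, so its center of symmetry is $n/2$. Note that the hypothesis $c_A = c_B$ in part~(2) forces $\deg A = \deg B = 2c_A$, while $\deg(AB) = \deg A + \deg B$ gives the claimed center $c_A + c_B$ in part~(1).

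I would dispatch part~(2) first and directly. Positivity and palindromicity of $A+B$ hold coefficient by coefficient, using that $A$ and $B$ share the common degree $n = 2c_A$. For unimodality, both coefficient sequences increase up to index $\lfloor n/2 \rfloor$ and decrease thereafter, with the same peak position, so their termwise sum does the same.

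For part~(1), palindromicity follows from $A(t) = t^{\deg A} A(1/t)$ and the analogous identity for $B$, which multiply to $(AB)(t) = t^{\deg(AB)} (AB)(1/t)$; positivity is clear since the coefficients of $AB$ are sums of products of nonnegative numbers. The real work is unimodality, and here the plan is a two-move reduction. First I would record the standard \emph{block decomposition}: any positive, palindromic, unimodal $f$ of degree $n$ can be written as
$$f(t) = \sum_{i=0}^{\lfloor n/2 \rfloor} c_i\,(t^i + t^{i+1} + \cdots + t^{n-i}), \qquad c_i := a_i - a_{i-1} \ge 0,$$
where the nonnegativity of $c_i$ for $i \le \lfloor n/2\rfloor$ is exactly unimodality, and each block $t^i + \cdots + t^{n-i} = t^i [\,n-2i+1\,]_t$ is palindromic and unimodal with the \emph{same} center $n/2$. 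Applying this to both $A$ and $B$ and expanding the product, $AB$ becomes a nonnegative combination of products of one $A$-block and one $B$-block, and every such product has the shape $t^{i+j}[p]_t[q]_t$ with $p = n_A - 2i + 1$ and $q = n_B - 2j + 1$; a short check shows its center is exactly $(n_A + n_B)/2 = c_A + c_B$ for every $i,j$.

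The crux is therefore the single fact that $[p]_t[q]_t$ is palindromic and unimodal. Taking $p \le q$ without loss of generality, the coefficient of $t^k$ counts pairs $(a,b)$ with $0 \le a \le p-1$, $0 \le b \le q-1$, and $a+b = k$, yielding the trapezoidal sequence $1,2,\dots,p,\dots,p,\dots,2,1$, which is visibly symmetric and unimodal about $(p+q-2)/2$. With this in hand, each summand $t^{i+j}[p]_t[q]_t$ of $AB$ is positive, palindromic, and unimodal with the common center $c_A + c_B$, so part~(2), applied repeatedly, assembles them into a positive, palindromic, unimodal polynomial of that center. I expect the trapezoid computation to be the only genuinely delicate point; the main conceptual step is recognizing that the block decomposition is precisely what forces all the pieces to share one center of symmetry, which is what makes part~(2) applicable at the end.
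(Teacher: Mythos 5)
Your overall strategy is exactly the standard one: the paper itself offers no proof of Proposition~\ref{tooluni}, deferring to Stanley \cite[Proposition 1]{St0}, and the argument behind that citation is precisely your block decomposition $f=\sum_i c_i\,t^i[n-2i+1]_t$ with $c_i=a_i-a_{i-1}\ge 0$, the trapezoid computation for $[p]_t[q]_t$, and reassembly of the product by repeated use of part~(2). So there is no divergence of method to report; the mathematical core of your proposal is sound.

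There is, however, one false statement that needs repair: the claim that $c_A=c_B$ forces $\deg A=\deg B=2c_A$, and, in the same vein, your identification of the center of symmetry of a degree-$n$ polynomial with $n/2$ (equivalently, the identity $A(t)=t^{\deg A}A(1/t)$ invoked for palindromicity in part~(1)). Under the paper's definition a positive, unimodal, palindromic polynomial may have $a_0=0$, and then its center is strictly larger than half its degree. This is not a corner case: the paper applies the proposition in Appendix~\ref{palin.appen} to the sum $[5]_t[2]_t+[6]_t+t[4]_t+[4]_t[3]_t$, whose four summands all have center $\frac{5}{2}$, while $t[4]_t$ has degree $4$ and the others have degree $5$; so equal centers do not give equal degrees, and the reciprocity identity you use fails for $t[4]_t$ (the correct statement is $A(t)=t^{2c_A}A(1/t)$). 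The fix is immediate and leaves the rest of your argument untouched: present every polynomial with center $c$ as a coefficient sequence indexed by $0,1,\dots,2c$, padding with zeros at both ends, which is harmless for positivity, palindromicity, and unimodality; in other words, replace ``degree'' by ``twice the center'' throughout. With that convention your part~(2) argument, the reciprocity identity, and the block decomposition (in which $c_0$ may now vanish) all go through verbatim.
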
 

For example, from Proposition~\ref{tooluni} (1) we see that   each term of the sum $[5]_t [2]_t + [6]_t+ t[4]_t+[4]_t [3]_t $ is positive, unimodal and palindromic with center of symmetry $\frac 5 2$. From Proposition~\ref{tooluni} (2) we conclude that the sum itself is also positive, unimodal and palindromic with center of symmetry $\frac 5 2$.  Note that the well-known palindromicity and unimodality of each row of Pascal's triangle  is an immediate consequence of Proposition~~\ref{tooluni} (1); indeed $(1+t)^n$ is a  product of positive, unimodal and palindromic polynomials.

We say that a polynomial $f(q) \in \Q[q] $ is $q$-positive if its nonzero coefficients are positive. Given polynomials $f(q),g(q) \in \Q[q]$ we say that $f(q) \le_q g(q)$ if $g(q) - f(q)$ is $q$-positive.  
More generally, let $R$ be an algebra over $\Q$ with basis $b$.  An element $s\in R$ is said to be $b$-{\em positive} if the expansion of $s$ in the basis $b$ has nonnegative coefficients.  Given $r,s \in R$, we say that $r \le_b s$ if $s-r$ is $b$-positive. 
 If $R = \Q[q]$ and $b = 
\{q^i: i \in \N\}$ then $b$-positive is  what we called $q$-positive above and $<_b$ is the same as $<_q$.

\begin{definition}\label{paldef} Let $R$ be a $\Q$-algebra with basis $b$.   We say that a polynomial
$A(t):=a_0 + a_1 t +\cdots  + a_{n} t^{n}\in R[t]$ is $b$-{\em positive} if each coefficient $a_i$ is $b$-positive.   The  polynomial $A(t)$ is $b$-{\em unimodal} if
$$a_0 \le_b a_1\le_b \cdots \le_b a_c \ge_b a_{c+1} \ge_b a_{c+2} \ge_b \cdots \ge_b a_n,$$ for some $c$.  The polynomial $A(t)$ is {\em palindromic} with center of symmetry $\frac n 2$ if $a_j = a_{n-j}$  for $0 \le j \le n$.
\end{definition} 
Proposition~\ref{tooluni} holds for polynomials $A(t), B(t) \in R[t]$, with positivity and unimodality replaced by $b$-positivity and $b$-unimodality, respectively.  

Note that if $b$ is a basis for the $\Q$-algebra $R$ then it is also a basis for the $\Q[t]$-algebra $R[t]$.   Indeed,    let $A(t) = a_0 + a_1 t +\cdots  + a_{n} t^{n} \in R[t]$ and let  $a_{i,j}\in \Q$ be the expansion coefficients of each $a_j$ in the basis  $b:= \{b_i : i \ge 0\}$;  that is, $a_j = \sum_{i \ge 0} a_{i,j} b_i$.  We have
$$A(t) = \sum_{j=0}^n \left( \sum_{i \ge 0} a_{i,j} b_i \right) t^j = \sum_{i \ge 0}\left( \sum_{j=0}^n a_{i,j}  t^j \right) b_i .$$

The next result is a convenient reformulation of Proposition~\ref{tooluni} (2)  for more general coefficient rings.  

\begin{prop} \label{tooluni2}
Let  $R$ be a $\Q$-algebra with basis $b$. Suppose $P(t) := \sum_{i =0}^m X_i Y_i(t)  \in R[t]$, where $X_i \in R\setminus \{0\}$ is $b$-positive and $Y_i(t) \in \Q[t]\setminus\{0\}$.  Then the polynomial $P(t)$  is $b$-positive,  $b$-unimodal, and palindromic  with center of symmetry $c$ if  each   $Y_i(t)$  is positive,  unimodal,  and palindromic with center of symmetry $c$.
\end{prop}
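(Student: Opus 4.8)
The plan is to reduce everything to the scalar statement of Proposition~\ref{tooluni}(2) by expanding $P(t)$ in the basis $b$ of the $\Q[t]$-algebra $R[t]$, exactly as in the remark preceding the proposition. First I would write each $b$-positive coefficient as $X_i = \sum_{k\ge 0} x_{i,k} b_k$ with all $x_{i,k}\in\Q_{\ge 0}$, which is possible precisely because $X_i$ is $b$-positive. Substituting into $P(t)=\sum_{i=0}^m X_i Y_i(t)$ and interchanging the order of summation gives $P(t)=\sum_{k\ge 0} Z_k(t)\, b_k$, where $Z_k(t):=\sum_{i=0}^m x_{i,k}\, Y_i(t)\in\Q[t]$. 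Since $b$ is a basis for $R[t]$ over $\Q[t]$, this is the unique $b$-expansion of $P(t)$. It therefore suffices to show that each scalar polynomial $Z_k(t)$ is positive, unimodal, and palindromic with center of symmetry $c$, and then to read off the three properties of $P(t)$ from those of the $Z_k$; I make this last implication precise below.

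Next I would verify these three properties for a fixed $Z_k(t)$. Each $x_{i,k}$ is a nonnegative rational, and multiplying a positive, unimodal, palindromic polynomial with center of symmetry $c$ by a nonnegative scalar preserves all three properties and the center. Thus $Z_k(t)$ is a finite sum of positive, unimodal, palindromic polynomials all sharing the center $c$ (discarding the terms with $x_{i,k}=0$; if all such terms vanish then $Z_k(t)=0$, which trivially has the required properties). Applying Proposition~\ref{tooluni}(2) inductively to this finite sum, which is legitimate because every summand has the same center $c$, shows that $Z_k(t)$ is itself positive, unimodal, and palindromic with center of symmetry $c$.

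Finally I would translate back. Writing $Z_k(t)=\sum_j z_{k,j} t^j$ and collecting powers of $t$, the coefficient of $t^j$ in $P(t)$ is $a_j=\sum_{k} z_{k,j}\, b_k\in R$. Positivity of each $Z_k$ gives $z_{k,j}\ge 0$, so each $a_j$ is $b$-positive; the common palindromicity gives $z_{k,j}=z_{k,2c-j}$, hence $a_j=a_{2c-j}$; and unimodality gives, for each $k$, $z_{k,j}\le z_{k,j+1}$ when $j<c$ and $z_{k,j}\ge z_{k,j+1}$ when $j\ge c$, so that $a_{j+1}-a_j=\sum_k (z_{k,j+1}-z_{k,j})\, b_k$ is $b$-positive for $j<c$ and $a_j-a_{j+1}$ is $b$-positive for $j\ge c$. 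By Definition~\ref{paldef} this is exactly $b$-positivity, $b$-unimodality, and palindromicity with center of symmetry $c$ for $P(t)$.

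The only step needing any genuine argument is the passage from the two-term Proposition~\ref{tooluni}(2) to an arbitrary nonnegative linear combination sharing a single center of symmetry; this is a routine induction on the number of nonzero terms, using that scalar multiplication by a nonnegative rational and addition of two center-$c$ palindromic unimodal polynomials both stay within the class. Everything else is the bookkeeping of expanding in the basis $b$, which the remark immediately before the proposition has already set up.
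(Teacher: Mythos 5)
Your proof is correct, but it runs in the opposite direction from the paper's and routes through a lemma the paper does not use. The paper never expands the $X_i$ in the basis $b$ and never invokes Proposition~\ref{tooluni}(2): it expands each $Y_i(t)=\sum_{j=0}^{2c}y_{i,j}t^j$ in powers of $t$, collects the coefficient of $t^j$ in $P(t)$ as $a_j=\sum_{i=0}^m y_{i,j}X_i$, and reads off $b$-positivity, palindromicity, and $b$-unimodality directly from $y_{i,j}\ge 0$, $y_{i,j}=y_{i,2c-j}$, and $y_{i,j+1}-y_{i,j}\ge 0$ for $j<\lfloor c\rfloor$, using only that each $X_i$ is $b$-positive. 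You perform the dual expansion: write $X_i=\sum_k x_{i,k}b_k$ with $x_{i,k}\ge 0$, collect $P(t)=\sum_k Z_k(t)\,b_k$ with $Z_k(t)=\sum_i x_{i,k}Y_i(t)\in\Q[t]$, establish the three properties of each scalar polynomial $Z_k$ by an induction based on Proposition~\ref{tooluni}(2), and then translate back coefficientwise. Your bookkeeping is sound, including the edge cases ($x_{i,k}=0$, $Z_k=0$) and the inequalities on either side of $c$; note that your translation step, once unwound, recovers exactly the paper's coefficients, since $\sum_k z_{k,j}b_k=\sum_i y_{i,j}X_i$. What your route buys is conceptual economy: it exhibits Proposition~\ref{tooluni2} as a formal consequence of the scalar Proposition~\ref{tooluni}(2) together with basis bookkeeping, so nothing about unimodality has to be re-argued in $R$. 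What the paper's route buys is brevity and self-containedness: keeping the $X_i$ intact avoids the second expansion and the induction, and the entire proof is one interchange of summation followed by three one-line verifications against Definition~\ref{paldef}.
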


\begin{proof}Since $Y_i(t)$  is positive,  unimodal,  and palindromic with center of symmetry $c$, we have  $Y_i(t) = \sum_{j=0}^{2c} y_{i,j} t^j$,    where $y_{i,j} \ge 0$, $y_{i,j} = y_{i,2c-j}$ for each $i,j$, and
$$y_{i,0} \le y_{i,1} \le \dots \le y_{i,\lfloor c \rfloor},$$
for each $i$.  Now 
\begin{eqnarray*} P(t) &=& \sum_{i =0}^m \sum_{j=0}^{2c}X_i \,y_{i,j}\, t^j \\
&=& \sum_{j=0}^{2c} \left (\sum_{i=0}^m y_{i,j} X_i \right ) t^j.\end{eqnarray*} 
Since each $y_{i,j} \ge 0$ and each $X_i$ is $b$-positive, each $\sum_{i=0}^m y_{i,j} X_i$ is $b$-positive, making $P(t)$ $b$-positive.  
Since  $y_{i,j} = y_{i,2c-j}$, each $\sum_{i=0}^m y_{i,j} X_i = \sum_{i=0}^m y_{i,2c-j} X_i $, making $P(t)$ palindromic. 
 Finally, since $y_{i,j+1} - y_{i,j} \ge 0$ for each $j=0,\dots, \lfloor c \rfloor - 1$, we see that $$\sum_{i=0}^m y_{i,j+1} X_i - \sum_{i=0}^m y_{i,j} X_i = \sum_{i=0}^m (y_{i,j+1}-y_{i,j}) X_i $$ 
is $b$-positive, making $P(t)$ $b$-unimodal. 
 \end{proof}

For example, consider the polynomial  $$P(q,t) := ([5]_t [2]_t + [6]_t + t[4]_t)(3q+q^5) + [4]_t [3]_t (1+2q^3).$$  By Proposition~\ref{tooluni} the polynomials $[5]_t [2]_t + [6]_t + t[4]_t$ and  $[4]_t [3]_t $ in $\Q[t]$ are positive, unimodal and palindromic  with center of symmetry $\frac 5 2$.  The polynomials   $3q+q^5$ and $1+2q^3$ are $q$-positive.  It therefore follows from Proposition~\ref{tooluni2} that $P(q,t)\in \Q[q][t]$ is $q$-positive, $q$-unimodal and palindromic with center of symmetry $\frac 5 2$.

\begin{remark}We have changed the terminology from our previous papers (e.g. \cite{ShWa}) where we referred to $q$-positivity and $q$-unimodality of polynomials in $\Q[q][t]$ as $t$-positivity and $t$-unimodality, respectively.
\end{remark}

\section{Unimodality of $q$-Eulerian polynomials and Smirnov word enumerators}  \label{app.eul}
 \subsection{The $q$-Eulerian polynomial $A_n(q,t)$}It is well known that the Eulerian polynomials are palindromic and unimodal.  In \cite{ShWa} a q-analog of this result is obtained for $A_n(q,t):=A_n(q,1,t)$.  In this section we present an alternative proof as an easy consequence of the closed form formula  given in the following result.

 \begin{thm}\label{newth}
  For $n \ge 1$,
\begin{equation}\label{newformq} A_n(q,t)= \sum_{m=1}^{\lfloor {n+1 \over 2} \rfloor}   \sum_{\substack{
k_1,\dots, k_m \ge 2 \\ \sum k_i = n+1}}\,\,\, \left[ \begin{array}{c} n \\ k_1-1,  k_2, \dots, k_m \end{array} \right]_q \,\,\,t^{m-1} \prod_{i=1}^m  [k_i-1]_{t} \,\,,\end{equation}
where $$\left[\begin{array}{c} n \\k_1,\dots,k_m\end{array}\right]_q = {[n]_q!
\over [k_1]_q!\cdots [k_m]_q!}.$$
 \end{thm}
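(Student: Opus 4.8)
The plan is to establish the closed-form formula (\ref{newformq}) for $A_n(q,t)=A_n(q,1,t)$ by recognizing the right-hand side as the stable principal specialization of the chromatic quasisymmetric function of the path $G_n=G_{n,2}$, and then invoking the results already proved in the body of the paper. Concretely, recall from Example~\ref{pathex} and equation (\ref{smirgrapheq}) that $X_{G_{n,2}}(\x,t)=W_n(\x,t)$ is the Smirnov word enumerator, and from Theorem~\ref{RawlingsqEulerTh} that $A_n(q,t)=A_n^{(2)}(q,t)=(q;q)_n\,\Ps(\omega X_{G_{n,2}}(\x,t))$. So the entire problem reduces to computing $(q;q)_n\,\Ps(\omega X_{G_{n,2}}(\x,t))$ in closed form. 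The key input is the explicit $e$-basis (equivalently, after applying $\omega$, $h$-basis) expansion of $X_{G_{n,2}}(\x,t)$ recorded in row $r=2$ of Table~1 of Section~\ref{esec}, namely
$$
X_{G_{n,2}}(\x,t)=\sum_{m=1}^{\lfloor (n+1)/2\rfloor}\ \sum_{\substack{k_1,\dots,k_m\ge 2\\ \sum k_i=n+1}} e_{(k_1-1,k_2,\dots,k_m)}\ t^{m-1}\prod_{i=1}^m [k_i-1]_t.
$$

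First I would apply $\omega$ termwise, which sends $e_{(k_1-1,k_2,\dots,k_m)}$ to $h_{(k_1-1,k_2,\dots,k_m)}$ while leaving the polynomial coefficients in $t$ untouched, since $\omega$ acts only on the $\x$-variables. Next I would take the stable principal specialization $\Ps$ of both sides and multiply through by $(q;q)_n$. The crucial computational step is equation (\ref{spech}): for any composition $(k_1-1,k_2,\dots,k_m)$ of $n$,
$$
(q;q)_n\,\Ps\bigl(h_{(k_1-1,k_2,\dots,k_m)}\bigr)=\left[\begin{array}{c} n \\ k_1-1,k_2,\dots,k_m\end{array}\right]_q,
$$
which converts each $h$-term into precisely the $q$-multinomial coefficient appearing in (\ref{newformq}). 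Substituting this into the specialized expansion yields the right-hand side of (\ref{newformq}) verbatim, and by Theorem~\ref{RawlingsqEulerTh} the left-hand side is $A_n(q,t)$, completing the identification.

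The main obstacle is not any single hard estimate but rather assembling the chain of identifications cleanly and verifying that the $r=2$ entry of Table~1 is genuinely available as a proved statement rather than merely tabulated; I would want to confirm that the path formula it records is established (via (\ref{pathchrom}), (\ref{smirgrapheq}), and the $e$-positivity/$e$-unimodality discussion following (\ref{introcarlg}), ultimately resting on Stanley's refinement of Carlitz--Scoville--Vaughan together with Haiman's observation). A secondary technical point is bookkeeping: I must check that the constraint $\sum k_i=n+1$ with all $k_i\ge 2$ correctly matches the composition $(k_1-1,k_2,\dots,k_m)$ being a genuine composition of $n$ (so that (\ref{spech}) applies with the stated parts), and that the range $1\le m\le \lfloor(n+1)/2\rfloor$ is exactly the range forced by $k_i\ge 2$. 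Once these matchings are confirmed, the proof is essentially an application of $\omega$ followed by (\ref{spech}), with Theorem~\ref{RawlingsqEulerTh} supplying the identity $A_n^{(2)}(q,t)=A_n(q,t)$. As a consistency check and an alternative route, I would note that one can instead specialize the generating-function identity (\ref{symEulereq}) directly, using (\ref{stabh}) to pass to $\exp_q$, recovering (\ref{expgeneq}) and hence the same formula; this provides independent confirmation without relying on Table~1.
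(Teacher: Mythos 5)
Your proof is correct, and there is no circularity: Theorem~\ref{speccor}, Theorem~\ref{RawlingsqEulerTh}, and Theorem~\ref{newsymth} (which is what the $r=2$ row of Table~1 records, via (\ref{smirgrapheq})) are all established independently of Theorem~\ref{newth}, so your chain $A_n(q,t)=A_n^{(2)}(q,t)=(q;q)_n\,\Ps(\omega X_{G_{n,2}}(\x,t))$ followed by termwise application of $\omega$ and (\ref{spech}) is legitimate, and your bookkeeping ($k_1-1\ge 1$, $\sum k_i=n+1$, $m\le\lfloor(n+1)/2\rfloor$) checks out. However, this is a genuinely different route from the paper's own proof, which never mentions chromatic quasisymmetric functions: the paper simply rewrites the $r=1$ case of the exponential generating function (\ref{expgeneq}) in the form $1+\bigl(\sum_{n\ge1}[n]_t z^n/[n]_q!\bigr)\big/\bigl(1-t\sum_{n\ge2}[n-1]_t z^n/[n]_q!\bigr)$ and extracts the coefficient of $z^n/[n]_q!$ directly, the geometric-series expansion of the denominator producing the sum over compositions and the $q$-multinomial coefficients in one stroke. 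Your route is the one the paper itself gestures at in the closing remark of Appendix~\ref{app.eul}, where Theorem~\ref{newth} is described as a specialization of Theorem~\ref{newsymth}; what it buys is conceptual clarity (the $q$-multinomials are visibly specializations of $h$-products, and the statement sits inside the symmetric-function framework of the paper), at the cost of invoking heavier machinery from Sections~\ref{fundquasec} and~\ref{specsec}. Note also that the coefficient-extraction work does not actually disappear in your argument: it is performed once, upstream, in passing from (\ref{recarlg}) to Theorem~\ref{newsymth}, and moreover the proof of Theorem~\ref{RawlingsqEulerTh} itself consumes (\ref{expgeneq}), so both proofs ultimately rest on the same generating-function identity from \cite{ShWa1,ShWa}; yours is not more elementary, just differently organized.
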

 
\begin{proof} We rewrite  the $r=1$ case of (\ref{expgeneq})  as
$$ 1+\sum_{n \geq 1}A_n(q,t)\frac{z^n}{[n]_q!} = 1 + \frac {\sum_{n \ge 1} [n]_t \frac{z^n}{[n]_q!}} {1-t\sum_{n\ge 2} [n-1]_t \frac{z^n}{[n]_q!} }
$$ and extract the coefficients of $\frac{z^n}{[n]_q!}$.
\end{proof}

\begin{cor}[Shareshian and Wachs \cite{ShWa}] \label{qunicor} For all $n \ge 1$, the polynomial $A_n(q,t)$ is $q$-unimodal and palindromic.
\end{cor}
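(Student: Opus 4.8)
The plan is to read off $q$-positivity, palindromicity, and $q$-unimodality directly from the closed form in Theorem~\ref{newth}, using Proposition~\ref{tooluni2} as the packaging lemma. First I would rewrite the right-hand side of (\ref{newformq}) as a single sum $\sum_i X_i\, Y_i(t)$, where $i$ ranges over all pairs $(m,(k_1,\dots,k_m))$ with $1 \le m \le \lfloor (n+1)/2 \rfloor$, each $k_j \ge 2$, and $\sum_j k_j = n+1$; here
$$X_i := \left[\begin{array}{c} n \\ k_1-1,\, k_2,\, \dots,\, k_m \end{array}\right]_q \in \Q[q] \qquad\text{and}\qquad Y_i(t) := t^{m-1}\prod_{j=1}^m [k_j-1]_t \in \Q[t].$$
Taking $R = \Q[q]$ with the basis $b = \{q^i : i \ge 0\}$, so that $b$-positivity is exactly $q$-positivity and $b$-unimodality is exactly $q$-unimodality, Proposition~\ref{tooluni2} will deliver the corollary at once, provided I verify its two hypotheses: that each $X_i$ is $q$-positive, and that each $Y_i(t)$ is positive, unimodal, and palindromic with a \emph{common} center of symmetry.

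Next I would dispatch the two hypotheses. Each Gaussian multinomial coefficient $X_i$ is a polynomial in $q$ with nonnegative integer coefficients, hence $q$-positive; this is standard. For the $t$-factors, each $[k_j-1]_t = 1+t+\dots+t^{k_j-2}$ is positive, unimodal, and palindromic with center of symmetry $\tfrac{k_j-2}{2}$, and the monomial $t^{m-1}$ is trivially positive, unimodal, and palindromic with center of symmetry $m-1$. By Proposition~\ref{tooluni}(1), the product $Y_i(t)$ is then positive, unimodal, and palindromic with center of symmetry $(m-1) + \sum_{j=1}^m \tfrac{k_j-2}{2}$.

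The one place where the defining constraint $\sum_j k_j = n+1$ does real work is in showing this center is independent of $m$ and of the composition: indeed
$$(m-1) + \sum_{j=1}^m \frac{k_j-2}{2} = (m-1) + \frac{(n+1) - 2m}{2} = \frac{n-1}{2}.$$
Thus every $Y_i(t)$ is palindromic and unimodal with the \emph{same} center $\tfrac{n-1}{2}$, which is precisely what Proposition~\ref{tooluni2} requires; applying it yields that $A_n(q,t)$ is $q$-positive, $q$-unimodal, and palindromic with center of symmetry $\tfrac{n-1}{2}$, proving the corollary. I expect the only genuine (and minor) obstacle to be this bookkeeping of the common center of symmetry, since the additive constraint on the parts $k_j$ is what forces all summands into alignment; the remaining steps are immediate invocations of Theorem~\ref{newth} and Propositions~\ref{tooluni} and~\ref{tooluni2}.
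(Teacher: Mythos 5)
Your proposal is correct and follows essentially the same route as the paper's own proof: both read the corollary off the closed form of Theorem~\ref{newth}, use Proposition~\ref{tooluni}(1) to show each factor $t^{m-1}\prod_{j=1}^m [k_j-1]_t$ is positive, unimodal, and palindromic with common center $\tfrac{n-1}{2}$, and then invoke Proposition~\ref{tooluni2} with the $q$-positive Gaussian multinomial coefficients. The only difference is that you spell out the bookkeeping (the identification $b=\{q^i\}$ and the center-of-symmetry computation) in more detail than the paper does.
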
   

\begin{proof}   By Proposition~\ref{tooluni} (1), the polynomial $t^{m-1} \prod_{i=1}^m   [k_i-1]_{t}
$ is positive, unimodal and palindromic with center of symmetry
$$m-1+\sum_{i=1}^m \frac {k_i-2} 2 =  \frac {n-1} 2.$$
Since each polynomial  $\left[ \begin{array}{c} n \\ k_1-1,  k_2, \dots, k_m \end{array} \right]_q $ is $q$-positive, the result now follows from Proposition~\ref{tooluni2}.
\end{proof}

\subsection{The Smirnov word enumerator $W_n(\x,t)$}  \label{smirsec}
Let $W_n$ be the set of all words of length $n$ over alphabet $\PP$ with no adjacent repeats, i.e.,
$$W_n := \{w \in \PP^n : w_i\ne w_{i+1}\,\, \forall i = 1,2,\dots,n-1\}. $$  These words are sometimes called {\em Smirnov words}.  Define the enumerator
$$W_n(\x,t):= \sum_{w\in  W_n} t^{\des(w)}x_{w_1} \cdots x_{w_n} ,$$ where
 $\des(w)= \{i \in [n-1] : w_i > w_{i+1}\}$.  
 
 Stanley (personal communication) observed that by $P$-partition reciprocity \cite[Section 4.5]{St1}, $W_n(\x,t)$ is equal to  $\sum_{j=0}^n  \omega Q_{n,j}(\x) t^j$, where  $Q_{n,j}(\x)$ is the  Eulerian quasisymmetric function considered in \cite{ShWa}.  This observation follows from the characterization of $Q_{n,j}(\x)$ involving ``banners", given in \cite[Theorem 3.6]{ShWa}.
The following result, which refines a formula of  Carlitz, Scoville and Vaughan \cite{CaScVa},  is therefore equivalent to the $r=1$ case of    \cite[Theorem 1.2]{ShWa}.
\begin{thm}[Stanley (see {\cite[Theorem  7.2]{ShWa}})] \label{stanth}  Let $$E(z) := \sum_{n \ge 0} e_n(\x) z^n.$$  Then

\begin{equation}\label{carlg} \sum_{n\ge 0} W_n(\x,t) z^n = {(1-t) E(z) \over E(zt) - t E(z)}. \end{equation}  
\end{thm}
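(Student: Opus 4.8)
The plan is to reduce the identity to the known power-series identity for Eulerian quasisymmetric functions, following the remark preceding the statement. Write $\mathcal{Q}_n(\x,t) := \sum_{j=0}^{n} Q_{n,j}(\x)\,t^j$, where $Q_{n,j}(\x)$ is the Eulerian quasisymmetric function of \cite{ShWa}. The first and central step is to establish the identity
\[
W_n(\x,t) = \omega\,\mathcal{Q}_n(\x,t) \qquad (n \ge 0),
\]
where $\omega$ is the standard involution exchanging $h_n$ and $e_n$. Once this is in hand, I would invoke the $r=1$ case of \cite[Theorem 1.2]{ShWa}, namely
\[
\sum_{n\ge 0} \mathcal{Q}_n(\x,t)\, z^n = \frac{(1-t)\,H(z)}{H(zt) - t\,H(z)},
\]
and simply apply $\omega$ to both sides. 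Since $\omega$ acts as a ring involution coefficientwise in the scalars $z$ and $t$ and satisfies $\omega H(z) = E(z)$ and $\omega H(zt) = E(zt)$, the right-hand side becomes $(1-t)E(z)/(E(zt)-tE(z))$, while the left-hand side becomes $\sum_{n\ge 0} \omega\mathcal{Q}_n(\x,t)\, z^n = \sum_{n\ge 0} W_n(\x,t)\, z^n$ by the identity $W_n = \omega\mathcal{Q}_n$. This yields (\ref{carlg}) directly, and one checks that the $n=0$ term is consistent, since $W_0 = 1$ and the right-hand side equals $1$ at $z=0$.

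The heart of the argument is therefore the identity $W_n(\x,t) = \omega\,\mathcal{Q}_n(\x,t)$, and this is the step I expect to be the main obstacle. I would prove it via Stanley's theory of $P$-partitions, as indicated in the text. The starting point is that a Smirnov word of length $n$ is exactly a proper coloring of the path $G_n$, so that $W_n = X_{G_n}(\x,t)$ by (\ref{smirgrapheq}); in particular $W_n(\x,t)$ is a symmetric function by Theorem~\ref{symprop}, which legitimizes applying $\omega$ to it. To identify it with $\omega\mathcal{Q}_n$, I would expand both sides in Gessel's fundamental basis: $P$-partition reciprocity converts the descent generating function over Smirnov words into an excedance (equivalently, ``banner'') generating function, which is precisely the combinatorial model for $Q_{n,j}(\x)$ furnished by \cite[Theorem 3.6]{ShWa}. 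The appearance of $\omega$ is accounted for by the reversal and complementation of descent sets inherent in reciprocity. Matching the two fundamental-basis expansions term by term, while keeping careful track of how the exponent of $t$ transforms (it is $\des$ on the Smirnov side and $j=\exc$ on the $Q_{n,j}$ side), completes the identification.

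As an alternative, more self-contained route that avoids citing \cite{ShWa}, I would instead derive (\ref{carlg}) by a direct generating-function argument of Carlitz--Scoville--Vaughan type. Here one decomposes each Smirnov word uniquely into its maximal weakly increasing factors; since a Smirnov word has no equal adjacent letters, each such factor is in fact a strictly increasing word (a finite subset of $\pp$, enumerated by $E(z)-1$), and the number of factors exceeds the number of descents by exactly one. Summing over the number of factors and imposing the junction conditions (a descent is required between consecutive factors) by inclusion--exclusion produces a functional equation for $\sum_{n\ge 0} W_n(\x,t)z^n$ whose solution is the claimed rational expression. I expect this route to be correct but considerably more computational; the delicate point is the inclusion--exclusion bookkeeping at the block junctions, which is exactly what the $P$-partition machinery packages cleanly in the first approach.
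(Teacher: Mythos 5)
Your main route is precisely the paper's own argument: the paper (crediting Stanley) identifies $W_n(\x,t)$ with $\sum_{j=0}^n \omega Q_{n,j}(\x)\,t^j$ via $P$-partition reciprocity together with the banner characterization of $Q_{n,j}$ from \cite[Theorem 3.6]{ShWa}, and then obtains (\ref{carlg}) by noting the statement is equivalent to the $r=1$ case of \cite[Theorem 1.2]{ShWa} under the involution $\omega$, exactly as you propose. So your proposal is correct and takes essentially the same approach as the paper; the alternative Carlitz--Scoville--Vaughan decomposition you sketch is not needed.
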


 The following closed form formula for $W_n(\x,t)$ is obtained by rewriting (\ref{carlg}) as 
\begin{equation} \label{recarlg} \sum_{n\ge 0} W_n(\x,t) z^n = 1+ \frac{\sum_{n\ge 1} [n]_t e_n z^n} { 1-t \sum_{n\ge 2} [n-1]_t e_n z^n}.\end{equation}
\begin{thm} \label{newsymth} For all $n\ge 1$,
$$W_n(\x,t) = \sum_{m=1}^{\lfloor {n+1 \over 2} \rfloor}   \sum_{\substack{
k_1,\dots, k_m \ge 2 \\ \sum k_i = n+1}}\,\,\, e_{ (k_1-1,  k_2, \dots, k_m)} \,\,\,t^{m-1} \prod_{i=1}^m  [k_i-1]_{t} \,\,$$
where $e_{(i_1,\dots,i_m)} :=  e_{i_1}\dots e_{i_m}$ for all $(i_1,\dots,i_m) \in \pp^m$.
\end{thm}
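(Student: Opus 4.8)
The plan is to follow the same route used to prove Theorem~\ref{newth}: start from the generating function of Theorem~\ref{stanth}, put it over a convenient denominator, expand the result as a geometric series, and read off the coefficient of $z^n$. First I would justify the displayed rewriting (\ref{recarlg}) of (\ref{carlg}). Writing $E(z)=\sum_{n\ge 0}e_n z^n$ with $e_0=1$, a direct computation gives
$$E(zt)-tE(z)=(1-t)\Bigl(1-t\sum_{n\ge 2}[n-1]_t\,e_n z^n\Bigr),$$
using the identity $t^n-t=-t(1-t)[n-1]_t$; the factor $(1-t)$ then cancels against the numerator $(1-t)E(z)$. To reach the exact shape of (\ref{recarlg}), I would split $E(z)$ as $\bigl(1-t\sum_{n\ge 2}[n-1]_t e_n z^n\bigr)+\sum_{n\ge 1}[n]_t e_n z^n$, which is valid because of the elementary identity $[n]_t-t[n-1]_t=1$ for every $n\ge 1$. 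Dividing by the denominator isolates the constant term $1$ and leaves exactly (\ref{recarlg}).

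Next I would expand the right-hand side of (\ref{recarlg}) as a geometric series,
$$\sum_{n\ge 0}W_n(\x,t)z^n=1+\Bigl(\sum_{k\ge 1}[k]_t\,e_k z^k\Bigr)\sum_{j\ge 0}t^j\Bigl(\sum_{k\ge 2}[k-1]_t\,e_k z^k\Bigr)^{\!j},$$
and extract the coefficient of $z^N$ for $N\ge 1$. This is a convolution: one chooses a single factor $[k_1]_t e_{k_1}z^{k_1}$ with $k_1\ge 1$ from the left-hand sum together with $j$ factors $[k_i-1]_t e_{k_i}z^{k_i}$ with $k_i\ge 2$ from the geometric expansion, subject to $k_1+\dots+k_{j+1}=N$. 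Setting $m=j+1$, this produces
$$W_N(\x,t)=\sum_{m\ge 1}t^{m-1}\!\!\sum_{\substack{k_1\ge 1,\ k_2,\dots,k_m\ge 2\\ k_1+\dots+k_m=N}}[k_1]_t\Bigl(\prod_{i=2}^m[k_i-1]_t\Bigr)e_{k_1}e_{k_2}\cdots e_{k_m}.$$

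Finally I would reindex the first part, replacing the summation variable $k_1\ge 1$ by $k_1-1$, where the new variable (again called $k_1$) ranges over $k_1\ge 2$. Then the factor $[k_1]_t e_{k_1}$ becomes $[k_1-1]_t e_{k_1-1}$, the degree constraint becomes $k_1+k_2+\dots+k_m=N+1$ with all parts at least $2$, and the summand takes the form $t^{m-1}\prod_{i=1}^m[k_i-1]_t\,e_{(k_1-1,k_2,\dots,k_m)}$, matching the statement; since $m$ parts each at least $2$ sum to $N+1$, necessarily $m\le\lfloor(N+1)/2\rfloor$, which supplies the stated range. I do not expect a genuine obstacle, as the argument is purely formal manipulation of generating functions; the only step demanding care is the bookkeeping of this reindexing—keeping straight that exactly one factor contributes the shifted part $e_{k_1-1}$ while the remaining factors contribute the unshifted $e_{k_i}$—and the accompanying verification of the summation range for $m$.
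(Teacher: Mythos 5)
Your proposal is correct and follows exactly the route the paper takes: the paper's proof consists of rewriting (\ref{carlg}) in the form (\ref{recarlg}) and extracting the coefficient of $z^n$ (mirroring the proof of Theorem~\ref{newth}), which is precisely what you do. Your write-up merely makes explicit the details the paper leaves to the reader — the identity $E(zt)-tE(z)=(1-t)\bigl(1-t\sum_{n\ge 2}[n-1]_t e_n z^n\bigr)$, the splitting of $E(z)$ via $[n]_t-t[n-1]_t=1$, the geometric-series expansion, and the reindexing $k_1\mapsto k_1-1$ — all of which are carried out correctly.
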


The following consequence of Theorem~\ref{newsymth} is obtained by applying Propositions~\ref{tooluni} 
and~\ref{tooluni2}.

\begin{cor} \label{symunicor} For all $n\ge 1$, the polynomial $W_n(\x,t)$ is $e$-positive,   $e$-unimodal and  palindromic.
\end{cor}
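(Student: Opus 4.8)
The plan is to read the conclusion directly off the closed form in Theorem~\ref{newsymth}, following verbatim the strategy used for the $q$-Eulerian polynomial in Corollary~\ref{qunicor}. The two ingredients are the structural tools of Appendix~\ref{palin.appen}: Proposition~\ref{tooluni} to control products of $t$-numbers, and Proposition~\ref{tooluni2} to assemble the pieces over the coefficient ring $\Lambda_\Q$ equipped with the $e$-basis.

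First I would fix a summand of the formula in Theorem~\ref{newsymth}, indexed by $m$ and a tuple $(k_1,\dots,k_m)$ with each $k_i \ge 2$ and $\sum k_i = n+1$, and analyze its $t$-part
$$Y(t) := t^{m-1}\prod_{i=1}^m [k_i-1]_t.$$
The monomial $t^{m-1}$ is positive, unimodal and palindromic with center of symmetry $m-1$, and each $t$-number $[k_i-1]_t = 1 + t + \dots + t^{k_i-2}$ is positive, unimodal and palindromic with center of symmetry $\frac{k_i-2}{2}$. Applying Proposition~\ref{tooluni}(1) repeatedly, $Y(t)$ is positive, unimodal and palindromic with center of symmetry
$$(m-1) + \sum_{i=1}^m \frac{k_i-2}{2} = (m-1) + \frac{(n+1) - 2m}{2} = \frac{n-1}{2}.$$
The key point to record is that this center equals $\frac{n-1}{2}$ for every choice of $m$ and $(k_1,\dots,k_m)$; that is, all the $t$-parts share a common center of symmetry.

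Next I would identify the symmetric-function coefficients as $e$-positive elements. Since $k_1 \ge 2$ we have $k_1 - 1 \ge 1$, so $(k_1-1,k_2,\dots,k_m)$ is a composition of $n$ into positive parts, and $e_{(k_1-1,k_2,\dots,k_m)} = e_{k_1-1}e_{k_2}\cdots e_{k_m}$ is precisely the $e$-basis element $e_\lambda$, where $\lambda \vdash n$ is the partition obtained by sorting that composition; in particular each such coefficient is a nonzero $e$-positive element of $\Lambda_\Q$. I would then re-index the double sum of Theorem~\ref{newsymth} as a single sum over the finite set of pairs $(m,(k_1,\dots,k_m))$ and apply Proposition~\ref{tooluni2} with $R = \Lambda_\Q$, the basis $b$ the $e$-basis, the elements $X_i$ equal to $e_{(k_1-1,k_2,\dots,k_m)}$, and the polynomials $Y_i(t)$ equal to the $Y(t)$ above. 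Since every $Y_i(t)$ is positive, unimodal and palindromic with the common center $\frac{n-1}{2}$, Proposition~\ref{tooluni2} yields that $W_n(\x,t)$ is $e$-positive, $e$-unimodal and palindromic (with center of symmetry $\frac{n-1}{2}$), which is exactly the assertion.

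There is no substantive obstacle here, since the corollary is a direct consequence of the closed form; the only point demanding care is the bookkeeping. One must verify that the center-of-symmetry computation yields $\frac{n-1}{2}$ independently of the summation indices, so that Proposition~\ref{tooluni2}, which requires a single common center, applies. One must also note that distinct compositions may sort to the same partition, so the same $e_\lambda$ can occur among several of the $X_i$; this causes no trouble, as Proposition~\ref{tooluni2} permits repeated $X_i$.
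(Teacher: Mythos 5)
Your proof is correct and takes essentially the same approach as the paper: the paper obtains Corollary~\ref{symunicor} by applying Propositions~\ref{tooluni} and~\ref{tooluni2} to the closed form in Theorem~\ref{newsymth}, exactly as you do, with the common center-of-symmetry computation $(m-1)+\sum_{i=1}^m\frac{k_i-2}{2}=\frac{n-1}{2}$ spelled out in the parallel proof of Corollary~\ref{qunicor}. Your added bookkeeping (the center being independent of the summation indices, and the $e$-positivity of the coefficients $e_{(k_1-1,k_2,\dots,k_m)}$) is precisely what the paper leaves implicit.
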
 
 
\begin{remark} Our proof of Corollary~\ref{symunicor} is based on an observation of Haiman in \cite[Equation (4.9)]{Ha}.   Stanley  observed in \cite{St0} that the weaker result that the coefficient of  $z^n$ in the formal power series $(1-t)E(z) \over E(zt)-tE(z)$ is a palindromic Schur-unimodal polynomial  is a consequence of (\ref{toreq}) and the hard Lefshetz theorem. 
Stembridge observed in \cite{Ste1}  that p-unimodality of the coefficients is a also a consequence of (\ref{toreq})  and the hard Lefshetz theorem.\end{remark}

\begin{remark} Theorem~\ref{newth} and Corollary~\ref{qunicor} are specializations of    Theorem~\ref{newsymth} and Corollary~\ref{symunicor}, respectively (see Section~\ref{specsec}).
\end{remark}

\section*{Acknowledgments}  

We are grateful  to Richard Stanley for informing us of Theorem~\ref{stanth}, which led us to  introduce the  chromatic quasisymmetric functions.   Darij Grinberg suggested a large number of corrections and improvements to exposition after reading an earlier version of this paper, for which we are also grateful. We thank  Julianna Tymoczko for useful
discussions.  Some of the work on this paper was done while the second author was a Visiting Scholar at the University of California, Berkeley and a member of MSRI.  The author thanks these institutions for their hospitality.

\end{document}